\newtheorem{theorem}{Theorem}[section]
\newtheorem{lemma}[theorem]{Lemma}
\newtheorem{proposition}[theorem]{Proposition}
\newtheorem{corollary}[theorem]{Corollary}
 \theoremstyle{definition}
 \newtheorem{definition}[theorem]{Definition}
 \newtheorem{observation}[theorem]{Observation}
 \newtheorem{example}[theorem]{Example}
 \DeclareMathOperator{\Ext}{Ext}
 \DeclareMathOperator{\Hom}{Hom}
 \DeclareMathOperator{\Tor}{Tor}
 \DeclareMathOperator{\im}{Im}
 \DeclareMathOperator{\Ker}{Ker}
 \DeclareMathOperator{\Mod}{Mod}
\def\coOmega{\mathop{{\rm co}\Omega}\nolimits}
\def\cTr{\mathop{\rm cTr}\nolimits}
\def\acTr{\mathop{\rm acTr}\nolimits}
\def\Ker{\mathop{\rm Ker}\nolimits}
\def\Coker{\mathop{\rm Coker}\nolimits}
\def\End{\mathop{\rm End}\nolimits}
\def\pd{\mathop{\rm pd}\nolimits}
\def\fd{\mathop{\rm fd}\nolimits}
\def\id{\mathop{\rm id}\nolimits}
\def\mod{\mathop{\rm mod}\nolimits}
\def\Mod{\mathop{\rm Mod}\nolimits}
\def\Add{\mathop{\rm Add}\nolimits}
\def\FID{\mathop{\rm FID}\nolimits}
\def\FPD{\mathop{\rm FPD}\nolimits}
\def\Ecograde{\mathop{\rm {E{\text-}cograde}}\nolimits}
\def\Tcograde{\mathop{\rm {T{\text-}cograde}}\nolimits}
\def\sEcograde{\mathop{\rm {s.E{\text-}cograde}}\nolimits}
\def\sTcograde{\mathop{\rm {s.T{\text-}cograde}}\nolimits}
\def\cT{\mathop{{\rm c}\mathcal{T}}\nolimits}
\def\acT{\mathop{{\rm ac}\mathcal{T}}\nolimits}
\begin{document}

\title{\footnotesize{Cograde conditions and cotorsion pairs}}
\thanks{{\it Key words and phrases:} Semidualizing bimodules; (Strong) $\Ext$-cograde, (Strong) $\Tor$-cograde;
Double functors; $n$-$\mathcal{X}$-(co)syzygy; (Adjoint) $n$-cotorsionfree modules; Cotorsion pairs; Finitistic dimensions.}

\author{Xi Tang}
\address{College of Science, Guilin University of Technology, Guilin 541004, Guangxi Province, P.R. China \\}
\email{tx5259@sina.com.cn}

\author{Zhaoyong Huang}
\address{Department of Mathematics, Nanjing University, Nanjing 210093, Jiangsu Province, P.R. China}
\email{huangzy@nju.edu.cn}
\urladdr{http://math.nju.edu.cn/~huangzy/}

\subjclass[2010]{18G25, 16E10, 16E30.}


\date{}

\begin{abstract}
Let $R$ and $S$ be rings and $_R\omega_S$ a semidualizing bimodule.
We study when the double functor $\Tor^S_i(\omega, \Ext^i_{R}(\omega,-))$ preserves epimorphisms
and the double functor $\Ext_{R}^i(\omega, \Tor_i^{S}(\omega,-))$ preserves monomorphisms
in terms of the (strong) cograde conditions of modules. Under certain cograde condition of modules,
we construct two complete cotorsion pairs. In addition, we establish the relation between
some relative finitistic dimensions of rings and the right and left projective dimensions of $\omega$.
\end{abstract}

\maketitle

\section{\bf Introduction}

Let $R$ be a left and right Noetherian ring and $n\geqslant 1$. It was proved by Auslander that
the flat dimension of the $i$-th term in the minimal injective resolution of $R_R$ is at most $i$
for any $0\leqslant i <n$ if and only if the strong grade of $\Ext_R^i(M,R)$ is at least $i$
for any finitely generated left $R$-module $M$ and $1\leqslant i \leqslant n$; and this result is
left-right symmetric (\cite[Theorem 3.7]{FGR}). In this case, $R$ is called {\it Auslander $n$-Gorenstein}.
If $R$ is Auslander $n$-Gorenstein for all $n$, then it is said to satisfy the {\it Auslander condition}.
This condition is a non-commutative version of commutative Gorenstein rings. It has been known that
Auslander $n$-Gorenstein rings and the Auslander condition play a crucial role in homological algebra,
representation theory of artin algebras and non-commutative algebraic geometry,
see \cite{AR1, AR2, B, CSS, EHIS, FGR, H1, H3, HI, HQ, IS, Iy1, Iy2} and references therein.
In particular, Auslander $n$-Gorenstein rings and some generalized versions were characterized
in terms of the properties of the double functor $\Ext^i_{R^{op}}(\Ext^i_R(-,R),R)$ and certain
(strong) grade conditions of $\Ext$-modules, and a series of cotorsion pairs were constructed
under the Auslander condition (\cite{HI}).

It is well known that the (Auslander) transpose is one of the most powerful tools in representation
theory of artin algebras and Gorenstein homological algebra, see \cite{AB, ARS, EJ}. To dualize this
important and useful notion, we introduced in \cite{TH1} the notion of the
cotranspose of modules and then obtained many dual counterparts of interesting results
(\cite{TH1, TH2, TH3, TH4}). As a dual of the notion of the (strong) grade of modules,
we introduced in \cite{TH1, TH2} the notion of the (strong) cograde of modules, and obtained
the dual versions of some results about the (strong) grade of modules.
Let $R$ and $S$ be rings and $_R\omega_S$ a semidualizing bimodule.
In this paper, we will study when the double functor $\Tor^S_i(\omega, \Ext^i_{R}(\omega,-))$
preserves epimorphisms and the double functor $\Ext_{R}^i(\omega, \Tor_i^{S}(\omega,-))$
preserves monomorphisms in terms of the (strong) cograde conditions of modules and some related properties
of the cotranspose of modules, and also investigate the relationship between certain cograde conditions
of modules and complete cotorsion pairs. This paper is organized as follows.

In Section 2, we give some terminology and some preliminary results.

Let $R$ and $S$ be rings and $_R\omega_S$ a semidualizing bimodule. In Section 3,
we study when $\Tor^S_i(\omega, \Ext^i_{R}(\omega,-))$ preserves epimorphisms and $\Ext_{R}^i(\omega, \Tor_i^{S}(\omega,-))$
preserves monomorphisms in terms of the (strong) cograde conditions of modules. Let $n,k\geqslant 0$.
We prove that the $\Tor$-cograde of $\Ext_R^{i+k}(\omega,M)$ with respect to $\omega$ is at least $i$
for any left $R$-module $M$ and $1\leqslant i\leqslant n$ if and only if $\Tor^S_i(\omega, \Ext^i_{R}(\omega,f))$
is an epimorphism for any epimorphism of left $R$-modules $f: B\twoheadrightarrow C$ with $B,C$ being a $(k+1)$-cosyzygy
and $0\leqslant i\leqslant n-1$ (Theorem~ \ref{thm-3.5}); and that the $\Ext$-cograde of $\Tor^S_{i+k}(\omega,N)$
with respect to $\omega$ is at least $i$ for any left $S$-module $N$ and $1\leqslant i\leqslant n$ if and only if
$\Ext_{R}^i(\omega, \Tor_i^{S}(\omega,g))$ is a monomorphism for any monomorphism of left $S$-modules $g: B'\rightarrowtail C'$
with $B',C'$ being a $(k+1)$-yoke and $0\leqslant i\leqslant n-1$ (Theorem~ \ref{thm-3.7}).

Moreover, we prove that the strong $\Tor$-cograde of $\Ext_R^{i+k}(\omega,M)$ with respect to $\omega$ is at least $i$
for any left $R$-module $M$ and $1\leqslant i\leqslant n$ if and only if for any exact sequence of left $R$-modules
$0\to A\to B\stackrel{f}{\longrightarrow}C\to 0$ with $A$ an $(i-1)$-$\mathcal{P}_{\omega}(R)$-syzygy of an
$(i+k-1)$-cosyzygy, $\Tor^S_i(\omega, \Ext^i_{R}(\omega,f))$ is an epimorphism for any $0\leqslant i\leqslant n-1$
(Theorem~ \ref{thm-3.8}); and that the strong $\Ext$-cograde of $\Tor^S_{i+k}(\omega,N)$ with respect to $\omega$
is at least $i$ for any left $S$-module $N$ and $1\leqslant i\leqslant n$ if and only if for any exact sequence
of left $S$-modules $0\to A\stackrel{g}{\rightarrow} B\to C\to 0$ with $C$ an $(i-1)$-$\mathcal{I}_{\omega}(S)$-cosyzygy
of an $(i+k-1)$-yoke, $\Ext_{R}^i(\omega, \Tor_i^{S}(\omega,g))$ is a monomorphism for any $0\leqslant i\leqslant n-1$
(Theorem~ \ref{thm-3.9}).

In Section 4, we introduce the notion of $\omega$ satisfying the (quasi) $n$-cograde condition in terms of
the properties of the strong cograde of modules. By using the results obtained in Section 3, we give some
equivalent characterizations for $\omega$ satisfying such conditions (Theorems \ref{thm-4.8} and \ref{thm-4.14}).
In particular, the $n$-cograde condition is left-right symmetric, but the quasi $n$-cograde condition is not.
In addition, we prove that the $\Tor$-cograde of $\Ext^i_R(\omega,M)$ with respect to $\omega$ is at least
$i-1$ for any $M\in \Mod R$ and $1\leqslant i\leqslant n$ if and only if the $\Ext$-cograde of $\Tor_{i}^{S}(\omega,N)$
with respect to $\omega$ is at least $i-1$ for any $N\in \Mod S$ and $1\leqslant i\leqslant n$ (Theorem~ \ref{thm-4.19}).

In Section 5, we prove that if one of the equivalent conditions in Theorem~ \ref{thm-4.19} mentioned above
is satisfied, then the right $S$-projective dimension $\pd_{S^{op}}\omega$ of $\omega$ is at most $n-1$ if and only if
$(\mathcal{P}_\omega\text{-}\id^{\leqslant n-1}(R), {_R\omega^{\perp_{n}}})$ forms a complete cotorsion pair;
and the left $R$-projective dimension $\pd_{R}\omega$ of $\omega$ is at most $n-1$ if and only if
$({{\omega_S}^{\top_{n}}}, \mathcal{I}_\omega\text{-}\pd^{\leqslant n-1}(S))$ forms a complete cotorsion pair
(Theorem~ \ref{thm-5.6}); see Section 2 and 5 for the details of these notations. Then we apply this result to
right quasi $(n-1)$-Gorenstein artin algebras (Corollary ~\ref{cor-5.8}).

In Section 6, we introduce the finitistic $\mathcal{P}_\omega(R)$-injective dimension ${\rm F}\mathcal{P}_{\omega}{\text -}\id R$
of $R$ and the $\mathcal{I}_\omega(S)$-projective dimension ${\rm F}\mathcal{I}_{\omega}{\text -}\pd S$
of $S$. We prove that if the $\Tor$-cograde of $\Ext^{i+k}_R(\omega,M)$
with respect to $\omega$ is at least $i$ for any $M\in \Mod R$ and $i\geqslant 1$,
then ${\rm F}\mathcal{P}_{\omega}{\text -}\id R\leqslant \pd_R\omega\leqslant{\rm F}\mathcal{P}_{\omega}{\text -}\id R+k$;
and if the $\Ext$-cograde of $\Tor_{i+k}^S(\omega,N)$ with respect to $\omega$ is at least $i$ for any $N\in \Mod S$ and $i\geqslant 1$,
then ${\rm F}\mathcal{I}_{\omega}{\text -}\pd S\leqslant \pd_{S^{op}}\omega\leqslant{\rm F}\mathcal{I}_{\omega}{\text -}\pd S+k$
(Theorem~ \ref{thm-6.3}). As an application, we get that for an artin algebra $R$,
if $R$ satisfies the Auslander condition, then $\FPD R^{op}=\FID R^{op}={\id_{R^{op}}R}=\id_RR=\FPD R=\FID R$;
and if $R$ satisfies the right quasi Auslander condition, then
$\FPD R\leqslant\FID R={\id_{R^{op}}R}=\id_{R}R\leqslant\FPD R+1$,
where $\FID R$, $\FPD R$, ${\id_{R^{op}}R}$ and $\id_RR$ are the finitistic injective dimension,
the finitistic projective dimension, the right and left self-injective dimensions of $R$ respectively
(Corollary ~\ref{cor-6.9}).

\section {\bf Preliminaries}
Throughout this paper, all rings are associative rings with units. For a ring $R$, $\Mod R$ (resp. $\mod R$) are the class of left
(resp. finitely generated left) $R$-modules. Let $M\in \Mod R$, we use $\Add_RM$ to denote the subclass of
$\Mod R$ consisting of modules consisting of direct summands of direct sums of copies of $M$,
and use $\pd_RM$, $\fd_RM$ and $\id_RM$ to denote the projective, flat and injective dimensions of $M$ respectively.

\begin{definition} \label{def-2.1} {\rm (\cite{ATY, HW}).
Let $R$ and $S$ be rings. An ($R$-$S$)-bimodule $_R\omega_S$ is called
\textit{semidualizing} if the following conditions are satisfied.
\begin{enumerate}
\item[(a1)] $_R\omega$ admits a degreewise finite $R$-projective resolution.
\item[(a2)] $\omega_S$ admits a degreewise finite $S$-projective resolution.
\item[(b1)] The homothety map $_RR_R\stackrel{_R\gamma}{\rightarrow} \Hom_{S^{op}}(\omega,\omega)$ is an isomorphism.
\item[(b2)] The homothety map $_SS_S\stackrel{\gamma_S}{\rightarrow} \Hom_{R}(\omega,\omega)$ is an isomorphism.
\item[(c1)] $\Ext_{R}^{\geqslant 1}(\omega,\omega)=0.$
\item[(c2)] $\Ext_{S^{op}}^{\geqslant 1}(\omega,\omega)=0.$
\end{enumerate}}
\end{definition}

Wakamatsu in \cite{W1} introduced and studied the so-called {\it generalized tilting modules},
which are usually called {\it Wakamatsu tilting modules}, see \cite{BR, MR}. Note that
a bimodule $_R\omega_S$ is semidualizing if and only if it is Wakamatsu tilting (\cite[Corollary 3.2]{W3}).
Examples of semidualizing bimodules are referred to \cite{HW,S,TH2,TH4,W2}.

From now on, $R$ and $S$ are arbitrary rings and we fix a semidualizing bimodule $_R\omega_S$.
For convenience, We write
$$(-)_*:=\Hom(\omega,-)\ \text{and}\ (-)^*:=\Hom(-,\omega),$$
$${_R\omega^{\perp}}:=\{M\in\Mod R\mid\Ext_R^{\geqslant 1}(\omega,M)=0\},$$
$${{\omega_S}^{\top}}:=\{N\in\Mod S\mid\Tor^S_{\geqslant 1}(\omega,N)=0\}.$$
For any $n\geqslant 1$, we write
$${_R\omega^{\perp_n}}:=\{M\in\Mod R\mid\Ext_R^{1\leqslant i\leqslant n}(\omega,M)=0\},$$
$${{\omega_S}^{\top_n}}:=\{N\in\Mod S\mid\Tor^S_{1\leqslant i\leqslant n}(\omega,N)=0\};$$
in particular, $_R\omega^{\perp_0}=\Mod R$ and ${\omega_S}^{\top_0}=\Mod S$. Symmetrically,
${{\omega_S}^{\perp_n}}$ and ${{_R\omega}^{\top_n}}$ are defined.
Following \cite{HW}, set
$$\mathcal{F}_\omega(R):=\{\omega\otimes_SF\mid F\ {\rm \ is\ flat\ in}\ \Mod S\},$$
$$\mathcal{P}_\omega(R):=\{\omega\otimes_SP\mid P\ {\rm \ is\ projective\ in}\ \Mod S\},$$
$$\mathcal{I}_\omega(S):=\{I_*\mid I\ {\rm \ is\ injective\ in}\ \Mod R\}.$$
The modules in $\mathcal{F}_\omega(R)$, $\mathcal{P}_\omega(R)$ and $\mathcal{I}_\omega(S)$ are called \textit{$\omega$-flat},
\textit{$\omega$-projective} and \textit{$\omega$-injective} respectively.
Note that $\mathcal{P}_\omega(R)=\Add_R\omega$ (\cite[Proposition 3.4(2)]{TH2}).
Symmetrically, the classes of $\mathcal{F}_\omega(S^{op})$,
$\mathcal{P}_\omega(S^{op})$ and $\mathcal{I}_\omega(R^{op})$ are defined.

Let $M\in \Mod R$ and $N\in \Mod S$. Then we have the following two canonical valuation homomorphisms
$$\theta_M:\omega\otimes_SM_*\rightarrow M$$ defined by $\theta_M(x\otimes f)=f(x)$
for any $x\in \omega$ and $f\in M_*$; and
$$\mu_N: N\rightarrow (\omega\otimes_SN)_*$$ defined by $\mu_N(y)(x)=x\otimes y$
for any $y\in N$ and $x\in \omega$. Recall that a module $M\in \Mod R$ is called {\it $\omega$-cotorsionless}
(resp. {\it $\omega$-coreflexive}) if $\theta_M$ is an epimorphism (resp. an isomorphism) (\cite{TH1});
and a module $N\in \Mod S$ is called {\it adjoint $\omega$-cotorsionless}
(resp. {\it adjoint $\omega$-coreflexive}) if $\mu_N$ is a monomorphism (resp. an isomorphism) (\cite{TH3}).

\begin{definition} \label{def-2.2}(\cite{HW}).
\begin{enumerate}
\item[(1)] The {\it Auslander class} $\mathcal{A}_{\omega}(S)$ with respect to $\omega$ consists of all left $S$-modules $N$
satisfying the following conditions.
\begin{enumerate}
\item[(A1)] $N\in{{\omega_S}^{\top}}$.
\item[(A2)] $\omega\otimes_SN\in{_R\omega^{\perp}}$.
\item[(A3)] $\mu_{N}$ is an isomorphism in $\Mod S$.
\end{enumerate}
\end{enumerate}
\begin{enumerate}
\item[(2)] The {\it Bass class} $\mathcal{B}_\omega(R)$ with respect to $\omega$ consists of all left $R$-modules $M$
satisfying the following conditions.
\begin{enumerate}
\item[(B1)] $M\in{_R\omega^{\perp}}$.
\item[(B2)] $M_*\in{{\omega_S}^{\top}}$.
\item[(B3)] $\theta_M$ is an isomorphism in $\Mod R$.
\end{enumerate}
\end{enumerate}
\end{definition}

For a module $M\in \Mod R$, we use
$$0\rightarrow M \rightarrow I^0(M) \stackrel{g^0}{\longrightarrow} I^1(M)\stackrel{g^1}{\longrightarrow}\cdots
\stackrel{g^{i-1}}{\longrightarrow} I^i(M)\stackrel{g^{i}}{\longrightarrow} \cdots\eqno{(2.1)}$$
to denote the minimal injective resolution of $M$. For any $n\geqslant 1$,
$\coOmega^n(M):= \im g^{n-1}$ is called the {\it $n$-cosyzygy} of $M$; in particular, $\coOmega^{0}(M)=M$.
We use $\coOmega^n(R)$ to denote the subclass of $\Mod R$ consisting of $n$-cosyzygy modules.
Symmetrically, $\coOmega^n(S^{op})$ is defined.

\begin{definition}\label{def-2.3} (\cite{TH1}).
Let $M\in \Mod R$ and $n\geqslant 1$.
\begin{enumerate}
\item $\cTr_\omega M:=\Coker({g^0}_*)$ is called the \emph{cotranspose} of $M$ with respect to $_R\omega_S$, where $g^0$ is as in (2.1).
\item $M$ is called \emph{$n$-$\omega$-cotorsionfree} if $\cTr_\omega M\in{{\omega_S}^{\top_n}}$; and is called
\emph{$\infty$-$\omega$-cotorsionfree} if it is $n$-$\omega$-cotorsionfree for all $n$.
\end{enumerate}
\end{definition}

We use $\cT^n_{\omega}(R)$ (resp. $\cT_{\omega}(R)$) to denote the subclass of $\Mod R$ consisting of
$n$-$\omega$-cotorsionfree modules (resp. $\infty$-$\omega$-cotorsionfree modules).
Symmetrically, $\cT^n_{\omega}(S^{op})$ is defined. By \cite[Proposition 3.2]{TH1},
we have that a module in $\Mod R$ is $\omega$-cotorsionless (resp. $\omega$-coreflexive) if and only if it is in
$\cT^1_{\omega}(R)$ (resp. $\cT^2_{\omega}(R)$).

Recall from \cite{E} that a homomorphism $f:F\rightarrow N$ in $\Mod S$ with $F$ flat
is called a \textit{flat cover} of $N$ if $\Hom_S(F',f)$ is epic for any flat module $F'$ in $\Mod S$,
and an endomorphism $h:F\rightarrow F$ is an automorphism whenever $f=fh$.
Let $N\in \Mod S$. Bican, El Bashir and Enochs proved in \cite{BBE} that $N$ has a flat cover. We use
$$\cdots \buildrel {f_{n}} \over \longrightarrow F_n(N) \buildrel {f_{n-1}} \over \longrightarrow \cdots
\buildrel {f_1} \over \longrightarrow F_1(N) \buildrel {f_0} \over \longrightarrow F_0(N)
\rightarrow N \to 0 \eqno{(2.2)}$$
to denote the minimal flat resolution of $N$ in $\Mod S$, where each $F_i(N)\to\Coker f_i$ is a flat cover of
$\Coker f_i$. For any $n\geqslant 1$,
$\Omega^n_{\mathcal{F}}(N):= \im f_{n-1}$ is called the {\it $n$-yoke} of $N$; in particular, $\Omega^{0}_{\mathcal{F}}(N)=N$.
We use $\Omega^n_{\mathcal{F}}(S)$ to denote the subclass of $\Mod S$ consisting of $n$-yoke modules.
Symmetrically, $\Omega^n_{\mathcal{F}}(R^{op})$ is defined.

\begin{definition} \label{def-2.4} {\rm (\cite{TH3})
Let $N\in \Mod S$ and $n\geqslant 1$.
\begin{enumerate}
\item $\acTr_{\omega} N:=\Ker(1_\omega\otimes f_0)$ is called the \emph{adjoint cotranspose} of $N$
with respect to $_{R}\omega_{S}$, where $f_0$ is as in (2.2).
\item $N$ is called \emph{adjoint $n$-$\omega$-cotorsionfree} if $\acTr_{\omega}N\in{_R\omega^{\perp_n}}$; and is called
\emph{adjoint $\infty$-$\omega$-cotorsionfree} if it is adjoint $n$-$\omega$-cotorsionfree for all $n$.
\end{enumerate}}
\end{definition}

We use $\acT^n_{\omega}(S)$ (resp. $\acT{\omega}(S)$) to denote the subclass of $\Mod S$ consisting of
adjoint $n$-$\omega$-cotorsionfree modules (resp. adjoint $\infty$-$\omega$-cotorsionfree modules).
Symmetrically, $\acT^n_{\omega}(R^{op})$ is defined.
By \cite[Proposition 3.2]{TH3}, we have that a module in $\Mod S$ is adjoint $\omega$-cotorsionless
(resp. adjoint $\omega$-coreflexive) if and only if it is in $\acT^1_{\omega}(S)$ (resp. $\acT^2_{\omega}(S)$).

\begin{definition} (\cite{TH2}) \label{def-2.5}
{\rm \begin{enumerate}
\item Let $M\in \Mod R$ and $n\geqslant 0$. The \emph{$\Ext$-cograde} of $M$ with respect to $\omega$ is defined as
$\Ecograde_{\omega}M:=\inf\{i\geqslant 0\mid\Ext_{R}^i(\omega,M)\neq 0\}$; and {\it the strong $\Ext$-cograde} of $M$ with respect to $\omega$,
denoted by $\sEcograde_{\omega}M$, is said to be at least $n$ if $\Ecograde_{\omega}X\geqslant n$ for any quotient module $X$ of $M$.
Symmetrically, the (\emph{strong}) \emph{$\Ext$-cograde} of a module in $\Mod S^{op}$ is defined.
\item Let $N\in \Mod S$ and $n\geqslant 0$. The \emph{$\Tor$-cograde} of $N$ with respect to $\omega$ is defined as
$\Tcograde_{\omega}N:=\inf\{i\geqslant 0\mid\Tor^{S}_i(\omega,N)\neq 0\}$; and {\it the strong $\Tor$-cograde} of $N$ with respect to $\omega$,
denoted by $\sTcograde_{\omega}N$, is said to be at least $n$ if $\Tcograde_{\omega}Y\geqslant n$ for any submodule $Y$ of $N$.
Symmetrically, the (\emph{strong}) \emph{$\Tor$-cograde} of a module in $\Mod R^{op}$ is defined.
\end{enumerate}}
\end{definition}





Let $\mathcal{X}$ be a subclass of $\Mod R$ and $M\in \Mod R$. An exact sequence (of finite or infinite length):
$$\cdots \to X_n \to \cdots \to X_1 \to X_0 \to M \to 0$$
in $\Mod R$ is called an \emph{$\mathcal{X}$-resolution} of
$M$ if all $X_i$ are in $\mathcal{X}$. The \emph{$\mathcal{X}$-projective
dimension} $\mathcal{X}$-$\pd_RM$ of $M$ is defined as
$\inf\{n\mid$ there exists an $\mathcal{X}$-resolution
$$0 \to X_n \to \cdots \to X_1\to X_0 \to M\to 0$$
of $M$ in $\Mod R\}$. Dually, the notions of an \emph{$\mathcal{X}$-coresolution}
and the \emph{$\mathcal{X}$-injective dimension} $\mathcal{X}$-$\id_RM$ of $M$ are defined.


Let $\mathcal{F}$ be a subclass of $\Mod R$. A module $M\in\Mod R$ is said to have \emph{special $\mathcal{F}$-precover}
if there exists an exact sequence
$$0\to K \to F \to M \to 0$$ in $\Mod R$ with $F\in\mathcal{F}$ and $\Ext^1_R(F',K)=0$ for any $F'\in\mathcal{F}$.
The class $\mathcal{F}$ is called \emph{special precovering} if any module in $\Mod R$ has a special $\mathcal{F}$-precover.
Dually, the notions of \emph{special $\mathcal{F}$-preenvelopes} and \emph{special preenveloping classes} are defined (see \cite{EJ}).

\begin{definition}\label{def-2.6} {\rm (cf. \cite{GT})}
Let $\mathcal{U},\mathcal{V}$ be subclasses of $\Mod R$. The pair $(\mathcal{U},\mathcal{V})$ is called a \textit{cotorsion pair}
if $\mathcal{U}={^{\bot_1}\mathcal{V}}:=\{U\in\Mod R\mid \Ext_R^1(U,V)=0$ for any $V\in\mathcal{V}\}$ and
$\mathcal{V}={\mathcal{U}^{\bot_1}}:=\{V\in\Mod R\mid \Ext_R^1(U,V)=0$ for any $U\in\mathcal{U}\}$.
\end{definition}

The following is the Salce's lemma.

\begin{lemma}\label{lem-2.7} {\rm (cf. \cite[Lemma 2.2.6]{GT})}
Let $(\mathcal{U},\mathcal{V})$ be a cotorsion pair in $\Mod R$. Then the following statements are equivalent.
\begin{enumerate}
\item Any module in $\Mod R$ has a special $\mathcal{U}$-precover.
\item Any module in $\Mod R$ has a special $\mathcal{V}$-preenvelope.
\end{enumerate}
In this case, the cotorsion pair $(\mathcal{U},\mathcal{V})$ is called \emph{complete}.
\end{lemma}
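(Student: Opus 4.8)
The plan is to establish the two implications separately, each via a standard pullback (resp.\ pushout) construction, after first recording two elementary properties of the two classes in a cotorsion pair.

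First I would note that every injective module in $\Mod R$ belongs to $\mathcal{V}$ and every projective module in $\Mod R$ belongs to $\mathcal{U}$: if $I$ is injective then $\Ext^1_R(U,I)=0$ for every $U\in\Mod R$, so $I\in\mathcal{U}^{\bot_1}=\mathcal{V}$, and dually for projectives. Next I would check that both $\mathcal{U}$ and $\mathcal{V}$ are closed under extensions. For $\mathcal{V}$: given a short exact sequence $0\to V_1\to X\to V_2\to 0$ with $V_1,V_2\in\mathcal{V}$ and any $U\in\mathcal{U}$, in the long exact sequence obtained by applying $\Hom_R(U,-)$ the term $\Ext^1_R(U,X)$ sits between $\Ext^1_R(U,V_1)$ and $\Ext^1_R(U,V_2)$, both of which vanish, so $\Ext^1_R(U,X)=0$ and hence $X\in\mathcal{U}^{\bot_1}=\mathcal{V}$; the argument for $\mathcal{U}$ is dual, using $\Hom_R(-,V)$ for $V\in\mathcal{V}$.

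For $(1)\Rightarrow(2)$, let $M\in\Mod R$ and embed $M$ into an injective module, giving an exact sequence $0\to M\to I\to I/M\to 0$ with $I\in\mathcal{V}$. Applying hypothesis (1) to $I/M$ yields an exact sequence $0\to V\to U\to I/M\to 0$ with $U\in\mathcal{U}$ and $V\in\mathcal{U}^{\bot_1}=\mathcal{V}$. Form the pullback $P$ of $U\to I/M\leftarrow I$; then $P$ sits in two exact sequences $0\to M\to P\to U\to 0$ and $0\to V\to P\to I\to 0$. Since $V$ and $I$ lie in $\mathcal{V}$, closure of $\mathcal{V}$ under extensions gives $P\in\mathcal{V}$, and then the first sequence is a special $\mathcal{V}$-preenvelope of $M$: its cokernel $U$ lies in ${}^{\bot_1}\mathcal{V}=\mathcal{U}$, which forces $\Ext^1_R(U,V')=0$ for all $V'\in\mathcal{V}$. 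The implication $(2)\Rightarrow(1)$ is dual: choose an epimorphism $Q_0\twoheadrightarrow M$ with $Q_0$ projective (hence $Q_0\in\mathcal{U}$), giving $0\to K\to Q_0\to M\to 0$; apply (2) to $K$ to get $0\to K\to V\to U\to 0$ with $V\in\mathcal{V}$ and $U\in\mathcal{U}$; form the pushout $Q$ of $V\leftarrow K\to Q_0$, producing exact sequences $0\to Q_0\to Q\to U\to 0$ and $0\to V\to Q\to M\to 0$; closure of $\mathcal{U}$ under extensions gives $Q\in\mathcal{U}$, and the second sequence is then a special $\mathcal{U}$-precover of $M$.

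I do not expect a genuine obstacle: the proof is driven entirely by the two closure statements above together with the bookkeeping of the pullback and pushout squares. The only points requiring a little care are verifying that the pullback (resp.\ pushout) really produces the two claimed short exact sequences with the stated kernels and cokernels — a routine diagram chase using exactness of the input sequences — and observing that the adjective ``special'' is automatic here, since in each case the end term of the constructed sequence lies, by construction, in the half of the cotorsion pair orthogonal to the class being used.
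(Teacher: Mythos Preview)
Your proof is correct and is the standard argument for Salce's lemma. The paper does not supply its own proof of this lemma; it simply cites \cite[Lemma 2.2.6]{GT}, and the pullback/pushout argument you give is precisely the one found there.
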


\begin{definition}\label{def-2.8}
Let $\mathcal{X}$ be a subcategory of an abelian category $\mathcal{E}$ and $n\geqslant 1$. If there exists an exact sequence
$$0 \rightarrow N \rightarrow X_{0}\rightarrow \cdots \rightarrow X_{n-1}\to M \to 0$$
in $\mathcal{E}$ with all $X_i$ in $\mathcal{X}$, then $N$ is called an {\it $n$-$\mathcal{X}$-syzygy} of $M$
and $M$ is called an {\it $n$-$\mathcal{X}$-cosyzygy} of $N$.
\end{definition}

For subcategories $\mathcal{X},\mathcal{Y}$ of an abelian category $\mathcal{E}$ and $n\geqslant 1$, we write
$$\Omega^n_{\mathcal{X}}(\mathcal{Y}):=\{N\in\mathcal{A}\mid N\ \text{is\ an}\ n\text{-}\mathcal{X}\text{-syzygy\ of\ some\ object\ in}\ \mathcal{Y}\},$$
$$\coOmega^n_{\mathcal{X}}(\mathcal{Y}):=\{M\in\mathcal{A}\mid M\ \text{is\ an}\ n\text{-}\mathcal{X}\text{-cosyzygy\ of\ some\ object\ in}\ \mathcal{Y}\}.$$
In particular,
$\Omega^0_{\mathcal{X}}(\mathcal{Y})=\mathcal{Y}=\coOmega^0_{\mathcal{X}}(\mathcal{Y})$ and
$\Omega^{-1}_{\mathcal{X}}(\mathcal{Y})=0=\coOmega^{-1}_{\mathcal{X}}(\mathcal{Y})$.
For convenience, we write
$$\Omega^{n}_{\mathcal{A}}(S):=\Omega^{n}_{\mathcal{A}_{\omega}(S)}(\Mod S),\ \Omega^{n}_{\mathcal{I}_{\omega}}(S):=\Omega^{n}_{\mathcal{I}_{\omega}(S)}(\Mod S),$$
$$\Omega^{n}_{\mathcal{I}_{\omega}}(R^{op}):=\Omega^{n}_{\mathcal{I}_{\omega}(R^{op})}(\Mod R^{op}),$$
$$\coOmega^{n}_{\mathcal{B}}(R):=\coOmega^{n}_{\mathcal{B}_{\omega}(R)}(\Mod R),\ \coOmega^{n}_{\mathcal{F}_{\omega}}(R):=\coOmega^{n}_{\mathcal{F}_{\omega}(R)}(\Mod R),$$
$$\coOmega^{n}_{\mathcal{P}_{\omega}}(R):=\coOmega^{n}_{\mathcal{P}_{\omega}(R)}(\Mod R),\ \coOmega^{n}_{\mathcal{P}_{\omega}}(S^{op}):
=\coOmega^{n}_{\mathcal{P}_{\omega}(S^{op})}(\Mod S^{op}).$$

\begin{lemma}\label{lem-2.9}
We have
\begin{enumerate}
\item $\Omega^{1}_{\mathcal{I}_{\omega}}(S)=\acT^1_{\omega}(S)$.
\item $\coOmega^{1}_{\mathcal{P}_{\omega}}(R)=\cT^1_{\omega}(R)$.
\end{enumerate}
\end{lemma}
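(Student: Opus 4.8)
\textbf{Proof proposal for Lemma~\ref{lem-2.9}.}

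The plan is to prove the two statements by unwinding the definitions on each side and checking that the defining short exact sequences coincide. I will do part~(2) in detail; part~(1) is entirely dual, exchanging the roles of $\mathcal{P}_\omega(R)$ and $\mathcal{I}_\omega(S)$, of $\Ext$ and $\Tor$, and of the homomorphisms $\theta$ and $\mu$. For part~(2), recall from Definition~\ref{def-2.8} (and the notation following it) that $M\in\coOmega^1_{\mathcal{P}_\omega}(R)$ means there is an exact sequence $0\to N\to X\to M\to 0$ in $\Mod R$ with $X\in\mathcal{P}_\omega(R)=\Add_R\omega$. On the other hand, by Definition~\ref{def-2.3} together with the identification recalled right after it (from \cite[Proposition 3.2]{TH1}), $M\in\cT^1_\omega(R)$ means precisely that $M$ is $\omega$-cotorsionless, i.e.\ the canonical evaluation map $\theta_M\colon\omega\otimes_SM_*\to M$ is an epimorphism.

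For the inclusion $\cT^1_\omega(R)\subseteq\coOmega^1_{\mathcal{P}_\omega}(R)$, I start from an $\omega$-cotorsionless module $M$. Choose a free $S$-module $S^{(\Lambda)}$ surjecting onto $M_*$; applying $\omega\otimes_S-$ and composing with $\theta_M$ gives a map $\omega^{(\Lambda)}\to\omega\otimes_SM_*\to M$ whose image contains $\im\theta_M=M$ since $\theta_M$ is epic. Thus $\omega^{(\Lambda)}\twoheadrightarrow M$, and because $\omega^{(\Lambda)}\in\Add_R\omega=\mathcal{P}_\omega(R)$, letting $N$ be the kernel yields an exact sequence $0\to N\to\omega^{(\Lambda)}\to M\to 0$, so $M\in\coOmega^1_{\mathcal{P}_\omega}(R)$. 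For the reverse inclusion, take $0\to N\to X\to M\to 0$ with $X\in\Add_R\omega$; I want to show $\theta_M$ is epic. Since $X$ is a direct summand of some $\omega^{(\Lambda)}$, naturality of $\theta$ and the fact that $\theta_\omega$ is an isomorphism (which follows from (b2) in Definition~\ref{def-2.1}: $\omega_* = \Hom_R(\omega,\omega)\cong S$, so $\omega\otimes_S\omega_*\cong\omega$ via $\theta_\omega$) give that $\theta_X$ is an isomorphism, hence epic. Now the naturality square for the surjection $X\twoheadrightarrow M$ reads
$$
\begin{CD}
\omega\otimes_S X_* @>>> \omega\otimes_S M_* \\
@V\theta_X VV @VV\theta_M V \\
X @>>> M
\end{CD}
$$
and commutativity together with surjectivity of $X\to M$ and of $\theta_X$ forces $\theta_M$ to be surjective. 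Therefore $M\in\cT^1_\omega(R)$, proving (2). Alternatively, and more cleanly, one may simply invoke Lemma~\ref{lem-2.7}-style bookkeeping and note that $\mathcal{P}_\omega(R)=\Add_R\omega$ and that $\theta$ is the counit of the adjunction $(\omega\otimes_S-,\Hom_R(\omega,-))$, so $\coOmega^1_{\mathcal{P}_\omega}(R)$ is exactly the class of modules admitting an epimorphism from an $\omega$-projective, which is the class on which the counit is epic.

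The main obstacle, such as it is, lies in the reverse inclusion of each part: one must make sure that the abstract $\mathcal{P}_\omega$-syzygy sequence (with an arbitrary object of $\Add_R\omega$ in the middle) can be compared with the \emph{canonical} sequence built from $\theta_M$ — i.e.\ that being a $1$-$\mathcal{P}_\omega(R)$-cosyzygy via \emph{some} $\omega$-projective already forces the canonical map $\theta_M$ to be epic. This is handled by the naturality diagram above together with the computation $\theta_\omega$ is an isomorphism, which propagates to all of $\Add_R\omega$. For part~(1), the same role is played by the naturality of $\mu_N\colon N\to(\omega\otimes_SN)_*$, the fact that $\mu_I$ is an isomorphism for $I$ injective in $\Mod R$ (equivalently $I_*\in\mathcal{I}_\omega(S)$ lies in the Auslander class, by \cite[Proposition 3.2]{TH3}), and the dual diagram chase showing that a monomorphism $N\hookrightarrow I_*$ with $I_*\in\mathcal{I}_\omega(S)$ forces $\mu_N$ to be monic, i.e.\ $N\in\acT^1_\omega(S)=\Omega^1_{\mathcal{I}_\omega}(S)$.
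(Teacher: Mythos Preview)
Your proof is correct and follows essentially the same approach as the paper: for the inclusion $\coOmega^1_{\mathcal{P}_\omega}(R)\subseteq\cT^1_\omega(R)$ you use exactly the paper's naturality square with $\theta_X$ an isomorphism for $X\in\Add_R\omega$, and dually for part~(1). The only difference is that for the reverse inclusions the paper simply cites \cite[Proposition~3.7]{TH1} and \cite[Proposition~3.8]{TH3}, whereas you supply the short direct argument (surjecting a free $S$-module onto $M_*$ and tensoring), which is fine and arguably more self-contained; one small notational slip in your sketch of~(1) is that the relevant isomorphism is $\mu_{I_*}$ for $I$ injective in $\Mod R$, not ``$\mu_I$''.
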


\begin{proof}
(1) By \cite[Proposition 3.8]{TH3}, we have $\acT^1_{\omega}(S)\subseteq\Omega^{1}_{\mathcal{I}_{\omega}}(S)$.
Now let $N\in \Omega^{1}_{\mathcal{I}_{\omega}}(S)$ and let
$f^0: N\rightarrowtail I^0$ be a monomorphism in $\Mod S$ with $I^0\in \mathcal{I}_\omega(S)$.
Then from the following commutative diagram
$$\xymatrix{N \ar@{{>}->}[r]^{f^0}\ar[d]^{\mu_{N}}
& I^0 \ar[d]^{\mu_{I^0}} \\
(\omega\otimes_S N)_* \ar[r]^{(1_\omega\otimes f^0)_*} & (\omega\otimes_S I^0)_*.}$$
with $\mu_{I^0}$ an isomorphism, we get that $\mu_N$ is a monomorphism and
$N\in \acT^1_{\omega}(S)$. It implies $\Omega^{1}_{\mathcal{I}_{\omega}}(S)\subseteq\acT^1_{\omega}(S)$.

(2) By \cite[Proposition 3.7]{TH1}, we have $\cT^1_{\omega}(R)\subseteq\coOmega^{1}_{\mathcal{P}_{\omega}}(R)$.
Now let $M\in \coOmega^{1}_{\mathcal{P}_{\omega}}(R)$ and let
$f_0:W_0\twoheadrightarrow M$ be an epimorphism in $\Mod R$ with $W_0\in \mathcal{P}_\omega(R)$.
Then from the following commutative diagram
$$\xymatrix{\omega\otimes_S{W_0}_* \ar[r]^{1_{\omega}\otimes {f_0}_*}\ar[d]^{\theta_{W_0}}
& \omega\otimes_SM_* \ar[d]^{\theta_M} \\
W_0 \ar@{>>}[r]^{f_0} & M}$$
with $\theta_{W_0}$ an isomorphism, we get that $\theta_M$ is an epimorphism and
$M\in \cT^1_{\omega}(R)$. It implies $\coOmega^{1}_{\mathcal{P}_{\omega}}(R)\subseteq\cT^1_{\omega}(R)$.
\end{proof}

Let $\mathcal{C},\mathcal{E}$ be abelian categories and $\Delta:\mathcal{C}\to\mathcal{E}$ a functor.
Recall that a sequence $\mathbb{T}$ in $\mathcal{C}$ is called {\it $\Delta$-exact} if $\Delta(\mathbb{T})$
is exact in $\mathcal{E}$.

\section{\bf (Strong) cograde conditions and double homological functors}

In this section, we study when $\Tor^S_i(\omega, \Ext^i_{R}(\omega,-))$ preserves epimorphisms
and $\Ext_{S^{op}}^i(\omega, \Tor_i^{R}(-,\omega))$ preserves monomorphisms in terms of the (strong)
cograde conditions of modules.

\vspace{0.2cm}

\noindent{\bf 3.1. Cograde conditions}

\vspace{0.2cm}

We begin with the following

\begin{lemma} \label{lem-3.1}
\begin{enumerate}
\item[]
\item Let $M\in \Mod R$ with the minimal injective resolution as (2.1). Then there exists an exact sequence
$$0\to \Ext^1_R(\omega,M)\stackrel{\lambda}{\longrightarrow} \cTr_\omega M \stackrel{\pi}{\longrightarrow} I^1(M)_*/\coOmega^{1}(M)_*\to 0 \eqno{(3.1)}$$
in $\Mod S$ such that $1_{\omega}\otimes \pi$ is an isomorphism.
\item Let $N\in \Mod S$ with the minimal flat resolution as (2.2). Then there exists an exact sequence
$$0\to \im(1_{\omega}\otimes f_1)\stackrel{\sigma}{\longrightarrow} \acTr_\omega N \stackrel{\tau}{\longrightarrow}
\Tor^S_1(\omega,N)\to 0 \eqno{(3.2)}$$
in $\Mod R$ such that $\sigma_*$ is an isomorphism.
\end{enumerate}
\end{lemma}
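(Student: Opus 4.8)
The plan is to prove the two parts dually; I will describe part (1) in detail, since part (2) follows by the dual construction using the minimal flat resolution (2.2) in place of the minimal injective resolution (2.1), and the functor $1_\omega\otimes-$ in place of $(-)_*$.

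For part (1), first I would write down the start of the minimal injective resolution (2.1) of $M$, giving the exact sequence $0\to M\to I^0(M)\stackrel{g^0}\to I^1(M)\stackrel{g^1}\to I^2(M)$, and let $\coOmega^1(M)=\im g^0=\Ker g^1$. Applying $(-)_*=\Hom_R(\omega,-)$ to $0\to M\to I^0(M)\to\coOmega^1(M)\to 0$ yields, by definition of $\Ext$, the exact sequence $0\to M_*\to I^0(M)_*\stackrel{(g^0)_*}\to\coOmega^1(M)_*\to\Ext^1_R(\omega,M)\to 0$ (using $\Ext^1_R(\omega,I^0(M))=0$ since $I^0(M)$ is injective). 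By Definition \ref{def-2.3}, $\cTr_\omega M=\Coker((g^0)_*)=I^1(M)_*/\im((g^0)_*)$. Since $\im((g^0)_*)\subseteq\coOmega^1(M)_*\subseteq I^1(M)_*$ (the first inclusion using that $g^0$ factors as $I^0(M)\twoheadrightarrow\coOmega^1(M)\hookrightarrow I^1(M)$, so $(g^0)_*$ factors through $\coOmega^1(M)_*$), the third isomorphism theorem gives the short exact sequence
$$0\to\coOmega^1(M)_*/\im((g^0)_*)\to I^1(M)_*/\im((g^0)_*)\to I^1(M)_*/\coOmega^1(M)_*\to 0,$$
and since $\coOmega^1(M)_*/\im((g^0)_*)\cong\Ext^1_R(\omega,M)$ by the computation above, this is exactly (3.1), with $\lambda$ the induced inclusion and $\pi$ the induced surjection.

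It remains to check that $1_\omega\otimes\pi$ is an isomorphism; equivalently, that $1_\omega\otimes\lambda=0$ and in fact that $\omega\otimes_S(-)$ kills $\Ext^1_R(\omega,M)$ in this sequence and is exact on the remaining piece. The key point is that $\omega\otimes_S\Ext^1_R(\omega,M)=0$: indeed $\Ext^1_R(\omega,M)$ is a quotient of $\coOmega^1(M)_*$, and more usefully one shows $\Tor^S_0(\omega,\Ext^1_R(\omega,M))$ vanishes because $I^1(M)_*/\coOmega^1(M)_*$ is (a submodule-free situation giving) a module on which $\theta$ behaves well — concretely, $I^1(M)_*\in\omega_S^\top$ with $\theta_{I^1(M)}$ an isomorphism (as $I^1(M)$ is injective, hence in the Bass class $\mathcal{B}_\omega(R)$), and $\coOmega^1(M)$ is an $R$-module so $\coOmega^1(M)_*$ sits inside; applying $\omega\otimes_S-$ to (3.1) and using that $\omega\otimes_S I^1(M)_*\cong I^1(M)$ via $\theta_{I^1(M)}$ while $\omega\otimes_S(I^1(M)_*/\coOmega^1(M)_*)\cong I^1(M)/\coOmega^1(M)\cong\coOmega^2(M)$, one identifies $1_\omega\otimes\pi$ as an isomorphism. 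I expect the main obstacle here to be precisely this last verification: tracking how $\theta$ and the right-exactness of $\omega\otimes_S-$ interact with the two overlapping quotients, and showing cleanly that $\omega\otimes_S\Ext^1_R(\omega,M)=0$ rather than merely that the composite map is surjective. This should follow from the fact that $I^1(M)$ (indeed every injective $R$-module) lies in the Bass class, so that $I^1(M)_*$ lies in the Auslander class $\mathcal{A}_\omega(S)$, together with a diagram chase comparing the $\omega\otimes_S-$ image of (3.1) with the canonical exact sequence $0\to\coOmega^1(M)\to I^1(M)\to\coOmega^2(M)\to 0$.

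For part (2), dually: from (2.2) take $0\to\Omega^1_{\mathcal{F}}(N)\to F_0(N)\to N\to 0$ and $F_1(N)\stackrel{f_1}\to\Omega^1_{\mathcal{F}}(N)\to 0$, apply $\omega\otimes_S-$ to get $\Tor^S_1(\omega,N)=\Ker(1_\omega\otimes f_0)/\im(1_\omega\otimes f_1)$ sitting inside $\omega\otimes_S F_1(N)$-data, set $\acTr_\omega N=\Ker(1_\omega\otimes f_0)$ by Definition \ref{def-2.4}, and extract the short exact sequence
$$0\to\im(1_\omega\otimes f_1)\stackrel{\sigma}\to\acTr_\omega N\stackrel{\tau}\to\Tor^S_1(\omega,N)\to 0;$$
then check $\sigma_*=\Hom_R(\omega,\sigma)$ is an isomorphism using that each $F_i(N)$ is flat, hence $\omega\otimes_S F_i(N)\in\mathcal{F}_\omega(R)\subseteq\mathcal{B}_\omega(R)$, so $\mu$ is an isomorphism on the relevant terms and $\Hom_R(\omega,-)$ is suitably exact on the pieces. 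The same obstacle — the exactness/vanishing bookkeeping, now for $\Hom_R(\omega,\Tor^S_1(\omega,N))=0$ — reappears and is handled symmetrically via the Auslander/Bass class properties.
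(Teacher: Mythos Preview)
Your construction of the short exact sequences (3.1) and (3.2) via the third isomorphism theorem is correct and matches the paper's diagram-chase derivation. The gap is in the second half of each part.

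For (1), you assert that the key point is $\omega\otimes_S\Ext^1_R(\omega,M)=0$, and for (2) dually that $(\Tor^S_1(\omega,N))_*=0$. Both are false in general. Indeed, the condition $\omega\otimes_S\Ext^1_R(\omega,M)=0$ for all $M$ is exactly $\Tcograde_\omega\Ext^1_R(\omega,M)\geqslant 1$, which is equivalent (by Proposition~\ref{prop-4.12} with $n=1$) to the right quasi $1$-cograde condition---a nontrivial hypothesis studied throughout the paper, not an automatic fact. What the lemma actually claims is only that $1_\omega\otimes\pi$ is an isomorphism, equivalently that the map $1_\omega\otimes\lambda$ has zero image; this is strictly weaker than $\omega\otimes_S\Ext^1_R(\omega,M)=0$. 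Your suggested identification $\omega\otimes_S(I^1(M)_*/\coOmega^1(M)_*)\cong I^1(M)/\coOmega^1(M)$ also fails in general, since $\omega\otimes_S\coOmega^1(M)_*\to\omega\otimes_S I^1(M)_*$ need not be injective.

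The paper's argument avoids any vanishing claim. One factors $g^1_*:I^1(M)_*\to I^2(M)_*$ through $\cTr_\omega M\stackrel{\pi}{\to}C\stackrel{\delta}{\to}I^2(M)_*$, using that $\Ker g^1_*=\coOmega^1(M)_*$. Then, since $\omega\otimes_S I^j(M)_*\cong I^j(M)$ for injective $I^j(M)$, the sequence $\omega\otimes_S I^0(M)_*\to\omega\otimes_S I^1(M)_*\to\omega\otimes_S I^2(M)_*$ is exact. A direct element chase shows that any $x\in\Ker(1_\omega\otimes\pi)$ lifts to $y\in\omega\otimes_S I^1(M)_*$ which is killed by $1_\omega\otimes g^1_*$, hence lies in the image of $1_\omega\otimes g^0_*$, forcing $x=0$. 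Thus $1_\omega\otimes\pi$ is monic (and already epic by right exactness). Part (2) is the dual chase: factor $(1_\omega\otimes f_1)_*$ through $\sigma_*$ and use exactness of $(\omega\otimes_S F_2)_*\to(\omega\otimes_S F_1)_*\to(\omega\otimes_S F_0)_*$ to show $\sigma_*$ is epic.
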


\begin{proof}
(1) Let $g^0=\alpha\cdot\beta$ (where $\beta:I^0(M)\twoheadrightarrow \coOmega^1(M)(=\im g^0)$ and $\alpha:\coOmega^1(M)\rightarrowtail I^1(M)$)
be the natural epic-monic decomposition of $g^0$. Then we have the following commutative diagram with exact columns and rows
$$\xymatrix{  &  &  & 0 \ar[d] &0\ar@{-->}[d] &  \\
0 \ar[r] & M_* \ar[r] \ar@{=}[d] & I^0(M)_*  \ar[r]^{\beta_*} \ar@{=}[d] & \coOmega^{1}(M)_* \ar[r] \ar[d]^{\alpha_*} &\Ext^1_R(\omega,M)\ar[r]\ar@{-->}[d]^{\lambda} & 0 \\
0 \ar[r] & M_* \ar[r] & I^0(M)_*  \ar[r]^{{g^0}_*}  & I^1(M)_* \ar[r]^{\gamma}\ar[d]^{\pi_1}& \cTr_\omega M\ar[r]\ar@{-->}[d]^{\pi} & 0\\
&  &  & C\ar[d]\ar@{=}[r] & C\ar@{-->}[d]\\
&  &  & 0 & 0}$$
in $\Mod S$, where $C=I^1(M)_*/\coOmega^{1}(M)_*$, $\pi_1$ is the natural epimorphism, $\lambda$ and $\pi$ are induced homomorphisms. The rightmost column
in the above diagram is exactly the exact sequence (3.1). Notice that
$$0\to \coOmega^{1}(M)_*\stackrel{\alpha_*}{\longrightarrow} I^1(M)_* \stackrel{{g^1}_*}{\longrightarrow} I^2(M)_*$$
is exact, so there exists a homomorphism $\delta: C\to I^2(M)_*$ in $\Mod S$ such that ${g^1}_*=\delta\cdot\pi_1$, and hence
${g^1}_*=\delta\cdot\pi_1=\delta\cdot\pi\cdot \gamma$.

By \cite[Lemma 4.1]{HW}, for any injective module $I\in \Mod R$, we have $\omega\otimes_SI_*\cong I$ canonically.
So the upper row in the following commutative diagram
$$\xymatrix{ \omega\otimes_SI^0(M)_* \ar[r]^{1_\omega\otimes {g^0}_*}  &  \omega\otimes_SI^1(M)_*
\ar[r]^{1_\omega\otimes {g^1}_*} \ar@{>>}[d]^{1_{\omega}\otimes\gamma}&  \omega\otimes_SI^2(M)_* \\
 &  \omega\otimes_S\cTr_\omega M \ar@{>>}[r]^{1_\omega\otimes \pi} & \omega\otimes_SC.  \ar[u]_{1_{\omega}\otimes\delta} }$$
is exact. Let $x\in \Ker (1_{\omega}\otimes \pi)$. Then there exists $y\in \omega\otimes_SI^1(M)_*$ such that $x=(1_{\omega}\otimes\gamma)(y)$.
It follows that $$(1_\omega\otimes {g^1}_*)(y)=(1_{\omega}\otimes\delta)\cdot (1_\omega\otimes \pi)\cdot (1_{\omega}\otimes\gamma)(y)
=(1_{\omega}\otimes\delta)\cdot (1_\omega\otimes \pi)(x)=0.$$ So $y\in\Ker (1_\omega\otimes {g^1}_*)=\im (1_\omega\otimes {g^0}_*)$,
and hence there exists $z\in \omega\otimes_SI^0(M)_*$ such that $y=(1_\omega\otimes {g^0}_*)(z)$. Thus
$$x=(1_{\omega}\otimes\gamma)(y)=(1_{\omega}\otimes\gamma)\cdot(1_\omega\otimes {g^0}_*)(z)=(1_{\omega}\otimes(\gamma\cdot {g^0}_*))(z)=0,$$
which implies that $1_{\omega}\otimes \pi$ is a monomorphism, and hence an isomorphism.

(2) Let $f_0=\alpha'\cdot\beta'$ (where $\beta':F_1(N)\twoheadrightarrow \Omega^1_{\mathcal{F}}(N)(=\im f_0)$
and $\alpha':\Omega^1_{\mathcal{F}}(N)\rightarrowtail F_0(N)$)
be the natural epic-monic decomposition of $f_0$. Then we have the following commutative diagram with exact columns and rows
$$\xymatrix{  &  0 \ar@{-->}[d] & 0\ar[d] &  & &  \\
&  \im(1_{\omega}\otimes f_1) \ar@{=}[r]\ar@{-->}[d]^{\sigma} & \im(1_{\omega}\otimes f_1)\ar[d]^{\sigma_1} &  & &  \\
0 \ar[r] & \acTr_\omega N \ar[r]^{\eta} \ar@{-->}[d]^{\tau} & \omega\otimes_S F_1(N) \ar[r]^{1_{\omega}\otimes f_0}
\ar[d]^{1_{\omega}\otimes\beta'} & \omega\otimes_SF_0(N) \ar[r] \ar@{=}[d] &\omega\otimes_SN\ar[r]\ar@{=}[d] & 0 \\
0 \ar[r] & \Tor_1^S(\omega,N) \ar[r]\ar@{-->}[d] & \omega\otimes_S\Omega_{\mathcal{F}}^1(N)\ar[r]^{1_{\omega}\otimes\alpha'}
\ar[d]  & \omega\otimes_SF_0(N) \ar[r]^{\gamma}& \omega\otimes_SN\ar[r] & 0\\
& 0 & 0 &  & }$$
in $\Mod R$, where $\sigma$ and $\tau$ are induced homomorphisms. The leftmost column
in the above diagram is exactly the exact sequence (3.2). Notice that
$$\omega\otimes_SF_2(N)\stackrel{1_{\omega}\otimes f_1}{\longrightarrow}
\omega\otimes_SF_1(N)\stackrel{1_{\omega}\otimes\beta'}{\longrightarrow}\omega\otimes_S\Omega_{\mathcal{F}}^1(N) \to 0$$
is exact, so there exists a homomorphism $\phi: \omega\otimes_SF_2(N)\to \im(1_{\omega}\otimes f_1)$
in $\Mod R$ such that $1_{\omega}\otimes f_1=\sigma_1\cdot\phi$, and hence
$1_{\omega}\otimes f_1=\sigma_1\cdot\phi=\eta\cdot\sigma\cdot\phi$.

By \cite[Lemma 4.1]{HW}, for any flat module $F\in \Mod S$, we have $F\cong(\omega\otimes_SF)_*$ canonically.
So the upper row in the following commutative diagram is exact.
$$\xymatrix{ (\omega\otimes_SF_2(N))_* \ar[r]^{(1_{\omega}\otimes f_1)_*} \ar[d]^{\phi_*}
&  (\omega\otimes_SF_1(N))_* \ar[r]^{(1_{\omega}\otimes f_0)_*} &  (\omega\otimes_SF_0(N))_*  \\
(\im(1_{\omega}\otimes f_1))_* \ar@{{>}->}[r]^{\sigma_*} &  (\acTr_\omega N)_*. \ar@{{>}->}[u]_{\eta_*} }$$
Let $x\in (\acTr_\omega N)_*$. Since $((1_{\omega}\otimes f_0)_* \cdot\eta_*)(x)=(((1_{\omega}\otimes f_0)\cdot \eta)_*)(x)=0$,
we have that $\eta_*(x)\in \Ker(1_{\omega}\otimes f_0)_*=\im(1_{\omega}\otimes f_1)_*$ and
there exists $y\in (\omega\otimes_SF_2(N))_*$ such that $\eta_*(x)=(1_{\omega}\otimes  f_1)_*(y)$. Thus
$$\eta_*(x)=(1_{\omega}\otimes  f_1)_*(y)=(\eta_*\cdot\sigma_*\cdot \phi_*)(y).$$ As $\eta^*$ is monic,
we have $x=\sigma_*(\phi_*(y))$. It means that $\sigma_*$ is an epimorphism, and hence an isomorphism.
\end{proof}

The following two lemmas are useful in this section.

\begin{lemma} \label{lem-3.2}
Assume that $\coOmega^n(R)\subseteq \cT^m_{\omega}(R)$ with $m,n\geqslant 0$.
Then the following statements are equivalent.
\begin{enumerate}
\item $\Tcograde_{\omega}\Ext^{n+1}_{R}(\omega,M)\geqslant m$ for any $M\in \Mod R$.
\item $\coOmega^{n+1}(R)\subseteq \cT^{m+1}_{\omega}(R)$.
\end{enumerate}
\end{lemma}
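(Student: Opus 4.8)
The plan is to exploit the exact sequence (3.1) from Lemma~\ref{lem-3.1}(1), which for a module $M\in\Mod R$ relates $\Ext^1_R(\omega,M)$, the cotranspose $\cTr_\omega M$, and the quotient $I^1(M)_*/\coOmega^1(M)_*$. The crucial point is that applying $\cTr_\omega$ to an $n$-cosyzygy ``shifts'' the relevant module up by one cosyzygy degree: if $M=\coOmega^n(L)$ then $\coOmega^1(M)=\coOmega^{n+1}(L)$, and the term $\Ext^1_R(\omega,\coOmega^n(L))$ computes $\Ext^{n+1}_R(\omega,L)$ by dimension shifting along the minimal injective resolution (2.1). So (3.1) becomes, for $M'=\coOmega^n(L)$,
$$0\to \Ext^{n+1}_R(\omega,L)\stackrel{\lambda}{\longrightarrow}\cTr_\omega M'\stackrel{\pi}{\longrightarrow} I^1(M')_*/\coOmega^{n+1}(L)_*\to 0,$$
with $1_\omega\otimes\pi$ an isomorphism. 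I would first record that, under the hypothesis $\coOmega^n(R)\subseteq\cT^m_\omega(R)$, every $n$-cosyzygy $M'$ is $m$-$\omega$-cotorsionfree, i.e. $\cTr_\omega M'\in\omega_S^{\top_m}$, which means $\Tor^S_{1\le i\le m}(\omega,\cTr_\omega M')=0$.

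For the implication (1)$\Rightarrow$(2): take any $M\in\coOmega^{n+1}(R)$, say $M=\coOmega^1(M')$ with $M'=\coOmega^n(L)\in\coOmega^n(R)$. I want $\cTr_\omega M\in\omega_S^{\top_{m+1}}$, i.e. $\Tor^S_{1\le j\le m+1}(\omega,\cTr_\omega M)=0$. Here I would apply the long exact sequence of $\Tor^S(\omega,-)$ to (3.1) for the module $M'$. The left term $\Ext^{n+1}_R(\omega,L)=\Ext^1_R(\omega,M')$ has $\Tor$-cograde $\ge m$ by hypothesis~(1), so $\Tor^S_{0\le i\le m-1}(\omega,\Ext^{n+1}_R(\omega,L))=0$; the middle term $\cTr_\omega M'$ lies in $\omega_S^{\top_m}$ by the standing assumption. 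One then needs to control the right-hand term $I^1(M')_*/\coOmega^{n+1}(L)_*$. Observe that $\omega\otimes_S\bigl(I^1(M')_*/\coOmega^{n+1}(L)_*\bigr)\cong\omega\otimes_S\cTr_\omega M'$ via $1_\omega\otimes\pi$, and more generally—this is the technical heart—one should compare the higher $\Tor^S_i(\omega,-)$ of this quotient with those of $\cTr_\omega M$ itself. The cleanest route is: from the epic-monic factorization of $g^0$ on $M'$ one has an exact sequence $0\to\coOmega^{n+1}(L)_*\to I^1(M')_*\to \cTr_\omega M$ (the cokernel map landing in $\cTr_\omega M$), whose image is exactly $I^1(M')_*/\coOmega^{n+1}(L)_*$; so $I^1(M')_*/\coOmega^{n+1}(L)_*$ is a submodule of $\cTr_\omega M$. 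Then I would play the two exact sequences off each other to push the vanishing of $\Tor^S$ up one degree, using that $I^1(M')$ is injective (so $I^1(M')_*$ is $\omega$-flat and has $\Tor^S_{\ge1}(\omega,-)=0$ by \cite[Lemma 4.1]{HW}) and that $\coOmega^{n+1}(L)=M$ is itself an $(n+1)$-cosyzygy of $L$, hence also an $n$-cosyzygy (of $\coOmega^1(L)$), so $\cTr_\omega M\in\omega_S^{\top_m}$ again. Chasing the two long exact $\Tor$-sequences carefully yields $\Tor^S_{1\le j\le m+1}(\omega,\cTr_\omega M)=0$.

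For (2)$\Rightarrow$(1): given $M\in\Mod R$, I want $\Tcograde_\omega\Ext^{n+1}_R(\omega,M)\ge m$. Write $\Ext^{n+1}_R(\omega,M)=\Ext^1_R(\omega,M'')$ with $M''=\coOmega^n(M)\in\coOmega^n(R)$, and apply the long exact $\Tor^S(\omega,-)$-sequence to (3.1) for $M''$: the middle term $\cTr_\omega M''\in\omega_S^{\top_m}$ by the standing hypothesis, the right term is a submodule of $\cTr_\omega M$ where $M=\coOmega^1(M'')=\coOmega^{n+1}(M)\in\coOmega^{n+1}(R)$, and by~(2) this $\cTr_\omega M\in\omega_S^{\top_{m+1}}$, forcing $\Tor^S_{\le m}(\omega,-)$ of the right term to vanish (again using injectivity of $I^1(M'')$ to kill the connecting contribution, and being slightly careful at degree $0$ versus $1$). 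Feeding these into the long exact sequence gives $\Tor^S_{0\le i\le m-1}(\omega,\Ext^{n+1}_R(\omega,M))=0$, which is exactly (1).

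The main obstacle I anticipate is the bookkeeping at the boundary degrees in the two $\Tor$-long-exact-sequence comparisons: the quotient $I^1(M')_*/\coOmega^{n+1}(L)_*$ and $\cTr_\omega M$ differ by the extension in (3.1) at the top, and I must make sure the degree shift by one ($\omega_S^{\top_m}\leadsto\omega_S^{\top_{m+1}}$) is propagated correctly, neither losing a degree nor claiming one too many. This is essentially a careful diagram chase through the commutative diagram in the proof of Lemma~\ref{lem-3.1}(1) combined with the vanishing $\Tor^S_{\ge1}(\omega,I^1(M')_*)=0$; once the right-hand term is identified as a submodule of a module known to lie in $\omega_S^{\top_m}$ (resp.\ $\omega_S^{\top_{m+1}}$), the rest is routine. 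The fact that submodules of $m$-$\omega$-cotorsionfree-type modules need not themselves be such is sidestepped because we only ever need the \emph{vanishing of $\Tor^S_i(\omega,-)$ in a fixed range}, and the relevant submodule sits inside a module of the required type with an $\omega$-flat quotient.
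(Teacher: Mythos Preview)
Your overall strategy matches the paper's: apply sequence~(3.1) from Lemma~\ref{lem-3.1}(1) to $M'=\coOmega^n(L)$, combine it with a second short exact sequence involving $C=I^1(M')_*/\coOmega^{n+1}(L)_*$, and deduce the chain of isomorphisms
\[
\Tor^S_i\bigl(\omega,\Ext^{n+1}_R(\omega,L)\bigr)\;\cong\;\Tor^S_{i+1}(\omega,C)\;\cong\;\Tor^S_{i+2}\bigl(\omega,\cTr_\omega\coOmega^{n+1}(L)\bigr)\qquad(0\le i\le m-1),
\]
from which the equivalence is immediate once one notes that $\coOmega^{n+1}(L)\in\coOmega^n(R)\subseteq\cT^m_\omega(R)$ already gives $\Tor^S_{1\le j\le m}(\omega,\cTr_\omega\coOmega^{n+1}(L))=0$.

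However, you misidentify the second short exact sequence. You claim that $C$ is a submodule of $\cTr_\omega M$ (with $M=\coOmega^{n+1}(L)$), via an exact sequence $0\to\coOmega^{n+1}(L)_*\to I^1(M')_*\to\cTr_\omega M$. This is wrong: since $\cTr_\omega M=\Coker\bigl(I^1(M')_*\to I^2(M')_*\bigr)$, the only natural map $I^1(M')_*\to\cTr_\omega M$ (namely the composite through $I^2(M')_*$) is zero. The correct second sequence, which the paper uses, is
\[
0\longrightarrow C\longrightarrow I^2(M')_*\longrightarrow\cTr_\omega\coOmega^{n+1}(L)\longrightarrow 0,
\]
obtained because $\Ker\bigl(I^1(M')_*\to I^2(M')_*\bigr)=\coOmega^{n+1}(L)_*$, so $C\cong\im\bigl(I^1(M')_*\to I^2(M')_*\bigr)$. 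Since $I^2(M')_*\in{\omega_S}^{\top}$ by \cite[Corollary 6.1]{HW}, this gives $\Tor^S_{i+1}(\omega,C)\cong\Tor^S_{i+2}(\omega,\cTr_\omega\coOmega^{n+1}(L))$ for all $i\ge 0$. With this correction your argument coincides with the paper's, and the ``careful diagram chase'' you anticipate reduces to this clean two-step dimension shift; no submodule-closure subtleties arise.
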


\begin{proof}
Because any injective module in $\Mod R$ is in $\cT^{1}_{\omega}(R)$ by \cite[Lemma 2.5(2)]{TH1}, we have that
$\coOmega^{n+1}(R)\subseteq \cT^{1}_{\omega}(R)$ for any $n\geqslant 0$, and the case for $m=0$ follows.
Now suppose that $m\geqslant 1$ and $M\in \Mod R$.
By Lemma ~\ref{lem-3.1}(1), there exists an exact sequence
$$0\to \Ext^1_R(\omega,\coOmega^n(M))\stackrel{\lambda}{\longrightarrow} \cTr_\omega \coOmega^n(M) \stackrel{\pi}{\longrightarrow} C\to 0$$
in $\Mod S$ such that $1_{\omega}\otimes \pi$ is an isomorphism, where $C={I^{n+1}(M)}_*/{\coOmega^{n+1}(M)}_*$.
Because $\coOmega^n(R)\subseteq \cT^m_{\omega}(R)$ by assumption, we have that both
$\cTr_\omega \coOmega^n(M)$ and $\cTr_\omega \coOmega^{n+1}(M)$ are in ${{\omega_S}^{\top_m}}$.
It yields that
$$\Tor_{i}^S(\omega, \Ext^{n+1}_R(\omega,M))\cong \Tor_{i}^S(\omega, \Ext^1_R(\omega,\coOmega^n(M)))\cong \Tor_{i+1}^S(\omega, C)$$
for any $0\leqslant i\leqslant m-1$. In addition, we also have an exact sequence
$$0\to C\to {I^{n+2}(M)}_*\to \cTr_\omega\coOmega^{n+1}(M)\to 0$$
in $\Mod S$. By \cite[Corollary 6.1]{HW}, we have ${I^{n+2}(M)}_*\in {{\omega_S}^{\top}}$.
So $$\Tor_{i}^S(\omega, \Ext^{n+1}_R(\omega,M))\cong \Tor_{i+1}^S(\omega, C)\cong\Tor_{i+2}^S(\omega, \cTr_\omega\coOmega^{n+1}(M))$$
for any $0\leqslant i\leqslant m-1$. Thus we conclude that $\Tor_{0\leqslant i\leqslant m-1}^S(\omega, \Ext^{n+1}_R(\omega,M))=0$
if and only if $\cTr_\omega \coOmega^{n+1}(M)\in{{\omega_S}^{\top_{m+1}}}$,
and if and only if $\coOmega^{n+1}(M)\in \cT^{m+1}_{\omega}(R)$. The proof is finished.
\end{proof}

\begin{lemma} \label{lem-3.3}
Assume that $\Omega^n_{\mathcal{F}}(S)\subseteq \acT^m_{\omega}(S)$ with $m,n\geqslant 0$.
Then the following statements are equivalent.
\begin{enumerate}
\item $\Ecograde_{\omega}\Tor_{n+1}^{S}(\omega,N)\geqslant m$ for any $N\in \Mod S$.
\item $\Omega^{n+1}_{\mathcal{F}}(S)\subseteq \acT^{m+1}_{\omega}(S)$.
\end{enumerate}
\end{lemma}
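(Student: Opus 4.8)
The plan is to dualize the proof of Lemma~\ref{lem-3.2} step for step, using Lemma~\ref{lem-3.1}(2) in place of Lemma~\ref{lem-3.1}(1) and exchanging the roles of $\cTr_\omega$, $\Ext_R(\omega,-)$, minimal injective resolutions, cosyzygies and $\cT^\bullet_\omega(R)$ with those of $\acTr_\omega$, $\Tor^S(\omega,-)$, minimal flat resolutions, yokes and $\acT^\bullet_\omega(S)$. First I would dispose of the case $m=0$, exactly as in Lemma~\ref{lem-3.2}: here (1) is vacuous since $\Ecograde_\omega(-)\geqslant 0$ always, and since $\acT^0_\omega(S)=\Mod S$, statement (2) amounts to the assertion that every $(n+1)$-yoke is adjoint $\omega$-cotorsionless. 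This holds because $\Omega^{n+1}_{\mathcal{F}}(N)=\im f_n$ is a submodule of the flat module $F_n(N)$ and $\mu_{F_n(N)}$ is an isomorphism by \cite[Lemma 4.1]{HW}, so naturality of $\mu$ along the inclusion forces $\mu_{\Omega^{n+1}_{\mathcal{F}}(N)}$ to be a monomorphism --- this is the argument used for Lemma~\ref{lem-2.9}(1), and it gives $\Omega^{n+1}_{\mathcal{F}}(N)\in\acT^1_\omega(S)$ for every $N$, a fact I will reuse.

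Now assume $m\geqslant 1$ and fix $N\in\Mod S$. Since the minimal flat resolution of $\Omega^n_{\mathcal{F}}(N)$ is the truncation of (2.2), the analogue of ``$f_1$'' for $\Omega^n_{\mathcal{F}}(N)$ is $f_{n+1}\colon F_{n+2}(N)\to F_{n+1}(N)$, while $\acTr_\omega\Omega^n_{\mathcal{F}}(N)=\Ker(1_\omega\otimes f_n)$. Applying Lemma~\ref{lem-3.1}(2) to $\Omega^n_{\mathcal{F}}(N)$ yields an exact sequence
$$0\to\im(1_\omega\otimes f_{n+1})\stackrel{\sigma}{\longrightarrow}\acTr_\omega\Omega^n_{\mathcal{F}}(N)\stackrel{\tau}{\longrightarrow}\Tor^S_1(\omega,\Omega^n_{\mathcal{F}}(N))\to 0$$
with $\sigma_*$ an isomorphism, and dimension shifting along the flat modules $F_i(N)$ gives $\Tor^S_1(\omega,\Omega^n_{\mathcal{F}}(N))\cong\Tor^S_{n+1}(\omega,N)$. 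The hypothesis $\Omega^n_{\mathcal{F}}(S)\subseteq\acT^m_\omega(S)$ forces $\acTr_\omega\Omega^n_{\mathcal{F}}(N)\in{_R\omega^{\perp_m}}$, so feeding the displayed sequence into the long exact sequence of $\Ext_R(\omega,-)$ and using that $\sigma_*$ is an isomorphism produces
$$\Ext^j_R(\omega,\Tor^S_{n+1}(\omega,N))\cong\Ext^{j+1}_R(\omega,\im(1_\omega\otimes f_{n+1}))\qquad(0\leqslant j\leqslant m-1).$$
On the other hand, since $\acTr_\omega\Omega^{n+1}_{\mathcal{F}}(N)=\Ker(1_\omega\otimes f_{n+1}\colon\omega\otimes_SF_{n+2}(N)\to\omega\otimes_SF_{n+1}(N))$, there is an exact sequence
$$0\to\acTr_\omega\Omega^{n+1}_{\mathcal{F}}(N)\to\omega\otimes_SF_{n+2}(N)\to\im(1_\omega\otimes f_{n+1})\to 0$$
in which $\omega\otimes_SF_{n+2}(N)$ is $\omega$-flat and hence lies in ${_R\omega^{\perp}}$; this is the dual of the sequence $0\to C\to I^{n+2}(M)_*\to\cTr_\omega\coOmega^{n+1}(M)\to 0$ used in Lemma~\ref{lem-3.2}. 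Splicing the last two displays gives
$$\Ext^j_R(\omega,\Tor^S_{n+1}(\omega,N))\cong\Ext^{j+2}_R(\omega,\acTr_\omega\Omega^{n+1}_{\mathcal{F}}(N))\qquad(0\leqslant j\leqslant m-1).$$

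Reading this off: $\Ecograde_\omega\Tor^S_{n+1}(\omega,N)\geqslant m$ for all $N$ (which is (1)) if and only if $\Ext^j_R(\omega,\acTr_\omega\Omega^{n+1}_{\mathcal{F}}(N))=0$ for $2\leqslant j\leqslant m+1$ and all $N$; since the case $j=1$ is automatic by the $m=0$ step, this holds if and only if $\acTr_\omega\Omega^{n+1}_{\mathcal{F}}(N)\in{_R\omega^{\perp_{m+1}}}$ for every $N$, i.e.\ $\Omega^{n+1}_{\mathcal{F}}(S)\subseteq\acT^{m+1}_\omega(S)$, which is (2). The part requiring genuine care --- as opposed to a mechanical transcription --- is getting the two auxiliary sequences and the degree bookkeeping right: one must identify $\im(1_\omega\otimes f_{n+1})$ simultaneously as the left-hand term of the exact sequence of Lemma~\ref{lem-3.1}(2) applied to $\Omega^n_{\mathcal{F}}(N)$ and as a quotient of $\omega\otimes_SF_{n+2}(N)$ with kernel $\acTr_\omega\Omega^{n+1}_{\mathcal{F}}(N)$, and one must note that, because $\sigma_*$ is an isomorphism only at the level of $\Hom_R(\omega,-)$ (not of $\omega\otimes_S-$), the degree-$0$ case of the first $\Ext$-isomorphism has to be extracted from the vanishing of $\Ext^1_R(\omega,\acTr_\omega\Omega^n_{\mathcal{F}}(N))$ rather than read straight off the long exact sequence. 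Finally, it uses the standard fact that $\omega$-flat modules lie in the Bass class $\mathcal{B}_\omega(R)$ (hence in ${_R\omega^{\perp}}$), the symmetric analogue of \cite[Corollary 6.1]{HW} invoked in Lemma~\ref{lem-3.2}.
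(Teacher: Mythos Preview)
Your proof is correct and follows essentially the same route as the paper's: apply Lemma~\ref{lem-3.1}(2) to $\Omega^n_{\mathcal{F}}(N)$, use the hypothesis $\Omega^n_{\mathcal{F}}(S)\subseteq\acT^m_\omega(S)$ to get $\acTr_\omega\Omega^n_{\mathcal{F}}(N)\in{_R\omega^{\perp_m}}$, combine with the auxiliary sequence $0\to\acTr_\omega\Omega^{n+1}_{\mathcal{F}}(N)\to\omega\otimes_SF_{n+2}(N)\to\im(1_\omega\otimes f_{n+1})\to 0$ and $\omega\otimes_SF_{n+2}(N)\in{_R\omega^\perp}$ (via \cite[Corollary 6.1]{HW}) to reach $\Ext^j_R(\omega,\Tor^S_{n+1}(\omega,N))\cong\Ext^{j+2}_R(\omega,\acTr_\omega\Omega^{n+1}_{\mathcal{F}}(N))$ for $0\leqslant j\leqslant m-1$. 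The only cosmetic difference is that the paper records up front that $\acTr_\omega\Omega^{n+1}_{\mathcal{F}}(N)\in{_R\omega^{\perp_m}}$ as well (since $\Omega^{n+1}_{\mathcal{F}}(N)\in\Omega^n_{\mathcal{F}}(S)$), whereas you invoke only the $j=1$ vanishing from the $m=0$ step; both suffice to close the gap between ``$\Ext^j=0$ for $2\leqslant j\leqslant m+1$'' and ``$\in{_R\omega^{\perp_{m+1}}}$''.
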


\begin{proof}
Because any flat module in $\Mod S$ is in $\acT^{1}_{\omega}(S)$ by \cite[Corollary 3.5(1)]{TH3}, we have that
$\Omega^{n+1}_{\mathcal{F}}(S)\subseteq \acT^{1}_{\omega}(S)$ for any $n\geqslant 0$, and the case for $m=0$ follows.
Now suppose that $m\geqslant 1$ and $N\in \Mod S$.
By Lemma ~\ref{lem-3.1}(2), there exists an exact sequence
$$0\to \im(1_{\omega}\otimes f_{n+1})\stackrel{\sigma}{\rightarrow} \acTr_\omega\Omega^n_{\mathcal{F}}(N) \stackrel{\tau}{\rightarrow}
\Tor^S_1(\omega,\Omega^n_{\mathcal{F}}(N))\to 0$$
in $\Mod R$ such that $\sigma_*$ is an isomorphism. Because $\Omega^n_{\mathcal{F}}(S)\subseteq \acT^m_{\omega}(S)$
by assumption, we have that both $\acTr_\omega \Omega^n_{\mathcal{F}}(N)$ and
$\acTr_\omega \Omega^{n+1}_{\mathcal{F}}(N)$ are in ${_R\omega^{\perp_m}}$. It yields that
$$\Ext^{i}_{R}(\omega, \Tor_{n+1}^{S}(\omega,N))\cong \Ext^{i}_{R}(\omega, \Tor_{1}^{S}(\omega,\Omega^n_{\mathcal{F}}(N)))
\cong \Ext^{i+1}_{R}(\omega, \im(1_{\omega}\otimes f_{n+1}))$$
for any $0\leqslant i\leqslant m-1$. In addition, we also have an exact sequence
$$0\to \acTr_\omega\Omega^{n+1}_{\mathcal{F}}(N) \to \omega\otimes_SF_{n+2}(N) \to \im(1_{\omega}\otimes f_{n+1})\to 0$$
in $\Mod R$. By \cite[Corollary 6.1]{HW}, we have $\omega\otimes_SF_{n+2}(N)\in{_R\omega^{\perp}}$.
So $$\Ext^{i}_{R}(\omega, \Tor_{n+1}^{S}(\omega,N))
\cong \Ext^{i+1}_{R}(\omega, \im(1_{\omega}\otimes f_{n+1}))\cong\Ext^{i+2}_{R}(\omega, \acTr_\omega\Omega^{n+1}_{\mathcal{F}}(N))$$
for any $0\leqslant i\leqslant m-1$. Thus we conclude that $\Ext^{0\leqslant i\leqslant m-1}_{R}(\omega, \Tor_{n+1}^{S}(\omega,N))=0$
if and only if $\acTr_\omega \Omega^{n+1}_{\mathcal{F}}(N)\in{_R\omega^{\perp_{m+1}}}$,
and if and only if $\Omega^{n+1}_{\mathcal{F}}(N)\in \acT^{m+1}_{\omega}(S)$. The proof is finished.
\end{proof}

Let $\mathcal{T}\subseteq \mathcal{W}$ be subcategories of an abelian category $\mathcal{E}$. Recall that $\mathcal{T}$ is called a
{\it generator} (resp. {\it cogenerator})
for $\mathcal{W}$ if for any $W\in \mathcal{W}$, there exists an exact sequence
$$0\to W' \to T \to W \to 0\ (\text{resp.}\ 0\to W \to T \to W' \to 0)$$
in $\mathcal{E}$ with $T\in \mathcal{T}$ and $W'\in \mathcal{W}$.

\begin{lemma} \label{lem-3.4}
\begin{enumerate}
\item[]
\item $\mathcal{P}_{\omega}(R)$ is a generator for $\mathcal{B}_{\omega}(R)$.
\item $\coOmega^{n}(R)\subseteq\coOmega^{n}_{\mathcal{B}}(R)=\coOmega^{n}_{\mathcal{F}_{\omega}}(R)=\coOmega^{n}_{\mathcal{P}_{\omega}}(R)$
for any $n\geqslant 1$.
\end{enumerate}
\end{lemma}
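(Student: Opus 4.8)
The plan is to handle the two claims in turn, with (1) feeding directly into (2). For part (1), given $M\in\mathcal{B}_\omega(R)$, I would use that $\mathcal{P}_\omega(R)=\Add_R\omega$ together with condition (B3): since $\theta_M\colon\omega\otimes_S M_*\to M$ is an isomorphism and $M_*\in\Mod S$ admits a surjection $P\twoheadrightarrow M_*$ from a projective $S$-module, applying the right-exact functor $\omega\otimes_S-$ yields an epimorphism $\omega\otimes_S P\twoheadrightarrow \omega\otimes_S M_*\cong M$ with $\omega\otimes_S P\in\mathcal{P}_\omega(R)$. It then remains to check that the kernel $W'$ lies in $\mathcal{B}_\omega(R)$; here I would invoke the standard closure properties of the Bass class (it is closed under kernels of epimorphisms between its objects, since $\mathcal{P}_\omega(R)\subseteq\mathcal{B}_\omega(R)$ and $\mathcal{B}_\omega(R)$ is closed under extensions and has the relevant two-out-of-three property — see \cite{HW}), applied to $0\to W'\to \omega\otimes_S P\to M\to 0$. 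This gives the generator property.

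For part (2), the inclusion $\coOmega^n(R)\subseteq\coOmega^n_{\mathcal{B}}(R)$ follows because every injective $R$-module lies in $\mathcal{B}_\omega(R)$ (by \cite[Corollary 6.1]{HW} and \cite[Lemma 4.1]{HW}, injectives satisfy (B1)–(B3)), so the minimal injective resolution (2.1) exhibits $\coOmega^n(M)$ as an $n$-$\mathcal{B}_\omega(R)$-cosyzygy of $M$. For the chain of equalities I would argue by induction on $n$. The containments $\coOmega^n_{\mathcal{P}_\omega}(R)\subseteq\coOmega^n_{\mathcal{F}_\omega}(R)\subseteq\coOmega^n_{\mathcal{B}}(R)$ are immediate since $\mathcal{P}_\omega(R)\subseteq\mathcal{F}_\omega(R)\subseteq\mathcal{B}_\omega(R)$ (the middle inclusion because flat $S$-modules are direct limits of projectives and $\mathcal{B}_\omega(R)$ is closed under direct limits, or directly from \cite{HW}). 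The reverse direction is where part (1) is used: given a $\mathcal{B}_\omega(R)$-cocresolution $0\to N\to B^0\to\cdots\to B^{n-1}\to M\to 0$, I would replace each $B^i$ step by step using (1) — covering $B^0$ by a $\mathcal{P}_\omega(R)$-module and applying the horseshoe/dimension-shifting argument, using that the syzygies stay inside $\mathcal{B}_\omega(R)$ — to manufacture a $\mathcal{P}_\omega(R)$-cocresolution of the same length ending at $M$. Composing the short exact sequences and keeping track of lengths gives $\coOmega^n_{\mathcal{B}}(R)\subseteq\coOmega^n_{\mathcal{P}_\omega}(R)$, closing the loop.

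The step I expect to be the main obstacle is the dimension-shifting in part (2): replacing a $\mathcal{B}_\omega(R)$-cocresolution by a $\mathcal{P}_\omega(R)$-cocresolution of exactly the same length requires care, because naively covering each term by an $\omega$-projective increases the length. The right approach is to do it one term at a time from the left: cover $N$'s ambient object, form a pullback/pushout to absorb the correction term into the next spot, and use that $\mathcal{B}_\omega(R)$ is closed under the operations involved so that all intermediate modules remain in $\mathcal{B}_\omega(R)$ and the process can be iterated; the fact that $\mathcal{P}_\omega(R)$ is a generator (part (1)) for the class $\mathcal{B}_\omega(R)$ in which all these syzygies live is exactly what makes the induction go through without lengthening the resolution. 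Everything else is routine diagram chasing together with the cited closure properties of $\mathcal{A}_\omega(S)$ and $\mathcal{B}_\omega(R)$ from \cite{HW}.
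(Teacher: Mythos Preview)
Your proposal is correct and reaches the goal, but by a somewhat more hands-on route than the paper. For part~(1), the paper invokes \cite[Theorem~3.9 and Proposition~3.7]{TH1} to obtain, for $M\in\mathcal{B}_\omega(R)$, a $\Hom_R(\omega,-)$-exact $\mathcal{P}_\omega(R)$-resolution and then checks that the first syzygy $M_1$ lies in $\cT_\omega(R)\cap{_R\omega}^\perp=\mathcal{B}_\omega(R)$. Your construction via $\theta_M$ and a projective cover $\pi:P\twoheadrightarrow M_*$ is more direct; what makes it work is that your epimorphism $\omega\otimes_SP\to M$ stays epic after applying $(-)_*$ (it recovers $\pi$ up to $\mu_P$), which forces $\Ext^1_R(\omega,W')=0$---so the verification that $W'\in\mathcal{B}_\omega(R)$ succeeds without needing to invoke a general ``kernels of epimorphisms'' closure for $\mathcal{B}_\omega(R)$ (be careful here: extension closure alone does not give that two-out-of-three direction, so your parenthetical justification is slightly loose, though the specific sequence you build does have the needed property). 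For part~(2), the paper dispatches $\coOmega^n_{\mathcal{B}}(R)=\coOmega^n_{\mathcal{P}_\omega}(R)$ in one line by citing \cite[Corollary~5.4(2)]{H4}, a general statement that cosyzygy classes with respect to an extension-closed category and with respect to a generator for it coincide; your inductive pullback/pushout argument is essentially that corollary unpacked. Both approaches are valid---yours is more self-contained, the paper's is shorter by outsourcing to existing literature.
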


\begin{proof}
(1) Let $M\in\mathcal{B}_{\omega}(R)$. Then by \cite[Theorem 3.9 and Proposition 3.7]{TH1},
there exists an exact sequence
$$\cdots \to W_2 \to W_1 \to W_0 \to M \to 0$$
in $\Mod R$ with all $W_i\in \mathcal{P}_\omega(R)$ such that it remains exact after applying the functor $\Hom_R(\omega,-)$.
Put $M_1:=\im(W_1 \to W_0)$. Then $M_1\in\cT_{\omega}(R)$ by \cite[Proposition 3.7]{TH1}. Because
both $M$ and $W_0$ are in ${_R\omega}^{\bot}$, we have $M_1\in{_R\omega}^{\bot}$.
So $M_1\in\mathcal{B}_{\omega}(R)$ by \cite[Theorem 3.9]{TH1}.

(2) Let $n\geqslant 1$. By \cite[Lemma 4.1]{HW}, we have that $\mathcal{B}_{\omega}(R)$ contains all injective left $R$-modules,
which yields $\coOmega^{n}(R)\subseteq\coOmega^{n}_{\mathcal{B}}(R)$.
Because ${\mathcal{B}_{\omega}}(R)\supseteq {\mathcal{F}_{\omega}}(R)\supseteq{\mathcal{P}_{\omega}}(R)$
by \cite[Corollary 6.1]{HW},
we have $\coOmega^{n}_{\mathcal{B}}(R)\supseteq\coOmega^{n}_{\mathcal{F}_{\omega}}(R)\supseteq\coOmega^{n}_{\mathcal{P}_{\omega}}(R)$.
Because $\mathcal{B}_{\omega}(R)$ is closed under extensions by \cite[Theorem 6.2]{HW}, we have
$\coOmega^{n}_{\mathcal{B}}(R)=\coOmega^{n}_{\mathcal{P}_{\omega}}(R)$ by (1) and \cite[Corollary 5.4(2)]{H4}.
\end{proof}

In the following result, we characterize when the double functor $\Tor^S_i(\omega, \Ext^i_{R}(\omega,-))$ preserves epimorphisms
in terms of the $\Tor$-cograde conditions of $\Ext$-modules.

\begin{theorem} \label{thm-3.5} The conditions (1)--(3) below are equivalent for any $n, k\geqslant 0$. If $k\geqslant 1$,
then (1)--(4) are equivalent.
\begin{enumerate}
\item $\Tcograde_{\omega}\Ext^{i+k}_{R}(\omega,M)\geqslant i$ for any $M\in \Mod R$ and $1\leqslant i\leqslant n$.
\item $\Tor^S_i(\omega, \Ext^i_{R}(\omega,f))$ is an epimorphism for any epimorphism $f: B\twoheadrightarrow C$
in $\Mod R$ with $B,C\in \coOmega^{k+1}_{\mathcal{P}_{\omega}}(R)$ and $0\leqslant i\leqslant n-1$.
\item $\Tor^S_i(\omega, \Ext^i_{R}(\omega,f))$ is an epimorphism for any epimorphism $f: B\twoheadrightarrow C$
in $\Mod R$ with $B,C\in \coOmega^{k+1}(R)$ and $0\leqslant i\leqslant n-1$.
\item $\coOmega^{i+k}(R)\subseteq \cT^{i+1}_{\omega}(R)$ for any $1\leqslant i\leqslant n$.
\end{enumerate}
\end{theorem}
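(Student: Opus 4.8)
The plan is to prove the cycle of implications $(4)\Rightarrow(1)\Rightarrow(2)\Rightarrow(3)\Rightarrow(4)$, with the extra equivalence with $(4)$ available only when $k\geqslant 1$ because in that case every module in $\coOmega^{k+1}(R)$ already lies in $\cT^1_\omega(R)$ (indeed in $\cT^k_\omega(R)$), so the inductive machinery of Lemma~\ref{lem-3.2} has a base to start from. First I would record the reduction: by Lemma~\ref{lem-3.4}(2) we may freely replace $\coOmega^{k+1}(R)$ by $\coOmega^{k+1}_{\mathcal{P}_\omega}(R)$ (or $\coOmega^{k+1}_{\mathcal{B}}(R)$ or $\coOmega^{k+1}_{\mathcal{F}_\omega}(R)$) throughout, which immediately collapses the distinction between (2) and (3) once one direction of each is known; so really the core content is $(1)\Leftrightarrow(2)$ together with the bridge to (4).

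For $(4)\Rightarrow(1)$: a module $\Ext^{i+k}_R(\omega,M)$ is, via the standard dimension-shifting along the minimal injective resolution (2.1), isomorphic to $\Ext^1_R(\omega,\coOmega^{i+k-1}(M))$, and condition (4) says $\coOmega^{i+k}(R)\subseteq\cT^{i+1}_\omega(R)$. I would run Lemma~\ref{lem-3.2} with $n\rightsquigarrow i+k-1$ and $m\rightsquigarrow i$: its hypothesis $\coOmega^{i+k-1}(R)\subseteq\cT^i_\omega(R)$ follows by downward induction on $i$ from (4) (the case $i=n$ being (4) itself; lowering $i$ uses that $\coOmega^{(i-1)+k}(R)=\coOmega\big(\coOmega^{(i-1)+k-1}(R)\big)\subseteq\coOmega^{i+k-1}(R)$ and that $\cT^{i+1}_\omega\subseteq\cT^i_\omega$), and then the equivalence in Lemma~\ref{lem-3.2} gives $\Tcograde_\omega\Ext^{i+k}_R(\omega,M)\geqslant i$. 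Conversely $(1)\Rightarrow(4)$ is the same Lemma~\ref{lem-3.2} read in the other direction, again organised as an induction on $i$ from $1$ up to $n$, the base case $i=1$ needing $\coOmega^k(R)\subseteq\cT^1_\omega(R)$ which holds automatically for $k\geqslant 1$ (injectives are $1$-$\omega$-cotorsionfree by \cite[Lemma 2.5(2)]{TH1}) and for $k=0$ is vacuous since $\cT^1_\omega$-membership of a $0$-cosyzygy is not claimed — this is exactly why (4) is only included in the list when $k\geqslant 1$.

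The heart is $(1)\Leftrightarrow(2)$, connecting the cograde bound to exactness of the double functor on epimorphisms between $(k+1)$-cosyzygies. Given an epimorphism $f:B\twoheadrightarrow C$ with $B,C\in\coOmega^{k+1}_{\mathcal{P}_\omega}(R)$, write $A=\Ker f$; applying $(-)_*=\Hom_R(\omega,-)$ and then $\omega\otimes_S-$ produces the double-functor long exact sequences, and the obstruction to $\Tor^S_i(\omega,\Ext^i_R(\omega,f))$ being epic is precisely a subquotient of $\Tor^S_i(\omega,\Ext^{i+1}_R(\omega,A))$ or, after shifting, of $\Tor^S_i(\omega,\Ext^{i+1+k}_R(\omega,A'))$ for a suitable module $A'$ — so if the $\Tor$-cograde of $\Ext^{i+1+k}_R(\omega,-)$-modules is $\geqslant i+1>i$ the obstruction vanishes; this is the $(1)\Rightarrow(2)$ direction. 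For $(2)\Rightarrow(1)$ (equivalently $(3)\Rightarrow(4)$ after the Lemma~\ref{lem-3.4} identification), I would take an arbitrary $M$ and apply (2) to a carefully chosen epimorphism built from the injective resolution of $M$ — e.g. the epimorphism from a $\mathcal{P}_\omega$-cover, or the connecting epimorphism $I^{i+k}(M)\twoheadrightarrow\coOmega^{i+k+1}(M)$ realised inside a $(k+1)$-cosyzygy — and read the forced surjectivity back as the vanishing of the relevant $\Tor$, then feed that into Lemma~\ref{lem-3.2}. The main obstacle I anticipate is bookkeeping: making the index shifts line up so that "$(k+1)$-cosyzygy" is exactly the regularity needed for the double-functor spectral-sequence-type computation to degenerate into the clean isomorphisms $\Tor^S_i(\omega,\Ext^i_R(\omega,-))\cong\Tor^S_{i+2}(\omega,\cTr_\omega(\cdots))$ used in Lemma~\ref{lem-3.2}, and verifying that the epimorphism one plugs into (2) can genuinely be chosen with both ends in $\coOmega^{k+1}(R)$ — this is where Lemma~\ref{lem-3.4}(2)'s equality of the various $\coOmega^{k+1}_{-}(R)$ classes, plus closure of $\mathcal{B}_\omega(R)$ under the relevant operations, does the essential work.
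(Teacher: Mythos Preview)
Your handling of $(1)\Leftrightarrow(4)$ via Lemma~\ref{lem-3.2} and of $(2)\Rightarrow(3)$ via Lemma~\ref{lem-3.4}(2) is correct and matches the paper. Your sketch of $(3)\Rightarrow(1)$ is also on the right track: the paper applies (3) to the single epimorphism $f:I^k(M)\twoheadrightarrow\coOmega^{k+1}(M)$ (both terms lie in $\coOmega^{k+1}(R)$ since injectives are cosyzygies of everything) and lets $i$ run from $0$ to $n-1$, using $\Ext^i_R(\omega,I^k(M))=0$ for $i\geqslant 1$ to force $\Tor^S_i(\omega,\Ext^{i+k+1}_R(\omega,M))=0$.

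The genuine gap is in your $(1)\Rightarrow(2)$. You propose to control the failure of surjectivity of $\Tor^S_i(\omega,\Ext^i_R(\omega,f))$ via the kernel $A=\Ker f$, claiming the obstruction is a subquotient of $\Tor^S_i(\omega,\Ext^{i+1}_R(\omega,A))$. But this route needs the \emph{strong} $\Tor$-cograde condition: the cokernel of $\Ext^i_R(\omega,f)$ is only a \emph{submodule} of $\Ext^{i+1}_R(\omega,A)$, and condition (1) says nothing about $\Tor$-cogrades of submodules (that is precisely the distinction between Theorems~\ref{thm-3.5} and~\ref{thm-3.8}). Moreover, $A$ is an arbitrary kernel of a map between $(k+1)$-cosyzygies and has no reason to be a $k$-cosyzygy, so your dimension shift to $\Ext^{i+1+k}_R(\omega,A')$ is not available. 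The paper sidesteps this entirely: since $C\in\coOmega^{k+1}_{\mathcal{P}_\omega}(R)$, write $C=\coOmega^{k+1}_{\mathcal{P}_\omega}(C')$, so $\Ext^i_R(\omega,C)\cong\Ext^{i+k+1}_R(\omega,C')$ and condition (1) gives $\Tcograde_\omega$ of this module $\geqslant i+1$, hence $\Tor^S_i(\omega,\Ext^i_R(\omega,C))=0$ outright for $1\leqslant i\leqslant n-1$ --- the target vanishes, so surjectivity is free. The case $i=0$ is handled separately (and you do not address it): for $k\geqslant 1$ one has $B,C\in\cT^2_\omega(R)$ so $1_\omega\otimes f_*\cong f$ is epic; for $k=0$ the paper precomposes with an epimorphism $p:W\twoheadrightarrow B$ from some $W\in\Add_R\omega$ and uses $\omega\otimes_S\Ext^1_R(\omega,\Ker(fp))=0$ from (1) to get $1_\omega\otimes(fp)_*$ epic.
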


\begin{proof}
By using induction on $i$, $(1)\Leftrightarrow (4)$ follows from Lemma ~\ref{lem-3.2}.

$(1)\Rightarrow (2)$ Let $f: B\twoheadrightarrow C$ be an epimorphism in $\Mod R$ with $B,C\in \coOmega^{k+1}_{\mathcal{P}_{\omega}}(R)$.
Then $C=\coOmega^{k+1}_{\mathcal{P}_{\omega}}(C')$ for some $C'\in \Mod R$. By (1), we have
$$\Tor^S_i(\omega, \Ext^i_{R}(\omega,C))\cong\Tor^S_{i}(\omega, \Ext^{i+k+1}_{R}(\omega,C'))=0$$
for any $1\leqslant i\leqslant n-1$. Thus $\Tor^S_i(\omega, \Ext^i_{R}(\omega,f))$ is epic.
In the following, we will show that $1_\omega\otimes f_*$ is epic.

If $k\geqslant 1$, then $\coOmega^{k}_{\mathcal{P}_{\omega}}(R)\subseteq \cT^{1}_{\omega}(R)$ by Lemma \ref{lem-2.9}(2).
So $\coOmega^{k+1}_{\mathcal{P}_{\omega}}(R)\subseteq \cT^{2}_{\omega}(R)$ by Lemma ~\ref{lem-3.2}, and hence $B,C\in \cT^{2}_{\omega}(R)$.
It follows that $1_\omega\otimes f_*\cong f$ and $1_\omega\otimes f_*$ is epic.

Now suppose $k=0$. We have an epimorphism $p:W\twoheadrightarrow B$ in $\Mod R$ with $W\in \Add_R\omega$.
From the exact sequence
$$0\to M_1\to W\stackrel{f\cdot p}{\longrightarrow}C\to 0$$ in $\Mod R$ with $M_1=\Ker (f\cdot p)$,
we get the following exact sequence
$$W_*\stackrel{(f\cdot p)_*}{\longrightarrow}C_*\to \Ext^1_R(\omega,M_1)\to 0$$
in $\Mod S$. By (1). $\omega\otimes_S\Ext^1_R(\omega,M_1)=0$.
So $(1_\omega\otimes f_*)\cdot (1_\omega\otimes p_*)=1_\omega\otimes (f\cdot p)_*$ is epic,
which implies that $1_\omega\otimes f_*$ is also epic.

By Lemma ~\ref{lem-3.4}(2), we have $(2)\Rightarrow (3)$.

$(3)\Rightarrow (1)$ Let $M\in \Mod R$. From the exact sequence
$$0\to \coOmega^{k}(M)\to I^k(M)\stackrel{f}{\longrightarrow}\coOmega^{k+1}(M)\to 0$$ in $\Mod R$,
we get the following exact sequence
$$I^k(M)_*\stackrel{f_*}{\longrightarrow}\coOmega^{k+1}(M)_*\to \Ext^{k+1}_R(\omega, M)\to 0$$ in $\Mod S$.
Since $1_\omega\otimes f_*$ is an epimorphism by (2), we have that $\omega\otimes_S\Ext^{k+1}_R(\omega, M)=0$
and $\Tcograde_{\omega}\Ext^{k+1}_{R}(\omega,M)\geqslant 1$. In addition, for any $1\leqslant i\leqslant n-1$,
$$0=\Tor^S_i(\omega, \Ext^i_{R}(\omega,I^k(M)))\stackrel{\Tor^S_i(\omega, \Ext^i_{R}(\omega,f))}{\longrightarrow}
\Tor^S_i(\omega, \Ext^i_{R}(\omega,\coOmega^{k+1}(M)))$$ is epic by (3), so we have
$$\Tor^S_i(\omega, \Ext^{i+k+1}_{R}(\omega,M))\cong \Tor^S_i(\omega, \Ext^i_{R}(\omega,\coOmega^{k+1}(M)))=0.$$
Thus we conclude that $\Tcograde_{\omega}\Ext^{i+k+1}_{R}(\omega,M)\geqslant i+1$ for any $0\leqslant i\leqslant n-1$.
\end{proof}

\begin{lemma} \label{lem-3.6}
\begin{enumerate}
\item[]
\item $\mathcal{I}_{\omega}(S)$ is a cogenerator for $\mathcal{A}_{\omega}(S)$.
\item $\Omega^{n}_{\mathcal{F}}(S)\subseteq\Omega^{n}_{\mathcal{A}}(S)=\Omega^{n}_{\mathcal{I}_{\omega}}(S)$ for any $n\geqslant 1$.
\end{enumerate}
\end{lemma}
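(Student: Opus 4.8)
The plan is to prove Lemma~\ref{lem-3.6} by dualizing the proof of Lemma~\ref{lem-3.4}, under the dictionary that exchanges $R$ with $S$, injective modules with flat modules, cosyzygies with yokes, the Bass class $\mathcal{B}_{\omega}(R)$ with the Auslander class $\mathcal{A}_{\omega}(S)$, the subclass $\cT_{\omega}(R)$ with $\acT_{\omega}(S)$, the cotranspose $\cTr_{\omega}$ with the adjoint cotranspose $\acTr_{\omega}$, the functor $\Hom_R(\omega,-)$ with $\omega\otimes_S-$, and ${_R\omega^{\perp}}$ with ${{\omega_S}^{\top}}$; correspondingly $\mathcal{P}_{\omega}(R)$, which is a \emph{generator} for $\mathcal{B}_{\omega}(R)$ in Lemma~\ref{lem-3.4}(1), gets replaced by $\mathcal{I}_{\omega}(S)$, which is to play the role of a \emph{cogenerator} for $\mathcal{A}_{\omega}(S)$.

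For (1), I would first record that $\mathcal{I}_{\omega}(S)\subseteq\mathcal{A}_{\omega}(S)$: this is the dual of the inclusion $\mathcal{F}_{\omega}(R)\subseteq\mathcal{B}_{\omega}(R)$ used in Lemma~\ref{lem-3.4}(2), and follows from \cite[Lemma 4.1 and Corollary 6.1]{HW} (for an injective $I\in\Mod R$ one has $I_*\in{{\omega_S}^{\top}}$, $\omega\otimes_SI_*\cong I\in{_R\omega^{\perp}}$, and $\mu_{I_*}$ is then an isomorphism). Next, for $N\in\mathcal{A}_{\omega}(S)$, the $S$-side analogues in \cite{TH3} of \cite[Theorem 3.9 and Proposition 3.7]{TH1} should supply an exact sequence $0\to N\to V^{0}\to V^{1}\to\cdots$ in $\Mod S$ with all $V^{i}\in\mathcal{I}_{\omega}(S)$ which remains exact after applying $\omega\otimes_S-$. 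Put $N^{1}:=\Coker(N\to V^{0})$. By the $\mathcal{A}_{\omega}(S)$-analogue of \cite[Proposition 3.7]{TH1}, $N^{1}\in\acT_{\omega}(S)$, i.e. $N^{1}$ is adjoint $\infty$-$\omega$-cotorsionfree. Applying $\omega\otimes_S-$ to $0\to N\to V^{0}\to N^{1}\to 0$ and using that $N,V^{0}\in{{\omega_S}^{\top}}$ together with the injectivity of $\omega\otimes_SN\to\omega\otimes_SV^{0}$ (which holds because the coresolution is $(\omega\otimes_S-)$-exact), the long exact sequence of $\Tor^S_{*}(\omega,-)$ yields $N^{1}\in{{\omega_S}^{\top}}$. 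Hence $N^{1}\in\mathcal{A}_{\omega}(S)$ by the analogue of \cite[Theorem 3.9]{TH1}, and the exact sequence $0\to N\to V^{0}\to N^{1}\to 0$ exhibits $\mathcal{I}_{\omega}(S)$ as a cogenerator for $\mathcal{A}_{\omega}(S)$.

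For (2), fix $n\geqslant 1$. Since $\mathcal{A}_{\omega}(S)$ contains every flat left $S$-module (the dual of the fact, used in Lemma~\ref{lem-3.4}(2), that $\mathcal{B}_{\omega}(R)$ contains all injective left $R$-modules, \cite[Lemma 4.1]{HW}), truncating the minimal flat resolution (2.2) of a module $N\in\Mod S$ shows that its $n$-yoke $\Omega^{n}_{\mathcal{F}}(N)$ is an $n$-$\mathcal{A}_{\omega}(S)$-syzygy of $N$; hence $\Omega^{n}_{\mathcal{F}}(S)\subseteq\Omega^{n}_{\mathcal{A}}(S)$, and $\mathcal{I}_{\omega}(S)\subseteq\mathcal{A}_{\omega}(S)$ gives $\Omega^{n}_{\mathcal{I}_{\omega}}(S)\subseteq\Omega^{n}_{\mathcal{A}}(S)$. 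For the reverse inclusion $\Omega^{n}_{\mathcal{A}}(S)\subseteq\Omega^{n}_{\mathcal{I}_{\omega}}(S)$, I would use that $\mathcal{A}_{\omega}(S)$ is closed under extensions (dual of \cite[Theorem 6.2]{HW}) together with part (1) and \cite[Corollary 5.4(2)]{H4}, dualizing the last step of the proof of Lemma~\ref{lem-3.4}(2). The step requiring the most care --- essentially the only non-formal one --- is confirming that the precise $S$-side analogues of \cite[Theorem 3.9 and Proposition 3.7]{TH1} are available in \cite{TH3}, namely that every module in $\mathcal{A}_{\omega}(S)$ admits an $(\omega\otimes_S-)$-exact $\mathcal{I}_{\omega}(S)$-coresolution whose cosyzygies remain adjoint $\infty$-$\omega$-cotorsionfree, together with the identification $\mathcal{A}_{\omega}(S)=\acT_{\omega}(S)\cap{{\omega_S}^{\top}}$. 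Granting these, the entire argument is a formal dualization of Lemma~\ref{lem-3.4}.
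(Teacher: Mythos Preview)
Your proposal is correct and follows essentially the same route as the paper's proof: the paper cites precisely the $S$-side analogues you anticipated, namely \cite[Theorem 3.11(1)]{TH3} for the $(\omega\otimes_S-)$-exact $\mathcal{I}_{\omega}(S)$-coresolution and the identification $\mathcal{A}_{\omega}(S)=\acT_{\omega}(S)\cap{{\omega_S}^{\top}}$, and \cite[Corollary 3.9]{TH3} for $N^{1}\in\acT_{\omega}(S)$. One minor correction: in part (2) you should invoke \cite[Corollary 5.4(1)]{H4} rather than (2), since here you are comparing classes of $\mathcal{X}$-\emph{syzygies} rather than $\mathcal{X}$-cosyzygies.
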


\begin{proof}
(1) Let $N\in\mathcal{A}_{\omega}(S)$. Then by \cite[Theorem 3.11(1)]{TH3},
there exists an $(\omega\otimes_S-)$-exact exact sequence
$$0\to N \to U^0 \to U^1 \to U^2 \to \cdots$$
in $\Mod S$ with all $U^i\in \mathcal{I}_\omega(S)$. Put $N^1:=\im(U^0 \to U^1)$. Then $N^1\in\acT_{\omega}(S)$ by \cite[Corollary 3.9]{TH3}.
Because both $N$ and $U^0$ are in ${\omega_S}^{\top}$, we have $N^1\in{\omega_S}^{\top}$.
So $N^1\in\mathcal{A}_{\omega}(S)$ by \cite[Theorem 3.11(1)]{TH3} again.

(2) Let $n\geqslant 1$. By \cite[Lemma 4.1]{HW}, we have that $\mathcal{A}_{\omega}(S)$ contains all flat left $S$-modules,
which yields $\Omega^{n}_{\mathcal{F}}(S)\subseteq\Omega^{n}_{\mathcal{A}}(S)$.
Because $\mathcal{A}_{\omega}(S)$ is closed under extensions by \cite[Theorem 6.2]{HW}, we have
$\Omega^{n}_{\mathcal{A}}(S)=\Omega^{n}_{\mathcal{I}_{\omega}}(S)$ by (1) and \cite[Corollary 5.4(1)]{H4}.
\end{proof}

In the following result, we characterize when the double functor $\Ext_{R}^i(\omega, \Tor_i^{S}(\omega,-))$ preserves monomorphisms
in terms of the $\Ext$-cograde conditions of $\Tor$-modules.

\begin{theorem} \label{thm-3.7}
The conditions (1)--(3) below are equivalent for any $n, k\geqslant 0$. If $k\geqslant 1$, then (1)--(4) are equivalent.
\begin{enumerate}
\item $\Ecograde_{\omega}\Tor_{i+k}^{S}(\omega,N)\geqslant i$ for any $N\in \Mod S$ and $1\leqslant i\leqslant n$.
\item $\Ext_{R}^i(\omega, \Tor_i^{S}(\omega,g))$ is a monomorphism for any monomorphism $g: B'\rightarrowtail C'$ in $\Mod S$
with $B', C'\in \Omega^{k+1}_{\mathcal{I}_{\omega}}(S)$ and $0\leqslant i\leqslant n-1$.
\item $\Ext_{R}^i(\omega, \Tor_i^{S}(\omega,g))$ is a monomorphism for any monomorphism $g: B'\rightarrowtail C'$ in $\Mod S$
with $B', C'\in \Omega^{k+1}_{\mathcal{F}}(S)$ and $0\leqslant i\leqslant n-1$.
\item $\Omega^{i+k}_{\mathcal{F}}(S)\subseteq \acT^{i+1}_{\omega}(S)$ for any $1\leqslant i\leqslant n$.
\end{enumerate}
\end{theorem}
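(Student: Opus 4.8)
The plan is to prove Theorem~\ref{thm-3.7} as the exact "adjoint" dual of Theorem~\ref{thm-3.5}, replacing cosyzygies by yokes, $\mathcal{P}_\omega(R)$ by $\mathcal{I}_\omega(S)$, the functor $(-)_*=\Hom_R(\omega,-)$ by $\omega\otimes_S-$, the valuation $\theta$ by $\mu$, and $\cTr_\omega$ by $\acTr_\omega$; the structural lemmas needed on the "$S$-side" are already in place (Lemma~\ref{lem-3.1}(2), Lemma~\ref{lem-3.3}, Lemma~\ref{lem-2.9}(1), Lemma~\ref{lem-3.6}). First I would prove $(1)\Leftrightarrow(4)$ by induction on $i$: the base step $i=0$ is vacuous (or uses that flat modules lie in $\acT^1_\omega(S)$), and the inductive step is exactly Lemma~\ref{lem-3.3} applied with $m=i$, $n=i+k$, using the inductive hypothesis $\Omega^{i+k}_{\mathcal{F}}(S)\subseteq\acT^{i+1}_\omega(S)$ together with Lemma~\ref{lem-3.6}(2), which identifies $\Omega^{i+k}_{\mathcal{F}}(S)$ with a subclass of $\Omega^{i+k}_{\mathcal{I}_\omega}(S)=\Omega^{i+k}_{\mathcal{A}}(S)$ so that the hypothesis of Lemma~\ref{lem-3.3} is met.

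Next I would do $(1)\Rightarrow(2)$. Given a monomorphism $g\colon B'\rightarrowtail C'$ with $B',C'\in\Omega^{k+1}_{\mathcal{I}_\omega}(S)$, write $B'=\Omega^{k+1}_{\mathcal{F}}(B'')$ (using $\Omega^{k+1}_{\mathcal{I}_\omega}(S)=\Omega^{k+1}_{\mathcal A}(S)\supseteq\Omega^{k+1}_{\mathcal F}(S)$ — actually one needs $B'$ itself to be a $(k+1)$-yoke of something, so I would instead take $B'=\Omega^{k+1}_{\mathcal{I}_\omega}(B'')$ and exploit that $\mathcal{I}_\omega(S)\subseteq\mathcal{A}_\omega(S)$ so $\Tor^S_{\geqslant1}(\omega,-)$ vanishes on the cosyzygy terms, giving $\Tor^S_i(\omega,B')\cong\Tor^S_{i+k+1}(\omega,B'')$). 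From $(1)$ one gets $\Ext^i_R(\omega,\Tor^S_{i+k+1}(\omega,B''))=0$ for $1\leqslant i\leqslant n-1$, hence $\Ext^i_R(\omega,\Tor^S_i(\omega,g))$ is monic for $1\leqslant i\leqslant n-1$; the remaining case $i=0$ is the claim that $\mu\colon\Tor^S_0(\omega,g)=(1_\omega\otimes g)$ becomes monic after applying $(-)_*$, i.e. that $(1_\omega\otimes g)_*$ is monic. When $k\geqslant1$ this follows because Lemma~\ref{lem-2.9}(1) and Lemma~\ref{lem-3.3} force $B',C'\in\acT^2_\omega(S)$, so $\mu_{B'},\mu_{C'}$ are isomorphisms and $(1_\omega\otimes g)_*$ is conjugate to $g$ itself. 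When $k=0$ one argues as in the proof of Theorem~\ref{thm-3.5}: choose a monomorphism $C'\rightarrowtail U$ with $U\in\mathcal{I}_\omega(S)$, form $0\to B'\to U\to N_1\to 0$ with $N_1=\cok$, get an exact sequence $0\to\Tor^S_1(\omega,N_1)\to\omega\otimes_S B'\to\omega\otimes_S U$, apply $(-)_*$, and use that $\Ext^1_R(\omega,\cdot)$-type contribution — more precisely that $(1)$ forces $\Ext^0_R(\omega,\Tor^S_1(\omega,N_1))=0$ via the yoke description — to conclude $(1_\omega\otimes g)_*$ is monic. Then $(2)\Rightarrow(3)$ is immediate from Lemma~\ref{lem-3.6}(2) since $\Omega^{k+1}_{\mathcal F}(S)\subseteq\Omega^{k+1}_{\mathcal{I}_\omega}(S)$.

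Finally $(3)\Rightarrow(1)$: for $N\in\Mod S$, apply the minimal flat resolution (2.2) and the exact sequence $0\to\Omega^{k+1}_{\mathcal F}(N)\stackrel{g}{\rightarrowtail}F_k(N)\to\Omega^k_{\mathcal F}(N)\to0$ — wait, the right shape is $0\to\Omega^{k+1}_{\mathcal F}(N)\to F_k(N)\to\Omega^k_{\mathcal F}(N)\to 0$, whose first two terms are in $\Omega^{k+1}_{\mathcal F}(S)$ (the kernel obviously, and $F_k(N)$ as a $0$-yoke lies in every $\Omega^{j}_{\mathcal F}$? no — a flat module is a $(k+1)$-yoke of itself via the split-type resolution, so indeed $F_k(N)\in\Omega^{k+1}_{\mathcal F}(S)$). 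Applying $\omega\otimes_S-$ gives $0\to\Tor^S_1(\omega,\Omega^k_{\mathcal F}(N))\to\omega\otimes F_k(N)\to\cdots$; by $(3)$ the map $\Ext^0_R(\omega,\Tor^S_0(\omega,g))=(1_\omega\otimes g)_*$ is monic, forcing $\Ext^0_R(\omega,\Tor^S_1(\omega,\Omega^k_{\mathcal F}(N)))=0$, i.e. $\Ecograde_\omega\Tor^S_{k+1}(\omega,N)\geqslant1$ after the dimension-shift $\Tor^S_1(\omega,\Omega^k_{\mathcal F}(N))\cong\Tor^S_{k+1}(\omega,N)$. For higher $i$, apply $\Ext^i_R(\omega,\Tor^S_i(\omega,-))$ to $g$: the source is $\Ext^i_R(\omega,\Tor^S_i(\omega,F_k(N)))=0$ since $F_k(N)$ is flat hence in $\omega_S^{\top}$, and $(3)$ says the induced map into $\Ext^i_R(\omega,\Tor^S_i(\omega,\Omega^k_{\mathcal F}(N)))$ is monic, so that target is $0$; together with the isomorphism $\Tor^S_i(\omega,\Omega^k_{\mathcal F}(N))\cong\Tor^S_{i+k}(\omega,N)$ this gives $\Ecograde_\omega\Tor^S_{i+k}(\omega,N)\geqslant i$ for all $1\leqslant i\leqslant n$.

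The main obstacle I anticipate is the bookkeeping in the $i=0$ case of $(1)\Rightarrow(2)$ when $k=0$ — namely producing the right short exact sequence of yokes and checking that the relevant $\Ext^0$/$\Tor$ group vanishes so that the valuation map $(1_\omega\otimes g)_*$ is forced to be monic — and, relatedly, verifying throughout that "$(k+1)$-yoke" behaves correctly under the identification $\Omega^{n}_{\mathcal F}(S)\subseteq\Omega^{n}_{\mathcal{I}_\omega}(S)=\Omega^n_{\mathcal A}(S)$ so that Lemma~\ref{lem-3.3}'s hypothesis $\Omega^n_{\mathcal F}(S)\subseteq\acT^m_\omega(S)$ is legitimately available at each inductive stage; the rest is a faithful transcription of the proof of Theorem~\ref{thm-3.5} with $(-)_*\leftrightarrow\omega\otimes_S-$, $\theta\leftrightarrow\mu$, $\cTr_\omega\leftrightarrow\acTr_\omega$, epimorphism $\leftrightarrow$ monomorphism, and cosyzygy $\leftrightarrow$ yoke interchanged.
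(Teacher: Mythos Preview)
Your proposal is correct and follows essentially the same route as the paper's proof: $(1)\Leftrightarrow(4)$ by induction via Lemma~\ref{lem-3.3}, $(1)\Rightarrow(2)$ by the dimension-shift $\Tor^S_i(\omega,B')\cong\Tor^S_{i+k+1}(\omega,B'')$ plus a separate treatment of $i=0$ (split into $k\geqslant 1$ using $\acT^2_\omega(S)$ and $k=0$ using an auxiliary embedding into $U\in\mathcal{I}_\omega(S)$), $(2)\Rightarrow(3)$ by Lemma~\ref{lem-3.6}(2), and $(3)\Rightarrow(1)$ by applying the hypothesis to the inclusion $g\colon\Omega^{k+1}_{\mathcal{F}}(N)\hookrightarrow F_k(N)$.

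Two small corrections to clean up. First, in the induction for $(1)\Leftrightarrow(4)$ the correct parameters for Lemma~\ref{lem-3.3} are $m=i$ and $n=i+k-1$ (not $n=i+k$): the inductive hypothesis is $\Omega^{i+k-1}_{\mathcal{F}}(S)\subseteq\acT^{i}_\omega(S)$, and the lemma then gives $\Ecograde_\omega\Tor^S_{i+k}(\omega,-)\geqslant i \Leftrightarrow \Omega^{i+k}_{\mathcal{F}}(S)\subseteq\acT^{i+1}_\omega(S)$. Second, in your $(3)\Rightarrow(1)$ paragraph you have swapped source and target and mis-indexed the yoke: the map $\Ext^i_R(\omega,\Tor^S_i(\omega,g))$ goes \emph{from} $\Ext^i_R(\omega,\Tor^S_i(\omega,\Omega^{k+1}_{\mathcal{F}}(N)))$ \emph{into} $\Ext^i_R(\omega,\Tor^S_i(\omega,F_k(N)))=0$, so its being monic forces the \emph{source} to vanish; via $\Tor^S_i(\omega,\Omega^{k+1}_{\mathcal{F}}(N))\cong\Tor^S_{i+k+1}(\omega,N)$ this yields $\Ecograde_\omega\Tor^S_{i+k+1}(\omega,N)\geqslant i+1$ for $0\leqslant i\leqslant n-1$, which is exactly (1). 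With these fixes your outline matches the paper verbatim, and your anticipated ``obstacle'' in the $i=0$, $k=0$ case is handled precisely as you describe: $(\Tor^S_1(\omega,L_1))_*=0$ by (1), so $(1_\omega\otimes(i\cdot g))_*$ is monic and hence so is $(1_\omega\otimes g)_*$.
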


\begin{proof}
By using induction on $i$, $(1)\Leftrightarrow (4)$ follows from Lemma ~\ref{lem-3.3}.

$(1)\Rightarrow (2)$
Let $g: B'\rightarrowtail C'$ be a monomorphism in $\Mod S$ with $B', C'\in \Omega^{k+1}_{\mathcal{I}_{\omega}}(S)$.
Then $B'=\Omega^{k+1}_{\mathcal{I}_{\omega}}(B'')$ for some $B''\in \Mod S$. By (1), we have
$$\Ext_{R}^{i}(\omega, \Tor_{i}^{S}(\omega,B'))\cong\Ext_{R}^{i}(\omega, \Tor^{S}_{i+k+1}(\omega,B''))=0$$
for any $1\leqslant i\leqslant n-1$. Thus $\Ext_{R}^i(\omega, \Tor_i^{S}(\omega,g))$ is a monic.
In the following, we will show that $(1_\omega\otimes g)_*$ is monic.

If $k\geqslant 1$, then $\Omega^{k}_{\mathcal{I}_{\omega}}(S)\subseteq \acT^{1}_{\omega}(S)$ by Lemma \ref{lem-2.9}(1). So
$\Omega^{k+1}_{\mathcal{I}_{\omega}}(S)\subseteq \acT^{2}_{\omega}(S)$ by Lemma ~\ref{lem-3.3},
and hence $B',C'\in\acT^{2}_{\omega}(S)$. It follows that $(1_{\omega}\otimes g)_*\cong g$ and $(1_{\omega}\otimes g)_*$ is monic.

Now suppose $k=0$. We have a monomorphism $i: C'\rightarrowtail U$ in $\Mod S$ with $U\in \mathcal{I}_\omega(S)$.
From the exact sequence $$0\to B'\stackrel{i\cdot g}{\longrightarrow} U \to L_1\to 0$$
in $\Mod S$ with $L_1=\Coker(i\cdot g)$, we get the following exact sequence
$$0\to \Tor^S_1(\omega,L_1)\to \omega\otimes_S B'\stackrel{1_{\omega}\otimes(i\cdot g)}{\longrightarrow} \omega\otimes_SU$$
in $\Mod R$. By (1), $(\Tor^S_1(\omega,L_1))_*=0$. So $(1_{\omega}\otimes i)_*\cdot (1_{\omega}\otimes g)_*=(1_\omega\otimes(i\cdot g))_*$
is monic, which implies that $(1_{\omega}\otimes g)_*$ is also monic.

By Lemma ~\ref{lem-3.6}(2), we have $(2)\Rightarrow (3)$.

$(3)\Rightarrow (1)$ Let $N\in \Mod S$. From the exact sequence
$$0\to \Omega^{k+1}_{\mathcal{F}}(N)\stackrel{g}{\longrightarrow} F_k(N)\to\Omega^{k}_{\mathcal{F}}(N)\to 0$$ in $\Mod S$,
we get the following exact sequence
$$0\to \Tor^S_{k+1}(\omega,N)\to \omega\otimes_S \Omega^{k+1}_{\mathcal{F}}(N)\stackrel{1_\omega\otimes g}{\longrightarrow} \omega\otimes_SF_k(N)$$
in $\Mod R$. Since $(1_\omega\otimes g)_*$ is a monomorphism by (2), we have that $(\Tor^S_{k+1}(\omega,N))_*=0$ and
$\Ecograde_{\omega}\Tor_{k+1}^{S}(\omega,N)\geqslant 1$. In addition, for any $1\leqslant i\leqslant n-1$,
$$\Ext_{R}^i(\omega, \Tor_i^{S}(\omega,\Omega^{k+1}_{\mathcal{F}}(N)))\stackrel{\Ext_{R}^i(\omega,\Tor_i^{S}(\omega,g))}{\longrightarrow}
\Ext_{R}^i(\omega, \Tor_i^{S}(\omega,F_k(N)))=0$$ is monic by (3), so we have
$$\Ext_{R}^i(\omega, \Tor_{i+k+1}^{S}(\omega,N))\cong\Ext_{R}^i(\omega, \Tor_i^{S}(\Omega^{k+1}_{\mathcal{F}}(\omega,N))=0.$$
Thus we conclude that $\Ecograde_{\omega}\Tor_{i+k+1}^{S}(\omega,N)\geqslant i+1$ for any $0\leqslant i\leqslant n-1$.
\end{proof}

\vspace{0.2cm}

\noindent{\bf 3.2. Strong cograde conditions}

\vspace{0.2cm}

Compare the following result with Theorem ~\ref{thm-3.5}.

\begin{theorem} \label{thm-3.8}
For any $n\geqslant 1$ and $k\geqslant 0$, the following three statements are equivalent.
\begin{enumerate}
\item $\sTcograde_{\omega}\Ext^{i+k}_{R}(\omega,M)\geqslant i$ for any $M\in \Mod R$ and $1\leqslant i\leqslant n$.
\item For any exact sequence
$$0\to A\to B\stackrel{f}{\longrightarrow}C\to 0$$ in $\Mod R$ with
$A\in \Omega^{i-1}_{\mathcal{P}_\omega}(\coOmega^{i+k-1}_{\mathcal{P}_\omega}(R))$,
$\Tor^S_i(\omega, \Ext^i_{R}(\omega,f))$ is an epimorphism for any $0\leqslant i\leqslant n-1$.
\item For any exact sequence
$$0\to A\to B\stackrel{f}{\longrightarrow}C\to 0$$ in $\Mod R$ with $A\in \Omega^{i-1}_{\mathcal{P}_\omega}(\coOmega^{i+k-1}(R))$,
$\Tor^S_i(\omega, \Ext^i_{R}(\omega,f))$ is an epimorphism for any $0\leqslant i\leqslant n-1$.
\end{enumerate}
Moreover, if $k=0$, then any of the above statements is equivalent to the following
\begin{enumerate}
\item[(4)] For any exact sequence
$$0\to A\to B\stackrel{f}{\longrightarrow}C\to 0$$ in $\Mod R$,
$\Tor^S_i(\omega, \Ext^i_{R}(\omega,f))$ is an epimorphism for any $0\leqslant i\leqslant n-1$.
\end{enumerate}
\end{theorem}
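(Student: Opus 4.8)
The plan is to run the implication cycle $(1)\Rightarrow(2)\Rightarrow(3)\Rightarrow(1)$, which works for all $n\geqslant1$ and $k\geqslant0$, and, in the case $k=0$, to splice in the two easy implications $(1)\Rightarrow(4)$ and $(4)\Rightarrow(2)$. The one standing tool is dimension shifting: since $\Ext^{\geqslant1}_R(\omega,W)=0$ for every $W\in\mathcal{P}_\omega(R)=\Add_R\omega$ (from (c1), because $\Ext^\ast_R(\omega,-)$ commutes with direct sums and summands), whenever $A$ is an $(i-1)$-$\mathcal{P}_\omega$-syzygy of $D$ and $D$ is an $(i+k-1)$-$\mathcal{P}_\omega$-cosyzygy (or an $(i+k-1)$-cosyzygy) of $D'$, one gets $\Ext^i_R(\omega,A)\cong\Ext^1_R(\omega,D)\cong\Ext^{i+k}_R(\omega,D')$ and $\Ext^{i+1}_R(\omega,A)\cong\Ext^2_R(\omega,D)\cong\Ext^{i+k+1}_R(\omega,D')$.

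For $(1)\Rightarrow(2)$ I fix $i$ with $0\leqslant i\leqslant n-1$ and an exact sequence $0\to A\to B\stackrel{f}{\to}C\to0$ with $A\in\Omega^{i-1}_{\mathcal{P}_\omega}(\coOmega^{i+k-1}_{\mathcal{P}_\omega}(R))$; if $i=0$ then $A=0$ and $f$ is invertible, so I may assume $i\geqslant1$ and write $A$ as an $(i-1)$-$\mathcal{P}_\omega$-syzygy of $D\in\coOmega^{i+k-1}_{\mathcal{P}_\omega}(R)$ and $D$ as an $(i+k-1)$-$\mathcal{P}_\omega$-cosyzygy of some $D'$. Putting $K=\im\Ext^i_R(\omega,f)$, $K'=\Ker\Ext^i_R(\omega,f)$, $Q=\Coker\Ext^i_R(\omega,f)$, the long exact $\Ext$-sequence of $0\to A\to B\to C\to0$ embeds $Q$ in $\Ext^{i+1}_R(\omega,A)\cong\Ext^{i+k+1}_R(\omega,D')$, so $(1)$ at index $i+1$ forces $\Tor^S_i(\omega,Q)=0$; and it presents $K'$ as the quotient of $\Ext^i_R(\omega,A)\cong\Ext^{i+k}_R(\omega,D')$ by the submodule $L=\im(\Ext^{i-1}_R(\omega,C)\to\Ext^i_R(\omega,A))$, so $(1)$ at index $i$ kills $\Tor^S_{i-1}(\omega,\Ext^i_R(\omega,A))$ and $\Tor^S_{i-2}(\omega,L)$, whence $\Tor^S_{i-1}(\omega,K')=0$. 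Feeding these vanishings into the $\Tor^S_i(\omega,-)$-sequences of $0\to K'\to\Ext^i_R(\omega,B)\to K\to0$ and $0\to K\to\Ext^i_R(\omega,C)\to Q\to0$ shows that $\Tor^S_i(\omega,\Ext^i_R(\omega,B))\to\Tor^S_i(\omega,K)$ and $\Tor^S_i(\omega,K)\to\Tor^S_i(\omega,\Ext^i_R(\omega,C))$ are both epic, so $\Tor^S_i(\omega,\Ext^i_R(\omega,f))$ is epic. The step $(2)\Rightarrow(3)$ is immediate from Lemma~\ref{lem-3.4}(2), since $\coOmega^{i+k-1}(R)\subseteq\coOmega^{i+k-1}_{\mathcal{P}_\omega}(R)$ makes every sequence admissible in $(3)$ admissible in $(2)$. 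When $k=0$ the very same computation runs for an \emph{arbitrary} exact sequence, because then $K'\subseteq\Ext^i_R(\omega,B)$ and $Q\subseteq\Ext^{i+1}_R(\omega,A)$ are controlled directly by $(1)$ at indices $i$ and $i+1$ (for $i=0$ the first half is automatic by right-exactness of $\omega\otimes_S-$); this yields $(1)\Rightarrow(4)$, and $(4)\Rightarrow(2)$ is trivial.

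The real work is $(3)\Rightarrow(1)$, and this is the step I expect to be the main obstacle. I would induct on the index $i\in\{1,\dots,n\}$: granting $(1)$ at all smaller indices and using $\Ext^{i+k}_R(\omega,M)\cong\Ext^{(i-1)+k}_R(\omega,\coOmega^1(M))$ (so that $(1)$ at index $i-1$ already forces $\Tor^S_j(\omega,Y)=0$ for $j<i-1$ and every submodule $Y\subseteq\Ext^{i+k}_R(\omega,M)$), it suffices to prove $\Tor^S_{i-1}(\omega,Y)=0$ for each such $Y$. The plan is to realize a given $Y$ as $\Coker\Ext^{i-1}_R(\omega,f)$ for an exact sequence $0\to A\to B\stackrel{f}{\to}C\to0$ in which $A$ is an $(i-2)$-$\mathcal{P}_\omega$-syzygy of the cosyzygy $\coOmega^{i+k-2}(M)$ — so that $\Ext^i_R(\omega,A)\cong\Ext^{i+k}_R(\omega,M)$ and $Q:=\Coker\Ext^{i-1}_R(\omega,f)\hookrightarrow\Ext^i_R(\omega,A)$ matches $Y$ — and in which $C$ is a $k$-cosyzygy; then $(3)$ at index $i-1$ makes the connecting map $\Tor^S_{i-1}(\omega,\Ext^{i-1}_R(\omega,C))\to\Tor^S_{i-1}(\omega,Q)$ vanish, and combining with $\Tor^S_{i-2}(\omega,\im\Ext^{i-1}_R(\omega,f))=0$ (from $(1)$ at index $i-1$ via $\Ext^{i-1}_R(\omega,C)\cong\Ext^{(i-1)+k}_R(\omega,-)$) gives $\Tor^S_{i-1}(\omega,Y)=0$. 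The genuinely delicate point, which is where I expect the argument to need care, is the construction of this exact sequence. Concretely, writing $Y\subseteq\Ext^1_R(\omega,N)$ for the appropriate cosyzygy $N$ and $0\to N\to I\to L\to0$ for the defining extension ($I$ injective, $L=\coOmega^1(N)$), one should take the pullback of $I\twoheadrightarrow L$ along a surjection $\omega^{(\bar Y)}\twoheadrightarrow L$ assembled from the preimage $\bar Y\subseteq L_{\ast}$ of $Y$; because the induced map $\Hom_R(\omega,\omega^{(\bar Y)})\to L_{\ast}$ has image exactly $\bar Y$, the connecting map identifies $Y$ with the cokernel of $\Hom_R(\omega,-)$ applied to the pulled-back surjection, and this has to be propagated up one homological degree and placed inside the required $\mathcal{P}_\omega$-syzygy class using Lemma~\ref{lem-3.1}(1) together with Lemma~\ref{lem-3.4}. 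The base case $i=1$ (where $\coOmega^{i+k-2}(M)$ and the $(i-2)$-syzygy degenerate) must be treated directly by the same pullback device, checking that it forces $\omega\otimes_S Y=0$.
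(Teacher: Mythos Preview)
Your $(1)\Rightarrow(2)$, $(2)\Rightarrow(3)$, and the $k=0$ additions are correct and match the paper's argument essentially verbatim: dimension shifting along the $\mathcal{P}_\omega$-syzygy/cosyzygy chain, then splitting $\Ext^i_R(\omega,f)$ into kernel/image/cokernel and killing the relevant $\Tor$ groups via the strong cograde hypothesis on submodules. Your observation that for $k=0$ one can simply use $K'\subseteq\Ext^i_R(\omega,B)$ directly is a minor simplification of the paper's treatment.

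The gap is in $(3)\Rightarrow(1)$. Your pullback construction (along a map from a module in $\Add_R\omega$ into the cosyzygy) is exactly the paper's Diagram~(3.3), and it does realize the given submodule as $\Coker c_*$ \emph{at degree zero}. But you then need, for the induction step, to transport this to a short exact sequence in which the submodule appears as $\Coker\Ext^{i-1}_R(\omega,f)$ and the kernel sits in the correct $\mathcal{P}_\omega$-syzygy class. You describe this only as ``propagated up one homological degree \ldots\ using Lemma~\ref{lem-3.1}(1) together with Lemma~\ref{lem-3.4}'', and this is where the argument is incomplete: neither of those lemmas produces what is needed. The paper's mechanism is different and more delicate. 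From the induction hypothesis together with Lemma~\ref{lem-3.2} one obtains $\coOmega^{n+k-1}(M)\in\cT^{n-1}_\omega(R)$, and since $\omega\otimes_SP\in\cT^{n-1}_\omega(R)$ as well, the pullback $X$ also lies in $\cT^{n-1}_\omega(R)$. This is what licenses the construction (via \cite[Proposition~3.7]{TH1}) of $\Hom_R(\Add_R\omega,-)$-exact $\mathcal{P}_\omega$-resolutions of length $n-1$ for both $\coOmega^{n+k-1}(M)$ and $X$; a comparison map between them then yields the new short exact sequence $0\to Y'\to Y\to Z\to 0$ with $Y,Y'\in{_R\omega}^{\perp_{n-1}}$ and the desired identification of the submodule with $\Ext^{n-1}_R(\omega,Z)$. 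The vanishing $\Ext^{n-1}_R(\omega,Y)=0$ is what makes the application of $(3)$ at index $n-1$ conclusive. Without invoking Lemma~\ref{lem-3.2} and the $\cT^{n-1}_\omega$ membership, you have no way to guarantee that such $(-)_*$-exact resolutions exist, and the propagation step collapses.
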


\begin{proof}
$(1)\Rightarrow (2)$ Let $A=\Omega^{i-1}_{\mathcal{P}_\omega}(\coOmega^{i+k-1}_{\mathcal{P}_\omega}(A'))$ with $A'\in\Mod R$.
For any $i\geqslant 0$, by dimension-shifting we have an exact sequence
$$\Ext^{i+k}_R(\omega,A')\stackrel{g}{\longrightarrow} \Ext^{i}_R(\omega,B)\stackrel{\Ext^{i}_R(\omega,f)}{\longrightarrow}
\Ext^{i}_R(\omega,C)\to \Ext^{i+k+1}_R(\omega,A')$$ in $\Mod S$, which induces exact sequences
$$\Tor^S_i(\omega, \Ext^i_{R}(\omega,B))\stackrel{a}{\longrightarrow} \Tor^S_i(\omega, \im (\Ext^i_{R}(\omega,f)))\to
\Tor^S_{i-1}(\omega, \Ker (\Ext^i_{R}(\omega,f)))$$ and
$$\Tor^S_i(\omega, \im (\Ext^i_{R}(\omega,f)))\stackrel{b}{\longrightarrow} \Tor^S_i(\omega, \Ext^i_{R}(\omega,C))\to
\Tor^S_i(\omega, \Coker (\Ext^i_{R}(\omega,f)))$$ in $\Mod R$.
Since $\Coker (\Ext^{i}_R(\omega,f))\subseteq \Ext^{i+k+1}_R(\omega,A')$, by (1) we have
$$\Tor^S_i(\omega, \Coker (\Ext^i_{R}(\omega,f)))=0$$ for any $0\leqslant i \leqslant n-1$.
Moreover, it follows from (1) and the exact sequence
$$0\to \Ker g\to \Ext^{i+k}_R(\omega,A')\to \Ker (\Ext^i_{R}(\omega,f))\to 0$$
in $\Mod S$ that $\Tor^S_{i-1}(\omega, \Ker (\Ext^i_{R}(\omega,f)))=0$ for any $0\leqslant i \leqslant n-1$.
Thus $\Tor^S_i(\omega, \Ext^i_{R}(\omega,f))=b\cdot a$ is an epimorphism for any $0\leqslant i \leqslant n-1$.

By Lemma ~\ref{lem-3.4}(2), we have $(2)\Rightarrow (3)$.

$(3)\Rightarrow (1)$ Let $M\in \Mod R$. Fix $i$ ($1\leqslant i\leqslant n$) and an $S$-submodule $L$ of $\Ext^{i+k}_{R}(\omega,M)$.
Take an epimorphism $a:P\twoheadrightarrow L$ in $\Mod S$ with $P$ projective and $a'$ the composition
$$P\stackrel{a}{\twoheadrightarrow} L\hookrightarrow \Ext^{i+k}_{R}(\omega,M).$$
Then $a'$ can be lifted to $b: P\to \coOmega^{i+k}(M)_*$. Take the following pull-back diagram
$$\xymatrix{ 0 \ar[r] & \coOmega^{i+k-1}(M) \ar[r]^{d} \ar@{=}[d] & X  \ar[r]^{c} \ar[d] & \omega\otimes_SP \ar[r] \ar[d]^{b'} & 0 \\
0 \ar[r] & \coOmega^{i+k-1}(M) \ar[r] & I^{i+k-1}(M)  \ar[r]  & \coOmega^{i+k}(M) \ar[r]& 0, \\
 & & {\rm Diagram}\ (3.3) &  }$$
where $b'$ is the composition
$$\omega\otimes_SP\stackrel{1_\omega\otimes b}{\longrightarrow}\omega\otimes_S \coOmega^{i+k}(M)_*
\stackrel{\theta_{\coOmega^{i+k}(M)}}{\longrightarrow}\coOmega^{i+k}(M).$$
It induces the following commutative diagram with exact rows
{\tiny $$\xymatrix{ 0 \ar[r] & \coOmega^{i+k-1}(M)_* \ar[r]^{d_*} \ar@{=}[d] & X_*  \ar[r]^{c_*} \ar[d]
& (\omega\otimes_SP)_* \ar[r] \ar[d]^{{b'}_*} & L \ar[r] \ar[d] & 0 \\
0 \ar[r] & \coOmega^{i+k-1}(M)_* \ar[r] & I^{i+k-1}(M)_*  \ar[r]  & \coOmega^{i+k}(M)_* \ar[r]& \Ext^{i+k}_{R}(\omega,M)\ar[r]&0. \\}$$}

In the following, we will proceed by induction on $i$. Let $i=1$. Since $1_{\omega}\otimes c_*$ is epic by (3),
we have that $\omega\otimes_S L=0$ and $\sTcograde_{\omega}\Ext^{1+k}_{R}(\omega,M)\geqslant 1$.

Assume that the statement (1) holds for any $1\leqslant i \leqslant n-1$.
Now consider the case for $i=n$. By the induction hypothesis, we have that $\sTcograde_{\omega}\Ext^{i+k}_{R}(\omega,M)\geqslant i$
for any $1\leqslant i\leqslant n-1$ and $\sTcograde_{\omega}\Ext^{n+k}_{R}(\omega,M)\geqslant n-1$. Then
$\coOmega^{n+k-1}(M)\in \cT^{n-1}_{\omega}(R)$ by Lemma ~\ref{lem-3.2}. Because $\omega\otimes_SP\in \cT^{n-1}_{\omega}(R)$
by \cite[Proposition 3.7]{TH1}, it follows from \cite[Lemma 4.3]{TH5} that $X$
in the diagram (3.3) is in $\cT^{n-1}_{\omega}(R)$. By \cite[Proposition 3.7]{TH1} again, there exists
$\Hom_R(\Add_R\omega,-)$-exact exact sequences
$$0\to Y' \to W'_{n-2} \to \cdots \to W'_0 \to \coOmega^{n+k-1}(M) \to 0$$
and
$$0\to Y \to W_{n-2} \to \cdots \to W_0 \to X \to 0$$
in $\Mod R$ with all $W'_j,W_j$ in $\Add_R\omega$. Then both $Y$ and $Y'$ are in ${_R\omega}^{\bot_{n-1}}$
and we get the following commutative diagram
{\tiny $$\xymatrix{ 0 \ar[r] & Y' \ar[r] \ar@{-->}[d]^{g} & W'_{n-2} \ar[r] \ar@{-->}[d] & \cdots \ar[r]
&  W'_{0} \ar[r] \ar@{-->}[d] &  \coOmega^{n+k-1}(M) \ar[r] \ar[d]^{d} & 0 \\
0 \ar[r] & Y \ar[r] & W_{n-2}  \ar[r]  & \cdots \ar[r] & W_{0} \ar[r]& X \ar[r]&0.}$$}
We can guarantee that $g$ is a monomorphism by adding a direct summand in $\Add_R\omega$ (for example $W'_{n-2}$) to $Y$ and $W_{n-2}$.
Thus we get an exact sequence
$$0\to Y'\stackrel{g}{\longrightarrow} Y \stackrel{h}{\longrightarrow} Z\to 0$$ in $\Mod R$ with $Z=\Coker g$. Since
$$\Coker(\Ext^{n-1}_R(\omega,h))\cong \Ker(\Ext^{n}_R(\omega,g))\cong \Ker(\Ext^{1}_R(\omega,d))\cong \Coker c_*\cong L,$$
we obtain $L\cong \Ext^{n-1}_R(\omega,Z)$. Since $Y'\in \Omega^{n-1}_{\mathcal{P}_\omega}(\coOmega^{n+k-1}(R))$, by (3) we get that
$\Tor^S_{n-1}(\omega, \Ext^{n-1}_{R}(\omega,h))$ is epic. So
$\Tor_{n-1}^S(\omega,L)=0$ and $\sTcograde_{\omega}\Ext^{n+k}_{R}(\omega,M)$ $\geqslant n$.

When $k=0$, the proof of $(3)\Rightarrow(1)\Rightarrow (2)$ is in fact that of $(4)\Leftrightarrow (1)$
by just removing the first sentence and putting $A'=A$ in the beginning of the proof of $(1)\Rightarrow (2)$,
\end{proof}

Compare the following result with Theorem ~\ref{thm-3.7}.

\begin{theorem} \label{thm-3.9}
For any $n\geqslant 1$ and $k\geqslant 0$, the following three statements are equivalent.
\begin{enumerate}
\item $\sEcograde_{\omega}\Tor_{i+k}^{S}(\omega,N)\geqslant i$ for any $N\in \Mod S$ and $1\leqslant i\leqslant n$.
\item For any exact sequence
$$0\to A\stackrel{g}{\longrightarrow} B\to C\to 0$$ in $\Mod S$ with $C\in \coOmega^{i-1}_{\mathcal{I}_\omega}(\Omega^{i+k-1}_{\mathcal{I}_\omega}(S))$,
$\Ext_{R}^i(\omega, \Tor_i^{S}(\omega,g))$ is a monomorphism for any $0\leqslant i\leqslant n-1$.
\item For any exact sequence
$$0\to A\stackrel{g}{\longrightarrow} B\to C\to 0$$ in $\Mod S$ with $C\in \coOmega^{i-1}_{\mathcal{I}_\omega}(\Omega^{i+k-1}_{\mathcal{F}}(S))$,
$\Ext_{R}^i(\omega, \Tor_i^{S}(\omega,g))$ is a monomorphism for any $0\leqslant i\leqslant n-1$.
\end{enumerate}
Moreover, if $k=0$, then any of the above statements is equivalent to the following
\begin{enumerate}
\item[(4)] For any exact sequence
$$0\to A\stackrel{g}{\longrightarrow} B\to C\to 0$$ in $\Mod S$,
$\Ext_{R}^i(\omega, \Tor_i^{S}(\omega,g))$ is a monomorphism for any $0\leqslant i\leqslant n-1$.
\end{enumerate}
\end{theorem}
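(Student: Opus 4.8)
The plan is to prove the cycle $(1)\Rightarrow(2)\Rightarrow(3)\Rightarrow(1)$, and the equivalence with $(4)$ when $k=0$, by dualizing the proof of Theorem~\ref{thm-3.8} throughout: minimal injective resolutions become minimal flat resolutions, cosyzygies become yokes, the cotranspose becomes the adjoint cotranspose, $\mathcal{P}_\omega(R)$-syzygies become $\mathcal{I}_\omega(S)$-cosyzygies, $\mathcal{B}_\omega(R)$ becomes $\mathcal{A}_\omega(S)$, pull-backs become push-outs, and the results of \cite{TH1,TH5} invoked there are replaced by their adjoint counterparts from \cite{TH3}; Lemma~\ref{lem-3.3} and Lemma~\ref{lem-3.6} play the roles of Lemma~\ref{lem-3.2} and Lemma~\ref{lem-3.4}. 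As in Theorem~\ref{thm-3.8}, the case $k=0$ together with statement $(4)$ drops out of the arguments for $(1)\Rightarrow(2)$ and $(3)\Rightarrow(1)$ applied to arbitrary short exact sequences.

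For $(1)\Rightarrow(2)$, take an exact sequence $0\to A\stackrel{g}{\to}B\to C\to 0$ with $C$ an $(i-1)$-$\mathcal{I}_\omega$-cosyzygy of an $(i+k-1)$-yoke of some $C'\in\Mod S$ (the case $i=0$ being trivial). Since every $\omega$-injective module lies in $\mathcal{A}_\omega(S)\subseteq{{\omega_S}^{\top}}$ (Definition~\ref{def-2.2} and \cite[Corollary 6.1]{HW}), dimension-shifting in the first variable of $\Tor$ gives $\Tor^S_i(\omega,C)\cong\Tor^S_{i+k}(\omega,C')$ and $\Tor^S_{i+1}(\omega,C)\cong\Tor^S_{i+k+1}(\omega,C')$. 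Factor $\Tor^S_i(\omega,g)$ through its image, $\Tor^S_i(\omega,A)\twoheadrightarrow Y\hookrightarrow\Tor^S_i(\omega,B)$, and apply $\Ext^i_R(\omega,-)$; then $\Ker\Tor^S_i(\omega,g)$ is a quotient of $\Tor^S_{i+k+1}(\omega,C')$ and $\Coker\Tor^S_i(\omega,g)$ embeds into $\Tor^S_{i+k}(\omega,C')$, so $(1)$ forces the $\Ecograde_\omega$ of these and of all their subquotients to be $\geq i+1$ and $\geq i$ respectively; hence the relevant connecting maps in the $\Ext$-long exact sequences vanish and $\Ext^i_R(\omega,\Tor^S_i(\omega,g))$ is monic for $0\le i\le n-1$. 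Finally $(2)\Rightarrow(3)$ follows from $\Omega^{i+k-1}_{\mathcal{F}}(S)\subseteq\Omega^{i+k-1}_{\mathcal{I}_{\omega}}(S)$ (Lemma~\ref{lem-3.6}(2)).

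The crux is $(3)\Rightarrow(1)$. Fix $N\in\Mod S$, $1\le i\le n$, and a quotient $L$ of $\Tor^S_{i+k}(\omega,N)$; we must show $\Ecograde_\omega L\ge i$. From $0\to\Omega^{i+k}_{\mathcal{F}}(N)\to F_{i+k-1}(N)\to\Omega^{i+k-1}_{\mathcal{F}}(N)\to 0$ there is a natural monomorphism $\Tor^S_{i+k}(\omega,N)\hookrightarrow\omega\otimes_S\Omega^{i+k}_{\mathcal{F}}(N)$; choose an injective $R$-module $Q$ with $L\hookrightarrow Q$, extend $\Tor^S_{i+k}(\omega,N)\twoheadrightarrow L\hookrightarrow Q$ to $b'\colon\omega\otimes_S\Omega^{i+k}_{\mathcal{F}}(N)\to Q$, and rewrite $b'=\theta_Q\circ(1_\omega\otimes b)$ via $\mu$ and $\omega\otimes_S Q_*\cong Q$ (\cite[Lemma 4.1]{HW}), where $b\colon\Omega^{i+k}_{\mathcal{F}}(N)\to Q_*$ and $Q_*\in\mathcal{I}_\omega(S)$. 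Pushing the above sequence out along $b$ gives $0\to Q_*\stackrel{d}{\to}X\stackrel{c}{\to}\Omega^{i+k-1}_{\mathcal{F}}(N)\to 0$, and applying $\omega\otimes_S-$ (which preserves push-outs) together with $\Tor^S_1(\omega,Q_*)=0$ identifies $L$ with $\Ker(1_\omega\otimes d)$, so $L_*=\Ker(1_\omega\otimes d)_*$. One then inducts on $n$ (over all $k$ at once, using that $(3)$ for $(n,k)$ implies $(3)$ for $(n-1,k)$ and for $(n-1,k+1)$ since $\Omega^{j+1}_{\mathcal{F}}(S)\subseteq\Omega^{j}_{\mathcal{F}}(S)$). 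For $n=1$, $(3)$ applied to $0\to Q_*\stackrel{d}{\to}X\stackrel{c}{\to}\Omega^{k}_{\mathcal{F}}(N)\to 0$ makes $(1_\omega\otimes d)_*$ monic, i.e.\ $L_*=0$. For $n\ge 2$, the hypothesis gives $\sEcograde_\omega\Tor^S_{i+k}(\omega,N)\ge i$ for $1\le i\le n-1$ and $\sEcograde_\omega\Tor^S_{n+k}(\omega,N)\ge n-1$, so $\Omega^{n+k-1}_{\mathcal{F}}(N)\in\acT^{n-1}_\omega(S)$ by Lemma~\ref{lem-3.3} and, using the adjoint analogue of \cite[Lemma 4.3]{TH5} and $\mathcal{I}_\omega(S)\subseteq\acT^{n-1}_\omega(S)$, also $X\in\acT^{n-1}_\omega(S)$; by the adjoint analogue of \cite[Proposition 3.7]{TH1} (cf.\ \cite[Theorem 3.11(1)]{TH3}), $X$ and $\Omega^{n+k-1}_{\mathcal{F}}(N)$ admit $(\omega\otimes_S-)$-exact $\mathcal{I}_\omega(S)$-coresolutions of length $n-1$ with cokernels $Z$, resp.\ $Z'$, in ${{\omega_S}^{\top_{n-1}}}$. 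Lifting $c$ through the comparison theorem and correcting by an $\mathcal{I}_\omega$-summand produces an epimorphism $\bar c\colon Z\twoheadrightarrow Z'$ and a short exact sequence $0\to W\stackrel{h}{\to}Z\stackrel{\bar c}{\to}Z'\to 0$ with $W=\Ker\bar c$. Dimension-shifting along the two coresolutions and along the push-out sequence yields $\Ker\Tor^S_{n-1}(\omega,h)\cong\Coker\Tor^S_n(\omega,\bar c)\cong\Coker\Tor^S_1(\omega,c)\cong\Ker(1_\omega\otimes d)\cong L$, while $\Tor^S_{n-1}(\omega,Z)=0$ makes $\Tor^S_{n-1}(\omega,h)$ a map with zero codomain. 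Since $Z'$ is an $\mathcal{I}_\omega$-cosyzygy of a yoke, $(3)$ applies to $0\to W\stackrel{h}{\to}Z\stackrel{\bar c}{\to}Z'\to 0$ and forces $\Ext^{n-1}_R(\omega,\Tor^S_{n-1}(\omega,h))$ to be monic, i.e.\ $\Ext^{n-1}_R(\omega,L)=0$; with the induction hypothesis this gives $\Ecograde_\omega L\ge n$.

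The step I expect to be the main obstacle is the push-out construction in $(3)\Rightarrow(1)$: one must build, from an arbitrary quotient $L$ of $\Tor^S_{i+k}(\omega,N)$, a short exact sequence whose third term has exactly the ``$\mathcal{I}_\omega$-cosyzygy of a yoke'' shape that $(3)$ can be applied to and whose induced map on the double functor recovers $L$, and then keep the indices of the $\mathcal{I}_\omega$-cosyzygies, the yokes, and ${{\omega_S}^{\top_{n-1}}}$ consistent through the coresolution comparison. The remaining work is formal manipulation of long exact sequences, together with the adjoint versions, drawn from \cite{TH3}, of the facts about $\cT^\bullet_\omega(R)$ used in Theorem~\ref{thm-3.8}.
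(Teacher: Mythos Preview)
Your proposal is correct and follows essentially the same route as the paper's own proof: the paper also dualizes Theorem~\ref{thm-3.8} step by step, using the push-out along a map $\Omega^{i+k}_{\mathcal{F}}(N)\to I_*$ (your $Q_*$), the identification of the quotient $H$ (your $L$) with the kernel of $1_\omega\otimes c$, Lemma~\ref{lem-3.3} and the dual of \cite[Lemma 4.3]{TH5} to place $Y$ (your $X$) in $\acT^{n-1}_\omega(S)$, the $(\omega\otimes_S-)$-exact $\mathcal{I}_\omega(S)$-coresolutions from \cite{TH3}, and the adjustment by an $\mathcal{I}_\omega$-summand to make the comparison map surjective. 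The only cosmetic difference is that you phrase the induction as ``over $n$ for all $k$ at once'' via the inclusion $\Omega^{j+1}_{\mathcal{F}}(S)\subseteq\Omega^{j}_{\mathcal{F}}(S)$, whereas the paper obtains $\sEcograde_\omega\Tor^S_{n+k}(\omega,N)\ge n-1$ by applying the inductive step to $\Omega^1_{\mathcal{F}}(N)$; these are the same manoeuvre.
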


\begin{proof}
$(1)\Rightarrow (2)$ Let $C=\coOmega^{i-1}_{\mathcal{I}_\omega}(\Omega^{i+k-1}_{\mathcal{I}_\omega}(C'))$ with $C'\in \Mod S$.
For any $i\geqslant 0$, by dimension shifting we have an exact sequence
$$\Tor_{i+k+1}^S(\omega,C')\to \Tor_{i}^S(\omega,A)\stackrel{\Tor_{i}^S(\omega,g)}{\longrightarrow}\Tor_{i}^S(\omega,B)
\stackrel{f}{\longrightarrow} \Tor_{i+k}^S(\omega,C')$$ in $\Mod R$, which induces exact sequences
$$\Ext_{R}^i(\omega, \Ker (\Tor_{i}^S(\omega,g)))\to \Ext_{R}^i(\omega, \Tor_{i}^S(\omega,A))
\stackrel{a}{\longrightarrow} \Ext_{R}^i(\omega, \im (\Tor_{i}^S(\omega,g)))$$
and
$$\Ext_{R}^{i-1}(\omega, \Coker (\Tor_{i}^S(\omega,g)))\to \Ext_{R}^i(\omega, \im (\Tor_{i}^S(\omega, g)))
\stackrel{b}{\longrightarrow} \Ext_{R}^i(\omega, \Tor_{i}^S(\omega,B))$$ in $\Mod S$.
Since $\Ker (\Tor_{i}^S(\omega,g))$ is an $R$-quotient module of $\Tor_{i+k+1}^S(\omega,C')$, by (1) we have
$$\Ext_{R}^i(\omega, \Ker (\Tor_{i}^S(\omega,g)))=0.$$
Moreover, it follows from (1) and the exact sequence
$$0\to \Coker(\Tor_{i}^S(\omega,g))\to \Tor_{i+k}^S(\omega,C')\to \Coker f\to 0$$ in $\Mod R$ that
$\Ext_{R}^{i-1}(\omega, \Coker (\Tor_{i}^S(\omega,g)))=0$ for any $0\leqslant i \leqslant n-1$. Thus
$\Ext_{R}^i(\omega, \Tor_i^{S}(\omega,g))=b\cdot a$ is a monomorphism for any $0\leqslant i \leqslant n-1$.

By Lemma ~\ref{lem-3.6}(2), we have $(2)\Rightarrow (3)$.

$(3)\Rightarrow (1)$ Let $N\in \Mod S$. Fix $i$ ($1\leqslant i\leqslant n$) and an $R$-quotient module $H$ of
$\Tor_{i+k}^{S}(\omega,N)$. Take a monomorphism $a:H\rightarrowtail I$ in $\Mod R$ with $I$ injective
and $a'$ the composition
$$\Tor_{i+k}^{S}(\omega,N)\twoheadrightarrow H\stackrel{a}{\rightarrowtail} I.$$ Then $a'$ can be extended to
$b:\omega\otimes_S\Omega^{i+k}_{\mathcal{F}}(N)\to I$. Take the following push-out diagram:
{$$\xymatrix{ 0 \ar[r] & \Omega^{i+k}_{\mathcal{F}}(N) \ar[r] \ar[d]^{b'} & F_{i+k-1}(N)  \ar[r] \ar[d]
& \Omega^{i+k-1}_{\mathcal{F}}(N) \ar[r] \ar@{=}[d] & 0 \\
0 \ar[r] & I_* \ar[r]^{c} & Y  \ar[r]^{d}  & \Omega^{i+k-1}_{\mathcal{F}}(N) \ar[r]& 0,\\
&  &{\rm Diagram}\ (3.5) &  }$$}
where $b'$ is the composition
$$\Omega^{i+k}_{\mathcal{F}}(N)\stackrel{\mu_{\Omega^{i+k}_{\mathcal{F}}(N)}}{\longrightarrow}
(\omega\otimes_S\Omega^{i+k}_{\mathcal{F}}(N))_*\stackrel{b_*}{\longrightarrow} I_*.$$
It induces the following commutative diagram with exact rows
{\tiny $$\xymatrix{ 0 \ar[r] & \Tor_{i+k}^{S}(\omega,N) \ar[r] \ar[d] & \omega\otimes_S\Omega^{i+k}_{\mathcal{F}}(N)
\ar[r] \ar[d]^{1_{\omega}\otimes b'} & \omega\otimes_SF_{i+k-1}(N) \ar[r] \ar[d]
& \omega\otimes_S\Omega^{i+k-1}_{\mathcal{F}}(N) \ar[r] \ar@{=}[d] & 0 \\
0 \ar[r] & H \ar[r] & \omega\otimes_SI_* \ar[r]^{1_{\omega}\otimes c}
& \omega\otimes_SY \ar[r]^{1_{\omega}\otimes d}& \omega\otimes_S\Omega^{i+k-1}_{\mathcal{F}}(N)\ar[r]&0.}$$}

In the following, we will proceed by induction on $i$. Let $i=1$. Since $(1_{\omega}\otimes c)_*$ is monic by (2), we have that $H_*=0$
and $\sEcograde_{\omega}\Tor_{1+k}^{S}(\omega,N)\geqslant 1$.

Assume that the statement (1) holds for any $1\leqslant i \leqslant n-1$.
Now consider the case for $i=n$. By the induction hypothesis, we have that $\sEcograde_{\omega}\Tor_{i+k}^{S}(\omega,N)\geqslant i$
for any $1\leqslant i\leqslant n-1$ and $\sEcograde_{\omega}\Tor_{n+k}^{S}(\omega,N)\geqslant n-1$.
Then $\Omega^{n+k-1}_{\mathcal{F}}(N)\in \acT^{n-1}_{\omega}(S)$ by Lemma ~\ref{lem-3.3}.
Because $I_*\in\acT^{n-1}_{\omega}(S)$ by \cite[Propposition]{TH3},
it follows from the dual result of \cite[Lemma 4.3]{TH5} that $Y$ in the diagram (3.5) is in $\acT^{n-1}_{\omega}(S)$.
By \cite[Propposition]{TH3} again, there exist $(\omega\otimes_S-)$-exact exact sequences
$$0 \to Y \to U^{0} \to \cdots \to  U^{n-2} \to  X \to 0$$
and
$$0\to \Omega^{n+k-1}_{\mathcal{F}}(L) \to V^{0} \to \cdots \to V^{n-2} \to X' \to 0$$
in $\Mod S$ with all $U^i, V^i$ in $\mathcal{I}_\omega(S)$. Then both $X$ and $X'$ are in ${\omega_S}^{\top_{n-1}}$
and we get the following commutative diagram
{\tiny $$\xymatrix{ 0 \ar[r] & Y \ar[r] \ar[d]^{d} & U^{0} \ar[r] \ar@{-->}[d] & \cdots \ar[r]
&  U^{n-2} \ar[r] \ar@{-->}[d] &  X \ar[r] \ar@{-->}[d]^{f} & 0 \\
0 \ar[r] & \Omega^{n+k-1}_{\mathcal{F}}(L) \ar[r] & V^{0}  \ar[r]  & \cdots \ar[r] &  V^{n-2} \ar[r]& X' \ar[r]&0.}$$}
We can guarantee that $f$ is an epimorphism by adding a direct summand in $\mathcal{I}_{\omega}(S)$ (for example $V^{n-2}$) to $X$ and $U^{n-2}$.
Thus we get an exact sequence
$$0\to Z\stackrel{h}{\longrightarrow} X \stackrel{f}{\longrightarrow} X'\to 0$$ in $\Mod S$ with $Z=\Ker f$. Since
$$\Ker(\Tor_{n-1}^S(\omega,h))\cong \Coker(\Tor_{n}^S(\omega,f))\cong \Coker(\Tor_1^S(\omega,d))\cong \Ker(1_{\omega}\otimes c),$$
we obtain $H\cong \Tor_{n-1}^S(\omega,Z)$. Since $X'\in \coOmega^{n-1}_{\mathcal{I}_\omega}(\Omega^{n+k-1}_{\mathcal{F}}(S))$,
by (3) we get that $\Ext_{R}^{n-1}(\omega, \Tor_{n-1}^{S}(\omega,h))$ is a monomorphism.
So $\Ext_{R}^{n-1}(\omega,H)=0$ and $\sEcograde_{\omega}\Tor_{n+k}^{S}(\omega,N)\geqslant n$.

When $k=0$, the proof of $(3)\Rightarrow (1)\Rightarrow (2)$ is in fact that of $(4)\Leftrightarrow (1)$
by just removing the first sentence and putting $C'=C$ in the beginning of the proof of $(1)\Rightarrow (2)$,
\end{proof}

\section {\bf (Quasi) $n$-cograde condition}

In this section, we introduce and study the (quasi) $n$-cograde condition of semidualizing bimodules.

\vspace{0.2cm}

\noindent{\bf 4.1. The $n$-cograde condition}

\vspace{0.2cm}

\begin{definition} \label{def-4.1}
{\rm For any $n\geqslant 1$, $\omega$ is said to satisfy the {\it right $n$-cograde condition}
if $\sEcograde_\omega\Tor_{i}^S(\omega,N)\geqslant i$ for any $N\in \Mod S$ and $1\leqslant i\leqslant n$;
and $\omega$ is said to satisfy the {\it left $n$-cograde condition}
if $\sEcograde_\omega\Tor_{i}^R(M',\omega)\geqslant i$ for any $M'\in \Mod R^{op}$ and $1\leqslant i\leqslant n$.}
\end{definition}

As a consequence of Theorems \ref{thm-3.8} and \ref{thm-3.9}, we get the following equivalent characterizations
for $\omega$ satisfying the right $n$-cograde condition.

\begin{corollary} \label{cor-4.2}
For any $n\geqslant 1$, the following statements are equivalent.
\begin{enumerate}
\item $\sTcograde_\omega\Ext^i_R(\omega,M)\geqslant i$ for any $M\in \Mod R$ and $1\leqslant i\leqslant n$.
\item $\sEcograde_\omega\Tor_i^S(\omega,N)\geqslant i$ for any $N\in \Mod S$ and $1\leqslant i\leqslant n$.
\item $\Tor^S_i(\omega,\Ext^i_R({\omega},-))$ preserves epimorphisms in $\Mod R$ for $0\leqslant i\leqslant n-1$.
\item $\Ext^i_R(\omega,\Tor^S_i({\omega},-))$ preserves monomorphisms in $\Mod S$ for $0\leqslant i\leqslant n-1$.
\item For any exact sequence
$$0\to A\to B\stackrel{f}{\longrightarrow}C\to 0$$ in $\Mod R$ with
$A\in \Omega^{i-1}_{\mathcal{P}_\omega}(\coOmega^{i-1}_{\mathcal{P}_\omega}(R))$,
$\Tor^S_i(\omega, \Ext^i_{R}(\omega,f))$ is an epimorphism for any $0\leqslant i\leqslant n-1$.
\item For any exact sequence
$$0\to A\to B\stackrel{f}{\longrightarrow}C\to 0$$ in $\Mod R$ with $A\in \Omega^{i-1}_{\mathcal{P}_\omega}(\coOmega^{i-1}(R))$,
$\Tor^S_i(\omega, \Ext^i_{R}(\omega,f))$ is an epimorphism for any $0\leqslant i\leqslant n-1$.
\item For any exact sequence
$$0\to A\stackrel{g}{\longrightarrow} B\to C\to 0$$ in $\Mod S$ with $C\in \coOmega^{i-1}_{\mathcal{I}_\omega}(\Omega^{i-1}_{\mathcal{I}_\omega}(S))$,
$\Ext_{R}^i(\omega, \Tor_i^{S}(\omega,g))$ is a monomorphism for any $0\leqslant i\leqslant n-1$.
\item For any exact sequence
$$0\to A\stackrel{g}{\longrightarrow} B\to C\to 0$$ in $\Mod S$ with $C\in \coOmega^{i-1}_{\mathcal{I}_\omega}(\Omega^{i-1}_{\mathcal{F}}(S))$,
$\Ext_{R}^i(\omega, \Tor_i^{S}(\omega,g))$ is a monomorphism for any $0\leqslant i\leqslant n-1$.
\end{enumerate}
\end{corollary}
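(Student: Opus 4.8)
The eight conditions split into two groups, each being a restatement of one theorem from Section~3, and the only genuine work lies in bridging the two groups. Specialising Theorem~\ref{thm-3.8} to $k=0$ matches condition $(1)$ of the corollary with statement $(1)$ of that theorem, and matches its statements $(2)$, $(3)$, $(4)$ with conditions $(5)$, $(6)$, $(3)$ (for $(3)$ one uses that an additive functor preserves epimorphisms exactly when it sends the surjection in every short exact sequence to a surjection); hence $(1)$, $(3)$, $(5)$, $(6)$ are mutually equivalent. Symmetrically, Theorem~\ref{thm-3.9} with $k=0$ makes $(2)$, $(4)$, $(7)$, $(8)$ mutually equivalent. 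So the plan is to prove $(1)\Leftrightarrow(2)$.

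The plan for $(1)\Leftrightarrow(2)$ is an induction on $n$. For $n=1$ the index in $(5)$ and in $(7)$ is forced to be $i=0$, where the ambient classes are $\Omega^{-1}_{\mathcal{P}_\omega}(\coOmega^{-1}_{\mathcal{P}_\omega}(R))=0$ and $\coOmega^{-1}_{\mathcal{I}_\omega}(\Omega^{-1}_{\mathcal{I}_\omega}(S))=0$; thus $(5)$ and $(7)$ hold vacuously, so $(1)$ and $(2)$ both hold and there is nothing to prove. Assuming the equivalence at level $n-1$, write $(1)$ at level $n$ as ``$(1)$ at level $n-1$'' together with ``$\sTcograde_\omega\Ext^n_R(\omega,M)\geqslant n$ for all $M\in\Mod R$'', and likewise for $(2)$; then the induction hypothesis reduces the step to the single implication, and its mirror image,
$$\bigl[\ \sTcograde_\omega\Ext^n_R(\omega,M)\geqslant n\ \ \text{for all }M\ \bigr]\ \Longrightarrow\ \bigl[\ \sEcograde_\omega\Tor^S_n(\omega,N)\geqslant n\ \ \text{for all }N\ \bigr],$$
granted that $(1)$ and $(2)$ both hold at level $n-1$.

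For this implication I would first use the cosyzygy machinery behind Theorems~\ref{thm-3.8} and~\ref{thm-3.9} (namely Lemmas~\ref{lem-3.2} and~\ref{lem-3.3}, applied to all pertinent subobjects, which is what the word ``strong'' encodes) to recast the hypothesis as the inclusion $\coOmega^{n-1}(R)\subseteq\cT^{n}_\omega(R)$ and the desired conclusion as $\Omega^{n-1}_{\mathcal{F}}(S)\subseteq\acT^{n}_\omega(S)$. The passage between the $R$-side data $\bigl(\coOmega,\ \mathcal{P}_\omega(R)=\Add_R\omega,\ \cT_\omega(R)\bigr)$ and the $S$-side data $\bigl(\Omega_{\mathcal{F}},\ \mathcal{I}_\omega(S),\ \acT_\omega(S)\bigr)$ would be carried out via the equivalence between the Auslander class $\mathcal{A}_\omega(S)$ and the Bass class $\mathcal{B}_\omega(R)$ afforded by $\Hom_R(\omega,-)$ and $\omega\otimes_S-$, together with the identifications $\coOmega^{n}_{\mathcal{B}}(R)=\coOmega^{n}_{\mathcal{P}_\omega}(R)$ and $\Omega^{n}_{\mathcal{A}}(S)=\Omega^{n}_{\mathcal{I}_\omega}(S)$ from Lemmas~\ref{lem-3.4} and~\ref{lem-3.6}, and the comparison exact sequences of Lemma~\ref{lem-3.1} that locate $\Ext^1_R(\omega,-)$ and $\Tor^S_1(\omega,-)$ inside $\cTr_\omega$ and $\acTr_\omega$.

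The main obstacle I anticipate is precisely this passage between the two module categories: $\cTr_\omega$ is built from a minimal injective resolution over $R$ whereas $\acTr_\omega$ is built from a minimal flat resolution over $S$, and the Auslander--Bass equivalence does not match these two kinds of resolutions, so one cannot transport them termwise. The cleaner route is likely to run the bridge directly on the double-functor reformulations $(3)$ and $(4)$---that is, to show that $\Tor^S_i(\omega,\Ext^i_R(\omega,-))$ preserves epimorphisms in $\Mod R$ for $0\leqslant i\leqslant n-1$ if and only if $\Ext^i_R(\omega,\Tor^S_i(\omega,-))$ preserves monomorphisms in $\Mod S$ for $0\leqslant i\leqslant n-1$---using the acyclicity of $\mathcal{P}_\omega(R)$ and of the injective $R$-modules for the first double functor, of $\mathcal{I}_\omega(S)$ and of the flat $S$-modules for the second, and chasing the valuation homomorphisms $\theta$ and $\mu$, rather than comparing resolutions.
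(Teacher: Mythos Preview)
Your splitting of the eight conditions into two blocks via Theorems~\ref{thm-3.8} and~\ref{thm-3.9} with $k=0$ is exactly what the paper does. The difference is entirely in how the bridge $(1)\Leftrightarrow(2)$ is handled: the paper does not prove it here at all but simply invokes \cite[Theorem~6.9]{TH2}. Your attempt to supply an internal proof by induction has a genuine gap.

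The base case is wrong. You argue that for $n=1$ the classes $\Omega^{-1}_{\mathcal{P}_\omega}(\coOmega^{-1}_{\mathcal{P}_\omega}(R))$ and $\coOmega^{-1}_{\mathcal{I}_\omega}(\Omega^{-1}_{\mathcal{I}_\omega}(S))$ reduce to $0$, so $(5)$ and $(7)$ are vacuous, hence $(1)$ and $(2)$ ``both hold and there is nothing to prove.'' But $(1)$ at $n=1$ is the condition $\sTcograde_\omega\Ext^1_R(\omega,M)\geqslant 1$ for every $M$, which is \emph{not} automatic: for an artin algebra with $\omega=D(R)$ it is exactly the Auslander $1$-Gorenstein condition (Example~\ref{exa-4.20}), and plenty of algebras fail it. So your argument, taken at face value, proves a false statement. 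What you should be establishing is the biconditional ``$(1)$ at $n=1$ holds iff $(2)$ at $n=1$ holds,'' and the vacuity of $(5)$ and $(7)$ at $n=1$ contributes nothing to that; you still need an honest argument linking $\sTcograde_\omega\Ext^1_R(\omega,-)\geqslant 1$ to $\sEcograde_\omega\Tor^S_1(\omega,-)\geqslant 1$.

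The inductive step is only a plan. Recasting the strong cograde condition as the inclusion $\coOmega^{n-1}(R)\subseteq\cT^{n}_\omega(R)$ does not follow from Lemmas~\ref{lem-3.2}--\ref{lem-3.3} ``applied to all pertinent subobjects''; those lemmas concern the ordinary cograde, and a submodule of $\Ext^{n}_R(\omega,M)$ need not itself be an $\Ext^{n}_R(\omega,-)$. More seriously, you correctly identify the obstacle---the Auslander--Bass equivalence does not carry minimal injective resolutions to minimal flat resolutions---but then retreat to a vaguer proposal (``run the bridge on $(3)\Leftrightarrow(4)$ via $\theta$ and $\mu$'') without saying how. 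None of this is a proof. If you want to avoid the external citation, the actual work sits in \cite[\S6]{TH2}, and reproducing it would require substantially more than what you have outlined.
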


\begin{proof}
By \cite[Theorem 6.9]{TH2}, we have $(1)\Leftrightarrow (2)$. By Theorems \ref{thm-3.8} and \ref{thm-3.9},
we have $(1)\Leftrightarrow (3)\Leftrightarrow (5)\Leftrightarrow (6)$ and $(2)\Leftrightarrow (4)\Leftrightarrow (7)\Leftrightarrow (8)$
respectively.
\end{proof}

Symmetrically, we have the following equivalent characterizations for $\omega$ satisfying the left $n$-cograde condition.

\begin{corollary} \label{cor-4.3}
For any $n\geqslant 1$, the following statements are equivalent.
\begin{enumerate}
\item $\sTcograde_\omega\Ext^i_{S^{op}}(\omega,N')\geqslant i$ for any $N'\in \Mod S^{op}$ and $1\leqslant i\leqslant n$.
\item $\sEcograde_\omega\Tor_i^R(M',\omega)\geqslant i$ for any $M'\in \Mod R^{op}$ and $1\leqslant i\leqslant n$.
\item $\Tor^R_i(\Ext^i_{S^{op}}({\omega},-),\omega)$ preserves epimorphisms in $\Mod S^{op}$ for $0\leqslant i\leqslant n-1$.
\item $\Ext^i_{S^{op}}(\omega,\Tor^{R}_i(-,{\omega}))$ preserves monomorphisms in $\Mod R^{op}$ for $0\leqslant i\leqslant n-1$.
\item For any exact sequence
$$0\to A\to B\stackrel{f}{\longrightarrow}C\to 0$$ in $\Mod S^{op}$ with
$A\in \Omega^{i-1}_{\mathcal{P}_\omega}(\coOmega^{i-1}_{\mathcal{P}_\omega}(S^{op}))$,
$\Tor^{R}_i(\Ext^i_{S^{op}}(\omega,f),\omega)$ is an epimorphism for any $0\leqslant i\leqslant n-1$.
\item For any exact sequence
$$0\to A\to B\stackrel{f}{\longrightarrow}C\to 0$$ in $\Mod S^{op}$ with $A\in \Omega^{i-1}_{\mathcal{P}_\omega}(\coOmega^{i-1}(S^{op}))$,
$\Tor^{R}_i(\Ext^i_{S^{op}}(\omega,f),\omega)$ is an epimorphism for any $0\leqslant i\leqslant n-1$.
\item For any exact sequence
$$0\to A\stackrel{g}{\longrightarrow} B\to C\to 0$$ in $\Mod R^{op}$ with $C\in \coOmega^{i-1}_{\mathcal{I}_\omega}(\Omega^{i-1}_{\mathcal{I}_\omega}(R^{op}))$,
$\Ext^i_{S^{op}}(\omega,\Tor^{R}_i(g,{\omega}))$ is a monomorphism for any $0\leqslant i\leqslant n-1$.
\item For any exact sequence
$$0\to A\stackrel{g}{\longrightarrow} B\to C\to 0$$ in $\Mod R^{op}$ with $C\in \coOmega^{i-1}_{\mathcal{I}_\omega}(\Omega^{i-1}_{\mathcal{F}}(R^{op}))$,
$\Ext^i_{S^{op}}(\omega,\Tor^{R}_i(g,{\omega}))$ is a monomorphism for any $0\leqslant i\leqslant n-1$.
\end{enumerate}
\end{corollary}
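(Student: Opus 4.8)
The plan is to observe that Corollary \ref{cor-4.3} is nothing but Corollary \ref{cor-4.2} read through the opposite-ring dictionary, and then to invoke the very same three ingredients used there.

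First I would record that, by the left--right symmetry built into Definition \ref{def-2.1}, the bimodule $_{S^{op}}\omega_{R^{op}}$ --- that is, $\omega$ regarded as an $(S^{op})$-$(R^{op})$-bimodule --- is again semidualizing. Under this reinterpretation $\Hom_{S^{op}}(\omega,-)$ plays the role of $(-)_*$, $\Hom_{R^{op}}(-,\omega)$ plays the role of $(-)^*$, the classes $\mathcal{P}_\omega(S^{op})$ and $\mathcal{I}_\omega(R^{op})$ play the roles of $\mathcal{P}_\omega(R)$ and $\mathcal{I}_\omega(S)$, and, using the symmetry of $\Tor$, one has $\Tor^R_i(M',\omega)\cong\Tor^{R^{op}}_i(\omega,M')$ for every $M'\in\Mod R^{op}$. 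Consequently each condition (i) of Corollary \ref{cor-4.3} coincides with condition (i) of Corollary \ref{cor-4.2} after the substitution $R\rightsquigarrow S^{op}$ and $S\rightsquigarrow R^{op}$.

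Then the equivalences follow exactly as in the proof of Corollary \ref{cor-4.2}: $(1)\Leftrightarrow(2)$ from \cite[Theorem 6.9]{TH2} applied to $_{S^{op}}\omega_{R^{op}}$; $(1)\Leftrightarrow(3)\Leftrightarrow(5)\Leftrightarrow(6)$ from Theorem \ref{thm-3.8} with $k=0$; and $(2)\Leftrightarrow(4)\Leftrightarrow(7)\Leftrightarrow(8)$ from Theorem \ref{thm-3.9} with $k=0$, in every case applied to the semidualizing bimodule $_{S^{op}}\omega_{R^{op}}$.

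The only point genuinely requiring care --- and hence the main (mild) obstacle --- is the bookkeeping: one must verify that every functor and every syzygy/cosyzygy class occurring in Corollary \ref{cor-4.3} is the image under this dictionary of the corresponding object in Corollary \ref{cor-4.2}; in particular that $\Omega^{i-1}_{\mathcal{P}_\omega}(\coOmega^{i-1}_{\mathcal{P}_\omega}(S^{op}))$, $\coOmega^{i-1}_{\mathcal{I}_\omega}(\Omega^{i-1}_{\mathcal{I}_\omega}(R^{op}))$, $\coOmega^{i-1}_{\mathcal{I}_\omega}(\Omega^{i-1}_{\mathcal{F}}(R^{op}))$, and so on, match the classes appearing in the hypotheses of Theorems \ref{thm-3.8} and \ref{thm-3.9}, and that the double functors $\Tor^R_i(\Ext^i_{S^{op}}(\omega,-),\omega)$ and $\Ext^i_{S^{op}}(\omega,\Tor^R_i(-,\omega))$ are, via the $\Tor$-symmetry identification, the functors treated in those theorems. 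Once this dictionary is in place no further argument is needed.
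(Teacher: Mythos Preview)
Your proposal is correct and matches the paper's approach: the paper states Corollary~\ref{cor-4.3} immediately after Corollary~\ref{cor-4.2} with the single word ``Symmetrically'' and gives no separate proof, which is precisely the opposite-ring dictionary argument you outline. The only content is indeed the bookkeeping you describe, and the three cited ingredients (\cite[Theorem 6.9]{TH2}, Theorem~\ref{thm-3.8}, Theorem~\ref{thm-3.9}) applied to $_{S^{op}}\omega_{R^{op}}$ give the result.
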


In the following, we will establish the left-right symmetry of the $n$-cograde condition.

\begin{lemma}\label{lem-4.4}
Let
$$0\to A\to B\to C\to 0$$ be an exact sequence in $\Mod R$ such that $A$ is superfluous in $B$.
Then the following assertions hold.
\begin{enumerate}
\item Let $L\in \Mod R^{op}$. If $L'\otimes_R C=0$ for any submodule $L'$ of $L$, then $L\otimes_R B=0$.
\item Let $M\in \Mod R$. If $\Hom_{R}(C, M')=0$ for any quotient module $M'$ of $M$, then $\Hom_{R}(B, M)=0$.
\end{enumerate}
\end{lemma}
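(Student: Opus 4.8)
I would prove both parts with the same two elementary facts: a module that is superfluous in itself is zero (apply the definition with the zero submodule), and the image of a superfluous submodule under any epimorphism is again superfluous (a routine preimage argument). Part (2) is the cleaner one, so I would do it first, and then reduce (1) — via character modules — to a statement of the same shape in $\Mod R^{op}$.

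\textbf{Part (2).} Suppose for contradiction that there is a nonzero $\varphi\colon B\to M$. Since $\varphi$ restricts to an epimorphism $B\twoheadrightarrow\varphi(B)$ and $A$ is superfluous in $B$, the image $\varphi(A)$ is superfluous in $\varphi(B)$. Now $\varphi(A)$ is a submodule of $M$, so $M/\varphi(A)$ is a quotient of $M$; the composite $B\to M\to M/\varphi(A)$ kills $A$, hence factors through $C=B/A$, giving a map $C\to M/\varphi(A)$. By hypothesis $\Hom_R(C,M/\varphi(A))=0$, so this map vanishes, i.e.\ $\varphi(B)\subseteq\varphi(A)$; together with $\varphi(A)\subseteq\varphi(B)$ this forces $\varphi(B)=\varphi(A)$. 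But then $\varphi(B)$ is superfluous in itself, so $\varphi(B)=0$ and $\varphi=0$, a contradiction.

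\textbf{Part (1).} I would translate everything into $\Mod R^{op}$ with $(-)^+:=\Hom_{\mathbb{Z}}(-,\mathbb{Q}/\mathbb{Z})$: the adjunction $(L\otimes_R B)^+\cong\Hom_{R^{op}}(L,B^+)$ shows $L\otimes_R B=0$ iff $\Hom_{R^{op}}(L,B^+)=0$, the hypothesis becomes $\Hom_{R^{op}}(L',C^+)=0$ for every submodule $L'\leqslant L$, and $0\to A\to B\to C\to 0$ becomes $0\to C^+\to B^+\to A^+\to 0$ in $\Mod R^{op}$. Granting that superfluousness of $A$ in $B$ makes $C^+$ an \emph{essential} submodule of $B^+$, one finishes dually to (2): given $\psi\colon L\to B^+$, apply the hypothesis to $L':=\psi^{-1}(C^+)$ to get $\psi|_{L'}=0$, hence $\psi(L)\cap C^+=0$; essentiality of $C^+$ then forces $\psi(L)=0$, i.e.\ $\psi=0$, so $L\otimes_R B=0$. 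A self-contained alternative avoids character modules: reduce first to finitely generated $L$ (because $-\otimes_R B$ commutes with direct limits and the hypothesis passes to submodules), then to cyclic $L\cong R/I$ via right-exactness applied to $L=L_1+L_2$; in the cyclic case $L\otimes_R C=0$ says $C=IC$, whence $B=A+IB$, and one must invoke the hypothesis for \emph{all} submodules of $R/I$, not only $R/I$ itself, to absorb $A$ into $IB$.

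\textbf{Main obstacle.} The delicate point is precisely the passage between the tensor side and the hom side — proving that $A$ superfluous in $B$ implies $C^+$ essential in $B^+$ (equivalently, the cyclic base case of the direct approach). This is \emph{not} a formal consequence of duality, since $(-)^+$ is not an anti-equivalence: not every submodule of $B^+$ has the ``closed'' form $(B/Y)^+$, and correspondingly, when $I$ is only a right ideal, $IB$ need not be an $R$-submodule of $B$, so superfluousness of $A$ cannot be applied to it naively. It is exactly here that one needs the hypothesis in full — $L'\otimes_R C=0$ for all submodules $L'$, not merely $L\otimes_R C=0$ — and making that precise (whether by the essentiality lemma or the cyclic reduction) is the one step demanding genuine care rather than formal manipulation; everything else is bookkeeping with the two facts listed at the outset.
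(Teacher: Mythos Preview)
Your proof of (2) is correct and close to the paper's: the paper takes a nonzero $f\colon B\to M$, sets $L=\Ker f$, observes $A+L\lneqq B$ by superfluousness, and exhibits a nonzero composite $C\cong B/A\twoheadrightarrow B/(A+L)\hookrightarrow M/((A+L)/L)$; your version via ``$\varphi(A)$ superfluous in $\varphi(B)$'' is a clean variant of the same idea.

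For (1), however, your proposal is not a proof --- you isolate the obstacle but do not overcome it --- and the paper's argument is precisely the direct cyclic route you sketch as your alternative: pick $x\in L$ with $xR\otimes_R B\neq 0$, write $xR\cong R/I$ so that $IB\lneqq B$, and then assert that superfluousness of $A$ forces $IB+A\lneqq B$, whence $xR\otimes_R C\cong B/(IB+A)\neq 0$. The paper gives no further justification for this step, and your concern is well-founded: when $I$ is only a right ideal, $IB$ need not be a left $R$-submodule, so superfluousness does not literally apply. Indeed, over the ring $R$ of lower-triangular $2\times 2$ matrices over a field (with diagonal idempotents $e_1,e_2$), taking $B=Re_{1}$, $A$ its unique maximal submodule, and $I=e_{1}R$, one has $A$ superfluous and $IB\lneqq B$ yet $IB+A=B$; the lemma survives here only because the unique proper nonzero right submodule $L'$ of $R/I$ has $L'\otimes_R C\neq 0$, so the full hypothesis fails. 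In short, the paper's proof and your outline share the same gap: a complete argument for (1) must genuinely exploit the hypothesis for proper submodules $L'\lneqq xR$, which neither carries out.
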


\begin{proof}
(1) If $L\otimes_R B\neq 0$, then there exists $x\in L$ such that $xR\otimes_RB\neq 0$.
Since $xR\cong R/I$ for some right ideal $I$ of $R$, we have that
$$B/IB\cong R/I\otimes_RB \cong xR\otimes_RB\neq 0$$
and $IB\lneqq B$. In view of the assumption that $A$ is superfluous in $B$, it follows that $IB+A\lneqq B$ and
$$xR\otimes_RC\cong R/I\otimes_RC\cong R/I\otimes_R B/A\cong \frac{B/A}{(IB+A)/A}\cong B/(IB+A)\neq 0.$$
It contradicts the assumption.

(2) If $\Hom_{R}(B, M)\neq 0$, then there exists a non-zero homomorphism $f \in \Hom_{R}(B, M)$.
Pick the kernel $L$ of $f$ such that $\im f$ $\cong$ $B/L$. Because $A$ is superfluous in $B$ and $f \neq 0$,
we have $A+L\lneqq B$. Then there exists a non-zero natural epimorphism $\pi: B/A(\cong C) \twoheadrightarrow B/(A+L)$.
Note that the inclusions $(A+L)/L \subseteq B/L\subseteq M$ induce an embedding homomorphism
$$i: \frac{B/L}{(A+L)/L}(\cong B/(A+L))\hookrightarrow \frac{M}{(A+L)/L}.$$ Then $0\neq i\cdot\pi \in
\Hom_{R}(C,\frac{M}{(A+L)/L})$, which contradicts the assumption.
\end{proof}

It is straightforward to verify the following observation.

\begin{lemma}\label{lem-4.5}
\begin{enumerate}
\item[]
\item If $P\in\Mod R$ is finitely generated projective, then $\pd_{S^{op}}P^*=\mathcal{P}_{\omega}(R)$-$\id_{R}P$.
\item If $Q\in\Mod S^{op}$ is finitely generated projective, then $\pd_{R}Q^*=\mathcal{P}_{\omega}(S^{op})$-$\id_{S^{op}}Q$.
\end{enumerate}
\end{lemma}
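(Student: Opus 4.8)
The plan is to run the $\Hom$-duality $(-)^{*}=\Hom(-,\omega)$ through a minimal $\omega$-projective coresolution. On finitely generated projectives this duality behaves like a transpose: it should interchange $\add_{R}R$ with $\add_{S^{op}}\omega_{S}$ and $\add_{R}\omega$ with $\add_{S^{op}}S$, so a coresolution of $P$ by $\omega$-projectives turns into a projective resolution of $P^{*}$ over $S^{op}$, and conversely.

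First I would record the duality data. For $P\in\Mod R$ finitely generated projective, $P\in\add_{R}R$, hence $P^{*}=\Hom_{R}(P,\omega)\in\add_{S^{op}}\Hom_{R}(R,\omega)=\add_{S^{op}}\omega_{S}$. By condition (b2) of Definition~\ref{def-2.1}, $\Hom_{R}(\omega,\omega)\cong S$ as right $S$-modules, so $\Hom_{R}(-,\omega)$ carries $\add_{R}\omega$ into $\add_{S^{op}}S$; symmetrically, since $\Hom_{S^{op}}(S,\omega)\cong\omega$, the functor $\Hom_{S^{op}}(-,\omega)$ carries $\add_{S^{op}}S$ into $\add_{R}\omega$. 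The biduality morphism $P\to P^{**}$ is an isomorphism for $P=R$ by condition (b1), hence by additivity for every finitely generated projective $P$, and likewise on $\add_{R}\omega$ and $\add_{S^{op}}S$; so $(-)^{*}$ is a duality between $\add_{R}\omega$ and $\add_{S^{op}}S$.

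The main step is the passage between resolutions. Given an $\add_{R}\omega$-coresolution $0\to P\to W^{0}\to\cdots\to W^{m}\to 0$, I would split it into short exact sequences and apply $\Hom_{R}(-,\omega)$: since $\Ext^{\geqslant 1}_{R}(\omega,\omega)=0$ by (c1) and $\Ext$ out of a direct sum is a product, $\Ext^{\geqslant 1}_{R}(W^{j},\omega)=0$, whence $\Ext^{\geqslant 1}_{R}$ of every cosyzygy vanishes and each short exact sequence stays exact; this yields an exact sequence $0\to(W^{m})^{*}\to\cdots\to(W^{0})^{*}\to P^{*}\to 0$ by finitely generated projective $S^{op}$-modules, so $\pd_{S^{op}}P^{*}\leqslant m$. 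For the converse I would use that $P^{*}$ is a direct summand of $(R^{n})^{*}=\omega^{n}$, hence is $\mathrm{FP}_{\infty}$ over $S$ (because $\omega_{S}$ is, by (a2)); so if $\pd_{S^{op}}P^{*}=d<\infty$ there is a projective resolution $0\to Q_{d}\to\cdots\to Q_{0}\to P^{*}\to 0$ with all $Q_{i}$ finitely generated projective, and applying $\Hom_{S^{op}}(-,\omega)$ (using $\Ext^{\geqslant 1}_{S^{op}}(\omega,\omega)=0$ by (c2), so $\Ext^{\geqslant 1}_{S^{op}}(P^{*},\omega)=0$) gives an exact sequence $0\to P\cong P^{**}\to Q_{0}^{*}\to\cdots\to Q_{d}^{*}\to 0$ with each $Q_{i}^{*}\in\add_{R}\omega\subseteq\mathcal{P}_{\omega}(R)$, so $\mathcal{P}_{\omega}(R)$-$\id_{R}P\leqslant d$. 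Combining the two inequalities gives the asserted equality, provided $\mathcal{P}_{\omega}(R)$-$\id_{R}P$ is realized by a \emph{finitely generated} $\omega$-projective coresolution.

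The hard part is precisely this last proviso, because $\mathcal{P}_{\omega}(R)=\Add_{R}\omega$ a priori permits non-finitely-generated $\omega$-projectives and $\Hom_{R}(-,\omega)$ does not send these to projective $S^{op}$-modules. I would handle it in two moves. First, $\mathcal{P}_{\omega}(R)$ is closed under extensions: $\Ext^{1}_{R}(W,W')=0$ for all $W,W'\in\Add_{R}\omega$ (from $\Ext^{1}_{R}(\omega,\omega)=0$ and $\omega$ being $\mathrm{FP}_{\infty}$ over $R$, by (a1)), so every such extension splits. Second, for finitely generated projective $P$, dualizing an epimorphism $S^{n}\twoheadrightarrow P^{*}$ of right $S$-modules via $\Hom_{S^{op}}(-,\omega)$ produces a monomorphism $P\cong P^{**}\hookrightarrow\omega^{n}$ which is a $\mathcal{P}_{\omega}(R)$-preenvelope with finitely generated cokernel $C$; then closure under extensions, together with a Schanuel-type comparison, shows that $C$ has $\mathcal{P}_{\omega}(R)$-injective dimension one less than $P$ (whenever that dimension is positive), and induction on it produces the required finitely generated $\add_{R}\omega$-coresolution of $P$. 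This settles (1); finally, (2) follows by applying (1) to the bimodule $_{S^{op}}\omega_{R^{op}}$, which is again semidualizing since the axioms of Definition~\ref{def-2.1} are symmetric in $R$ and $S$.
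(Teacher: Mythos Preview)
The paper offers no argument beyond declaring the lemma ``straightforward to verify'', so there is nothing to compare against; your write-up actually supplies what the paper omits. Your strategy is the natural one and is correct: the contravariant functor $(-)^{*}$ exchanges $\add_{R}\omega$-coresolutions of $P$ with resolutions of $P^{*}$ by finitely generated projective right $S$-modules (exactness in each direction uses (c1), (c2) and the reflexivity $P\cong P^{**}$), yielding $\add_{R}\omega\text{-}\id_{R}P=\pd_{S^{op}}P^{*}$. You are also right to flag that $\mathcal{P}_{\omega}(R)=\Add_{R}\omega$ is the subtle point, since $\Hom_{R}(\omega^{(I)},\omega)\cong S^{I}$ need not be projective.

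One point in your ``hard part'' needs tightening. Your induction is on $\mathcal{P}_{\omega}(R)\text{-}\id_{R}P$ for $P$ finitely generated projective, but the cokernel $C$ of your preenvelope is not finitely generated projective, so the hypothesis as stated does not apply to it. What you really want is an \emph{iteration}: having produced $0\to P\to\omega^{n_{0}}\to C_{1}\to 0$, you must run the same construction with $C_{1}$ in place of $P$. For that you need, at each step, that $C_{k}$ is finitely generated, reflexive, with $(C_{k})^{*}$ finitely generated and $\Ext^{\geqslant 1}_{S^{op}}((C_{k})^{*},\omega)=0$. These hold because $(C_{k})^{*}$ is the $k$-th syzygy of $P^{*}$ in a resolution by finitely generated projectives, and $P^{*}\in\add_{S^{op}}\omega$ is $\mathrm{FP}_{\infty}$ by (a2); they should be stated as part of the inductive package. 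Your Schanuel comparison is fine once you note that for the given $\Add_{R}\omega$-coresolution $0\to P\to W^{0}\to\cdots\to W^{m}\to 0$, descending induction from $W^{m}$ gives $\Ext^{\geqslant 1}_{R}(C^{j},\omega)=0$ for every cosyzygy $C^{j}$; together with the preenvelope property of $P\hookrightarrow\omega^{n_{0}}$ this supplies both $\Ext^{1}$-vanishings needed for Schanuel, whence $\mathcal{P}_{\omega}(R)\text{-}\id_{R}C_{1}\leqslant m-1$. After $m$ steps $C_{m}\in\Add_{R}\omega$ is finitely generated, hence lies in $\add_{R}\omega$, and you are done.
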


\begin{lemma}\label{lem-4.6}
Let $P\in\Mod R$ be finitely generated projective and $t\geqslant 0$. Then the following statements are equivalent.
\begin{enumerate}
\item $\pd_{S^{op}}P^*\leqslant t$.
\item $\mathcal{P}_{\omega}(R)$-$\id_{R}P\leqslant t$.
\item $\Ext^{t+1}_{S^{op}}(\omega,H)\otimes_RP=0$ for any $H\in\Mod S^{op}$.
\item $\Hom_{R}(P,\Tor^{S}_{t+1}(\omega,N))=0$ for any $N\in\Mod S$.
\end{enumerate}
\end{lemma}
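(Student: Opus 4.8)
The plan is to transfer every condition into a statement about the right $S$-module $P^{*}=\Hom_{R}(P,\omega)$. First, $(1)\Leftrightarrow(2)$ is immediate from Lemma~\ref{lem-4.5}(1), which gives the equality $\pd_{S^{op}}P^{*}=\mathcal{P}_{\omega}(R)\text{-}\id_{R}P$. The heart of the argument is a pair of tensor-evaluation isomorphisms, both valid because $P$ is finitely generated projective: for every $H\in\Mod S^{op}$, every $N\in\Mod S$ and every $i\geqslant0$,
$$\Ext^{i}_{S^{op}}(\omega,H)\otimes_{R}P\;\cong\;\Ext^{i}_{S^{op}}(P^{*},H)
\qquad\text{and}\qquad
\Hom_{R}\bigl(P,\Tor^{S}_{i}(\omega,N)\bigr)\;\cong\;\Tor^{S}_{i}(P^{*},N).$$
For the first one I would fix an injective coresolution $H\to E^{\bullet}$ in $\Mod S^{op}$, so that $\Ext^{i}_{S^{op}}(\omega,H)$ is the $i$-th cohomology of the complex of right $R$-modules $\Hom_{S^{op}}(\omega,E^{\bullet})$; since $P$ is flat, $-\otimes_{R}P$ commutes with this cohomology, and it remains to note that the natural evaluation map $\Hom_{S^{op}}(\omega,E)\otimes_{R}P\to\Hom_{S^{op}}\bigl(\Hom_{R}(P,\omega),E\bigr)$, $\varphi\otimes p\mapsto[\,g\mapsto\varphi(g(p))\,]$, is an isomorphism for $P$ finitely generated projective (it is one for $P=R$, and both sides are additive functors of $P$). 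The second isomorphism is the dual statement: fix a projective resolution $F_{\bullet}\to N$ in $\Mod S$, use that $\Hom_{R}(P,-)$ is exact and so commutes with homology, and that the natural map $\Hom_{R}(P,\omega)\otimes_{S}F\to\Hom_{R}(P,\omega\otimes_{S}F)$ is an isomorphism for $P$ finitely generated projective.

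With these isomorphisms in hand, $(1)\Leftrightarrow(3)$ becomes routine: by the standard characterization of projective dimension, $\pd_{S^{op}}P^{*}\leqslant t$ if and only if $\Ext^{t+1}_{S^{op}}(P^{*},H)=0$ for all $H\in\Mod S^{op}$, and by the first isomorphism the latter is exactly condition $(3)$; combined with $(1)\Leftrightarrow(2)$ this settles $(1)$, $(2)$ and $(3)$. For $(3)\Rightarrow(4)$, note that $\pd_{S^{op}}P^{*}\leqslant t$ forces $\fd_{S^{op}}P^{*}\leqslant t$, i.e.\ $\Tor^{S}_{t+1}(P^{*},N)=0$ for all $N\in\Mod S$, and by the second isomorphism this is precisely condition $(4)$.

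The remaining implication $(4)\Rightarrow(1)$ is where the hypothesis that $\omega_{S}$ admits a degreewise finite $S$-projective resolution (Definition~\ref{def-2.1}(a2)) is used. Since $P$ is a direct summand of some $R^{n}$, the module $P^{*}$ is a direct summand of $\omega^{n}\cong\Hom_{R}(R^{n},\omega)$; as the property of admitting a resolution by finitely generated projectives passes to finite direct sums and to direct summands (a standard consequence of Schanuel's lemma), $P^{*}$ has a projective resolution by finitely generated projective right $S$-modules. Now $(4)$ yields $\Tor^{S}_{t+1}(P^{*},N)=0$ for all $N$, hence $\fd_{S^{op}}P^{*}\leqslant t$, so the $t$-th syzygy of $P^{*}$ in that resolution is simultaneously finitely presented (it lies between two finitely generated projectives) and flat, therefore projective; thus $\pd_{S^{op}}P^{*}\leqslant t$, which is $(1)$.

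I expect the main obstacle to be bookkeeping rather than anything conceptual: one must keep careful track of which side each of the many modules lives on, so that the evaluation maps type-check and the resulting identifications land in the correct categories, and one must check that these maps are compatible with the differentials of the chosen resolutions so that passage to (co)homology is legitimate. The only genuinely non-formal input is the classical fact that a finitely presented flat module is projective, which powers the implication $(4)\Rightarrow(1)$.
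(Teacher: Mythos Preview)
Your proof is correct and follows essentially the same route as the paper. The paper's argument for $(1)\Leftrightarrow(2)$ and $(1)\Leftrightarrow(3)$ is identical to yours; for $(1)\Leftrightarrow(4)$ the paper simply asserts $\pd_{S^{op}}P^{*}=\fd_{S^{op}}P^{*}$ and cites \cite[Lemma~7.6]{TH4}, whereas you unpack both ingredients---deriving the isomorphism $\Hom_{R}(P,\Tor^{S}_{i}(\omega,N))\cong\Tor^{S}_{i}(P^{*},N)$ directly, and justifying $\pd=\fd$ via the $FP_{\infty}$ property of $P^{*}$ inherited from $\omega_{S}$ together with ``finitely presented flat $\Rightarrow$ projective''---so your write-up is a self-contained version of the same proof.
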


\begin{proof}
By Lemma ~\ref{lem-4.5}(1), we have $(1)\Leftrightarrow (2)$.

$(1)\Leftrightarrow (3)$ Let $H\in\Mod S^{op}$ and
$${\bf I}:=\ 0\to  H\to I^0\to I^1\to \cdots \to I^i\to \cdots$$ be an injective resolution of $H$ in $\Mod S^{op}$.
Because $P\in\Mod R$ is finitely generated projective by assumption, the functor $-\otimes_R P$ is exact. Then we have
\begin{align*}
& \ \ \ \  \ \  \ \ \Ext^{t+1}_{S^{op}}(P^*,H)\\
&\ \ \ \ \ \cong H^{t+1}(\Hom_{S^{op}}(P^*,{\bf I}))\\
&\ \ \ \ \  \cong H^{t+1}(\Hom_{S^{op}}(\omega,{\bf I})\otimes_R P)\\
&\ \ \ \ \  \cong H^{t+1}(\Hom_{S^{op}}(\omega,{\bf I}))\otimes_RP\ {\rm (by\ [6,\ p.33,\ Excercise\ 3]})\\
&\ \ \ \ \  \cong \Ext^{t+1}_{S^{op}}(\omega,H)\otimes_R P.
\end{align*}
Now the assertion follows easily.

$(1)\Leftrightarrow (4)$ Since $\pd_{S^{op}}P^*=\fd_{S^{op}}P^*$, the assertion follows from \cite[Lemma 7.6]{TH4}.
\end{proof}

Recall from \cite{N} that a ring $R$ is called {\it semiregular} if $R/J(R)$ is von Neumann regular and idempotents
can be lifted modulo $J(R)$, where $J(R)$ is the Jacobson radical of $R$. The class of semiregular rings includes:
(1) von Neumann regular rings; (2) semiperfect rings; (3) left cotorsion rings; and (4) right cotorsion rings.
See \cite{GH} for the definitions of left cotorsion rings and right cotorsion rings.

If $R$ is a semiregular ring, then any finitely presented left or right $R$-module has a projective cover by \cite[Theorem 2.9]{N}.
In this case, since $_R\omega$ admits a degreewise finite $R$-projective resolution by Definition ~\ref{def-2.1}, we may assume that
$$\cdots \to P_i(\omega) \to \cdots \to P_1(\omega) \to P_0(\omega)\to {_R\omega} \to 0$$ is the minimal projective resolution
of $_R\omega$ in $\mod R$.  Put $\omega_{i}:=\im (P_i(\omega)\to P_{i-1}(\omega))$ for any $i\geqslant 1$ and $\omega_{0}:=\omega$.
Analogously, if $S$ is a semiregular ring, then we assume that
$$\cdots \to Q_i(\omega) \to \cdots \to Q_1(\omega) \to Q_0(\omega)\to {\omega_S} \to 0$$ is the minimal projective resolution
of $\omega_S$ in $\mod S^{op}$. By Lemma~ \ref{lem-4.6}, we have the following

\begin{proposition}\label{prop-4.7}
Let $R$ be a semiregular ring and $m,n\geqslant 1$. Then the following statements are equivalent.
\begin{enumerate}
\item $\pd_{S^{op}}P_i(\omega)^*\leqslant m-1$ for any $0\leqslant i \leqslant n-1$.
\item $\mathcal{P}_{\omega}(R)$-$\id_{R}P_i(\omega)\leqslant m-1$ for any $0\leqslant i \leqslant n-1$.
\item $\sTcograde_\omega \Ext^{m}_{S^{op}}(\omega,N')\geqslant n$ for any $N'\in\Mod S^{op}$.
\item $\sEcograde_\omega \Tor_{m}^{S}(\omega,N)\geqslant n$ for any $N\in\Mod S$.
\end{enumerate}
\end{proposition}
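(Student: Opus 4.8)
The plan is to prove the cycle $(1)\Leftrightarrow(2)$, $(1)\Leftrightarrow(3)$, $(3)\Leftrightarrow(4)$, using Lemma~\ref{lem-4.6} as the bridge between pointwise projective-dimension statements about the $P_i(\omega)$ and cograde statements about $\Ext$- and $\Tor$-modules. The equivalence $(1)\Leftrightarrow(2)$ is immediate from Lemma~\ref{lem-4.5}(1) applied term by term, so no real work is needed there. The equivalence $(3)\Leftrightarrow(4)$ should follow by the same mechanism used in the proof of Lemma~\ref{lem-4.6}, namely that $\pd_{S^{op}}P^*=\fd_{S^{op}}P^*$ together with \cite[Lemma 7.6]{TH4} converts the vanishing of $\Ext^{m}_{S^{op}}(\omega,H)\otimes_R P$ into the vanishing of $\Hom_R(P,\Tor^S_{m}(\omega,N))$; one applies this to every quotient module $X$ of $\Ext^m_{S^{op}}(\omega,N')$ and every submodule $Y$ of $\Tor^S_m(\omega,N)$, noting that quotients of $\Ext$ and submodules of $\Tor$ correspond under the general formalism, so the ``strong'' qualifiers match up.

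The heart of the argument is $(1)\Leftrightarrow(3)$. First I would unwind the definition of $\sTcograde_\omega\Ext^m_{S^{op}}(\omega,N')\geqslant n$: this says $\Tor^R_i(X,\omega)=0$ — equivalently $X\otimes_R\omega_i=0$ after dimension-shifting along the minimal projective resolution of $\omega_S$ — hmm, more directly it says $\Tcograde_\omega X\geqslant n$ for every quotient $X$ of $\Ext^m_{S^{op}}(\omega,N')$, i.e. $\Tor^R_{0\leqslant j\leqslant n-1}(X,\omega)=0$. By Lemma~\ref{lem-4.6}, for a fixed $i$ the condition $\pd_{S^{op}}P_i(\omega)^*\leqslant m-1$ is equivalent to $\Ext^m_{S^{op}}(\omega,H)\otimes_R P_i(\omega)=0$ for all $H\in\Mod S^{op}$. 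The point is to connect the family $\{P_0(\omega),\dots,P_{n-1}(\omega)\}$ with the first $n$ homologies $\Tor^R_0,\dots,\Tor^R_{n-1}(X,\omega)$. Using the minimal projective resolution $\cdots\to P_i(\omega)\to\cdots\to P_0(\omega)\to{_R\omega}\to0$ and the syzygies $\omega_i=\im(P_i(\omega)\to P_{i-1}(\omega))$, one gets short exact sequences $0\to\omega_{i+1}\to P_i(\omega)\to\omega_i\to0$; applying $X\otimes_R-$ and chasing the long exact sequence inductively should show that $\Tor^R_{0\leqslant j\leqslant n-1}(X,\omega)=0$ for all relevant $X$ holds iff $X\otimes_R P_i(\omega)=0$ for all such $X$ and all $0\leqslant i\leqslant n-1$. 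Quantifying over all quotients $X$ of all $\Ext^m_{S^{op}}(\omega,N')$ and matching this against the ``for all $H\in\Mod S^{op}$'' in Lemma~\ref{lem-4.6}(3) then yields $(1)\Leftrightarrow(3)$; here the semiregularity hypothesis on $R$ is exactly what licenses the minimal projective resolution of $_R\omega$ in $\mod R$ with projective covers, so that each $P_i(\omega)$ is finitely generated projective and Lemmas~\ref{lem-4.5} and~\ref{lem-4.6} apply.

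The step I expect to be the main obstacle is the bookkeeping in $(1)\Leftrightarrow(3)$: getting the indices right so that ``$\pd_{S^{op}}P_i(\omega)^*\leqslant m-1$ for $0\leqslant i\leqslant n-1$'' corresponds precisely to ``$\Tor$-cograde at least $n$'' (an $n$-term, not $(n-1)$-term, vanishing condition) — in particular keeping track of the off-by-one shifts introduced by passing between $P_i(\omega)$, the syzygy $\omega_i$, and the homology $\Tor^R_i(X,\omega)\cong\Tor^R_1(X,\omega_{i-1})$, and verifying that no extra hypothesis beyond those on $X$ (arbitrary quotient of an arbitrary $\Ext^m$) is needed to run the induction. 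Once the $m$ is held fixed and only the homological degree in the $R$-direction is being shifted, this is a routine but delicate dimension-shift; I would present it as an induction on $n$ with the base case $n=1$ reducing to Lemma~\ref{lem-4.6}(3) directly.
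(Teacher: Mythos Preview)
Your plan for $(1)\Leftrightarrow(2)$ via Lemma~\ref{lem-4.5}(1), and for the direction $(1)\Rightarrow(3)$ via Lemma~\ref{lem-4.6} together with dimension-shifting along the syzygies $\omega_i$, is correct and matches the paper. The genuine gap is in $(3)\Rightarrow(1)$.

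Your claimed biconditional --- that $\Tor^R_{0\leqslant j\leqslant n-1}(X,\omega)=0$ for all relevant $X$ holds iff $X\otimes_R P_i(\omega)=0$ for all such $X$ and all $0\leqslant i\leqslant n-1$ --- is only half true. The implication from right to left does follow from the long exact sequences as you describe, but the implication from left to right cannot be obtained by dimension-shifting alone. Already for $n=1$: knowing $X\otimes_R\omega=0$ for every submodule $X$ of $\Ext^m_{S^{op}}(\omega,N')$ does not force $X\otimes_R P_0(\omega)=0$, since the sequence $X\otimes_R\omega_1\to X\otimes_R P_0(\omega)\to X\otimes_R\omega=0$ only exhibits $X\otimes_R P_0(\omega)$ as a quotient of $X\otimes_R\omega_1$, about which you know nothing. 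The missing ingredient is Lemma~\ref{lem-4.4}(1): because $P_0(\omega)\to\omega$ is a projective \emph{cover}, the kernel $\omega_1$ is superfluous in $P_0(\omega)$, and Lemma~\ref{lem-4.4}(1) (with $L=\Ext^m_{S^{op}}(\omega,N')$, $B=P_0(\omega)$, $C=\omega$) turns ``$L'\otimes_R\omega=0$ for all submodules $L'\subseteq L$'' into ``$L\otimes_R P_0(\omega)=0$''. The inductive step is the same trick applied to the cover $P_{n-1}(\omega)\to\omega_{n-1}$. This is where semiregularity of $R$ is really used --- not just to make each $P_i(\omega)$ finitely generated projective, but to guarantee that the resolution is \emph{minimal}, so that each $\omega_{i+1}$ is superfluous in $P_i(\omega)$.

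Your direct route for $(3)\Leftrightarrow(4)$ is also shaky: you have the sub/quotient roles reversed (strong $\Tor$-cograde quantifies over submodules, strong $\Ext$-cograde over quotients), and ``correspond under the general formalism'' is doing unexplained work. The paper does not attempt $(3)\Leftrightarrow(4)$ directly; it obtains $(4)\Leftrightarrow(1)$ by citing \cite[Proposition~7.7]{TH4} together with Lemma~\ref{lem-4.6}, and handles $(1)\Leftrightarrow(3)$ separately. If you want a self-contained $(4)\Leftrightarrow(1)$, the analogous argument runs with Lemma~\ref{lem-4.4}(2) in place of Lemma~\ref{lem-4.4}(1), but that step should be made explicit.
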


\begin{proof}
By \cite[Proposition 7.7]{TH4} and Lemma ~\ref{lem-4.6}, we have $(4)\Leftrightarrow (1)\Leftrightarrow (2)$.

$(3)\Rightarrow (1)$ We proceed by induction on $n$. Let $N'\in \Mod S^{op}$. Suppose $n=1$. Because
$\sTcograde_\omega \Ext^{m}_{S^{op}}(\omega,N')\geqslant 1$ by (3), we have $L'\otimes_R\omega=0$
for any submodule $L'$ of $\Ext^{m}_{S^{op}}(\omega,N')$ in $\Mod R^{op}$. It follows from Lemma ~\ref{lem-4.4}(1) that
$\Ext^{m}_{S^{op}}(\omega,N')\otimes_RP_0(\omega)=0$. Therefore by Lemma ~\ref{lem-4.6} we get
$\pd_{S^{op}}P_0(\omega)^*\leqslant m-1$ and the case for $n=1$ is proved.

Now suppose $n\geqslant 2$. Let $X$ be a submodule of $\Ext^{m}_{S^{op}}(\omega,N')$ in $\Mod R^{op}$.
By (3), we have $\Tor_{0\leqslant i\leqslant n-1\emph{}}^{R}(X,\omega)=0$. Then for any $0\leqslant i\leqslant n-2$,
we have
$$\Tor_1^{R}(X,\omega_{i})\cong \Tor_{i+1}^{R}(X,\omega)=0.$$
For any $i\geqslant 0$, from the exact sequence
$$0\to \omega_{i+1}\to P_{i}(\omega)\to \omega_{i}\to 0,$$
we get the following exact sequence
$$0\to \Tor_1^R(X,\omega_{i})\to X\otimes_R \omega_{i+1}\to X\otimes_R P_{i}(\omega).\eqno{(4.1)}$$
By the induction hypothesis, we have $\pd_{S^{op}}P_{i}(\omega)^*\leqslant m-1$ for any $0\leqslant i \leqslant n-2$.
Then it follows from Lemma ~\ref{lem-4.6} that $\Ext^{m}_{S^{op}}(\omega,N')\otimes_RP_{n-2}(\omega)=0$
and hence $X\otimes_RP_{n-2}(\omega)=0$. So it is derived from (4.1) that
$X\otimes_R\omega_{n-1}=0$. Notice that $P_{n-1}(\omega)$ is the projective cover of $\omega_{n-1}$,
so $\Ext^{m}_{S^{op}}(\omega,N')\otimes_RP_{n-1}(\omega)=0$ by Lemma ~\ref{lem-4.4}(1).
It follows from Lemma ~\ref{lem-4.6} that $\pd_{S^{op}}P_{n-1}(\omega)^*\leqslant m-1$.

$(1)\Rightarrow (3)$ Let $X$ be a submodule of $\Ext^{m}_{S^{op}}(\omega,N')$ in $\Mod R^{op}$. Then by (1) and Lemma ~\ref{lem-4.6},
we have $\Ext^{m}_{S^{op}}(\omega,N')\otimes_R(\oplus_{i=0}^{n-1}P_i(\omega))=0$,
and hence $X\otimes_R(\oplus_{i=0}^{n-1}P_i(\omega))=0$.
Since $\omega_{i}$ is a quotient module of $P_i(\omega)$ for any $i\geqslant 0$, we then have
$X\otimes_R(\oplus_{i=0}^{n-1}\omega_i)=0$.

If $n=1$, then $X\otimes_R\omega=0$ and
$\sTcograde_\omega \Ext^{m}_{S^{op}}(\omega,N')\geqslant 1$. If $n\geqslant 2$, then
from (4.1) we get $\Tor^{R}_1(X,\oplus_{i=0}^{n-2}\omega_{i})=0$.
Since $\Tor_{i+1}^{R}(X,\omega)\cong \Tor_{1}^{R}(X,\omega_{i})$ for any $i\geqslant 0$, we have that
$\Tor^{R}_{0\leqslant i\leqslant n-1}(X,\omega)=0$ and $\sTcograde_\omega \Ext^{m}_{S^{op}}(\omega,N')\geqslant n$.
\end{proof}

The following result means that the $n$-cograde condition is left-right symmetric.

\begin{theorem} \label{thm-4.8}
Let $R$ be semiregular and $n\geqslant 1$. Then the following statements are equivalent.
\begin{enumerate}
\item $\pd_{S^{op}}P_i(\omega)^*\leqslant i$ for any $0\leqslant i \leqslant n-1$.
\item $\mathcal{P}_{\omega}(R)$-$\id_{R}P_i(\omega)\leqslant i$ for any $0\leqslant i \leqslant n-1$.
\item $\sTcograde_\omega\Ext^i_R(\omega,M)\geqslant i$ for any $M\in \Mod R$ and $1\leqslant i\leqslant n$.
\item $\sEcograde_\omega\Tor_i^S(\omega,N)\geqslant i$ for any $N\in \Mod S$ and $1\leqslant i\leqslant n$.
\item $\sTcograde_\omega\Ext^i_{S^{op}}(\omega,N')\geqslant i$ for any $N'\in \Mod S^{op}$ and $1\leqslant i\leqslant n$.
\item $\sEcograde_\omega\Tor_i^R(M',\omega)\geqslant i$ for any $M'\in \Mod R^{op}$ and $1\leqslant i\leqslant n$.
\item $\Tor^S_i(\omega,\Ext^i_R({\omega},-))$ preserves epimorphisms in $\Mod R$ for $0\leqslant i\leqslant n-1$.
\item $\Ext^i_R(\omega,\Tor^S_i({\omega},-))$ preserves monomorphisms in $\Mod S$ for $0\leqslant i\leqslant n-1$.
\item $\Tor^R_i(\Ext^i_{S^{op}}({\omega},-),\omega)$ preserves epimorphisms in $\Mod S^{op}$ for $0\leqslant i\leqslant n-1$.
\item $\Ext^i_{S^{op}}(\omega,\Tor^{R}_i(-,{\omega}))$ preserves monomorphisms in $\Mod R^{op}$ for $0\leqslant i\leqslant n-1$.
\end{enumerate}
\end{theorem}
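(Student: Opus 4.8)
The plan is to establish the equivalences in a cycle that routes everything through the two technical engines already built in Sections 3 and 4. First, observe that the conditions (3)--(10) are precisely the statements compared in Corollaries~\ref{cor-4.2} and \ref{cor-4.3}: condition~(3) is the right $n$-cograde condition in its $\sTcograde$-form, (4) its $\sEcograde$-dual, (5)--(6) the left $n$-cograde condition, and (7)--(10) the four functorial reformulations. So the real content of the theorem is to tie the \emph{internal} module-theoretic data (1)--(2) to one of these, and then to glue the ``left'' block (5)--(6), (9)--(10) to the ``right'' block (3)--(4), (7)--(8). Accordingly, I would first record $(1)\Leftrightarrow(2)$, which is immediate from Lemma~\ref{lem-4.5}(1) applied termwise to $P_i(\omega)$.

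Next I would prove $(2)\Leftrightarrow(5)$ (equivalently $(1)\Leftrightarrow(5)$). For each fixed $i$ with $1\leqslant i\leqslant n$, apply Proposition~\ref{prop-4.7} with the pair $(m,n)$ there specialized to $(i,i)$: this gives that $\pd_{S^{op}}P_j(\omega)^*\leqslant i-1$ for all $0\leqslant j\leqslant i-1$ is equivalent to $\sTcograde_\omega\Ext^{i}_{S^{op}}(\omega,N')\geqslant i$ for all $N'\in\Mod S^{op}$. The slightly delicate point is the bookkeeping across the range of $i$: condition~(1) asks $\pd_{S^{op}}P_i(\omega)^*\leqslant i$ for $0\leqslant i\leqslant n-1$, and one must check that running Proposition~\ref{prop-4.7} for each $i$ and intersecting the conclusions reproduces exactly condition~(5) for all $1\leqslant i\leqslant n$. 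Here the monotonicity $\pd_{S^{op}}P_j(\omega)^*\leqslant j\leqslant i-1$ for $j<i$ is what makes the hypothesis of Proposition~\ref{prop-4.7} (with $m=i$) hold automatically from the lower-index instances, so an induction on $i$ cleanly assembles the two sides. This step, the careful index-matching between ``$\leqslant i$ for each $i$'' and the fixed-$m$ statement of Proposition~\ref{prop-4.7}, is where I expect the only genuine friction; everything else is a citation.

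With $(1)\Leftrightarrow(2)\Leftrightarrow(5)$ in hand, I would close the loop as follows. The equivalence $(5)\Leftrightarrow(6)\Leftrightarrow(9)\Leftrightarrow(10)$ is Corollary~\ref{cor-4.3} verbatim (with the roles of $R,S$ swapped, i.e.\ applied to $_{S}\omega_{R}$ over $S^{op}$). Similarly $(3)\Leftrightarrow(4)\Leftrightarrow(7)\Leftrightarrow(8)$ is Corollary~\ref{cor-4.2}. It remains only to bridge the two blocks, for instance to show $(5)\Leftrightarrow(3)$. But this is exactly the left-right symmetry statement: condition~(5) is the right $n$-cograde condition for the bimodule $_{S}\omega_{R}$, and by the symmetric form of $(1)\Leftrightarrow(2)\Leftrightarrow(5)$ applied now over the ring $S$ (using that $S$ is semiregular whenever it is forced to be—or more simply, by running the argument symmetrically, noting condition~(1) is visibly self-dual in $R\leftrightarrow S^{op}$ via $P_i(\omega)^*$ versus $Q_i(\omega)^*$ together with \cite[Proposition 7.7]{TH4}), one gets that (5) is equivalent to the analog of (1) for $S$, which in turn is equivalent to condition~(3). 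Concretely: apply the chain $(1)\Leftrightarrow(5)$ once with $(R,S)$ and once with $(S^{op},R^{op})$; the shared middle term is condition~(1), whose two incarnations coincide because $\pd_{S^{op}}P_i(\omega)^* = \mathcal P_\omega(R)\text{-}\id_R P_i(\omega)$ and the dual identity are symmetric. Chaining these identifications yields $(3)\Leftrightarrow(5)$, and the theorem follows. The proof is therefore almost entirely a matter of correctly invoking Proposition~\ref{prop-4.7} at the diagonal $(m,n)=(i,i)$, assembling over $i$ by induction, and then quoting Corollaries~\ref{cor-4.2} and \ref{cor-4.3}.
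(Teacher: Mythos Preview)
Your overall plan is correct and matches the paper's, but the bridging step $(5)\Leftrightarrow(3)$ is where you go astray. Your argument there invokes ``the symmetric form of $(1)\Leftrightarrow(5)$ applied now over the ring $S$'', which would require $S$ to be semiregular; the theorem only assumes $R$ is semiregular, and you cannot simply assert that $S$ inherits this. The parenthetical attempts to patch this (``condition~(1) is visibly self-dual in $R\leftrightarrow S^{op}$ via $P_i(\omega)^*$ versus $Q_i(\omega)^*$'') do not work either, since $Q_i(\omega)$ is defined via the minimal projective resolution of $\omega_S$, which again needs $S$ semiregular to exist.

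The fix is that no bridging argument is needed at all: you have under-read Proposition~\ref{prop-4.7}. That proposition has \emph{four} equivalent conditions, and its condition~(4), namely $\sEcograde_\omega\Tor_m^S(\omega,N)\geqslant n$ for all $N\in\Mod S$, is exactly (after the diagonal specialization $(m,n)\mapsto(i,i)$ and assembling over $1\leqslant i\leqslant n$) condition~(4) of Theorem~\ref{thm-4.8}. So Proposition~\ref{prop-4.7} directly yields $(1)\Leftrightarrow(2)\Leftrightarrow(4)\Leftrightarrow(5)$, not merely $(1)\Leftrightarrow(2)\Leftrightarrow(5)$. Once you have~(4), Corollary~\ref{cor-4.2} gives $(3)\Leftrightarrow(4)\Leftrightarrow(7)\Leftrightarrow(8)$ and Corollary~\ref{cor-4.3} gives $(5)\Leftrightarrow(6)\Leftrightarrow(9)\Leftrightarrow(10)$, completing the chain with no symmetry hypothesis on $S$. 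This is precisely the paper's proof. Your index-bookkeeping remarks about the diagonal specialization are fine and are the only real content beyond citation.
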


\begin{proof}
By Proposition ~\ref{prop-4.7}, we have $(1)\Leftrightarrow (2)\Leftrightarrow (4)\Leftrightarrow (5)$.
By Corollaries ~\ref{cor-4.2} and ~\ref{cor-4.3}, we have $(3)\Leftrightarrow (4)\Leftrightarrow (7)\Leftrightarrow (8)$
and $(5)\Leftrightarrow (6)\Leftrightarrow (9)\Leftrightarrow (10)$.
\end{proof}

As a consequence, we get the following

\begin{corollary} \label{cor-4.9}
Let $R$ and $S$ be semiregular and $n\geqslant 1$. Then the following statements are equivalent.
\begin{enumerate}
\item $\pd_{S^{op}}P_i(\omega)^*\leqslant i$ for any $0\leqslant i \leqslant n-1$.
\item $\pd_{R}Q_i(\omega)^*\leqslant i$ for any $0\leqslant i \leqslant n-1$.
\item $\mathcal{P}_{\omega}(R)$-$\id_{R}P_i(\omega)\leqslant i$ for any $0\leqslant i \leqslant n-1$.
\item $\mathcal{P}_{\omega}(S^{op})$-$\id_{S^{op}}Q_i(\omega)\leqslant i$ for any $0\leqslant i \leqslant n-1$.
\end{enumerate}
\end{corollary}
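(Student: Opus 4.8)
The plan is to deduce Corollary~\ref{cor-4.9} from Theorem~\ref{thm-4.8}, applied twice: once for the semidualizing bimodule $_R\omega_S$ and once for the bimodule $_{S^{op}}\omega_{R^{op}}$ obtained by passing to opposite rings on both sides. Three elementary facts make the second application legitimate and meaningful. First, $_{S^{op}}\omega_{R^{op}}$ is again semidualizing, because reading an $(R,S)$-bimodule as an $(S^{op},R^{op})$-bimodule merely interchanges the conditions (a1)$\leftrightarrow$(a2), (b1)$\leftrightarrow$(b2) and (c1)$\leftrightarrow$(c2) of Definition~\ref{def-2.1}. Second, semiregularity is left--right symmetric: $J(R^{op})=J(R)$ as an ideal, von Neumann regularity of $R/J(R)$ does not depend on the side, and neither does the liftability of idempotents modulo $J(R)$; hence $S$ semiregular forces $S^{op}$ semiregular. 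Third, under the identification of $_R\omega_S$ with $_{S^{op}}\omega_{R^{op}}$, the minimal projective resolution of $_{S^{op}}\omega$ in $\mod S^{op}$ is exactly $\cdots\to Q_i(\omega)\to\cdots\to Q_0(\omega)\to\omega\to 0$, so the modules ``$P_i(\omega)$'' attached to $_{S^{op}}\omega_{R^{op}}$ are the $Q_i(\omega)$, its functor ``$(-)^{*}$'' is $\Hom_{S^{op}}(-,\omega)$, and its class ``$\mathcal{P}_\omega$'' is $\mathcal{P}_\omega(S^{op})$.

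Next I would apply Theorem~\ref{thm-4.8} to $_R\omega_S$ (legitimate since $R$ is semiregular). Its items (1) and (2) are precisely items (1) and (3) of Corollary~\ref{cor-4.9}, and its item (5) is the condition
\[
(\ast)\colon\quad \sTcograde_\omega\Ext^i_{S^{op}}(\omega,N')\geqslant i\ \text{ for all }N'\in\Mod S^{op}\text{ and }1\leqslant i\leqslant n;
\]
thus Theorem~\ref{thm-4.8} already yields that items (1) and (3) of the corollary are each equivalent to $(\ast)$. The crucial observation is that $(\ast)$ is verbatim item (3) of Theorem~\ref{thm-4.8} written for $_{S^{op}}\omega_{R^{op}}$: there $\Ext^i_{S^{op}}(\omega,-)$ takes values in $\Mod R^{op}$, and the strong $\Tor$-cograde of a module in $\Mod R^{op}$ is computed in Definition~\ref{def-2.5} via $\Tor^R(-,\omega)$, which by the canonical isomorphism $\Tor^{R^{op}}_j(\omega,-)\cong\Tor^R_j(-,\omega)$ is exactly the $\Tor$ over the ``$S$''-side ring $R^{op}$ of $_{S^{op}}\omega_{R^{op}}$. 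Now applying Theorem~\ref{thm-4.8} to $_{S^{op}}\omega_{R^{op}}$ (legitimate since $S^{op}$ is semiregular), the condition $(\ast)$, being its item (3), is equivalent to its item (1) --- namely $\pd_R Q_i(\omega)^{*}\leqslant i$ for $0\leqslant i\leqslant n-1$, which is item (2) of the corollary --- and to its item (2) --- namely $\mathcal{P}_\omega(S^{op})$-$\id_{S^{op}}Q_i(\omega)\leqslant i$ for $0\leqslant i\leqslant n-1$, which is item (4) of the corollary. Concatenating these chains gives the asserted equivalence of (1), (2), (3) and (4).

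The entire mathematical substance is thereby imported from Theorem~\ref{thm-4.8}; the only step demanding genuine care --- and the one I expect to be the main obstacle --- is the opposite-ring bookkeeping in the previous paragraph, i.e.\ verifying that each invariant attached by Theorem~\ref{thm-4.8} to $_{S^{op}}\omega_{R^{op}}$ (its $P_i(\omega)$, its $(-)^{*}$, its $\mathcal{P}_\omega$, its $\Ext^i_{S^{op}}(\omega,-)$, and the strong $\Tor$-cograde over its ``$S$''-ring) really does coincide with the symmetric invariant of $_R\omega_S$ already appearing in the statement of Theorem~\ref{thm-4.8}. Once these identifications are made explicit, no additional argument is required.
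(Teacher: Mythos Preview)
Your proof is correct and follows essentially the same strategy as the paper: both arguments apply Theorem~\ref{thm-4.8} (or equivalently its main ingredient, Proposition~\ref{prop-4.7}) once to $_R\omega_S$ and once to the opposite bimodule $_{S^{op}}\omega_{R^{op}}$, then link the two chains of equivalences through a common cograde condition. The only cosmetic difference is the choice of bridge: the paper links via condition~(3) of Theorem~\ref{thm-4.8} (the $\sTcograde$ condition on $\Ext^i_R(\omega,-)$), whereas you link via condition~(5) (the $\sTcograde$ condition on $\Ext^i_{S^{op}}(\omega,-)$); since Theorem~\ref{thm-4.8} already makes these equivalent, either route works equally well.
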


\begin{proof}
By the symmetric version of Proposition ~\ref{prop-4.7}, we have
$$(2)\Leftrightarrow (4)\Leftrightarrow
\sTcograde_\omega \Ext^{i}_{R}(\omega,M)\geqslant i\ {\rm for\ any}\ M\in\Mod R\ {\rm and}\ 1\leqslant i\leqslant n.$$
Now the assertion follows from Theorem \ref{thm-4.8}.
\end{proof}

\vspace{0.2cm}

\noindent{\bf 4.2. The quasi $n$-cograde condition}

\vspace{0.2cm}

\begin{definition} \label{def-4.10}
{\rm For any $n\geqslant 1$, $\omega$ is said to satisfy the {\it right quasi $n$-cograde condition}
if $\sEcograde_\omega\Tor_{i+1}^S(\omega,N)\geqslant i$ for any $N\in \Mod S$ and $1\leqslant i\leqslant n$;
and $\omega$ is said to satisfy the {\it left quasi $n$-cograde condition}
if $\sEcograde_\omega\Tor_{i+1}^R(M',\omega)\geqslant i$ for any $M'\in \Mod R^{op}$ and $1\leqslant i\leqslant n$.}
\end{definition}

It is trivial that $\omega$ satisfies the right (resp. left) quasi $n$-cograde conditions
if it satisfies the right (resp. left) $n$-cograde condition. But the converse does not hold true
in general, see Subsection 4.4 below.

The following lemma is useful in the sequel.

\begin{lemma} \label{lem-4.11}
For any $n\geqslant 0$, the following assertions hold.
\begin{enumerate}
\item Let $M\in \Mod R$. If $\Ecograde_{\omega}M\geqslant n$
and $\Tcograde_{\omega}\Ext^n_{R}(\omega,M)\geqslant n+1$, then $\Ecograde_{\omega}M\geqslant n+1$.
\item Let $N\in \Mod S$. If $\Tcograde_{\omega}N\geqslant n$
and $\Ecograde_{\omega}\Tor_n^{S}(\omega,N)\geqslant n+1$, then $\Tcograde_{\omega}N\geqslant n+1$.
\end{enumerate}
\end{lemma}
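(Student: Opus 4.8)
The plan is to prove (1) by analyzing the minimal injective resolution of $M$ together with the exact sequence from Lemma~\ref{lem-3.1}(1), and then obtain (2) by a formally dual argument using the minimal flat resolution and Lemma~\ref{lem-3.1}(2). For part (1), the hypothesis $\Ecograde_\omega M\geqslant n$ means $\Ext^i_R(\omega,M)=0$ for all $0\leqslant i\leqslant n-1$; equivalently, applying $(-)_*=\Hom_R(\omega,-)$ to the minimal injective resolution $(2.1)$ of $M$, the truncated complex $0\to M_*\to I^0(M)_*\to\cdots\to I^n(M)_*$ is exact at every spot up to homological degree $n-1$, so $\Ext^n_R(\omega,M)=\coker(I^{n-1}(M)_*\to\coOmega^n(M)_*)$ appears as the only homology in that range. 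The point I want to exploit is that for $n\geqslant 1$ this cokernel is precisely $\Ext^1_R(\omega,\coOmega^{n-1}(M))$, and more to the point it sits inside $\cTr_\omega\coOmega^{n-1}(M)$ via the exact sequence $(3.1)$ of Lemma~\ref{lem-3.1}(1). (For the degenerate case $n=0$ the statement is vacuous or immediate, since $\Ecograde_\omega M\geqslant 0$ always and the hypothesis on $\Tor$-cograde of $M_*=\Ext^0_R(\omega,M)$ forces $\omega\otimes_S M_*=0$, hence $M_*$ is adjoint-related to $0$; I will treat $n=0$ separately and quickly.)

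The main step is then a Tor-vanishing computation. From $(3.1)$ applied to $\coOmega^{n-1}(M)$ we have a short exact sequence
$$0\to \Ext^1_R(\omega,\coOmega^{n-1}(M))\to \cTr_\omega\coOmega^{n-1}(M)\to C\to 0$$
with $1_\omega\otimes\pi$ an isomorphism, where $C=I^{n}(M)_*/\coOmega^n(M)_*$. Since $\Ext^1_R(\omega,\coOmega^{n-1}(M))\cong\Ext^n_R(\omega,M)$ by dimension shifting (using $\Ecograde_\omega M\geqslant n$), the hypothesis $\Tcograde_\omega\Ext^n_R(\omega,M)\geqslant n+1$ gives $\Tor^S_j(\omega,\Ext^1_R(\omega,\coOmega^{n-1}(M)))=0$ for $0\leqslant j\leqslant n$. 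Feeding this into the long exact sequence in $\Tor^S(\omega,-)$ attached to the displayed short exact sequence, and using that $\omega\otimes_S(-)$ kills $\Ext^1_R(\omega,\coOmega^{n-1}(M))$ together with $1_\omega\otimes\pi$ being an isomorphism, I expect to deduce $\omega\otimes_S\Ext^1_R(\omega,\coOmega^{n-1}(M))=0$ in a way that propagates back: concretely I want to show $\Ext^n_R(\omega,M)=0$, which combined with $\Ecograde_\omega M\geqslant n$ yields $\Ecograde_\omega M\geqslant n+1$. The cleanest route is probably to observe that $\Ext^1_R(\omega,\coOmega^{n-1}(M))$ is a submodule of the $\omega$-coreflexive-type module $\cTr_\omega\coOmega^{n-1}(M)$ on which $\theta$ or $\mu$ behaves well, so that $\Tcograde\geqslant 1$ already forces it to vanish; alternatively use that $C$ embeds in $I^{n+1}(M)_*$ which lies in ${\omega_S}^{\top}$ by \cite[Corollary 6.1]{HW}, so $\Tor^S_j(\omega,C)\cong\Tor^S_{j-1}(\omega,\cTr_\omega\coOmega^n(M))$ for $j\geqslant 1$, and run the bookkeeping from there.

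The anticipated obstacle is the very first homological degree: getting from ``$\omega\otimes_S\Ext^n_R(\omega,M)=0$'' to ``$\Ext^n_R(\omega,M)=0$'' requires knowing that $\Ext^n_R(\omega,M)$ lies in a class where $\Tor$-cograde $\geqslant 1$ implies vanishing, and this is exactly where one must use that it is a quotient (or sub) of a module built from the syzygies of $M$ that already satisfies enough of the Bass/Auslander-class conditions — this is the place where the detailed structure of $(3.1)$, the isomorphism $1_\omega\otimes\pi$, and \cite[Corollary 6.1]{HW} all get used simultaneously. Once (1) is established, part (2) follows by the dual argument: replace the minimal injective resolution by the minimal flat resolution $(2.2)$, replace $(3.1)$ by $(3.2)$ of Lemma~\ref{lem-3.1}(2) (with $\sigma_*$ an isomorphism in place of $1_\omega\otimes\pi$), replace $\Ecograde$/$\Tcograde$ and $\Ext$/$\Tor$ throughout by their partners, and use that $\omega\otimes_S F_{n+1}(N)\in{}_R\omega^{\perp}$ by \cite[Corollary 6.1]{HW}; no new idea is needed, so I would simply say ``(2) is dual to (1)'' after spelling out the dictionary.
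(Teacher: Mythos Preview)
Your proposal correctly identifies the crux of the argument---getting from $\omega\otimes_S\Ext^n_R(\omega,M)=0$ to $\Ext^n_R(\omega,M)=0$---but it does not actually close this gap, and neither of your two suggested routes works as stated. The first suggestion, that $\Ext^n_R(\omega,M)$ sits as a submodule of $\cTr_\omega\coOmega^{n-1}(M)$ ``on which $\mu$ behaves well,'' would require $\cTr_\omega\coOmega^{n-1}(M)$ to be adjoint $\omega$-cotorsionless, but this module is a \emph{quotient} of $I^n(M)_*$, and quotients of adjoint cotorsionless modules need not be adjoint cotorsionless; there is no general reason for $\mu$ to be monic there. The second suggestion, chasing $\Tor$-identities through $C\hookrightarrow I^{n+1}(M)_*$, only reshuffles the bookkeeping and does not produce the needed vanishing of $\Ext^n_R(\omega,M)$ itself. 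Crucially, both attempts use only $\Tcograde\geqslant 1$, leaving the full strength of the hypothesis $\Tcograde\geqslant n+1$ unused---a sign that the argument is incomplete.

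The paper's proof resolves this by a splitting argument that uses all of the $\Tor$-vanishing. Write $M'=\coOmega^n(M)$; the hypothesis $\Ecograde_\omega M\geqslant n$ yields an exact sequence
\[
0\to I^0(M)_*\to\cdots\to I^{n-1}(M)_*\xrightarrow{g} M'_*\to \Ext^n_R(\omega,M)\to 0.
\]
Now the adjunction isomorphism $\Ext^i_S(\Ext^n_R(\omega,M),I^j(M)_*)\cong \Hom_R(\Tor^S_i(\omega,\Ext^n_R(\omega,M)),I^j(M))$ together with $\Tcograde_\omega\Ext^n_R(\omega,M)\geqslant n+1$ kills these groups for $0\leqslant i\leqslant n$; dimension-shifting along the displayed resolution then gives $\Ext^1_S(\Ext^n_R(\omega,M),\im g)=0$, so the short exact sequence $0\to\im g\to M'_*\to\Ext^n_R(\omega,M)\to 0$ splits. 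Thus $\Ext^n_R(\omega,M)$ is a \emph{direct summand} of $M'_*$, and $M'_*$ is adjoint $\omega$-cotorsionless by \cite[Lemma 6.1(1)]{TH2} (the identity $(\theta_{M'})_*\cdot\mu_{M'_*}=1$). Hence $\Ext^n_R(\omega,M)$ is adjoint $\omega$-cotorsionless, and combined with $\omega\otimes_S\Ext^n_R(\omega,M)=0$ this forces $\Ext^n_R(\omega,M)=0$. For part (2) the paper dualizes in the same spirit, proving the splitting via $\Ext^1_R(\im f,\Tor_n^S(\omega,N))=0$ rather than via Lemma~\ref{lem-3.1}(2); so your ``dual dictionary'' is the right idea, but applied to the corrected argument for (1).
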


\begin{proof}
We proceed by induction on $n$.

(1) If $n=0$, then $\omega\otimes_S M_*=0$ by assumption. It follows from \cite[Lemma 6.1(1)]{TH2} that $M_*=0$
and $\Ecograde_{\omega}M\geqslant 1$.

Let $n\geqslant 1$. Consider an injective resolution
$$0\to M\to I^0\to \cdots \to I^n \to \cdots$$
of $M$ in $\Mod R$. Put $M'=\im(I^{n-1}\to I^{n})$. Since $\Ecograde_{\omega}M\geqslant n$ by the induction hypothesis,
applying the functor $(-)_*$ to the above exact sequence yields the following exact sequence
$$0\to {I^0}_*\to \cdots \to {I^{n-1}}_* \stackrel{g}{\longrightarrow} {M'}_*\to \Ext^n_R(\omega,M)\to 0$$
in $\Mod S$. Because $\Tcograde_{\omega}\Ext^n_{R}(\omega,M)\geqslant n+1$ by assumption,
we have $\Tor^S_{0\leqslant i\leqslant n}(\omega,\Ext^n_{R}(\omega,M))=0$. Then by \cite[Proposition VI.5.1]{CE},
we have
$$\Ext^i_S(\Ext^n_{R}(\omega,M), {I^j}_*)\cong \Hom_R(\Tor^S_i(\omega,\Ext^n_{R}(\omega,M)),I^j)=0$$
for any $0\leqslant i\leqslant n$ and $j\geqslant 0$, and hence
$$\Ext^1_S(\Ext^n_{R}(\omega,M), \im g)\cong \Ext^n_S(\Ext^n_{R}(\omega,M), {I^0}_*)=0.$$
It implies that the exact sequence
$$0\to \im g\to {M'}_*\to \Ext_R^n(\omega,M)\to 0$$
splits and hence $\Ext_R^n(\omega,M)$ is a direct summand of ${M'}_*$. Since ${M'}_*$
is adjoint 1-$\omega$-cotorsionfree, so is $\Ext_R^n(\omega,M)$. Thus, applying \cite[Proposition 3.2]{TH3},
the T-cograde condition on $\Ext_R^n(\omega,M)$ proves $\Ext_R^n(\omega,M)=0$.
Consequently we have $\Ecograde_{\omega}M \geqslant n+1$ and the assertion follows.

(2) If $n=0$, then $(\omega\otimes_SN)_*=0$ by assumption. It follows from
\cite[Lemma 6.1(2)]{TH2} that $\omega\otimes_SN=0$ and $\Tcograde_{\omega}N\geqslant 1$.

Let $n\geqslant 1$. Consider a projective resolution
$$\cdots \to P_n \to \cdots \to P_0 \to N\to 0$$ of $N$ in $\Mod S$. Put $N'=\im(P_n\to P_{n-1})$.
Since $\Tcograde_{\omega}N\geqslant n$ by the induction hypothesis, applying the functor $\omega\otimes_S-$ to
the above exact sequence yields the following exact sequence
$$0\to \Tor_n^S(\omega,N)\to \omega\otimes_SN'\stackrel{f}{\longrightarrow} \omega\otimes_SP_{n-1}\to \cdots \to \omega\otimes_SP_{0}\to 0$$
in $\Mod R$. Because $\Ecograde_{\omega}\Tor_n^{S}(\omega,N)\geqslant n+1$ by assumption, we have
\linebreak
$\Ext^{0\leqslant i\leqslant n}_{R}(\omega,\Tor_n^{S}(\omega,N))=0$. Notice that $\omega\otimes_SP\in\Add_{R}\omega$
for any projective module $P$ in $\Mod S$, so $\Ext^{0\leqslant i\leqslant n}_{R}(\omega\otimes_SP_{j}, \Tor_n^{S}(\omega,N))=0$
for any $j\geqslant 0$, and hence
$$\Ext^1_{R}(\im f, \Tor_n^{S}(\omega,N))\cong \Ext^n_{R}(\omega\otimes_SP_0, \Tor_n^{S}(\omega,N))=0.$$
It induces an exact sequence
$$\Hom_{R}(\omega\otimes_SN',\Tor_n^{S}(\omega,N))\to \Hom_{R}(\Tor_n^{S}(\omega,N),\Tor_n^{S}(\omega,N))\to 0.$$
Because $\omega\otimes_SN'\in\cT_{\omega}^1(R)$ by \cite[Lemma 6.1(2)]{TH2}, there exists an epimorphism
$U\twoheadrightarrow \omega\otimes_SN'$ in $\Mod R$ with $U\in\Add_{R}\omega$ by \cite[Lemma 3.6(1)]{TH1}.
Because $(\Tor_n^{S}(\omega,N))_*=0$, we have $\Hom_{R}(U,\Tor_n^{S}(\omega,N))=0$. It follows that
$\Hom_{R}(\omega\otimes_SN',\Tor_n^{S}(\omega,N))=0$ and $\Hom_{R}(\Tor_n^{S}(\omega,N),\Tor_n^{S}(\omega,N))=0$,
which implies $\Tor_n^{S}(\omega,N)=0$. So $\Tcograde_{\omega}N \geqslant n+1$ and the assertion follows.
\end{proof}

We have the following equivalent characterizations for $\omega$ satisfying the right quasi $n$-cograde condition.

\begin{proposition} \label{prop-4.12}
For any $n\geqslant 1$, the following statements are equivalent.
\begin{enumerate}
\item $\sEcograde_{\omega}\Tor_{i+1}^{S}(\omega,N)\geqslant i$ for any $N\in \Mod S$ and $1\leqslant i\leqslant n$.
\item $\Tcograde_{\omega}\Ext^i_{R}(\omega,M)\geqslant i$ for any $M\in \Mod R$ and $1\leqslant i\leqslant n$.
\item For any exact sequence
$$0\to A\stackrel{g}{\longrightarrow} B\to C\to 0$$ in $\Mod S$ with $C\in \coOmega^{i-1}_{\mathcal{I}_\omega}(\Omega^{i}_{\mathcal{I}_\omega}(S))$,
$\Ext_{R}^i(\omega, \Tor_i^{S}(\omega,g))$ is a monomorphism for any $0\leqslant i\leqslant n-1$.
\item For any exact sequence
$$0\to A\stackrel{g}{\longrightarrow} B\to C\to 0$$ in $\Mod S$ with $C\in \coOmega^{i-1}_{\mathcal{I}_\omega}(\Omega^{i}_{\mathcal{F}}(S))$,
$\Ext_{R}^i(\omega, \Tor_i^{S}(\omega,g))$ is a monomorphism for any $0\leqslant i\leqslant n-1$.
\item $\Tor^S_i(\omega, \Ext^i_{R}(\omega,f))$ is an epimorphism for any epimorphism $f: B\twoheadrightarrow C$
in $\Mod R$ with $B,C\in \coOmega^{1}_{\mathcal{P}_{\omega}}(R)$ and $0\leqslant i\leqslant n-1$.
\item $\Tor^S_i(\omega, \Ext^i_{R}(\omega,f))$ is an epimorphism for any epimorphism $f: B\twoheadrightarrow C$
in $\Mod R$ with $B,C\in \coOmega^{1}(R)$ and $0\leqslant i\leqslant n-1$.
\item $\coOmega^{i}(R)\subseteq \cT^{i+1}_{\omega}(R)$ for any $1\leqslant i\leqslant n$.
\end{enumerate}
\end{proposition}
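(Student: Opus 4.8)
The plan is to read the proposition off from the machinery of Section~3 by specializing the auxiliary parameter $k$, then to glue the two resulting chains of equivalences together with Lemmas~\ref{lem-3.2} and~\ref{lem-4.11}, running the whole argument as an induction on $n$. The first step is pure bookkeeping: conditions (1), (3), (4) are literally conditions (1), (2), (3) of Theorem~\ref{thm-3.9} with $k=1$, and conditions (2), (5), (6) are literally conditions (1), (2), (3) of Theorem~\ref{thm-3.5} with $k=0$; since in both theorems conditions (1)--(3) are equivalent for \emph{every} $k\geqslant 0$, we get $(1)\Leftrightarrow(3)\Leftrightarrow(4)$ and $(2)\Leftrightarrow(5)\Leftrightarrow(6)$ at once. (Condition (7) is not absorbed here because Theorem~\ref{thm-3.5} asserts the equivalence with its fourth condition only for $k\geqslant 1$.) It remains to attach (7) to (2) and to link the ``$R$-side'' condition (2) with the ``$S$-side'' condition (1).

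For $(2)\Leftrightarrow(7)$ I would induct on $n$. Granting the equivalence below $n$, condition (7) at the index $i=n-1$ supplies the hypothesis $\coOmega^{n-1}(R)\subseteq\cT^{n}_{\omega}(R)$ of Lemma~\ref{lem-3.2}, which then makes ``$\Tcograde_{\omega}\Ext^{n}_{R}(\omega,M)\geqslant n$ for all $M$'' equivalent to ``$\coOmega^{n}(R)\subseteq\cT^{n+1}_{\omega}(R)$'', completing the step. The base case $n=1$ must be treated by hand, since Lemma~\ref{lem-3.2} with the index pair $(0,1)$ is vacuous in general ($\coOmega^{0}(R)=\Mod R\not\subseteq\cT^{1}_{\omega}(R)$): I would show directly, from the exact sequence~(3.1) of Lemma~\ref{lem-3.1}(1) and the fact that every cosyzygy is $1$-$\omega$-cotorsionfree, that ``every $1$-cosyzygy is $\omega$-coreflexive'' (i.e.\ $\coOmega^{1}(R)\subseteq\cT^{2}_{\omega}(R)$) is equivalent to ``$\omega\otimes_{S}\Ext^{1}_{R}(\omega,M)=0$ for all $M$'', i.e.\ to (2) with $n=1$. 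Throughout I would use the dimension-shifting isomorphism $\Ext^{i}_{R}(\omega,M)\cong\Ext^{1}_{R}(\omega,\coOmega^{i-1}(M))$ to move between degree $1$ and higher degrees, and Lemma~\ref{lem-3.4}(2) to trade $\coOmega^{\bullet}(R)$ for $\coOmega^{\bullet}_{\mathcal{P}_{\omega}}(R)$.

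The crux is $(1)\Leftrightarrow(2)$, and here Lemma~\ref{lem-4.11} is the bridge. For $(2)\Rightarrow(1)$ I would fix $N\in\Mod S$, take a quotient module $X$ of $\Tor^{S}_{i+1}(\omega,N)$ in $\Mod R$, and induct on $i$: from $\Ecograde_{\omega}X\geqslant i-1$ (the previous step) and the bound $\Tcograde_{\omega}\Ext^{i-1}_{R}(\omega,X)\geqslant i$ supplied by (2), Lemma~\ref{lem-4.11}(1) yields $\Ecograde_{\omega}X\geqslant i$; as $X$ was arbitrary, this is $\sEcograde_{\omega}\Tor^{S}_{i+1}(\omega,N)\geqslant i$. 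For $(1)\Rightarrow(2)$ I would run the mirror argument, using the exact sequence~(3.2) of Lemma~\ref{lem-3.1}(2) to express $\Tor^{S}_{1}(\omega,-)$ of a yoke through $\acTr_{\omega}$ and Lemma~\ref{lem-4.11}(2) to convert the strong $\Ext$-cograde control that (1) places on the $\Tor$-modules into the vanishing of the low $\Tor^{S}$-groups of $\Ext^{i}_{R}(\omega,M)$. I expect this bridging step to be the main obstacle: one has to absorb the off-by-one between ``$\Ext^{i}$'' in (2) and ``$\Tor^{S}_{i+1}$'' in (1), and must check that the quotients of $\Tor$-modules (resp.\ the submodules of $\Ext$-modules) that arise are exactly the inputs required by Lemma~\ref{lem-4.11}; making these indices and module-theoretic reductions fit together cleanly is where the real work lies.
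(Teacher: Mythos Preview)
Your reduction of $(1)\Leftrightarrow(3)\Leftrightarrow(4)$ and $(2)\Leftrightarrow(5)\Leftrightarrow(6)$ to Theorems~\ref{thm-3.9} and~\ref{thm-3.5} at $k=1$ and $k=0$ is exactly what the paper does, and your separate treatment of $(2)\Leftrightarrow(7)$ via Lemma~\ref{lem-3.2} with a direct base case is fine (indeed, the paper's one-line citation of Theorem~\ref{thm-3.5} for $(7)$ glosses over precisely the $k=0$ obstruction you noticed).

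The gap is in your bridging argument $(2)\Rightarrow(1)$. You write that from $\Ecograde_{\omega}X\geqslant i-1$ and ``the bound $\Tcograde_{\omega}\Ext^{i-1}_{R}(\omega,X)\geqslant i$ supplied by (2)'' one concludes via Lemma~\ref{lem-4.11}(1) that $\Ecograde_{\omega}X\geqslant i$. But $(2)$ at level $j=i-1$ only gives $\Tcograde_{\omega}\Ext^{i-1}_{R}(\omega,X)\geqslant i-1$, not $\geqslant i$; Lemma~\ref{lem-4.11}(1) with $n=i-1$ genuinely requires the stronger bound $\geqslant i$. This is not a cosmetic off-by-one: you cannot extract $\Tcograde_{\omega}\Ext^{i-1}_{R}(\omega,X)\geqslant i$ from $(2)$ applied to $X$ alone. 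The paper closes this gap by a dimension shift through the adjoint cotranspose: using Lemma~\ref{lem-3.1}(2) and the induction hypothesis (which gives $\acTr_{\omega}\Omega^{n}_{\mathcal{F}}(N)\in{_R\omega^{\perp_n}}$), one obtains an isomorphism $\Ext^{n-1}_{R}(\omega,X)\cong\Ext^{n}_{R}(\omega,\Ker f)$ for a specific module $\Ker f$, and now $(2)$ at level $n$ yields $\Tcograde_{\omega}\Ext^{n-1}_{R}(\omega,X)\geqslant n$, exactly what Lemma~\ref{lem-4.11}(1) needs. The mirror direction $(1)\Rightarrow(2)$ has the same issue and the same cure, this time via the cotranspose (Lemma~\ref{lem-3.1}(1)): one realizes $\Tor^{S}_{n-1}(\omega,\Ext^{n}_{R}(\omega,M))$ as a \emph{quotient} of $\Tor^{S}_{n+1}(\omega,\cTr_{\omega}\coOmega^{n}(M))$, so that the strong $\Ext$-cograde hypothesis at level $n$ applies and Lemma~\ref{lem-4.11}(2) finishes. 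Your closing paragraph anticipates that the index-matching is the crux; the missing ingredient is precisely this cotranspose-mediated shift that converts an $(i-1)$-level Ext (resp.\ Tor) of the target module into an $i$-level Ext (resp.\ $(i+1)$-level Tor) of an auxiliary module.
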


\begin{proof}
By Theorems \ref{thm-3.9} and \ref{thm-3.5}, we have $(1)\Leftrightarrow (3)\Leftrightarrow (4)$ and
$(2)\Leftrightarrow (5)\Leftrightarrow (6)\Leftrightarrow (7)$
respectively. In the following, we will prove $(1)\Leftrightarrow (2)$ by induction on $n$.

$(1)\Rightarrow (2)$ Let $M\in \Mod R$. By Lemma ~\ref{lem-3.1}(1), for any $n\geqslant 1$, there exist exact sequences
$$0\to \Ext^n_R(\omega, M)\stackrel{\lambda}{\longrightarrow}
\cTr_{\omega}\coOmega^{n-1}(M)\stackrel{\pi}{\longrightarrow} C\to 0, \eqno{(4.2)}$$
$$0\to C\to I^{n+1}(M)_*\to \cTr_{\omega}\coOmega^{n}(M)\to 0 \eqno{(4.3)}$$
in $\Mod S$ such that $1_\omega\otimes \pi$ is an isomorphism, where $C=I^n(M)_*/\coOmega^n(M)_*$.
Because $I^{n+1}(M)_*\in {\omega_S}^{\top}$ by \cite[Corollary 6.1]{HW},
it follows from the exact sequence (4.3) that $\Tor_i^S(\omega,C)\cong
\Tor_{i+1}^S(\omega,\cTr_{\omega}\coOmega^{n}(M))$ for any $i\geqslant 1$.

If $n=1$, then from the exact sequence (4.2) we get an exact sequence
$$\Tor_{2}^S(\omega, \cTr_{\omega}\coOmega^{1}(M))(\cong\Tor_1^S(\omega,C))\to \omega\otimes_S\Ext^1_{R}(\omega,M)\to 0$$
in $\Mod R$. Because $\sEcograde_{\omega}\Tor_{2}^S(\omega, \cTr_{\omega}\coOmega^{1}(M))\geqslant 1$ by assumption,
we have $\Ecograde_{\omega}\omega\otimes_S\Ext^1_{R}(\omega,M))\geqslant 1$.
It is derived from Lemma ~\ref{lem-4.11}(2) that $\Tcograde_{\omega}\Ext^1_{R}(\omega,M)\geqslant 1$.

Now suppose $n\geqslant 2$. Then $\Tcograde_{\omega}\Ext^i_{R}(\omega,M)\geqslant i$ for any $1\leqslant i\leqslant n-1$
and $\Tcograde_{\omega}\Ext^n_{R}(\omega,M)\geqslant n-1$ by the induction hypothesis. It follows from Theorem ~\ref{thm-3.5} that
$\coOmega^{i}(R)\subseteq\cT^i_{\omega}(R)$ for any $1\leqslant i\leqslant n$. So $\coOmega^{n-1}(M)\in \cT^{n-1}_{\omega}(R)$,
and hence $\cTr_{\omega}\coOmega^{n-1}(M)$ $\in {\omega_S}^{\top_{n-1}}$.
Thus from the exact sequences (4.2) and (4.3) we get the following exact sequence
$$\Tor_{n+1}^S(\omega, \cTr_{\omega}\coOmega^{n}(M))\to \Tor_{n-1}^S(\omega, \Ext^n_{R}(\omega,M))\to 0.$$
By (1), we have $\Ecograde_{\omega}\Tor_{n-1}^S(\omega, \Ext^n_{R}(\omega,M))\geqslant n$. It follows from Lemma ~\ref{lem-4.11}(2) that
$\Tcograde_{\omega}\Ext^n_{R}(\omega,M)\geqslant n$.

$(2) \Rightarrow (1)$ Let $N\in \Mod S$ and $X$ a quotient module of $\Tor_{n+1}^{S}(\omega,N)$ in $\Mod R$,  and let
$\beta:\Tor_{1}^{S}(\omega,\Omega_{\mathcal{F}}^{n}(N))(\cong \Tor_{n+1}^{S}(\omega,N))\twoheadrightarrow X$ be an epimorphism in $\Mod R$.
By Lemma ~\ref{lem-3.1}(2), we have an exact sequence
$$0\to \im (1_{\omega}\otimes f_{n+1})\stackrel{\sigma}{\longrightarrow}
\acTr_\omega \Omega_{\mathcal{F}}^{n}(N) \stackrel{\tau}{\longrightarrow} \Tor_{n+1}^{S}(\omega,N)\to 0$$
in $\Mod R$ such that $\sigma_*$ is an isomorphism. Then we get an exact sequence
$$0\to \Ker f\stackrel{\eta}{\longrightarrow} \acTr_{\omega}\Omega_{\mathcal{F}}^{n}(N)\stackrel{f}{\longrightarrow} X\to 0 \eqno{(4.4)}$$
in $\Mod R$, where $f=\beta\cdot \tau$. It is easy to see that $\eta_*$ is an isomorphism.

Let $n=1$. Because $\Omega_{\mathcal{F}}^{1}(N)\in \acT^1_{\omega}(S)$ by \cite[Corollary 3.5(1)]{TH3},
we have $\acTr_{\omega}\Omega_{\mathcal{F}}^{1}(N)\in{_R\omega}^{\bot_1}$.
Then the exact sequence (4.4) gives that $X_*\cong \Ext_R^{1}(\omega, \Ker f)$. So $\Tcograde_{\omega}X_*\geqslant 1$
by (2), and hence $\Ecograde_{\omega}X\geqslant 1$ by Lemma ~\ref{lem-4.11}(1). The case for $n=1$ is proved.

Now suppose $n\geqslant 2$. Then $\sEcograde_{\omega}\Tor_{i+1}^{S}(\omega,N)\geqslant i$ for any $1\leqslant i\leqslant n-1$
and $\sEcograde_{\omega}\Tor_{n+1}^{S}(\omega,N)\geqslant n-1$ by the induction hypothesis. So $\Ecograde_{\omega}X\geqslant n-1$.

By Theorem ~\ref{thm-3.7}, we have $\Omega^{i}_{\mathcal{F}}(S)\subseteq\acT^i_{\omega}(S)$ for any $1\leqslant i\leqslant n$.
So $\Omega^{n}_{\mathcal{F}}(N)\in \acT^n_{\omega}(S)$ and $\acTr_{\omega}\Omega^{n}_{\mathcal{F}}(N)\in{_R\omega}^{\bot_n}$.
It follows from the exact sequence (4.4) that $\Ext_R^{n-1}(\omega, X)\cong \Ext_R^{n}(\omega, \Ker f)$.
Then by (2), we have $\Tcograde_{\omega}\Ext^{n-1}_{R}(\omega,X)=\Tcograde_{\omega}\Ext^n_{R}(\omega,\Ker f)\geqslant n$.
Thus $\Ecograde_{\omega}X\geqslant n$ by Lemma ~\ref{lem-4.11}(1).
\end{proof}

We also have the following

\begin{proposition} \label{prop-4.13}
For any $n\geqslant 1$, the following statements are equivalent.
\begin{enumerate}
\item $\sTcograde_{\omega}\Ext^{i+1}_{S^{op}}(\omega,N')\geqslant i$ for any $N'\in \Mod S^{op}$ and $1\leqslant i\leqslant n$.
\item $\Ecograde_{\omega}\Tor_i^{R}(M', \omega)\geqslant i$ for any $M'\in \Mod R^{op}$ and $1\leqslant i\leqslant n$.
\item For any exact sequence
$$0\to A'\to B'\stackrel{f'}{\longrightarrow}C'\to 0$$ in $\Mod S^{op}$ with
$A\in \Omega^{i-1}_{\mathcal{P}_\omega}(\coOmega^{i}_{\mathcal{P}_\omega}(S^{op}))$,
$\Tor^R_i(\Ext^i_{S^{op}}(\omega,f'),\omega)$ is an epimorphism for any $0\leqslant i\leqslant n-1$.
\item For any exact sequence
$$0\to A'\to B'\stackrel{f'}{\longrightarrow}C'\to 0$$ in $\Mod S^{op}$ with $A\in \Omega^{i-1}_{\mathcal{P}_\omega}(\coOmega^{i}(S^{op}))$,
$\Tor^R_i(\Ext^i_{S^{op}}(\omega,f'),\omega)$ is an epimorphism for any $0\leqslant i\leqslant n-1$.
\item $\Ext_{S^{op}}^i(\omega, \Tor_i^{R}(g',\omega))$ is a monomorphism for any monomorphism $g': B'\rightarrowtail C'$ in $\Mod R^{op}$
with $B', C'\in \Omega^{1}_{\mathcal{I}_{\omega}}(R^{op})$ and $0\leqslant i\leqslant n-1$.
\item $\Ext_{S^{op}}^i(\omega, \Tor_i^{R}(g',\omega))$ is a monomorphism for any monomorphism $g': B'\rightarrowtail C'$ in $\Mod R^{op}$
with $B', C'\in \Omega^{1}_{\mathcal{F}}(R^{op})$ and $0\leqslant i\leqslant n-1$.
\item $\Omega^{i}_{\mathcal{F}}(R^{op})\subseteq \acT^{i+1}_{\omega}(R^{op})$ for any $1\leqslant i\leqslant n$.
\end{enumerate}
\end{proposition}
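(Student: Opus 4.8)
The plan is to carry out, for the semidualizing bimodule $_{S^{op}}\omega_{R^{op}}$, the argument of Proposition~\ref{prop-4.12}, interchanging throughout the roles of $\Tor$ and $\Ext$ (equivalently: of submodules and quotient modules, of flat/projective resolutions and injective resolutions, of monomorphisms and epimorphisms, and of the functors $-\otimes_R\omega$ and $\Hom_{S^{op}}(\omega,-)$). The ``formal'' equivalences come first. The chain $(1)\Leftrightarrow(3)\Leftrightarrow(4)$ is the symmetric version of Theorem~\ref{thm-3.8} with $k=1$: condition $(1)$ is its statement about $\sTcograde_\omega\Ext^{i+1}_{S^{op}}(\omega,-)$, and $(3)$, $(4)$ are its two exact-sequence reformulations over $\Mod S^{op}$, phrased with $\coOmega^{i}_{\mathcal{P}_\omega}(S^{op})$ and with $\coOmega^{i}(S^{op})$ respectively. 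Similarly $(2)\Leftrightarrow(5)\Leftrightarrow(6)$ is the symmetric version of Theorem~\ref{thm-3.7} with $k=0$, while $(2)\Leftrightarrow(7)$ follows by induction on $n$ from the symmetric version of Lemma~\ref{lem-3.3}, whose base step $\Omega^{1}_{\mathcal{F}}(R^{op})\subseteq\acT^{1}_{\omega}(R^{op})$ is given by the symmetric forms of Lemmas~\ref{lem-2.9}(1) and~\ref{lem-3.6}(2).

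The substantive point is $(1)\Leftrightarrow(2)$, which I would prove by induction on $n$, as a dual of the $(1)\Leftrightarrow(2)$ step of Proposition~\ref{prop-4.12}. For the implication towards $(2)$: given $M'\in\Mod R^{op}$, apply the symmetric form of Lemma~\ref{lem-3.1}(2) to the yokes $\Omega^{n-1}_{\mathcal{F}}(M')$ and $\Omega^{n}_{\mathcal{F}}(M')$ to obtain short exact sequences relating $\Tor^{R}_{n}(M',\omega)\cong\Tor^{R}_{1}(\Omega^{n-1}_{\mathcal{F}}(M'),\omega)$ to the adjoint cotransposes $\acTr_\omega\Omega^{n-1}_{\mathcal{F}}(M')$, $\acTr_\omega\Omega^{n}_{\mathcal{F}}(M')$ and to terms $F\otimes_R\omega$ with $F$ a flat cover. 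Using that such $F\otimes_R\omega$ are $\Ext$-acyclic (symmetric form of \cite[Corollary 6.1]{HW}) and that the inductive hypothesis, via the symmetric form of Theorem~\ref{thm-3.7}, places the two yokes in a suitable adjoint-cotorsionfree class so that their adjoint cotransposes are $\Ext$-orthogonal in the relevant range, one identifies $\Ext^{j}_{S^{op}}(\omega,\Tor^{R}_{n}(M',\omega))$ for small $j$ with $\Ext^{j+2}_{S^{op}}(\omega,\acTr_\omega\Omega^{n}_{\mathcal{F}}(M'))$. Feeding $N'=\acTr_\omega\Omega^{n}_{\mathcal{F}}(M')$ into $(1)$ then shows that $\Ext^{n-1}_{S^{op}}(\omega,\Tor^{R}_{n}(M',\omega))$ has $\Tor$-cograde $\geqslant n$; the symmetric form of Lemma~\ref{lem-4.11}(1), applied with the inductive bound $\Ecograde_\omega\Tor^{R}_{n}(M',\omega)\geqslant n-1$, then upgrades this to $\Ecograde_\omega\Tor^{R}_{n}(M',\omega)\geqslant n$. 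For the converse implication: given $N'\in\Mod S^{op}$ and a submodule of $\Ext^{n+1}_{S^{op}}(\omega,N')$, dually lift it to a map into a cotranspose over $\Mod S^{op}$ (symmetric form of Lemma~\ref{lem-3.1}(1)), run the pull-back / push-out construction from the proofs of Theorems~\ref{thm-3.8} and~\ref{thm-3.9}, and combine $(2)$ with the symmetric forms of Theorems~\ref{thm-3.5} and~\ref{thm-3.7} and of Lemma~\ref{lem-4.11} to conclude $\sTcograde_\omega\Ext^{n+1}_{S^{op}}(\omega,N')\geqslant n$.

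The only real difficulty is bookkeeping. In the dualized setting one must track consistently which module lives in $\Mod R^{op}$ and which in $\Mod S^{op}$, which of $\Hom_{S^{op}}(\omega,-)$ and $-\otimes_R\omega$ is being applied at each stage, and — since $\Tor$ and $\Ext$ have exchanged roles — which member of each of the pairs Lemma~\ref{lem-3.1}(1)/(2) and Lemma~\ref{lem-4.11}(1)/(2) is the relevant one (it is the opposite member from the one used in the proof of Proposition~\ref{prop-4.12}). Beyond this, no new idea is needed over those already present in the proofs of Theorems~\ref{thm-3.5}--\ref{thm-3.9}, Lemma~\ref{lem-4.11} and Proposition~\ref{prop-4.12}.
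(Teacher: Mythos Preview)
Your proposal is correct and follows essentially the same route as the paper: the formal chains $(1)\Leftrightarrow(3)\Leftrightarrow(4)$ and $(2)\Leftrightarrow(5)\Leftrightarrow(6)\Leftrightarrow(7)$ come from the symmetric versions of Theorems~\ref{thm-3.8} and~\ref{thm-3.7}, and the substantive $(1)\Leftrightarrow(2)$ is proved by induction using the symmetric Lemmas~\ref{lem-3.1} and~\ref{lem-4.11} together with Theorems~\ref{thm-3.5} and~\ref{thm-3.7}, exactly as in Proposition~\ref{prop-4.12}.

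Two small points of precision worth noting. For $(1)\Rightarrow(2)$, the paper obtains only an \emph{embedding} $\Ext^{n-1}_{S^{op}}(\omega,\Tor^R_n(M',\omega))\hookrightarrow\Ext^{n+1}_{S^{op}}(\omega,\acTr_\omega\Omega^n_{\mathcal{F}}(M'))$ rather than an identification; this is enough precisely because $(1)$ is a \emph{strong} $\Tor$-cograde condition, hence inherited by submodules. For $(2)\Rightarrow(1)$, no pull-back/push-out construction is needed: one simply composes the inclusion $Y\hookrightarrow\Ext^{n+1}_{S^{op}}(\omega,N')$ with the map $\lambda$ from the symmetric Lemma~\ref{lem-3.1}(1) to embed $Y$ into $\cTr_\omega\coOmega^n(N')$, and works directly with the short exact sequence $0\to Y\to\cTr_\omega\coOmega^n(N')\to\Coker g\to 0$ (using that the induced $\rho\otimes 1_\omega$ is an isomorphism).
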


\begin{proof}
By the symmetric versions of Theorems \ref{thm-3.8} and \ref{thm-3.7}, we have $(1)\Leftrightarrow (3)\Leftrightarrow (4)$
and $(2)\Leftrightarrow (5)\Leftrightarrow (6)\Leftrightarrow (7)$ respectively.
In the following, we will prove $(1)\Leftrightarrow (2)$ by induction on $n$.

$(1)\Rightarrow (2)$ Let $M'\in \Mod R^{op}$ and let
$$\cdots \to F_{i+1}(M')\stackrel{f_i}{\longrightarrow} F_{i}(M')\to \cdots \stackrel{f_0}{\longrightarrow} F_{0}(M')\to M' \to 0$$
be the minimal flat resolution of $M'$ in $\Mod R^{op}$.
By Lemma ~\ref{lem-3.1}(2), for any $n\geqslant 1$, there exist exact sequences
$$0\to \im (1_{\omega}\otimes f_{n})\stackrel{\sigma}{\longrightarrow}
\acTr_\omega \Omega_{\mathcal{F}}^{n-1}(M') \stackrel{\tau}{\longrightarrow} \Tor_{n}^{R}(M',\omega)\to 0,\eqno{(4.5)}$$
$$0\to \acTr_{\omega}\Omega_{\mathcal{F}}^{n}(M')\to F_{n+1}(M')\otimes_R\omega\to
\im (1_{\omega}\otimes f_{n})\to 0 \eqno{(4.6)}$$
in $\Mod S^{op}$ such that $\sigma_*$ is an isomorphism. Because $F_{n+1}(M')\otimes_R\omega\in{\omega_S}^{\bot}$
by \cite[Corollary 6.1]{HW}, it follows from the exact sequence (4.6) that $\Ext^i_{S^{op}}(\omega,\im (1_{\omega}\otimes f_{n}))
\cong \Ext^{i+1}_{S^{op}}(\omega,\acTr_{\omega}\Omega_{\mathcal{F}}^{n}(M'))$ for any $i\geqslant 1$.

If $n=1$, then from the exact sequence (4.5) we get an exact sequence
$$0\to (\Tor_{1}^{R}(M',\omega))_*\to \Ext^{2}_{S^{op}}(\omega,\acTr_{\omega}\Omega_{\mathcal{F}}^{1}(M'))
(\cong \Ext^1_{S^{op}}(\omega,\im (1_{\omega}\otimes f_{n})))$$
in $\Mod R^{op}$. Because $\sTcograde_{\omega}\Ext^{2}_{S^{op}}(\omega,\acTr_{\omega}\Omega_{\mathcal{F}}^{1}(M'))\geqslant 1$
by assumption, we have $\Tcograde_{\omega}(\Tor_{1}^{R}(M',\omega))_*\geqslant 1$.
It is derived from Lemma ~\ref{lem-4.11}(1) that $\Ecograde_{\omega}\Tor_1^{R}(M',\omega)\geqslant 1$.

Now suppose $n\geqslant 2$. Then $\Ecograde_{\omega}\Tor_i^{R}(M',\omega)\geqslant i$ for any $1\leqslant i\leqslant n-1$
and $\Ecograde_{\omega}\Tor_n^{R}(M',\omega)\geqslant n-1$ by the induction hypothesis. It follows from Theorem ~\ref{thm-3.7} that
$\Omega^{i}_{\mathcal{F}}(R^{op})\subseteq \acT^i_{\omega}(R^{op})$ for any $1\leqslant i\leqslant n$.
So $\Omega^{n-1}_{\mathcal{F}}(M')\in \acT^{n-1}_{\omega}(R^{op})$ and
$\acTr_{\omega}\Omega^{n-1}_{\mathcal{F}}(M')\in {\omega_S}^{\bot_{n-1}}$.
Thus from the exact sequences (4.5) and (4.6) we get the following exact sequence
$$0\to \Ext^{n-1}_{S^{op}}(\omega, \Tor^R_n(M',\omega))\to \Ext^{n+1}_{S^{op}}(\omega,\acTr_{\omega}\Omega^{n}_{\mathcal{F}}(M')).$$
By (1), we have $\Tcograde_{\omega}\Ext^{n-1}_{S^{op}}(\omega, \Tor^R_n(M',\omega))\geqslant n$. It follows from Lemma ~\ref{lem-4.11}(1)
that $\Ecograde_{\omega}\Tor_n^{R}(M',\omega)\geqslant n$.

$(2)\Rightarrow (1)$ Let $N'\in \Mod S^{op}$ and $Y$ a submodule of $\Ext^{n+1}_{S^{op}}(\omega,N')$ in $\Mod R^{op}$,
and let $\alpha: Y\rightarrowtail \Ext^{1}_{S^{op}}(\omega,\coOmega^{n}(N'))(\cong \Ext^{n+1}_{S^{op}}(\omega,N'))$ be a monomorphism in $\Mod R^{op}$.
By Lemma ~\ref{lem-3.1}(1), we have an exact sequence
$$0\to \Ext^{n+1}_{S^{op}}(\omega,N')\stackrel{\lambda}{\longrightarrow} \cTr_\omega \coOmega^{n}(N')
\stackrel{\pi}{\longrightarrow} {I^{n+1}(N')}_*/\coOmega^{n+1}(N')_*\to 0$$
in $\Mod R^{op}$ such that $\pi \otimes 1_{\omega}$ is an isomorphism. Then we get an exact sequence
$$0\to Y \stackrel{g}{\longrightarrow}\cTr_\omega \coOmega^{n}(N')\stackrel{\rho}{\longrightarrow} \Coker g\to 0 \eqno{(4.7)}$$
in $\Mod R^{op}$, where $g=\lambda\cdot\alpha$. It is easy to see that $\rho \otimes 1_{\omega}$ is an isomorphism.

Let $n=1$. Because $\coOmega^{1}(N')\in \cT^1_{\omega}(S^{op})$ by \cite[Lemma 2.5(2)]{TH1}, we have
$\cTr_{\omega}\coOmega^{1}(N')\in{\omega_S}^{\top_1}$. Then the exact sequence (4.7) gives that
$Y\otimes_R \omega\cong \Tor^R_1(\Coker g,\omega)$. So $\Ecograde_{\omega}Y\otimes_R \omega\geqslant 1$ by (2), and hence
$\Tcograde_{\omega}Y\geqslant 1$ by Lemma ~\ref{lem-4.11}(2). The case for $n=1$ is proved.

Now suppose $n\geqslant 2$. Then $\sTcograde_{\omega}\Ext^{i+1}_{S^{op}}(\omega,N')\geqslant i$ for any $1\leqslant i\leqslant n-1$
and $\sTcograde_{\omega}\Ext^{n+1}_{S^{op}}(\omega,N')\geqslant n-1$ by the induction hypothesis. So $\Tcograde_{\omega}Y$ $\geqslant n-1$.

By Theorem ~\ref{thm-3.5}, we have $\coOmega^{i}(R^{op})\subseteq \cT^i_{\omega}(R^{op})$ for any $1\leqslant i\leqslant n$.
So $\coOmega^{n}(N')\in \cT^i_{\omega}(R^{op})$ and $\cTr_\omega \coOmega^{n}(N')\in {_R\omega}^{\top_n}$.
It follows from the exact sequence (4.7) that $\Tor^R_{n-1}(Y,\omega)\cong \Tor^R_{n}(\Coker g,\omega)$. Then by (2),
we have $\Ecograde_{\omega}\Tor^R_{n-1}(Y,\omega)=\Ecograde_{\omega}\Tor^R_{n}(\Coker g,\omega)\geqslant n$.
Thus $\Tcograde_{\omega}Y$ $\geqslant n$ by Lemma ~\ref{lem-4.11}(2).
\end{proof}

Now we are in a position to state the following

\begin{theorem} \label{thm-4.14}
Let $R$ be semiregular and $n\geqslant 1$. Then the following statements are equivalent.
\begin{enumerate}
\item $\pd_{S^{op}}P_i(\omega)^*\leqslant i+1$ for any $0\leqslant i\leqslant n-1$.
\item $\mathcal{P}_{\omega}(R)$-$\id_{R}P_i(\omega)\leqslant i+1$ for any $0\leqslant i\leqslant n-1$.
\item $\sTcograde_{\omega}\Ext^{i+1}_{S^{op}}(\omega,N')\geqslant i$ for any $N'\in \Mod S^{op}$ and $1\leqslant i\leqslant n$.
\item $\sEcograde_{\omega}\Tor_{i+1}^{S}(\omega,N)\geqslant i$ for any $N\in \Mod S$ and $1\leqslant i\leqslant n$.
\item $\Tcograde_{\omega}\Ext^i_{R}(\omega,M)\geqslant i$ for any $M\in \Mod R$ and $1\leqslant i\leqslant n$.
\item $\Ecograde_{\omega}\Tor_i^{R}(M', \omega)\geqslant i$ for any $M'\in \Mod R^{op}$ and $1\leqslant i\leqslant n$.
\item $\Tor^S_i(\omega, \Ext^i_{R}(\omega,f))$ is an epimorphism for any epimorphism $f: B\twoheadrightarrow C$
in $\Mod R$ with $B,C\in \coOmega^{1}(R)$ and $0\leqslant i\leqslant n-1$.
\item $\Ext_{S^{op}}^i(\omega, \Tor_i^{R}(f',\omega))$ is a monomorphism for any monomorphism $f': B'\rightarrowtail C'$ in $\Mod R^{op}$
with $B', C'\in \Omega^{1}_{\mathcal{F}}(R^{op})$ and $0\leqslant i\leqslant n-1$.
\item For any exact sequence
$$0\to A'\to B'\stackrel{g'}{\longrightarrow}C'\to 0$$ in $\Mod S^{op}$ with $A'\in \Omega^{i-1}_{\mathcal{P}_\omega}(\coOmega^{i}(S^{op}))$,
$\Tor^R_i(\Ext^i_{S^{op}}(\omega,g'),\omega)$ is an epimorphism for any $0\leqslant i\leqslant n-1$.
\item For any exact sequence
$$0\to A\stackrel{g}{\longrightarrow} B\to C\to 0$$ in $\Mod S$ with $C\in \coOmega^{i-1}_{\mathcal{I}_\omega}(\Omega^{i}_{\mathcal{F}}(S))$,
$\Ext_{R}^i(\omega, \Tor_i^{S}(\omega,g))$ is a monomorphism for any $0\leqslant i\leqslant n-1$.
\item $\coOmega^{i}(R)\subseteq \cT^{i+1}_{\omega}(R)$ for any $1\leqslant i\leqslant n$.
\item $\Omega^{i}_{\mathcal{F}}(R^{op})\subseteq \acT^{i+1}_{\omega}(R^{op})$ for any $1\leqslant i\leqslant n$.
\end{enumerate}
\end{theorem}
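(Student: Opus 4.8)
The plan is to deduce the entire chain of equivalences from three facts already established in this section: Proposition~\ref{prop-4.7}, Proposition~\ref{prop-4.12}, Proposition~\ref{prop-4.13}, together with the trivial Lemma~\ref{lem-4.5}(1). The twelve conditions organize themselves into two mirror-image blocks glued along the conditions (3) and (4) about $\Ext_{S^{op}}$ and $\Tor^{S}$.

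First I would match Proposition~\ref{prop-4.12} against the present statement: its items (1), (2), (6), (4), (7) are, term for term, our (4), (5), (7), (10), (11), so that block is equivalent outright. Symmetrically, Proposition~\ref{prop-4.13} gives the equivalence of our (3), (6), (8), (9), (12), since its items (1), (2), (6), (4), (7) are exactly those (only the names of the maps differ). It then remains only to (i) link the two blocks by proving (3)$\Leftrightarrow$(4), and (ii) bring in the homological dimension conditions (1) and (2).

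Both tasks are handled by Proposition~\ref{prop-4.7}. For each fixed $i$ with $1\leqslant i\leqslant n$, I would apply it with the parameter called ``$m$'' there set equal to $i+1$ and the one called ``$n$'' set equal to $i$; this gives, for that single $i$, the equivalence of the four assertions: $\pd_{S^{op}}P_j(\omega)^*\leqslant i$ for all $0\leqslant j\leqslant i-1$; $\mathcal{P}_{\omega}(R)$-$\id_{R}P_j(\omega)\leqslant i$ for all $0\leqslant j\leqslant i-1$; $\sTcograde_\omega\Ext^{i+1}_{S^{op}}(\omega,N')\geqslant i$ for all $N'\in\Mod S^{op}$; and $\sEcograde_\omega\Tor^{S}_{i+1}(\omega,N)\geqslant i$ for all $N\in\Mod S$. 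Running $i$ over $1,\dots,n$ and conjoining, the third family is precisely our (3) and the fourth is precisely our (4), whence (3)$\Leftrightarrow$(4); and the first (resp.\ second) family is equivalent to our (1) (resp.\ (2)) by the elementary reindexing remark noted below. Finally (1)$\Leftrightarrow$(2) also follows directly from Lemma~\ref{lem-4.5}(1) applied to the finitely generated projectives $P_i(\omega)$. Together with the two blocks this closes the circle.

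The only step that is not a verbatim citation—and hence the one to be careful about—is this reindexing: one must check that ``$\pd_{S^{op}}P_j(\omega)^*\leqslant j+1$ for all $0\leqslant j\leqslant n-1$'' is equivalent to ``for every $i\in\{1,\dots,n\}$, $\pd_{S^{op}}P_j(\omega)^*\leqslant i$ for all $0\leqslant j\leqslant i-1$'', and likewise with $\mathcal{P}_{\omega}(R)$-$\id_{R}$ in place of $\pd_{S^{op}}(-)^*$. The forward direction takes $i=j+1$; the backward direction uses that $j\leqslant i-1$ forces $j+1\leqslant i$. This diagonalization, converting the uniform bounds produced by Proposition~\ref{prop-4.7} into the staircase bounds appearing in (1) and (2), is routine, and everything else in the argument is bookkeeping against the three cited propositions.
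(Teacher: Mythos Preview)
Your proposal is correct and follows essentially the same approach as the paper: the paper's proof reads ``By Proposition~\ref{prop-4.7}, we have $(1)\Leftrightarrow(2)\Leftrightarrow(3)\Leftrightarrow(4)$. By Propositions~\ref{prop-4.12} and~\ref{prop-4.13}, we have $(4)\Leftrightarrow(5)\Leftrightarrow(7)\Leftrightarrow(10)\Leftrightarrow(11)$ and $(3)\Leftrightarrow(6)\Leftrightarrow(8)\Leftrightarrow(9)\Leftrightarrow(12)$ respectively,'' which is exactly your decomposition. Your only addition is making explicit the diagonal reindexing needed to extract $(1)$ and $(2)$ from the varying-parameter applications of Proposition~\ref{prop-4.7}, which the paper leaves implicit; one small slip is that your ``forward'' and ``backward'' labels on that reindexing are interchanged, but the content of both directions is stated correctly.
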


\begin{proof}
By Proposition ~\ref{prop-4.7}, we have $(1)\Leftrightarrow (2) \Leftrightarrow (3)\Leftrightarrow (4)$.
By Propositions ~\ref{prop-4.12} and ~\ref{prop-4.13},
we have $(4)\Leftrightarrow (5)\Leftrightarrow (7)\Leftrightarrow (10)\Leftrightarrow (11)$ and
$(3)\Leftrightarrow (6)\Leftrightarrow (8)\Leftrightarrow (9)\Leftrightarrow(12)$ respectively.
\end{proof}

For the right quasi 1-cograde condition, we have some additional interesting equivalent characterizations.

\begin{proposition} \label{prop-4.15}
Let $R$ be a semiregular ring. Then the following statements are equivalent.
\begin{enumerate}
\item $\pd_{S^{op}}P_0(\omega)^*\leqslant 1$.
\item $\sEcograde_{\omega}\Tor_2^{S}(\omega,N)\geqslant 1$ for any $N\in \Mod S$.
\item $\theta_M$ is a superfluous epimorphism for any $M\in\coOmega^{1}(R)$.
\item $\mu_{M'}$ is an essential monomorphism for any $M'\in\Omega^{1}_{\mathcal{F}}(R^{op})$.
\end{enumerate}
\end{proposition}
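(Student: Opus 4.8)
The plan is to route everything through Theorem~\ref{thm-4.14} in the special case $n=1$. That result already identifies condition (1), namely $\pd_{S^{op}}P_0(\omega)^*\leqslant 1$, with: $\sEcograde_{\omega}\Tor_2^{S}(\omega,N)\geqslant 1$ for all $N\in\Mod S$ (this is precisely (2)); the vanishing $\omega\otimes_S\Ext^1_R(\omega,M)=0$ for all $M\in\Mod R$ (since $\Tcograde_{\omega}(-)\geqslant 1$ just means $\omega\otimes_S(-)=0$); the vanishing $(\Tor_1^R(M',\omega))_*=0$ for all $M'\in\Mod R^{op}$; and the containments $\coOmega^{1}(R)\subseteq\cT^{2}_{\omega}(R)$ and $\Omega^{1}_{\mathcal{F}}(R^{op})\subseteq\acT^{2}_{\omega}(R^{op})$. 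Thus $(1)\Leftrightarrow(2)$ is immediate, and it remains only to prove $(1)\Leftrightarrow(3)$ and $(1)\Leftrightarrow(4)$. The implications $(1)\Rightarrow(3)$ and $(1)\Rightarrow(4)$ are soft: by Theorem~\ref{thm-4.14}, (1) forces $\theta_M$ to be an \emph{isomorphism} for every $M\in\coOmega^{1}(R)$ (a module in $\cT^2_\omega(R)$ is exactly an $\omega$-coreflexive one) and $\mu_{M'}$ to be an \emph{isomorphism} for every $M'\in\Omega^{1}_{\mathcal{F}}(R^{op})$ (adjoint $\omega$-coreflexive), and an isomorphism is in particular a superfluous epimorphism, respectively an essential monomorphism.

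For $(3)\Rightarrow(1)$ I would fix $M\in\Mod R$, put $Z:=\coOmega^{1}(M)$, and use the canonical epimorphism $\pi\colon I^0(M)\twoheadrightarrow Z$ coming from the minimal injective resolution (2.1). Applying $(-)_*$ to $0\to M\to I^0(M)\xrightarrow{\pi}Z\to 0$ and using $\Ext^1_R(\omega,I^0(M))=0$ yields a right exact sequence $I^0(M)_*\xrightarrow{\pi_*}Z_*\to\Ext^1_R(\omega,M)\to 0$, so that $\omega\otimes_S\Ext^1_R(\omega,M)\cong\Coker(1_\omega\otimes\pi_*)$. Naturality of $\theta$ gives $\theta_Z\circ(1_\omega\otimes\pi_*)=\pi\circ\theta_{I^0(M)}$, and $\theta_{I^0(M)}$ is an isomorphism by \cite[Lemma 4.1]{HW}; hence $\theta_Z\circ(1_\omega\otimes\pi_*)$ is an epimorphism, which forces $\im(1_\omega\otimes\pi_*)+\Ker\theta_Z=\omega\otimes_SZ_*$. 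Since $Z\in\coOmega^{1}(R)$, hypothesis (3) says $\Ker\theta_Z$ is superfluous in $\omega\otimes_SZ_*$, so in fact $\im(1_\omega\otimes\pi_*)=\omega\otimes_SZ_*$ and $\omega\otimes_S\Ext^1_R(\omega,M)=0$. As $M$ was arbitrary, Theorem~\ref{thm-4.14} gives (1).

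The implication $(4)\Rightarrow(1)$ is the exact dual. I would fix $M'\in\Mod R^{op}$, put $W:=\Omega^{1}_{\mathcal{F}}(M')$, and use the inclusion $\iota\colon W\hookrightarrow F_0(M')$ from the minimal flat resolution (2.2). Applying $-\otimes_R\omega$ to $0\to W\xrightarrow{\iota}F_0(M')\to M'\to 0$ and using that $F_0(M')$ is flat yields $0\to\Tor_1^R(M',\omega)\to W\otimes_R\omega\xrightarrow{\iota\otimes 1_\omega}F_0(M')\otimes_R\omega$, so $(\Tor_1^R(M',\omega))_*=\Ker\bigl((\iota\otimes 1_\omega)_*\bigr)$. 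Naturality of $\mu$ gives $(\iota\otimes 1_\omega)_*\circ\mu_W=\mu_{F_0(M')}\circ\iota$, with $\mu_{F_0(M')}$ an isomorphism by \cite[Lemma 4.1]{HW}, so the composite is a monomorphism and $\Ker\bigl((\iota\otimes 1_\omega)_*\bigr)\cap\im\mu_W=0$. Since $W\in\Omega^{1}_{\mathcal{F}}(R^{op})$, hypothesis (4) says $\im\mu_W$ is essential in $(W\otimes_R\omega)_*$, which forces $(\Tor_1^R(M',\omega))_*=0$; Theorem~\ref{thm-4.14} then gives (1).

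I expect the only genuine content to be the two implications $(3)\Rightarrow(1)$ and $(4)\Rightarrow(1)$, and the point needing care is bookkeeping rather than depth: making sure the naturality squares involve the correct canonical maps $\theta$ and $\mu$, that \cite[Lemma 4.1]{HW} is invoked on the injective side (for $\theta$) and the flat side (for $\mu$) with the right variance, and that the ``superfluous'' and ``essential'' hypotheses are each used exactly once --- to upgrade $\im(1_\omega\otimes\pi_*)+\Ker\theta_Z=\omega\otimes_SZ_*$ to $\im(1_\omega\otimes\pi_*)=\omega\otimes_SZ_*$, and to upgrade $\Ker\bigl((\iota\otimes 1_\omega)_*\bigr)\cap\im\mu_W=0$ to $(\Tor_1^R(M',\omega))_*=0$.
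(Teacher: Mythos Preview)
Your proof is correct and, like the paper's, routes everything through Theorem~\ref{thm-4.14} with $n=1$. The arguments differ in two places.

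For $(1)\Rightarrow(3)$ and $(1)\Rightarrow(4)$, you invoke conditions (11) and (12) of Theorem~\ref{thm-4.14} to conclude that $\theta_M$ and $\mu_{M'}$ are already \emph{isomorphisms}, which trivially yields superfluous epimorphisms and essential monomorphisms. The paper does not take this shortcut: it argues directly from Lemma~\ref{lem-4.6} that $\Hom_R(P_0(\omega),\Ker\theta_M)=0$ (respectively $\Coker\mu_{M'}\otimes_RP_0(\omega)=0$) and then works by hand to show $\Ker\theta_M$ is superfluous (respectively $\im\mu_{M'}$ is essential). Your route is shorter and exploits more of what Theorem~\ref{thm-4.14} already packages.

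For the converse implications, both proofs use the superfluous/essential hypothesis in the same spirit but verify different equivalent conditions of Theorem~\ref{thm-4.14}. You test the \emph{specific} epimorphism $\pi:I^0(M)\twoheadrightarrow\coOmega^1(M)$ (respectively monomorphism $\iota:\Omega^1_{\mathcal{F}}(M')\hookrightarrow F_0(M')$), use naturality of $\theta$ (respectively $\mu$) together with the fact that $\theta_{I^0(M)}$ and $\mu_{F_0(M')}$ are isomorphisms, and deduce $\omega\otimes_S\Ext^1_R(\omega,M)=0$ (respectively $(\Tor_1^R(M',\omega))_*=0$), which is condition (5) (respectively (6)) of Theorem~\ref{thm-4.14}. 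The paper instead takes an \emph{arbitrary} epimorphism $f:B\twoheadrightarrow C$ with $B,C\in\coOmega^1(R)$ (respectively monomorphism in $\Omega^1_{\mathcal{F}}(R^{op})$), uses \cite[Corollaries 5.13 and 5.15]{AF} to conclude $1_\omega\otimes f_*$ is epic (respectively $(g\otimes 1_\omega)_*$ is monic), and feeds this into condition (7) (respectively (8)) of Theorem~\ref{thm-4.14}. The two routes are formally interchangeable; yours is a bit more concrete, the paper's a bit more uniform with the statements of the surrounding theorems.
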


\begin{proof}
By Theorem \ref{thm-4.14}, we have $(1)\Leftrightarrow (2)$.

$(1)\Rightarrow (3)$ Let $M\in\coOmega^{1}(R)$. By \cite[Lemma 2.5(2)]{TH1}, we have $\coOmega^{1}(R)\subseteq\cT^{1}_{\omega}(R)$.
So $M\in\cT^{1}_{\omega}(R)$ and $\theta_M$ is an epimorphism. Because
$\Ker \theta_M\cong$ \linebreak $\Tor_2^S(\omega, \cTr_{\omega}M)$ by \cite[Proposition 3.2]{TH1}, we have
$$\Hom_R(P_0(\omega), \Ker \theta_M)\cong \Hom_R(P_0(\omega), \Tor_2^S(\omega, \cTr_{\omega}M))=0$$
by (1) and Lemma ~\ref{lem-4.6}.
It follows easily that $X_{*}=0$ for any quotient module $X$ of $\Ker \theta_M$. Let $A$ be a submodule of $\omega\otimes_S M_*$ in $\Mod R$
such that $\Ker \theta_M+A=\omega\otimes_S M_*$. Then $(\Ker \theta_M+A)/A(\cong \Ker \theta_M/(A\cap \Ker \theta_M))$
is isomorphic to a quotient module of $\Ker \theta_M$, and so $((\Ker \theta_M+A)/A)_*=0$. Since $\omega\otimes_S M_*\in\cT^{1}_{\omega}(R)$
by \cite[Lemma 6.1(2)]{TH2}, $(\Ker \theta_M+A)/A\in\cT^{1}_{\omega}(R)$ by \cite[Corollary 3.8]{TH1}. It follows that
$\theta_{(\Ker \theta_M+A)/A}:\omega\otimes_S((\Ker \theta_M+A)/A)_*\to (\Ker \theta_M+A)/A$ is epic and $(\Ker \theta_M+A)/A=0$.
It induces that $A=\Ker \theta_M+A=\omega\otimes_S M_*$ and $\theta_M$ is a superfluous epimorphism.

$(3)\Rightarrow (2)$ Let $f:B\twoheadrightarrow C$ be an epimorphism in $\Mod R$ with $B,C\in\coOmega^{1}(R)(\subseteq\cT^{1}_{\omega}(R))$.
Then $\theta_C\cdot(1_{\omega}\otimes f_*)=f\cdot\theta_B$ is epic.
Because $\theta_C$ is a superfluous epimorphism by (3), it follows from \cite[Corollary 5.15]{AF}
that $1_{\omega}\otimes f_*$ is epic. Now the assertion follows from Theorem \ref{thm-4.14}.

$(1)\Rightarrow (4)$ Let $M'\in\Omega^{1}_{\mathcal{F}}(R^{op})$. By \cite[Corollary 3.5(1)]{TH3},
we have $\Omega^{1}_{\mathcal{F}}(R^{op})\subseteq\acT^{1}_{\omega}(R^{op})$. So $M'\in\acT^{1}_{\omega}(R^{op})$
and $\mu_{M'}$ is a monomorphism. Because $\Coker \mu_{M'}\cong \Ext^2_{S^{op}}(\omega, \acTr_{\omega}M')$
by \cite[Proposition 3.2]{TH3}, we have
$$\Coker \mu_{M'}\otimes_RP_0(\omega)\cong \Ext^{2}_{S^{op}}(\omega,\acTr_{\omega}M')\otimes_RP_0(\omega)=0$$
by (1) and Lemma ~\ref{lem-4.6}.
It follows easily that $Y\otimes_R \omega=0$ for any submodule $Y$ of $\Coker \mu_{M'}$. Let $A'$ be a submodule of $(M'\otimes_R \omega)_*$ in $\Mod R^{op}$
with $A'\cap M'=0$. Then $A'\cong A'/A'\cap M'\cong (A'+M')/M'$ is isomorphic to a submodule of $\Coker \mu_{M'}$, and so $A'\otimes_R \omega=0$.
Since $(M'\otimes_R \omega)_*\in\acT^{1}_{\omega}(R^{op})$ by \cite[Lemma 6.1(1)]{TH2}, $A'\in\acT^{1}_{\omega}(R^{op})$ by \cite[Corollary 3.3(1)]{TH3}.
It follows that $\mu_{A'}:A'\to (A'\otimes_R \omega)_*$ is monic, It induces that $A'=0$ and $\mu_{M'}$ is an essential monomorphism.

$(4)\Rightarrow (2)$ Let $g: B'\rightarrow C'$ be a monomorphism in $\Mod R^{op}$
with $B',C'\in\Omega^{1}_{\mathcal{F}}(R^{op})(\subseteq\acT^{1}_{\omega}(R^{op}))$.
Then $(g\otimes 1_\omega)_*\cdot \mu_{B'}=\mu_{C'}\cdot g$ is monic. Because $\mu_{B'}$ is an essential monomorphism by (4),
it follows from \cite[Corollary 5.13]{AF} that $(g\otimes  1_{\omega})_*$ is monic. Now the assertion follows from Theorem \ref{thm-4.14}.
\end{proof}

\vspace{0.2cm}

\noindent{\bf 4.3. The equivalence of certain cograde condition of modules}

\vspace{0.2cm}

We have the following facts: for the strong $\Tor$-cograde condition of modules
in Theorem ~\ref{thm-3.8}(1) and the strong $\Ext$-cograde condition of modules in Theorem ~\ref{thm-3.9}(1),
they are equivalent when $k=0$ by Theorems ~\ref{thm-4.8}; but they are not equivalent when $k=1$
by Theorem ~\ref{thm-4.14} and Subsection 4.4 below. Also from Theorem ~\ref{thm-4.14} and Subsection 4.4 below
we know that the $\Tor$-cograde condition of modules
in Theorem ~\ref{thm-3.5}(1) and the $\Ext$-cograde condition of modules in Theorem ~\ref{thm-3.7}(1) are
not equivalent when $k=0$. In this subsection, we will show that these two cograde conditions of modules are equivalent when $k=1$.

For any $i\geqslant 1$, by \cite[Proposition 3.8]{TH3} we have $\acT^i_{\omega}(S)\subseteq\Omega^{i}_{\mathcal{I}_{\omega}}(S)$.
The following result characterizes when they are identical.

\begin{proposition} \label{prop-4.16}
For any $n\geqslant 1$, the following statements are equivalent.
\begin{enumerate}
\item $\Ecograde_\omega\Tor_{i}^S(\omega,N)\geqslant i-1$ for any $N\in \coOmega^i_{\mathcal{A}}(S)$ and $1\leqslant i\leqslant n$.
\item $\Ecograde_\omega\Tor_{i}^S(\omega,N)\geqslant i-1$ for any $N\in \coOmega^i_{\mathcal{I}_{\omega}}(S)$ and $1\leqslant i\leqslant n$.
\item $\acT^i_{\omega}(S)=\Omega^{i}_{\mathcal{A}}(S)$ for any $1\leqslant i\leqslant n$.
\item $\acT^i_{\omega}(S)=\Omega^{i}_{\mathcal{I}_{\omega}}(S)$ for any $1\leqslant i\leqslant n$.
\end{enumerate}
\end{proposition}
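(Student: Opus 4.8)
The plan is to prove the cycle of implications $(1)\Rightarrow(2)\Rightarrow(4)\Rightarrow(3)\Rightarrow(1)$, with the two easy directions $(2)\Rightarrow(4)$ and $(4)\Rightarrow(3)$ essentially formal and the substantive content concentrated in $(4)\Rightarrow(1)$ (or equivalently $(2)\Rightarrow(4)$). Note first that $(1)\Leftrightarrow(2)$ and $(3)\Leftrightarrow(4)$ are immediate from Lemma~\ref{lem-3.6}(2), which tells us that $\coOmega^i_{\mathcal{A}}(S)=\coOmega^i_{\mathcal{I}_\omega}(S)$ for the $\Tor$-cograde statements are really about the same class of modules, and likewise $\Omega^i_{\mathcal{A}}(S)=\Omega^i_{\mathcal{I}_\omega}(S)$; so really there are only two distinct conditions to reconcile, the one on cograde of $\Tor$-modules and the one identifying adjoint cotorsionfree modules with syzygies of $\omega$-injectives.

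For $(4)\Rightarrow(2)$: suppose $\acT^i_\omega(S)=\Omega^i_{\mathcal{I}_\omega}(S)$ for $1\leqslant i\leqslant n$, and let $N\in\coOmega^i_{\mathcal{I}_\omega}(S)$, so there is an exact sequence $0\to L\to I^0\to\cdots\to I^{i-1}\to N\to 0$ with the $I^j\in\mathcal{I}_\omega(S)$, hence $L\in\Omega^i_{\mathcal{I}_\omega}(S)=\acT^i_\omega(S)$. I would apply Lemma~\ref{lem-3.1}(2) to $L$ to relate $\acTr_\omega L$ to $\Tor^S_1(\omega,L)$, and then use dimension shifting along the flat resolution of $L$ together with the vanishing $\acTr_\omega\Omega^j_{\mathcal{F}}(L)\in{}_R\omega^{\perp_i}$ (which follows because $\Omega^j_{\mathcal{F}}(S)\subseteq\acT^j_\omega(S)$ for $j\leqslant i$, a consequence of $(4)$ via Theorem~\ref{thm-3.7} reading backwards) to compute
$$\Ext^t_R(\omega,\Tor^S_i(\omega,L))\cong\Ext^{t+i}_R(\omega,\acTr_\omega L)$$
for $t\leqslant$ the relevant range. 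Since $L\in\acT^i_\omega(S)$ means $\acTr_\omega L\in{}_R\omega^{\perp_i}$, the left side vanishes for $t\leqslant i-1$. Finally relate $\Tor^S_i(\omega,N)$ back to $\Tor^S_i(\omega,L)$ using that the $I^j$ lie in $\mathcal{A}_\omega(S)\subseteq{}\omega_S^{\top}$ (Lemma~\ref{lem-3.6} and \cite[Corollary 6.1]{HW}), so that $\Tor^S_i(\omega,N)\cong\Tor^S_1(\omega,\coOmega^{i-1}(\cdots))$ collapses appropriately; this yields $\Ecograde_\omega\Tor^S_i(\omega,N)\geqslant i-1$.

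For the converse $(1)\Rightarrow(3)$, given the cograde hypothesis I would show directly that $\acT^i_\omega(S)\supseteq\Omega^i_{\mathcal{I}_\omega}(S)$ (the reverse inclusion being \cite[Proposition 3.8]{TH3}, as noted just before the statement): take $N\in\Omega^i_{\mathcal{I}_\omega}(S)$, so $N$ is an $i$-$\mathcal{I}_\omega(S)$-syzygy of some $N'\in\coOmega^i_{\mathcal{I}_\omega}(S)$ via $0\to N\to U^0\to\cdots\to U^{i-1}\to N'\to 0$ with $U^j\in\mathcal{I}_\omega(S)$; applying Lemma~\ref{lem-3.1}(2) and dimension shifting, one gets $\acTr_\omega N$ expressed through $\Tor^S_i(\omega,N')$ up to modules whose higher $\Ext$ into $\omega$ vanishes, and the hypothesis $\Ecograde_\omega\Tor^S_i(\omega,N')\geqslant i-1$ forces $\acTr_\omega N\in{}_R\omega^{\perp_i}$, i.e.\ $N\in\acT^i_\omega(S)$. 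An induction on $i$ (using the $i-1$ case to secure $\Omega^{i-1}_{\mathcal{I}_\omega}(S)=\acT^{i-1}_\omega(S)$, which is what lets the dimension-shifting isomorphisms go through) completes this direction. The main obstacle I anticipate is bookkeeping in the dimension-shifting: one must carefully track which intermediate syzygies land in ${}\omega_S^{\top}$ versus ${}\omega_S^{\top_m}$ for the right $m$, and ensure the induction hypothesis supplies exactly the cotorsionfree-ness of the lower syzygies needed to justify each isomorphism $\Ext^{\bullet}_R(\omega,\Tor^S_i(\omega,-))\cong\Ext^{\bullet+j}_R(\omega,\acTr_\omega(-))$; the algebra of Lemma~\ref{lem-3.1}(2) combined with \cite[Corollary 6.1]{HW} should make each individual step routine, but the indices must be matched precisely.
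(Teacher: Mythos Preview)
Your overall cycle $(1)\Rightarrow(2)\Rightarrow(4)\Leftrightarrow(3)\Rightarrow(1)$ matches the paper's, and $(3)\Leftrightarrow(4)$ does follow from Lemma~\ref{lem-3.6}(2). One small correction: Lemma~\ref{lem-3.6}(2) concerns the \emph{syzygy} classes $\Omega^i$, not the cosyzygy classes $\coOmega^i$, so it does not give $\coOmega^i_{\mathcal{A}}(S)=\coOmega^i_{\mathcal{I}_\omega}(S)$ and hence not $(2)\Rightarrow(1)$ directly. Only $(1)\Rightarrow(2)$ is immediate (from $\mathcal{I}_\omega(S)\subseteq\mathcal{A}_\omega(S)$), which is all the cycle needs.

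The substantive gap is in your argument for $(4)\Rightarrow(2)$. With $L$ the $i$-th $\mathcal{I}_\omega$-syzygy of $N$, the module to control is $\Tor^S_i(\omega,N)$, which by dimension shifting along $0\to L\to I^0\to\cdots\to I^{i-1}\to N\to 0$ is $\ker(\omega\otimes_S L\to\omega\otimes_S I^0)$, \emph{not} $\Tor^S_i(\omega,L)$. Your displayed isomorphism is therefore aimed at the wrong object; and even read on its own terms the conclusion fails: $\acTr_\omega L\in{}_R\omega^{\perp_i}$ kills only $\Ext^1,\ldots,\Ext^i$, so $\Ext^{t+i}_R(\omega,\acTr_\omega L)=0$ forces $t+i\leqslant i$, i.e.\ $t\leqslant 0$, not $t\leqslant i-1$. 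The paper's proof of $(3)\Rightarrow(1)$ is quite different: it sets $M:=\Ker(1_\omega\otimes f)$ for the top map $f$ in an $\mathcal{A}_\omega$-resolution of $N$, notes $M_*\cong H\in\acT^n_\omega(S)$, and invokes \cite[Proposition~5.1]{TH4} to produce the two short exact sequences (4.12)--(4.13) which sandwich $\Tor^S_n(\omega,N)$ between modules already known to lie in ${}_R\omega^{\perp_{n-2}}$; the induction hypothesis supplies $\Ecograde\geqslant n-2$ and this extra step gives the missing $\Ext^{n-2}$-vanishing.

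Your sketch for the other hard direction $(1)\Rightarrow(3)$ (showing $\Omega^i_{\mathcal{I}_\omega}(S)\subseteq\acT^i_\omega(S)$) has the right shape but is too vague where it matters: Lemma~\ref{lem-3.1}(2) relates $\acTr_\omega N$ to $\Tor^S_1(\omega,N)$ via the \emph{minimal flat} resolution of $N$, not to $\Tor^S_i(\omega,N')$ via the given $\mathcal{I}_\omega$-coresolution, so the ``dimension shifting'' you invoke does not directly connect those two objects. The paper instead avoids $\acTr_\omega$ here entirely: it applies $\omega\otimes_S-$ to the $\mathcal{I}_\omega$-coresolution to obtain sequence (4.9), then uses the commutative diagrams (4.10)--(4.11) with the canonical maps $\mu_N,\mu_{I^0},\mu_{\im f^1}$ to show first that $\mu_N$ is an isomorphism and then (for $n\geqslant 3$) that $\omega\otimes_S N\in{}_R\omega^{\perp_{n-2}}$, concluding $N\in\acT^n_\omega(S)$ via \cite[Corollary~3.3(3)]{TH3}. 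The hypothesis $\Ecograde_\omega\Tor^S_n(\omega,\Coker f^{n-1})\geqslant n-1$ enters precisely to make (4.11) collapse and to push the $\Ext$-vanishing of $\omega\otimes_S N$ to the required depth.
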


\begin{proof}
Because $\mathcal{I}_{\omega}(S)\subseteq\mathcal{A}_{\omega}(S)$, we have $(1)\Rightarrow (2)$.
By Lemma ~\ref{lem-3.6}(2), we have $(3)\Leftrightarrow (4)$.

$(2)\Rightarrow (4)$ By \cite[Proposition 3.8]{TH3}, it suffices to prove $\Omega^{i}_{\mathcal{I}_{\omega}}(S)\subseteq\acT^i_{\omega}(S)$
for any $1\leqslant i\leqslant n$. We proceed by induction on $n$. The case for $n=1$ follows from Lemma \ref{lem-2.9}(1).

Now let $N\in \Omega^{n}_{\mathcal{I}_{\omega}}(S)$ with $n\geqslant 2$ and let
$$0\longrightarrow N\stackrel{f^0}{\longrightarrow}I^0 \stackrel{f^{1}}{\longrightarrow} \cdots \stackrel{f^{n-1}}{\longrightarrow} I^{n-1} \eqno{(4.8)}$$
be an exact sequence in $\Mod S$ with all $I^i$ in $\mathcal{I}_\omega(S)$. By the induction hypothesis, we have
$\im f^1\in \acT^{n-1}_{\omega}(S)$. Applying the functor $\omega\otimes_S-$ to (4.8) gives an exact sequence
$$0\to \Tor_n^S(\omega, \Coker f^{n-1})\longrightarrow \omega\otimes_S N \stackrel{1_\omega\otimes f^0}{\longrightarrow} \omega\otimes_S I^0
\longrightarrow \omega\otimes_S \im f^1 \to 0\eqno{(4.9)}$$
in $\Mod R$. Set $M:=\im ({1_\omega\otimes f^0)}$ and let $1_\omega\otimes f^0:=\alpha\cdot\pi$
(where $\pi:\omega\otimes_S N\twoheadrightarrow M$ and $\alpha:M\hookrightarrow \omega\otimes_S I^0$) be the natural epic-monic
decomposition of $1_\omega\otimes f^0$. Then we have the following commutative diagram with exact rows
$$\xymatrix{0\ar[r] & N\ar[r]^{f^0} \ar@{-->}[d]^{g}& I^0 \ar[r] \ar[d]^{\mu_{I^0}} & \im f^1 \ar[r] \ar[d]^{\mu_{\im f^1}} & 0\\
0 \ar[r]  & (M)_*  \ar[r]^{\alpha_*} & (\omega\otimes_S I^0)_* \ar[r] & (\omega\otimes_S \im f^1)_* \ar[r] & \Ext^1_R(\omega, M) \ar[r]& 0.\\
&  &  &{\rm Diagram}\ (4.10) &  }$$
Since $\mu_{\im f^1}$ is a monomorphism by the above argument,
it follows from the snake lemma that $g$ is an epimorphism. On the other hand, we have
$$\alpha_* \cdot \pi_* \cdot \mu_N=(\alpha\cdot\pi)_* \cdot \mu_N=(1_\omega\otimes f^0)_* \cdot \mu_N=\mu_{I^0} \cdot f^0= \alpha_* \cdot g.$$
As $\alpha_* $ is monic, we get that $\pi_* \cdot \mu_N=g$ and $\pi_*$ is epic.
Consider the following commutative diagram with exact rows
$$\xymatrix{ & &  N\ar[d]^{\mu_N} \ar@{=}[r]& N\ar[d]^{g} & & \\
0 \ar[r] & (\Tor_n^S(\omega, \Coker f^{n-1}))_* \ar[r] & (\omega\otimes_S N)_*
\ar[r]^{\pi_*}& {M}_*\ar[r] & 0. \\
&  &{\rm Diagram}\ (4.11) & }$$ Because $(\Tor_n^S(\omega, \Coker f^{n-1}))_*=0$ by assumption, we have that $\pi_*$ is an isomorphism. So
$\mu_{N}$ is epic by the diagram (4.11), and hence an isomorphism. Thus $N\in\acT^2_{\omega}(S)$ and the case for $n=2$ follows.

Now suppose $n\geqslant 3$. By the induction hypothesis, we have that $\im f^1\in \acT^{n-1}_{\omega}(S)$ and $\mu_{\im f^1}$ is an isomorphism.
So $\Ext^1_R(\omega, M)=0$ by the diagram (4.10). In addition, we have $\omega\otimes_S \im f^1\in {_R\omega}^{\bot_{n-3}}$
by \cite[Corollary 3.3(3)]{TH3}. Because $\Ecograde_\omega\Tor_n^S(\omega,\Coker f^{n-1})\geqslant n-1$ (by assumption)
and $\omega\otimes_S I^0\in{_R\omega}^{\bot}$, applying the dimension shifting to (4.9) we obtain
$\omega\otimes_S N\in{_R\omega}^{\bot_{n-2}}$.
Therefore we conclude that $N\in \acT^n_{\omega}(S)$ by \cite[Corollary 3.3(3)]{TH3} again.

$(3)\Rightarrow (1)$ We proceed by induction on $n$. The case for $n=1$ is trivial. Let $N\in \coOmega^{n}_{\mathcal{A}}(S)$
with $n\geqslant 2$. Then there exists an exact sequence
$$0\to H\to A_{n-1}\stackrel{f}{\longrightarrow} A_{n-2}\to \cdots \to A_0\to N\to 0$$
in $\Mod S$ with all $A_i$ in $\mathcal{A}_\omega(S)$. By (3), we have $H\in \acT^n_{\omega}(S)$. By the induction hypothesis,
we have that $\Ecograde_\omega\Tor_i^S(\omega,N)\geqslant i-1$ for any $1\leqslant i\leqslant n-1$ and
$\Ecograde_\omega\Tor_n^S(\omega,N)\geqslant n-2$.

Put $M:=\Ker (1_{\omega}\otimes f)$. Because $A_i\in \acT_{\omega}(S)$ by \cite[Theorem 3.11(1)]{TH3}, we obtain that
$M_*\cong H(\in \acT^n_{\omega}(S))$ and $M\in{_R\omega}^{\bot_{n-2}}$.
By \cite[Proposition 5.1]{TH4}, we have the following exact sequences
$$0\to \Tor_n^S(\omega,N)(\cong \Tor_2^S(\omega,\Coker f))\to \omega\otimes_SM_*\stackrel{\pi}{\longrightarrow} \im \theta_M\to 0, \eqno{(4.12)}$$
$$0\to \im \theta_M \stackrel{\lambda}{\longrightarrow} M \to \Tor_{n-1}^S(\omega,N)(\cong \Tor_1^S(\omega,\Coker f))\to 0 \eqno{(4.13)}$$
such that $\theta_M=\lambda\cdot \pi$. Since $\mu_{M_*}$ is an isomorphism, it follows from \cite[Lemma 6.1(1)]{TH2} that $(\theta_M)_*$
is also an isomorphism. Then both $\lambda_*$ and $\pi_*$ are isomorphisms.

From the exact sequence (4.13), we get $\im \theta_M\in{_R\omega}^{\bot_{n-2}}$.
Because $\omega\otimes_SM_*\in{_R\omega}^{\bot_{n-2}}$
by \cite[Corollary 3.3]{TH3}, from the exact sequence (4.12) it yields that
\linebreak
$\Ext_R^{n-2}(\omega, \Tor_n^S(\omega,N))=0$. Thus we have $\Ecograde_\omega\Tor_n^S(\omega,N)\geqslant n-1$.
\end{proof}

For any $i\geqslant 1$, by \cite[Proposition 3.7]{TH1} we have $\cT^i_{\omega}(R)\subseteq \coOmega^{i}_{\mathcal{P}_{\omega}}(R)$.
The following result characterizes when they are identical.

\begin{proposition} \label{prop-4.17}
For any $n\geqslant 1$, the following statements are equivalent.
\begin{enumerate}
\item $\Tcograde_\omega\Ext^i_R(\omega,M)\geqslant i-1$ for any $M\in \Omega^{i}_{\mathcal{B}}(R)$ and
$1\leqslant i\leqslant n$.
\item $\Tcograde_\omega\Ext^i_R(\omega,M)\geqslant i-1$ for any $M\in \Omega^{i}_{\mathcal{F}_{\omega}}(R)$ and
$1\leqslant i\leqslant n$.
\item $\Tcograde_\omega\Ext^i_R(\omega,M)\geqslant i-1$ for any $M\in \Omega^{i}_{\mathcal{P}_{\omega}}(R)$ and
$1\leqslant i\leqslant n$.
\item $\cT^i_{\omega}(R)=\coOmega^{i}_{\mathcal{B}}(R)$ for any $1\leqslant i\leqslant n$.
\item $\cT^i_{\omega}(R)=\coOmega^{i}_{\mathcal{F}_{\omega}}(R)$ for any $1\leqslant i\leqslant n$.
\item $\cT^i_{\omega}(R)=\coOmega^{i}_{\mathcal{P}_{\omega}}(R)$ for any $1\leqslant i\leqslant n$.
\end{enumerate}
\end{proposition}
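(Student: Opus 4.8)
The plan is to prove the chain $(1)\Rightarrow(2)\Rightarrow(3)\Rightarrow(6)$, then $(6)\Leftrightarrow(5)\Leftrightarrow(4)$, and finally $(4)\Rightarrow(1)$. The whole argument is the formal dual of the proof of Proposition~\ref{prop-4.16}, read off under the correspondence that interchanges the functor $\omega\otimes_S-$ with $(-)_*=\Hom_R(\omega,-)$, the evaluation maps $\mu_N$ with $\theta_M$, the adjoint cotranspose $\acTr_\omega$ with the cotranspose $\cTr_\omega$, yokes (and $\mathcal{F}$-syzygies) with cosyzygies $\coOmega^\bullet$, the $\Ext$-cograde of $\Tor$-modules with the $\Tor$-cograde of $\Ext$-modules, and the classes $\mathcal{A}_\omega(S),\mathcal{I}_\omega(S)$ with $\mathcal{B}_\omega(R),\mathcal{P}_\omega(R)$; concretely, $(3)\Rightarrow(6)$ will mirror $(2)\Rightarrow(4)$ there and $(4)\Rightarrow(1)$ will mirror $(3)\Rightarrow(1)$ there.

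First I would dispose of the formal implications. Since $\mathcal{P}_\omega(R)\subseteq\mathcal{F}_\omega(R)\subseteq\mathcal{B}_\omega(R)$ by \cite[Corollary~6.1]{HW}, one has $\Omega^i_{\mathcal{P}_\omega}(R)\subseteq\Omega^i_{\mathcal{F}_\omega}(R)\subseteq\Omega^i_{\mathcal{B}}(R)$, so a cograde hypothesis over a larger class implies it over a smaller one, giving $(1)\Rightarrow(2)\Rightarrow(3)$. Dually, Lemma~\ref{lem-3.4}(2) identifies $\coOmega^i_{\mathcal{B}}(R)=\coOmega^i_{\mathcal{F}_\omega}(R)=\coOmega^i_{\mathcal{P}_\omega}(R)$ for every $i\geqslant1$, so the three equality statements $(4),(5),(6)$ coincide.

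For $(3)\Rightarrow(6)$: the inclusion $\cT^i_\omega(R)\subseteq\coOmega^i_{\mathcal{P}_\omega}(R)$ holds by \cite[Proposition~3.7]{TH1}, so it remains to show $\coOmega^i_{\mathcal{P}_\omega}(R)\subseteq\cT^i_\omega(R)$ for $1\leqslant i\leqslant n$, which I would prove by induction on $n$; the base case $n=1$ is Lemma~\ref{lem-2.9}(2). In the inductive step one takes $M\in\coOmega^n_{\mathcal{P}_\omega}(R)$, writes it as the cokernel of a map $W\to W'$ with $W'\in\mathcal{P}_\omega(R)$ and $W$ an $(n-1)$-$\mathcal{P}_\omega(R)$-cosyzygy, the latter lying in $\coOmega^{n-1}_{\mathcal{P}_\omega}(R)=\cT^{n-1}_\omega(R)$ by induction, applies $(-)_*$ (whose composite with $\theta$ recovers the identity on $\mathcal{P}_\omega(R)$), invokes Lemma~\ref{lem-3.1}(1) to bring $\cTr_\omega$ into the picture, and feeds in the vanishing of $\Tor^S_j(\omega,\Ext^n_R(\omega,-))$ for $j\leqslant n-1$ provided by $(3)$. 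Running the snake-lemma chase dual to the one in Proposition~\ref{prop-4.16} then yields $\theta_M$ an isomorphism when $n=2$, and for $n\geqslant3$ the full conclusion $M\in\cT^n_\omega(R)$ from the characterization of $n$-$\omega$-cotorsionfree modules via $\Hom_R(\Add_R\omega,-)$-exact $\mathcal{P}_\omega(R)$-coresolutions in \cite[Proposition~3.7]{TH1} together with dimension shifting.

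Finally $(4)\Rightarrow(1)$ dualizes $(3)\Rightarrow(1)$ of Proposition~\ref{prop-4.16}, again by induction on $n$ with the case $n=1$ trivial. Given $M\in\Omega^n_{\mathcal{B}}(R)$, its ambient $n$-$\mathcal{B}_\omega(R)$-cosyzygy $L$ lies in $\coOmega^n_{\mathcal{B}}(R)=\cT^n_\omega(R)$ by $(4)$; applying $(-)_*$ to a $\mathcal{B}_\omega(R)$-coresolution of $M$ (all of whose terms are $\Hom_R(\omega,-)$-acyclic, so that $\Ext^i_R(\omega,M)$ is computed from it), and using \cite[Proposition~5.1]{TH4} in its $\mu$-$\Ext$ form together with Lemma~\ref{lem-3.1}(1), one obtains short exact sequences which, with the induction hypothesis and dimension shifting, force $\Tcograde_\omega\Ext^n_R(\omega,M)\geqslant n-1$. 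The one genuine difficulty throughout is bookkeeping in the two inductive diagram chases: one must keep track that $(-)_*$ is left exact — so it is $\Ext$-terms, not $\Tor$-terms, that obstruct exactness after applying it — and that cosyzygies and $\cTr_\omega$ systematically take the place of yokes and $\acTr_\omega$, so that the chains of natural isomorphisms actually terminate at the $\Ext^j_R(\omega,-)$ to which the hypotheses $(3)$ and $(4)$ apply. No idea beyond those already present in the proof of Proposition~\ref{prop-4.16} is needed.
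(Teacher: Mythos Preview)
Your proposal is correct and takes essentially the same approach as the paper: the same logical chain $(1)\Rightarrow(2)\Rightarrow(3)\Rightarrow(6)\Leftrightarrow(5)\Leftrightarrow(4)\Rightarrow(1)$, obtained by formally dualizing the proof of Proposition~\ref{prop-4.16}. The only cosmetic differences are that in $(4)\Rightarrow(1)$ the paper cites \cite[Proposition~6.7]{TH2} (which is precisely the dual of \cite[Proposition~5.1]{TH4} you mention) to produce the two short exact sequences involving $\mu_N$, and in $(3)\Rightarrow(6)$ the paper works directly with the commutative diagrams arising from the epic--monic factorization of ${f_0}_*$ rather than invoking Lemma~\ref{lem-3.1}(1) or $\cTr_\omega$ explicitly.
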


\begin{proof}
Because ${\mathcal{B}_{\omega}}(R)\supseteq {\mathcal{F}_{\omega}}(R)\supseteq{\mathcal{P}_{\omega}}(R)$,
we have $(1)\Rightarrow (2)\Rightarrow (3)$.
By Lemma ~\ref{lem-3.4}(2), we have $(4)\Leftrightarrow (5)\Leftrightarrow (6)$.

$(3)\Rightarrow (6)$ By \cite[Proposition 3.7]{TH1}, it suffices to prove $\coOmega^{i}_{\mathcal{P}_{\omega}}(R)\subseteq\cT^i_{\omega}(R)$
for any $1\leqslant i\leqslant n$. We proceed by induction on $n$. The case for $n=1$ follows from Lemma \ref{lem-2.9}(2).

Now let $M\in \coOmega^{n}_{\mathcal{P}_{\omega}}(R)$ with $n\geqslant 2$ and let
$$W_{n-1}\stackrel{f_{n-1}}{\longrightarrow} \cdots \rightarrow W_{1}\stackrel{f_{1}}{\longrightarrow}
W_{0}\stackrel{f_{0}}{\longrightarrow} M\rightarrow 0\eqno{(4.14)}$$ be an exact sequence in $\Mod R$
with all $W_i$ in $\mathcal{P}_\omega(R)$. By the induction hypothesis, we have
$\im f_1\in \cT^{n-1}_{\omega}(R)$. Applying the functor $(-)_*$ to (4.14) gives an exact sequence
$$0\to (\im {f_1})_*\to {W_0}_*\stackrel{{f_0}_*}{\longrightarrow} M_*
\to \Ext^n_R(\omega, \Ker f_{n-1})\to 0.\eqno{(4.15)}$$
Set $N:=\im({f_0}_*)$ and let ${f_0}_*:=\alpha\cdot\pi$ (where $\pi:{W_0}_*\twoheadrightarrow N$ and $\alpha:N\hookrightarrow M_*$)
be the natural epic-monic decompositions of ${f_0}_*$.
Then we have the following commutative diagram with exact rows
$$\xymatrix{0\ar[r]& \Tor_1^S(\omega,N)\ar[r] & \omega\otimes_S(\im f_1)_*
\ar[r] \ar[d]^{\theta_{\im f_1}} & \omega\otimes_S{W_0}_*
\ar[r]^{1_{\omega}\otimes \pi} \ar[d]^{\theta_{W_0}}& \omega\otimes_SN \ar@{-->}[d]^g \ar[r] & 0\\
&0 \ar[r] &  \im f_1 \ar[r] & W_0 \ar[r]^{f_0} &  M \ar[r]& 0.\\
&  & {\rm Diagram}\ (4.16) &  }$$
So we have
$$\theta_M\cdot(1_{\omega}\otimes \alpha)\cdot(1_{\omega}\otimes \pi)
=\theta_M\cdot (1_{\omega}\otimes {f_0}_*)=f_0\cdot\theta_{W_0}=g\cdot(1_{\omega}\otimes \pi).$$
Because $1_{\omega}\otimes \pi$ is epic, we have $\theta_M\cdot (1_{\omega}\otimes \alpha)=g$
and the following commutative diagram with exact rows
$$\xymatrix{
\omega\otimes_SN
\ar[d]^g \ar[r]^{1_\omega\otimes \alpha}& \omega\otimes_SM_* \ar[r]
\ar[d]^{\theta_{M}}& \omega\otimes_S\Ext^n_R(\omega,\Ker f_{n-1}) \ar[r]& 0\\
M \ar@{=}[r]& M.\\
&\ \ \  \ \ \ \ \ \ \ \ \ \ \ \ \ {\rm Diagram}\ (4.17)}$$
Since $\theta_{\im f_1}$ is an epimorphism by the above argument,
it follows from the snake lemma that $g$ is an isomorphism. Thus $1_{\omega}\otimes \alpha$ is a monomorphism.
Because $\omega\otimes_S\Ext^n_R(\omega,\Ker f_{n-1})=0$ by assumption,
we have that $\theta_{M}$ is an isomorphism and $M\in\cT^2_{\omega}(R)$ by the diagram (4.17).
It means that the assertion holds true for $n=2$.
If $n\geqslant 3$, then the fact that $\im f_1\in \cT^{n-1}_{\omega}(R)$ implies $\theta_{\im f_1}$ is an isomorphism.
So $\Tor^S_1(\omega,N)=0$ by the diagram (4.16). In addition, we have $(\im f_1)_*\in{\omega_S}^{\top_{n-3}}$
by \cite[Corollary 3.4(3)]{TH1}. Because $\Tcograde_\omega\Ext^n_R(\omega, \Ker f_{n-1})\geqslant n-1$ by assumption,
applying the dimension shifting to (4.15) we obtain $M_*\in{\omega_S}^{\top_{n-2}}$.
Therefore we conclude that $M\in \cT^n_{\omega}(R)$ by \cite[Corollary 3.4(3)]{TH1} again.

$(4)\Rightarrow (1)$
We proceed by induction on $n$. The case for $n=1$ is trivial.
Let $M\in \Omega^{n}_{\mathcal{B}}(R)$ with $n\geqslant 2$ and let
$$0\to M\to B_{n-1}\to \cdots \to B_1 \stackrel{f}{\longrightarrow} B_0\to L\to 0$$ be an exact sequence
with all $B_i$ in $\mathcal{B}_\omega(R)$. By (4), we have $L\in \cT^n_{\omega}(R)$. By the induction hypothesis, we have
$\Tcograde_\omega\Ext^i_R(\omega,M)\geqslant i-1$ for any $1\leqslant i\leqslant n-1$ and
$\Tcograde_\omega\Ext^n_R(\omega,M)\geqslant n-2$.

Put $N:=\cTr_\omega\Ker f$. Because $B_i\in \cT_{\omega}(R)$ by \cite[Theorem 3.9]{TH1}, we obtain that
$\omega\otimes_S N\cong L(\in \cT^n_{\omega}(R))$ and $N\in{\omega_S}^{\top_{n-2}}$.
By \cite[Proposition 6.7]{TH2}, we have the following exact sequences
$$0\to \Ext^{n-1}_R(\omega,M)\to N\stackrel{\pi}{\longrightarrow} \im \mu_N\to 0, \eqno{(4.18)}$$
$$0\to \im \mu_N \stackrel{\lambda}{\longrightarrow} (\omega\otimes_S N)_* \to  \Ext^{n}_R(\omega,M)\to 0 \eqno{(4.19)}$$
such that $\mu_N =\lambda\cdot \pi$. Since $\theta_{\omega\otimes_S N}$ is an isomorphism,
it follows from \cite[Lemma 6.1(2)]{TH2} that $1_{\omega}\otimes \mu_N$ is also an isomorphism.
Then both $1_{\omega}\otimes \lambda$ and $1_{\omega}\otimes\pi$ are isomorphisms.

From the exact sequence (4.18), we get $\im \mu_N\in{\omega_S}^{\top_{n-2}}$.
Because $(\omega\otimes_S N)_*\in{\omega_S}^{\top_{n-2}}$
by \cite[Corollary 3.4]{TH1}, from the exact sequence (4.19) it yields that
\linebreak
$\Tor_{n-2}^S(\omega, \Ext^n_R(\omega,M))=0$.
Thus we have $\Tcograde_\omega\Ext^n_R(\omega,M)\geqslant n-1$.
\end{proof}

\begin{lemma} \label{lem-4.20}
For any $n\geqslant 0$, the following statements are equivalent.
\begin{enumerate}
\item $\omega\otimes\Ext^{2}_{R}(\omega,-)$ vanishes on $\Mod R$.
\item $(\Tor_{2}^{S}(\omega,-))_*$ vanishes on $\Mod S$.
\item $M_*\in \acT^2_{\omega}(S)$ for any $M\in \Mod R$.
\item $\omega\otimes_SN\in \cT^2_{\omega}(R)$ for any $N\in \Mod S$.
\end{enumerate}
\end{lemma}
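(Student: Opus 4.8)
The plan is to translate each of (1)--(4) into a statement about the subcategories $\cT^2_{\omega}(R)$ and $\acT^2_{\omega}(S)$ and then feed it into the results of Sections~3--4. By the remarks following Definitions~\ref{def-2.3} and \ref{def-2.4}, $\omega\otimes_SN\in\cT^2_{\omega}(R)$ exactly when $\theta_{\omega\otimes_SN}$ is an isomorphism, and $M_*\in\acT^2_{\omega}(S)$ exactly when $\mu_{M_*}$ is an isomorphism. The key elementary input is the identity $\{\omega\otimes_SN\mid N\in\Mod S\}=\coOmega^{2}_{\mathcal{P}_{\omega}}(R)$ and, dually, $\{M_*\mid M\in\Mod R\}=\Omega^{2}_{\mathcal{I}_{\omega}}(S)$. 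Since $\mathcal{P}_{\omega}(R)=\Add_R\omega$ (\cite[Proposition 3.4(2)]{TH2}) and, by \cite[Lemma 4.1]{HW}, the functors $\omega\otimes_S-$ and $(-)_*$ restrict to mutually inverse equivalences between the projective $S$-modules and $\mathcal{P}_{\omega}(R)$ and between the injective $R$-modules and $\mathcal{I}_{\omega}(S)$, this is easy: a projective presentation of $N$ becomes, after applying $\omega\otimes_S-$, an exact sequence realizing $\omega\otimes_SN$ as a $2$-$\mathcal{P}_{\omega}$-cosyzygy; conversely any map between two modules of $\mathcal{P}_{\omega}(R)$ has the form $1_{\omega}\otimes d$ for a map $d$ of projectives, so its cokernel is $\omega\otimes_S\Coker d$ by right exactness of $\omega\otimes_S-$; the other identity is symmetric, using left exactness of $(-)_*$. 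Combining this with $\cT^2_{\omega}(R)\subseteq\coOmega^{2}_{\mathcal{P}_{\omega}}(R)$ (\cite[Proposition 3.7]{TH1}) and $\acT^2_{\omega}(S)\subseteq\Omega^{2}_{\mathcal{I}_{\omega}}(S)$ (\cite[Proposition 3.8]{TH3}) shows that condition (4) is equivalent to $\cT^2_{\omega}(R)=\coOmega^{2}_{\mathcal{P}_{\omega}}(R)$, and condition (3) to $\acT^2_{\omega}(S)=\Omega^{2}_{\mathcal{I}_{\omega}}(S)$.

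With these reformulations I would argue as follows. First, (1) and (2) say precisely that $\Tcograde_{\omega}\Ext^2_R(\omega,M)\geqslant 1$ for all $M\in\Mod R$, respectively $\Ecograde_{\omega}\Tor^S_2(\omega,N)\geqslant 1$ for all $N\in\Mod S$; so $(1)\Leftrightarrow(2)$ is the case $n=2$ of Theorem~\ref{thm-4.19} (the terms with $i=1$ being vacuous). For $(1)\Rightarrow(4)$: condition (1) implies statement (3) of Proposition~\ref{prop-4.17} for $n=2$, hence its statement (6), namely $\cT^2_{\omega}(R)=\coOmega^{2}_{\mathcal{P}_{\omega}}(R)$, which is the reformulation of (4). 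For $(4)\Rightarrow(1)$: the reformulation $\coOmega^{2}_{\mathcal{P}_{\omega}}(R)\subseteq\cT^2_{\omega}(R)$ together with $\coOmega^{2}(R)\subseteq\coOmega^{2}_{\mathcal{P}_{\omega}}(R)$ (Lemma~\ref{lem-3.4}(2)) gives $\coOmega^{2}(R)\subseteq\cT^2_{\omega}(R)$, and then Lemma~\ref{lem-3.2} with $n=m=1$ (using $\coOmega^{1}(R)\subseteq\cT^1_{\omega}(R)$, immediate from Lemmas~\ref{lem-3.4}(2) and \ref{lem-2.9}) yields (1); equivalently, one invokes $(4)\Leftrightarrow(1)$ of Theorem~\ref{thm-3.5} with $n=k=1$. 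Dually, $(2)\Rightarrow(3)$ follows from Proposition~\ref{prop-4.16} for $n=2$, and $(3)\Rightarrow(2)$ follows from $\Omega^{2}_{\mathcal{F}}(S)\subseteq\Omega^{2}_{\mathcal{I}_{\omega}}(S)$ (Lemma~\ref{lem-3.6}(2)) together with Lemma~\ref{lem-3.3} for $n=m=1$. These implications make all four statements equivalent.

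The only step beyond routine bookkeeping is the class identity of the first paragraph: one must verify both inclusions and, more importantly, upgrade the inclusions coming from conditions (3) and (4) to equalities of subcategories, so that Propositions~\ref{prop-4.16} and \ref{prop-4.17} (which are phrased as equalities of categories) become applicable; this upgrade rests on the always-valid containments $\cT^2_{\omega}(R)\subseteq\coOmega^{2}_{\mathcal{P}_{\omega}}(R)$ and $\acT^2_{\omega}(S)\subseteq\Omega^{2}_{\mathcal{I}_{\omega}}(S)$ and on the identifications in Lemma~\ref{lem-2.9}. Everything else is a matter of deciding which earlier result to cite; no genuinely new homological computation is needed beyond what is already packaged in Lemmas~\ref{lem-3.2}, \ref{lem-3.3} and Propositions~\ref{prop-4.16}, \ref{prop-4.17}.
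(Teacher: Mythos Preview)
Your argument has a circularity problem. You invoke Theorem~\ref{thm-4.19} (case $n=2$) to obtain $(1)\Leftrightarrow(2)$, but in the paper the $n=2$ case of Theorem~\ref{thm-4.19} is proved \emph{by} Lemma~\ref{lem-4.20}. Once that step is removed, your chain of implications splits into two disconnected blocks: you correctly establish $(1)\Leftrightarrow(4)$ (via Proposition~\ref{prop-4.17}/Lemma~\ref{lem-3.2}) and $(2)\Leftrightarrow(3)$ (via Proposition~\ref{prop-4.16}/Lemma~\ref{lem-3.3}), but nothing you wrote links the pair $\{(1),(4)\}$ to the pair $\{(2),(3)\}$.

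The paper closes this gap in two independent ways. First, it cites \cite[Corollary 6.6]{TH4} for $(3)\Leftrightarrow(4)$ directly. Second, for $(1)\Leftrightarrow(4)$ and $(2)\Leftrightarrow(3)$ it gives elementary proofs using the retractions $\theta_{\omega\otimes_SN}\cdot(1_\omega\otimes\mu_N)=1_{\omega\otimes_SN}$ and $(\theta_M)_*\cdot\mu_{M_*}=1_{M_*}$, computing $\Ker\theta_{\omega\otimes_SN}\cong\omega\otimes_S\Ext^2_R(\omega,\acTr_\omega N)$ and $\Coker\mu_{M_*}\cong(\Tor_2^S(\omega,\cTr_\omega M))_*$ via \cite[Corollaries 5.2, 5.3]{TH4}. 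Your class identities $\{\omega\otimes_SN\}=\coOmega^{2}_{\mathcal{P}_{\omega}}(R)$ and $\{M_*\}=\Omega^{2}_{\mathcal{I}_{\omega}}(S)$ are correct and give a nice conceptual reformulation of $(3)$ and $(4)$, and your derivations of $(1)\Leftrightarrow(4)$ and $(2)\Leftrightarrow(3)$ from Propositions~\ref{prop-4.17} and~\ref{prop-4.16} are valid alternatives to the paper's direct computations; you just need one more bridge---for instance \cite[Corollary 6.6]{TH4}---to complete the cycle without appealing to Theorem~\ref{thm-4.19}.
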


\begin{proof}
By \cite[Corollary 6.6]{TH4}, we have $(3)\Leftrightarrow (4)$.

$(1)\Leftrightarrow (4)$ Assume that (1) holds true.
Let $N\in \Mod S$. By \cite[Lemma 6.1(2)]{TH2}, we have
$$\theta_{\omega\otimes_SN}\cdot(1_{\omega}\otimes\mu_{N})=1_{\omega\otimes_SN}.$$
It follows that $\theta_{\omega\otimes_SN}$ is a split epimorphism and
\begin{align*}
& \ \ \ \  \ \  \ \ \Ker \theta_{\omega\otimes_SN}\cong\Coker (1_{\omega}\otimes\mu_{N})\cong \omega\otimes_S\Coker\mu_N\\
&\ \ \ \ \ \cong\omega\otimes_S\Ext^{2}_{R}(\omega,\acTr_{\omega}N)\ \ \text{(by \cite[Corollary 5.2(2)]{TH4})}\\
&\ \ \ \ \ =0\ \ \text{(by (1)}).
\end{align*}
So $\theta_{\omega\otimes_SN}$ is a monomorphism, and hence an isomorphism.

Conversely, assume that (4) holds true. Let $M\in\Mod R$. By \cite[Lemma 6.1(2)]{TH2} again, we have
$$\theta_{\omega\otimes_S\cTr_{\omega}M}\cdot(1_{\omega}\otimes\mu_{\cTr_{\omega}M})=1_{\omega\otimes_S\cTr_{\omega}M}.$$
It follows that
\begin{align*}
& \ \ \ \ \ \ \ \omega\otimes_S\Ext^{2}_{R}(\omega,M)\cong\omega\otimes_S\Coker\mu_{\cTr_{\omega}M}\ \ \text{(by \cite[Corollary 5.3(2)]{TH4})}\\
&\ \ \ \cong\Coker(1_{\omega}\otimes\mu_{\cTr_{\omega}M})\cong \Ker\theta_{\omega\otimes_S\cTr_{\omega}M}\\
&\ \ \ =0\ \ \text{(by (4)}).
\end{align*}

$(2)\Leftrightarrow (3)$ Assume that (2) holds true.
Let $M\in \Mod R$. By \cite[Lemma 6.1(1)]{TH2}, we have
$$(\theta_M)_*\cdot\mu_{M_*}=1_{M_*}.$$
It follows that $\mu_{M_*}$ is a split monomorphism and
\begin{align*}
& \ \ \ \  \ \  \ \ \Coker \mu_{M_*}\cong\Ker (\theta_M)_*\cong (\Ker \theta_M)_*\\
&\ \ \ \ \ \cong(\Tor_{2}^{S}(\omega,\cTr_{\omega}M))_*\ \ \text{(by \cite[Proposition 3.2]{TH1})}\\
&\ \ \ \ \ =0\ \ \text{(by (2)}).
\end{align*}
So $\mu_{M_*}$ is an epimorphism, and hence an isomorphism.

Conversely, assume that (3) holds true. Let $N\in\Mod S$. By \cite[Lemma 6.1(1)]{TH2} again, we have
$$(\theta_{\acTr_{\omega}N})_*\cdot\mu_{(\acTr_{\omega}N)_*}=1_{(\acTr_{\omega}N)_*}.$$
It follows that
\begin{align*}
& \ \ \ \ \ \ \ (\Tor_{2}^{S}(\omega,N))_*\cong(\Ker \theta_{\acTr_{\omega}N})_*\ \ \text{(by \cite[Corollary 5.3(1)]{TH4})}\\
&\ \ \ \cong\Ker (\theta_{\acTr_{\omega}N})_*\cong \Coker\mu_{(\acTr_{\omega}N)_*}\\
&\ \ \ =0\ \ \text{(by (3)}).
\end{align*}
\end{proof}

The following result establishes the left-right symmetry of certain cograde condition of modules.

\begin{theorem} \label{thm-4.19}
For any $n\geqslant 1$, the following statements are equivalent.
\begin{enumerate}
\item $\Tcograde_{\omega}\Ext^{i}_{R}(\omega,M)\geqslant i-1$ for any $M\in \Mod R$ and $1\leqslant i\leqslant n$.
\item $\Ecograde_{\omega}\Tor_{i}^{S}(\omega,N)\geqslant i-1$ for any $N\in \Mod S$ and $1\leqslant i\leqslant n$.
\item $\coOmega^i(R)\subseteq\cT^i_{\omega}(R)=\coOmega^{i}_{\mathcal{B}}(R)$
for any $1\leqslant i\leqslant n$.
\item $\coOmega^i(R)\subseteq\cT^i_{\omega}(R)=\coOmega^{i}_{\mathcal{F}_{\omega}}(R)$
for any $1\leqslant i\leqslant n$.
\item $\coOmega^i(R)\subseteq\cT^i_{\omega}(R)=\coOmega^{i}_{\mathcal{P}_{\omega}}(R)$
for any $1\leqslant i\leqslant n$.
\item $\Omega^i_{\mathcal{F}}(S)\subseteq\acT^i_{\omega}(S)=\Omega^{i}_{\mathcal{A}}(S)$
for any $1\leqslant i\leqslant n$.
\item $\Omega^i_{\mathcal{F}}(S)\subseteq\acT^i_{\omega}(S)=\Omega^{i}_{\mathcal{I}_{\omega}}(S)$
for any $1\leqslant i\leqslant n$.
\end{enumerate}
\end{theorem}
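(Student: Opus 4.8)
The plan is to arrange the equivalences so that all of (3)--(7) appear as ``categorical'' reformulations of the cograde conditions and the only substantive point is the left--right symmetry $(1)\Leftrightarrow(2)$. The three $R$-side statements (3), (4), (5) are literally the same statement once one invokes the identifications $\coOmega^{i}_{\mathcal{B}}(R)=\coOmega^{i}_{\mathcal{F}_{\omega}}(R)=\coOmega^{i}_{\mathcal{P}_{\omega}}(R)$ of Lemma~\ref{lem-3.4}(2), and likewise $(6)\Leftrightarrow(7)$ follows from $\Omega^{i}_{\mathcal{A}}(S)=\Omega^{i}_{\mathcal{I}_{\omega}}(S)$ in Lemma~\ref{lem-3.6}(2). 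So the real work is $(1)\Leftrightarrow(5)$, $(2)\Leftrightarrow(7)$, and $(1)\Leftrightarrow(2)$.

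For $(1)\Leftrightarrow(5)$: if (1) holds, then since $\Omega^{i}_{\mathcal{B}}(R)\subseteq\Mod R$ hypothesis (1) of Proposition~\ref{prop-4.17} is satisfied, which yields the equalities $\cT^{i}_{\omega}(R)=\coOmega^{i}_{\mathcal{P}_{\omega}}(R)$ for $1\leqslant i\leqslant n$; and feeding (1) into Lemma~\ref{lem-3.2} with its $n$ and $m$ both taken equal to $i-1$, inducting on $i$ from the trivial base $\coOmega^{0}(R)=\Mod R$, gives the inclusions $\coOmega^{i}(R)\subseteq\cT^{i}_{\omega}(R)$; together these are (5). Conversely, if (5) holds, the inclusions $\coOmega^{i}(R)\subseteq\cT^{i}_{\omega}(R)$ are precisely the hypothesis under which Lemma~\ref{lem-3.2} returns $\Tcograde_{\omega}\Ext^{i}_{R}(\omega,M)\geqslant i-1$ for all $M$ and $1\leqslant i\leqslant n$, i.e.\ (1). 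The equivalence $(2)\Leftrightarrow(7)$ is proved the same way, with Proposition~\ref{prop-4.16} in place of~\ref{prop-4.17} and Lemma~\ref{lem-3.3} in place of~\ref{lem-3.2}.

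It remains to prove $(1)\Leftrightarrow(2)$, which I would do by induction on $n$, carrying out $(1)\Rightarrow(2)$ in detail (the implication $(2)\Rightarrow(1)$ being the parallel argument with $\Ext$, cosyzygies and $\cTr_{\omega}$ interchanged with $\Tor$, yokes and $\acTr_{\omega}$, and Lemma~\ref{lem-3.1}(1) used instead of~(2)). The case $n=1$ is vacuous, and $n=2$ is exactly Lemma~\ref{lem-4.20}, since (1) for $i=2$ reads ``$\omega\otimes_{S}\Ext^{2}_{R}(\omega,-)=0$ on $\Mod R$'' and (2) for $i=2$ reads ``$(\Tor_{2}^{S}(\omega,-))_{*}=0$ on $\Mod S$''. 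For $n\geqslant 3$, assume (1) holds up to $n$; by the inductive hypothesis (2) holds up to $n-1$, so by Lemma~\ref{lem-3.3} and Lemma~\ref{lem-3.6}(2) we get $\Omega^{i}_{\mathcal{F}}(S)\subseteq\acT^{i}_{\omega}(S)=\Omega^{i}_{\mathcal{I}_{\omega}}(S)$ for $i\leqslant n-1$, and by Lemma~\ref{lem-3.3} applied at level $n-1$ the remaining content of (2) at level $n$ is the single containment $\Omega^{n}_{\mathcal{F}}(S)\subseteq\acT^{n}_{\omega}(S)$. To establish it, fix $N\in\Mod S$, apply Lemma~\ref{lem-3.1}(2) to $\Omega^{n-1}_{\mathcal{F}}(N)$ — whose adjoint cotranspose lies in ${_R\omega^{\perp_{n-1}}}$ by the inductive hypothesis — together with the sequence $0\to\acTr_{\omega}\Omega^{n}_{\mathcal{F}}(N)\to\omega\otimes_{S}F_{n+1}(N)\to\im(1_{\omega}\otimes f_{n})\to 0$ and the vanishing $\omega\otimes_{S}F_{n+1}(N)\in{_R\omega^{\perp}}$; dimension-shifting through these sequences rewrites $\Ext^{j}_{R}(\omega,\Tor^{S}_{n}(\omega,N))$ for $0\leqslant j\leqslant n-2$ in terms of $\Ext_{R}(\omega,\acTr_{\omega}\Omega^{n}_{\mathcal{F}}(N))$, and hypothesis (1) at level $n$ — transported back to modules over $S$ through the adjoint pair $\omega\otimes_{S}-$ and $(-)_{*}$, exactly as the degree-two instance is handled in Lemma~\ref{lem-4.20} — forces these groups to vanish, so $\Omega^{n}_{\mathcal{F}}(N)\in\acT^{n}_{\omega}(S)$. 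I expect this last step to be the main obstacle: a purely $S$-sided dimension count here is circular, so the heart of the theorem is precisely the mechanism, anchored by Lemma~\ref{lem-4.20}, by which the $\Mod R$-hypothesis (1) is converted into the $\Mod S$-vanishing.
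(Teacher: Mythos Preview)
Your treatment of the equivalences $(1)\Leftrightarrow(3)\Leftrightarrow(4)\Leftrightarrow(5)$ and $(2)\Leftrightarrow(6)\Leftrightarrow(7)$, via Lemmas~\ref{lem-3.2}--\ref{lem-3.3}, \ref{lem-3.4}(2), \ref{lem-3.6}(2) and Propositions~\ref{prop-4.16}--\ref{prop-4.17}, is correct and matches the paper (which packages the inclusion part as Theorems~\ref{thm-3.5} and~\ref{thm-3.7}). The base cases $n=1,2$ of $(1)\Leftrightarrow(2)$ are also handled correctly.

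The gap is exactly where you flag it: the inductive step $(1)\Rightarrow(2)$ for $n\geqslant 3$. Your dimension-shifting is right --- from Lemma~\ref{lem-3.1}(2) and the auxiliary sequence one obtains
\[
\Ext^{n-2}_R\bigl(\omega,\Tor^S_n(\omega,N)\bigr)\;\cong\;\Ext^{n}_R\bigl(\omega,\acTr_\omega\Omega^{n}_{\mathcal{F}}(N)\bigr),
\]
and applying (1) to the right-hand side gives $\Tcograde_\omega$ of this module at least $n-1$. But this is only a $\Tor$-cograde bound on an $\Ext$-group; it does not by itself force any vanishing, and your appeal to ``the mechanism of Lemma~\ref{lem-4.20}'' does not supply one. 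The paper closes the gap with Lemma~\ref{lem-4.11}(1): one first notes, from the inductive hypothesis applied to $\Omega^{1}_{\mathcal{F}}(N)$, that $\Ecograde_\omega\Tor^S_n(\omega,N)\geqslant n-2$ already holds; then Lemma~\ref{lem-4.11}(1), fed the inequality $\Tcograde_\omega\Ext^{n-2}_R(\omega,\Tor^S_n(\omega,N))\geqslant n-1$ just obtained, upgrades this to $\Ecograde_\omega\Tor^S_n(\omega,N)\geqslant n-1$. The dual step $(2)\Rightarrow(1)$ uses Lemma~\ref{lem-4.11}(2) in the same way. So the missing idea is not a generalization of Lemma~\ref{lem-4.20} but the bootstrap Lemma~\ref{lem-4.11}, which converts a $\Tcograde$-bound on $\Ext^m_R(\omega,M)$ (one degree higher than the current $\Ecograde$) into an improved $\Ecograde$-bound on $M$ itself.
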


\begin{proof}
By Theorem ~\ref{thm-3.5} and Proposition ~\ref{prop-4.17}, we have $(1)\Leftrightarrow(3)\Leftrightarrow (4) \Leftrightarrow (5)$.
By Theorem ~\ref{thm-3.7} and Proposition ~\ref{prop-4.16}, $(2)\Leftrightarrow(6)\Leftrightarrow (7)$.

In the following, we will prove $(1)\Leftrightarrow (2)$ by induction on $n$.
The case for $n=1$ is trivial and the case for $n=2$ follows from Lemma ~\ref{lem-4.20}. Now suppose $n\geqslant 3$.

$(1)\Rightarrow (2)$ Let $N\in \Mod S$. By the induction hypothesis, we have that
\linebreak
$\Ecograde_{\omega}\Tor_{i}^{S}(\omega,N)\geqslant i-1$ for any $1\leqslant i\leqslant n-1$ and
$\Ecograde_{\omega}\Tor_{n}^{S}(\omega,N)\geqslant n-2$.
By Lemma \ref{lem-3.1}(2), there exists an exact sequence
$$0\to \im(f_{n}\otimes 1_{\omega})\stackrel{\sigma}{\longrightarrow} \acTr_\omega \Omega_{\mathcal{F}}^{n-1}(N)
\stackrel{\tau}{\longrightarrow}\Tor^S_{n}(\omega,N)\to 0$$
in $\Mod R$ such that $\sigma_*$ is an isomorphism. By Theorem \ref{thm-3.7},
we have that $\Omega_{\mathcal{F}}^{n-1}(N)\in \acT_{\omega}^{n-1}(S)$ and
$\acTr_\omega \Omega_{\mathcal{F}}^{n-1}(N)\in{_R\omega}^{\bot_{n-1}}$.
So $$\Ext^{n-2}_R(\omega,\Tor^S_{n}(\omega,N))\cong \Ext^{n-1}_R(\omega,\im(f_{n}\otimes 1_{\omega}))$$
$$\cong \Ext^{n}_R(\omega,\acTr_{\omega}\Omega^{n}_{\mathcal{F}}(N)).$$
Then $\Tcograde_{\omega}\Ext^{n-2}_R(\omega,\Tor^S_{n}(\omega,N))\geqslant n-1$ by (1). It follows from Lemma ~\ref{lem-4.11}(1)
that $\Ecograde_{\omega}\Tor_{n}^{S}(\omega,N)\geqslant n-1$.

$(2)\Rightarrow (1)$ Let $M\in \Mod R$. By the induction hypothesis, we have that
\linebreak
$\Tcograde_{\omega}\Ext^{i}_{R}(\omega,M)\geqslant i-1$ for any $1\leqslant i\leqslant n-1$ and
$\Tcograde_{\omega}\Ext^{n}_{R}(\omega,M)\geqslant n-2$.
By Lemma \ref{lem-3.1}(1), there exists an exact sequence
$$0\to \Ext^{n+1}_R(\omega,M)\stackrel{\lambda}{\longrightarrow} \cTr_\omega \coOmega^n(M)
\stackrel{\pi}{\longrightarrow} I^{n+1}(M)_*/\coOmega^{n+1}(M)_*\to 0$$
in $\Mod S$ such that $1_{\omega}\otimes \pi$ is an isomorphism. By Theorem \ref{thm-3.5},
we have that $\coOmega^{n-1}(M)\in \cT_{\omega}^{n-1}(R)$ and $\cTr_\omega \coOmega^{n-1}(M)\in{\omega_S}^{\top_{n-1}}$.
So $$\Tor_{n-2}^S(\omega,\Ext^{n}_R(\omega,M))\cong \Tor_{n-1}^S(\omega,I^{n}(M)_*/\coOmega^{n}(M)_*)$$
$$\cong \Tor_{n}^S(\omega,\cTr_{\omega}\coOmega^{n}(M)).$$
Then $\Ecograde_{\omega}\Tor_{n-2}^S(\omega,\Ext^{n}_R(\omega,M))\geqslant n-1$ by (2). It follows from Lemma ~\ref{lem-4.11}(2)
that $\Tcograde_{\omega}\Ext^{n}_{R}(\omega,M)\geqslant n-1$.
\end{proof}

\vspace{0.2cm}

\noindent{\bf 4.4. Examples}

\vspace{0.2cm}

In this subsection, we give some examples for $\omega$ satisfying the (quasi) $n$-cograde condition.

Let $R$ be an artin algebra. Recall that $R$ is called {\it Auslander $n$-Gorenstein} if $\pd_{R^{op}}I^i(R_R)\leqslant i$
for any $0\leqslant i\leqslant n-1$; equivalently $\pd_{R}I^i(_RR)\leqslant i$
for any $0\leqslant i\leqslant n-1$ (\cite{FGR,IS}); and $R$ is called {\it left (resp. right) quasi $n$-Gorenstein}
if $\pd_{R}I^i(_{R}R)$ (resp. $\pd_{R^{op}}I^i(R_{R}) \leqslant i+1$ for any $0\leqslant i\leqslant n-1$
(\cite{H3}).

Let $D$ be the ordinary duality between $\mod R$ and $\mod R^{op}$. Then $D(R)$ is a semidualizing $(R,R)$-bimodule.
Because
$$\pd_{R}I^i(_RR)=\id_{R^{op}}P_i(D(_RR))=\pd_{R}\Hom_{R^{op}}(P_i(D(_RR)),D(R))\ \text{and}$$
$$\pd_{R^{op}}I^i(R_R)=\id_{R}P_i(D(R_R))=\pd_{R^{op}}\Hom_{R}(P_i(D(R_R)),D(R)),$$
we have

\begin{example} \label{exa-4.20}
{\rm \begin{enumerate}
\item[]
\item $R$ is Auslander $n$-Gorenstein if and only if $D(R)$ satisfies the $n$-cograde condition.
\item $R$ is left (resp. right) quasi $n$-Gorenstein if and only if $D(R)$ satisfies
the left (resp. right) quasi $n$-cograde condition.
\end{enumerate}}
\end{example}

So, if putting ${_R\omega_S}={_RD(R)_R}$ in Theorem ~\ref{thm-4.8} (resp. Theorem ~\ref{thm-4.14}),
then all the conditions there are equivalent to that $R$ is Auslander $n$-Gorenstein
(resp. right quasi $n$-Gorenstein). Note that the notion of quasi $n$-Gorenstein algebras is not
left-right symmetric (\cite[p.11]{AR1}). So,
contrary to the $n$-cograde condition, the quasi $n$-cograde condition is not left-right symmetric.

\begin{example} \label{exa-4.21}
Let $Q$ be the quiver
$$\xymatrix{
& 3  \ar[dl]_{\beta} \\
1 & & 5 \ar[dl]^{\gamma} \ar[ul]_{\alpha}. \\
& 4 \ar[dl]^{\varepsilon} \ar[ul]^{\delta}\\
2 }$$ and $R=KQ/<\beta\alpha-\delta\gamma, \varepsilon\gamma>$ with $K$ a field. Take
$$\omega:=\begin{array}{c} 0\\1 ~ 0 \\ 0\\ 0~\end{array}\oplus \begin{array}{c} 0\\1 ~ 0 \\ 1\\ 1~\end{array}\oplus
\begin{array}{c} 0\\1 ~ 0 \\ 1\\ 0~\end{array}\oplus \begin{array}{c} 1\\1 ~ 1 \\ 1\\ 0~\end{array}\oplus
\begin{array}{c} 0\\0 ~ 1 \\ 1\\ 0~\end{array}.$$
By \cite[Example VI.2.8(a)]{ASS}, we have that $\omega_R$ is a non-injective tilting module with $\pd_{R}\omega=1$.
Thus it is a semidualizing $(R,\End_R(\omega))$-bimodule. It is straightforward to verify that the projective cover
$P_0(\omega)$ of $\omega$ is $P(1)\oplus P(4)^2\oplus P(5)^2$. So $\mathcal{P}_\omega(R)$-$\id_{R}P_0(\omega)=0$,
and hence $\omega$ satisfies the left and right 1-cograde conditions by Theorem ~\ref{thm-4.8}.
Since $\pd_{R}\omega=1$, we have $\Ext_{R}^{\geqslant 2}(\omega,M)=0$ for any $M\in \Mod R$. By Theorem ~\ref{thm-4.8} again,
we have that $\omega$ satisfies the left and right $n$-cograde conditions for any $n\geqslant 1$.
\end{example}

\section {\bf Two cotorsion pairs}

In this section, we will construct two complete cotorsion pairs under any of the equivalent conditions in Theorem ~\ref{thm-4.19}.

For any $n\geqslant 0$, set $\mathcal{P}_\omega\text{-}\id^{\leqslant n}(R):=\{M\in \Mod R\mid\mathcal{P}_\omega(R){\text -}\id_R M\leqslant n \}$.

\begin{lemma} \label{lem-5.1}
Let $M\in {_R\omega^{\perp_{n-1}}}$ with $n\geqslant 1$. If $\Tcograde_{\omega}\Ext_R^n(\omega,M)\geqslant n-1$,
then there exists an exact sequence
$$0\to M\to X\to Y\to 0$$ in $\Mod R$
with $X\in {_R\omega^{\perp_{n}}}$ and $Y\in\mathcal{P}_\omega\text{-}\id^{\leqslant n-1}(R)$.
\end{lemma}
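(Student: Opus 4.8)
The plan is to reduce the statement to the construction of an exact sequence
$$0\to M\to X\to W^0\to W^1\to\cdots\to W^{n-1}\to 0$$
with all $W^j\in\mathcal{P}_\omega(R)=\Add_R\omega$ and $X\in{_R\omega^{\perp_n}}$. Such a sequence is equivalent to the assertion: put $Y:=\Coker(M\to X)=\im(X\to W^0)$; then $0\to M\to X\to Y\to 0$ is exact and $0\to Y\to W^0\to\cdots\to W^{n-1}\to 0$ is a $\mathcal{P}_\omega(R)$-coresolution of $Y$, so $\mathcal{P}_\omega(R)$-$\id_RY\leqslant n-1$; conversely, splicing a $\mathcal{P}_\omega(R)$-coresolution of $Y$ onto $0\to M\to X\to Y\to 0$ recovers the displayed sequence. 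When $n=1$ the $\Tor$-cograde hypothesis is vacuous and the sequence is produced by one pullback: pulling the minimal injective resolution $0\to M\to I^0(M)\to\coOmega^1(M)\to 0$ back along the evaluation map $p\colon\omega^{(J)}\to\coOmega^1(M)$, where $J:=\Hom_R(\omega,\coOmega^1(M))$, yields $0\to M\to X\to\omega^{(J)}\to 0$, and since both $p_*$ and the connecting map $\coOmega^1(M)_*\to\Ext^1_R(\omega,M)$ are epic, its long exact sequence forces $\Ext^1_R(\omega,X)=0$.

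For $n\geqslant 2$ I would first manufacture $Y$ out of $\Ext^n_R(\omega,M)$. Put $L:=\Ext^n_R(\omega,M)$ and fix a projective resolution $\cdots\to P_1\to P_0\to L\to 0$ in $\Mod S$. The hypothesis $\Tcograde_\omega L\geqslant n-1$ means $\Tor^S_i(\omega,L)=0$ for $0\leqslant i\leqslant n-2$, so $\omega\otimes_S-$ applied to $P_\bullet$ makes $\omega\otimes_SP_{n-2}\to\cdots\to\omega\otimes_SP_0\to 0$ exact; with $K:=\Ker(\omega\otimes_SP_{n-1}\to\omega\otimes_SP_{n-2})$ one obtains an exact sequence $0\to K\to\omega\otimes_SP_{n-1}\to\cdots\to\omega\otimes_SP_0\to 0$ whose middle terms lie in $\mathcal{P}_\omega(R)$, hence $\mathcal{P}_\omega(R)$-$\id_RK\leqslant n-1$. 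Because $\Add_R\omega\subseteq{_R\omega^{\perp}}$, this coresolution computes $\Ext^\bullet_R(\omega,K)$; applying $(-)_*$ and the standard equivalence between $\mathcal{P}_\omega(R)$ and the projective $S$-modules (given by $(-)_*$ and $\omega\otimes_S-$) turns it into the complex $P_{n-1}\to\cdots\to P_0$, so exactness of $P_\bullet\to L$ gives $\Ext^i_R(\omega,K)=0$ for $1\leqslant i\leqslant n-2$ and for $i\geqslant n$, and $\Ext^{n-1}_R(\omega,K)\cong\Coker(P_1\to P_0)=L\cong\Ext^n_R(\omega,M)$. I take $Y:=K$.

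It then remains to produce an extension $0\to M\to X\to K\to 0$ whose connecting homomorphism $\partial\colon\Ext^{n-1}_R(\omega,K)\to\Ext^n_R(\omega,M)$ is an isomorphism. Granting this, the long exact sequence of $\Ext^\bullet_R(\omega,-)$ for $0\to M\to X\to K\to 0$, together with $M\in{_R\omega^{\perp_{n-1}}}$ and the vanishing of $\Ext^i_R(\omega,K)$ found above, yields $\Ext^i_R(\omega,X)\cong\Ext^i_R(\omega,K)=0$ for $1\leqslant i\leqslant n-2$, $\Ext^{n-1}_R(\omega,X)=\Ker\partial=0$, and $\Ext^n_R(\omega,X)=\Coker\partial=0$; hence $X\in{_R\omega^{\perp_n}}$, while $\Coker(M\to X)=K$ satisfies $\mathcal{P}_\omega(R)$-$\id_RK\leqslant n-1$, as required.

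To build the extension I would invoke Lemma~\ref{lem-3.1}(1) for $\coOmega^{n-1}(M)$: since $M\in{_R\omega^{\perp_{n-1}}}$, dimension shifting along the minimal injective resolution gives $\Ext^1_R(\omega,\coOmega^{n-1}(M))\cong L$, and Lemma~\ref{lem-3.1}(1) embeds $L$ into $\cTr_\omega\coOmega^{n-1}(M)=\Coker\big(I^{n-1}(M)_*\to I^n(M)_*\big)$ with cokernel $I^n(M)_*/\coOmega^n(M)_*$. Lifting $P_0\to L$ through this presentation necessarily lands in $\coOmega^n(M)_*$, and applying $\omega\otimes_S-$ and $\theta$ produces a map $\omega\otimes_SP_0\to\coOmega^n(M)$; pulling the short exact sequences $0\to\coOmega^{k-1}(M)\to I^{k-1}(M)\to\coOmega^k(M)\to 0$ back successively along the coresolution $\omega\otimes_SP_{n-1}\to\cdots\to\omega\otimes_SP_0$ of $K$, and grafting the result onto the initial segment $0\to M\to I^0(M)\to\cdots\to I^{n-2}(M)\to\coOmega^{n-1}(M)\to 0$, produces the desired $X$, while a naturality chase with $\theta$ and $\mu$ identifies $\partial$ with the isomorphism induced by $P_0\to L$. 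The \emph{main obstacle} is precisely this last step — converting the algebraic lift of $P_0\to L$ into a genuine short exact sequence with the prescribed connecting homomorphism, which is where the $\Tor$-cograde condition is used decisively; the remaining points (the case $n=1$, the construction of $K$, the $\Ext$-computations, and the long exact sequence argument) are routine homological bookkeeping.
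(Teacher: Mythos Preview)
Your approach is correct and arrives at the same $Y$ and $X$ as the paper, but the paper's organization is more direct and completely bypasses what you flag as the ``main obstacle.'' After lifting the projective resolution $P_\bullet\to L$ against the complex $I^0(M)_*\to\cdots\to I^{n-1}(M)_*\to\coOmega^n(M)_*\to L$ (your lift $P_0\to\coOmega^n(M)_*$ is the last of these vertical maps), the paper simply \emph{applies $\omega\otimes_S-$ once} and forms the direct-sum complex
\[
I^0(M)\oplus(\omega\otimes_S P_{n-1})\to\cdots\to I^{n-1}(M)\oplus(\omega\otimes_S P_0)\to\coOmega^n(M)\to 0
\]
whose kernel at the left is $X$; the three columns (left: injective resolution of $M$; right: your coresolution of $K$; middle: their sum) assemble into a short exact sequence of complexes, so the middle column is exact and its top row is $0\to M\to X\to K\to 0$. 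The point you identify as delicate --- that the connecting map $\partial\colon\Ext^{n-1}_R(\omega,K)\to\Ext^n_R(\omega,M)$ is an isomorphism --- is replaced by a single observation: applying $(-)_*$ to the middle column yields the mapping cone $I^0(M)_*\oplus P_{n-1}\to\cdots\to I^{n-1}(M)_*\oplus P_0\to\coOmega^n(M)_*$ of the chain map from diagram~(5.1), which is automatically exact because it is the cone of a quasi-isomorphism. Since every term of the middle column lies in ${_R\omega^\perp}$, exactness after $(-)_*$ gives $X\in{_R\omega^{\perp_n}}$ immediately, with no naturality chase and no need to invoke Lemma~\ref{lem-3.1}.

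Your iterated-pullback description is equivalent (it builds the same total complex one row at a time), and your $\Ext$-computations for $K$ and the long-exact-sequence deduction of $X\in{_R\omega^{\perp_n}}$ from $\partial$ being an isomorphism are correct. The advantage of the paper's packaging is that it turns your ``obstacle'' into a tautology about mapping cones.
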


\begin{proof}
Let $M\in {_R\omega^{\perp_{n-1}}}$. From the exact sequence
$$0\rightarrow M \rightarrow I^0(M) \to \cdots\to I^{n-1}(M)\to \coOmega^{n}(M)\to 0$$
in $\Mod R$, we get the following commutative diagram with exact rows
{\footnotesize$$\xymatrix{ & P_{n-1} \ar[d] \ar[r]& \cdots \ar[r]  &   P_{0} \ar[d] \ar[r]& \Ext_R^n(\omega, M) \ar@{=}[d] \ar[r]& 0\\
I^0(M)_*   \ar[r]& I^1(M)_*   \ar[r]& \cdots \ar[r] &  \coOmega^{n}(M)_* \ar[r] & \Ext_R^n(\omega, M) \ar[r]& 0,\\
&  & &{\rm Diagram}\ (5.1) &  }$$}
where the upper sequence is a projective resolution of $\Ext_R^n(\omega, M)$ in $\Mod S$.
Taking the mapping cone of the diagram (5.1), we get an exact sequence
$$I^0(M)_*\oplus P_{n-1}\to \cdots \to I^{n-1}(M)_*\oplus P_{0}\to \coOmega^{n}(M)_* \to 0. \eqno{(5.2)}$$
Since $\Tcograde_{\omega}\Ext_R^n(\omega,M)\geqslant n-1$, we get an exact sequence
$$\omega\otimes_S P_{n-1}\to \cdots \to \omega\otimes_S P_{1} \to \omega\otimes_S P_{0}\to 0$$ in $\Mod R$.
Then we get the following commutative diagram with exact columns and rows
$$\xymatrix{ & 0 \ar[d]& 0 \ar@{-->}[d] & 0 \ar[d] & \\
0\ar@{-->}[r] & M\ar@{-->}[r] \ar[d]& X \ar@{-->}[r]\ar@{-->}[d] & Y \ar@{-->}[r]\ar[d] & 0\\
0\ar[r] & I^0(M)\ar[r] \ar[d]& I^0(M)\oplus (\omega \otimes_S P_{n-1}) \ar[r]\ar@{-->}[d] & \omega \otimes_S P_{n-1} \ar[r]\ar[d] & 0\\
& \vdots \ar[d]& \vdots\ar@{-->}[d] & \vdots\ar[d] & \\
0\ar[r] & I^{n-1}(M)\ar[r] \ar[d]& I^{n-1}(M)\oplus (\omega \otimes_S P_0) \ar[r]\ar@{-->}[d] & \omega \otimes_S P_{0} \ar[r]\ar[d] & 0\\
& \coOmega^{n}(M)\ar@{==}[r] \ar[d]& \coOmega^{n}(M) \ar@{-->}[d] & 0 & \\
& 0 & 0, &  & \\
&  & {\rm Diagram}\ (5.3) & &  }$$
where
$$X=\Ker(I^0(M)\oplus (\omega \otimes_S P_{n-1})\to I^1(M)\oplus (\omega \otimes_S P_{n-2}))\ {\rm and}$$
$$Y=\Ker(\omega\otimes_S P_{n-1} \to \omega\otimes_S P_{n-2}).$$
Then $Y\in\mathcal{P}_\omega\text{-}\id^{\leqslant n-1}(R)$.
From the exactness of (5.2) and the middle column in the diagram (5.3), we know that $X\in {_R\omega^{\perp_{n}}}$.
So the top row in the diagram (5.3) is the desired exact sequence.
\end{proof}

For any $n\geqslant 0$, set $\mathcal{I}_\omega\text{-}\pd^{\leqslant n}(S):=\{N\in \Mod S\mid\mathcal{I}_\omega(S){\text -}\pd_S N\leqslant n \}$.

\begin{lemma} \label{lem-5.2}
Let $N\in {{\omega_S}^{\top_{n-1}}}$ with $n\geqslant 1$. If $\Ecograde_{\omega}\Tor^S_n(\omega,N)\geqslant n-1$,
then there exists an exact sequence
$$0\to Y'\to X'\to N\to 0$$
in $\Mod S$ with $X'\in {{\omega_S}^{\top_{n}}}$ and $Y'\in\mathcal{I}_\omega\text{-}\pd^{\leqslant n-1}(S)$.
\end{lemma}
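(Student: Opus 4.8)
The plan is to dualize the argument of Lemma~\ref{lem-5.1}, replacing the minimal injective resolution of $M$ by the minimal flat resolution of $N$, replacing $(-)_*=\Hom_R(\omega,-)$ by $\omega\otimes_S-$, and replacing $\mathcal P_\omega(R)$-injective objects by $\mathcal I_\omega(S)$-projective objects. Concretely, first I would take the minimal flat resolution $(2.2)$ of $N$ and truncate it: since $N\in{\omega_S}^{\top_{n-1}}$, the sequence
$$0\to \Omega^n_{\mathcal F}(N)\to F_{n-1}(N)\to\cdots\to F_0(N)\to N\to 0$$
in $\Mod S$ remains exact after applying $\omega\otimes_S-$ in the relevant range, and it computes $\Tor^S_n(\omega,N)\cong\Tor^S_1(\omega,\Omega^{n-1}_{\mathcal F}(N))\cong\ker(1_\omega\otimes f_{n-1})$ up to the usual dimension shift. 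Next I would choose an injective coresolution (of finite length $n$) of $\Tor^S_n(\omega,N)$ in $\Mod R$, say $0\to\Tor^S_n(\omega,N)\to I^0\to\cdots$, apply the functor $(-)_*=\Hom_R(\omega,-)$ to it, and splice it against the $\omega\otimes_S-$-image of the truncated flat resolution above. This produces a commutative diagram dual to Diagram~(5.1); forming its mapping cone (in the dual direction — a comapping cone, i.e.\ taking kernels of direct sums rather than cokernels) yields an exact sequence of the form
$$0\to (I^0)_*\to (I^1)_*\oplus W_{n-1}\to\cdots\to (I^{n-1})_*\oplus W_0\to \Omega^n_{\mathcal F}(N)\to 0$$
with each $W_j\in\mathcal I_\omega(S)$ (here $W_j=(I^j)_*$ after reindexing; since each $I^j$ is injective in $\Mod R$, $(I^j)_*\in\mathcal I_\omega(S)$ by definition).

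Then, using the hypothesis $\Ecograde_\omega\Tor^S_n(\omega,N)\geqslant n-1$, i.e.\ $(-)_*$ applied to that injective coresolution of $\Tor^S_n(\omega,N)$ has vanishing cohomology in degrees $0,\dots,n-2$, I would build the pull-back/push-out ladder dual to Diagram~(5.3): a $3\times n$ diagram with exact rows and columns whose left column is a coresolution $0\to Y'\to V^0\to\cdots$ refining the truncated flat resolution, whose right column consists of the $\mathcal I_\omega(S)$-modules $(I^j)_*$, and whose middle column is formed by taking fibre products termwise. Reading off the top row of this diagram gives the desired short exact sequence $0\to Y'\to X'\to N\to 0$, where
$$X'=\Coker\bigl((\omega\otimes_S\Omega^{n}_{\mathcal F}(N)\ \text{correction term}) \bigr)$$
— more precisely $X'$ is the appropriate cokernel in the middle column — and $Y'$ is the kernel appearing in the left column. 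The exactness of the spliced mapping-cone sequence together with the middle column forces $X'\in{\omega_S}^{\top_n}$ (one more Tor vanishes because of the cograde hypothesis and because $\omega\otimes_S F\in{\omega_S}^{\top}$ for $F$ flat, cf.\ \cite[Corollary 6.1]{HW}), while $Y'$ is visibly an $\mathcal I_\omega(S)$-module of coresolution length $\leqslant n-1$, so $Y'\in\mathcal I_\omega\text{-}\pd^{\leqslant n-1}(S)$.

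The main obstacle I anticipate is bookkeeping in the dualization: in Lemma~\ref{lem-5.1} the roles of ``syzygy'' and ``cosyzygy'' and of ``projective resolution'' and ``injective resolution'' get swapped in a way that is easy to state but fiddly to set up correctly — one must keep straight that the mapping cone of a chain map between resolutions becomes, in the dual picture, a sequence obtained by a pull-back construction, and that the finiteness we need ($\mathcal I_\omega(S)$-$\pd\leqslant n-1$ rather than $\leqslant n$) comes precisely from the fact that $\Tor^S_n(\omega,N)$ has an \emph{injective} coresolution of length exactly $n$ whose $(-)_*$-image is a resolution by $\mathcal I_\omega(S)$-modules, with the cograde hypothesis guaranteeing exactness of that image through degree $n-2$. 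Once the diagram is drawn correctly, the verifications that $X'\in{\omega_S}^{\top_n}$ and $Y'\in\mathcal I_\omega\text{-}\pd^{\leqslant n-1}(S)$ are routine dimension-shifting, entirely parallel to the last two sentences of the proof of Lemma~\ref{lem-5.1}.
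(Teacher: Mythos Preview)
Your overall plan is the paper's: dualize Lemma~\ref{lem-5.1} by replacing the minimal injective resolution of $M$ with the minimal flat resolution of $N$, the projective resolution of $\Ext^n_R(\omega,M)$ with an injective resolution of $\Tor^S_n(\omega,N)$, and the functor $\omega\otimes_S-$ with $(-)_*$. However, the bookkeeping error you anticipated has already occurred in your write-up, and it is worth naming precisely so you can fix it.

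The two auxiliary constructions live in \emph{different} categories and you have conflated them. The map of complexes dual to Diagram~(5.1) (the paper's Diagram~(5.4)) lives in $\Mod R$: it goes from the $\omega\otimes_S-$-image of the truncated flat resolution of $N$ \emph{to} an injective resolution $0\to\Tor^S_n(\omega,N)\to I^0\to\cdots\to I^{n-1}$, and its mapping cone (the paper's (5.5)) is the exact sequence
\[
\omega\otimes_S\Omega^n_{\mathcal F}(N)\to I^0\oplus(\omega\otimes_S F_{n-1}(N))\to\cdots\to I^{n-1}\oplus(\omega\otimes_S F_0(N))
\]
in $\Mod R$, not the $\Mod S$ sequence you displayed. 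The three-column diagram dual to (5.3) (the paper's (5.6)) lives in $\Mod S$ and runs the \emph{other} way from what you wrote: the columns start with $\Omega^n_{\mathcal F}(N)$ at the top, pass through the terms ${I^j}_*\oplus F_{n-1-j}(N)$ (middle column), ${I^j}_*$ (left column), $F_{n-1-j}(N)$ (right column), and the desired short exact sequence $0\to Y'\to X'\to N\to 0$ appears as the row of \emph{cokernels} at the bottom. In particular $Y'=\Coker({I^{n-2}}_*\to{I^{n-1}}_*)$ is a cokernel, not a kernel, and its $\mathcal I_\omega(S)$-resolution is precisely the left column read downward (exact because $\Ecograde_\omega\Tor^S_n(\omega,N)\geqslant n-1$). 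The verification $X'\in{\omega_S}^{\top_n}$ then comes from observing that $\omega\otimes_S-$ applied to the middle column reproduces the mapping-cone sequence in $\Mod R$, which is exact. Once you reorient the diagram in this way the rest of your sketch goes through verbatim.
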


\begin{proof}
Let $N\in {{\omega_S}^{\top_{n-1}}}$. From the exact sequence
$$0\rightarrow \Omega^{n}_{\mathcal{F}}(N) \rightarrow F_{n-1}(N) \to  \cdots \to F_{0}(N)\to N\to 0$$
in $\Mod S$, we get the following commutative diagram with exact rows
{\tiny$$\xymatrix{ 0 \ar[r]& \Tor_n^S(\omega,N) \ar@{=}[d] \ar[r]& \omega\otimes_S\Omega^{n}_{\mathcal{F}}(N)
\ar[r] \ar[d]&\cdots \ar[r]  & \omega\otimes_S F_1(N) \ar[d] \ar[r]& \omega\otimes_SF_0(N)\\
 0 \ar[r]& \Tor_n^S(\omega,N)  \ar[r]& I^0 \ar[r]& \cdots \ar[r] & I^{n-1},\\
&  & &{\rm Diagram}\ (5.4) &  }$$}
where the lower sequence is an injective resolution of $\Tor_n^S(\omega,N)$ in $\Mod R$.
Taking the mapping cone of diagram (5.4), we get an exact sequence
$$\omega\otimes_S\Omega^{n}_{\mathcal{F}}(N)\to I^0\oplus(\omega\otimes_SF_{n-1}(N))\to \cdots \to
I^{n-1}\oplus(\omega\otimes_SF_{0}(N)). \eqno{(5.5)}$$
Since $\Ecograde_{\omega}\Tor^S_n(\omega,N)\geqslant n-1$, we get an exact sequence
$$0\to {I^0}_*\to {I^1}_*\to \cdots \to {I^{n-1}}_*$$ in $\Mod S$.
Then we get the following commutative diagram with exact columns and rows
$$\xymatrix{& & 0\ar@{-->}[d] & 0\ar[d] &  \\
& 0\ar[d]& \Omega^n_{\mathcal{F}}(N)\ar@{-->}[d]\ar@{==}[r] & \Omega^n_{\mathcal{F}}(N)\ar[d] &  \\
0\ar[r] & {I^0}_*\ar[r] \ar[d]& {I^0}_*\oplus F_{n-1}(N) \ar[r]\ar@{-->}[d] & F_{n-1}(N) \ar[r]\ar[d] & 0\\
& \vdots \ar[d]& \vdots\ar@{-->}[d] & \vdots\ar[d] & \\
0\ar[r] & {I^{n-1}}_*\ar[r] \ar[d]& {I^{n-1}}_*\oplus F_{0}(N) \ar[r]\ar@{-->}[d] & F_{0}(N) \ar[r]\ar[d] & 0\\
0\ar@{-->}[r]& Y'\ar[d]\ar@{-->}[r]& X'\ar@{-->}[d]\ar@{-->}[r] & N\ar[d]\ar@{-->}[r] & 0 \\
& 0& 0 & 0, & \\
&  & {\rm Diagram}\ (5.6) & &  }$$
where $$X'=\Coker({I^{n-2}}_*\oplus F_{1}(N)\to {I^{n-1}}_*\oplus F_{0}(N))\ \text{and}$$
$$Y'=\Coker({I^{n-2}}_*\to {I^{n-1}}_*).$$
Then $Y'\in\mathcal{I}_\omega\text{-}\pd^{\leqslant n-1}(S)$.
From the exactness of (5.5) and the middle column in the diagram (5.6), we know that $X'\in {{\omega_S}^{\top_{n}}}$.
So the bottom row in the diagram (5.6) is the desired exact sequence.
\end{proof}

\begin{lemma} \label{lem-5.3}
For any $n\geqslant 0$, we have
\begin{enumerate}
\item $\mathcal{P}_\omega\text{-}\id^{\leqslant n}(R)$ is closed under direct summands and closed under extensions.
\item $\mathcal{I}_\omega\text{-}\pd^{\leqslant n}(S)$ is closed under direct summands and closed under extensions.
\end{enumerate}
\end{lemma}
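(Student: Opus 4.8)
The plan is to treat the two statements in parallel, since (2) is the formal dual of (1) (replace $R$ by $S$, $\mathcal{P}_\omega(R)$-resolutions by $\mathcal{I}_\omega(S)$-coresolutions, projective dimension by injective dimension, and reverse all arrows); so I would write out the argument for (1) in full and then remark that (2) follows dually. For (1), I first recall the standard homological reformulation of $\mathcal{P}_\omega(R)$-projective dimension: since $\mathcal{P}_\omega(R)=\Add_R\omega$ by \cite[Proposition 3.4(2)]{TH2}, a module $M$ lies in $\mathcal{P}_\omega\text{-}\id^{\leqslant n}(R)$ iff there is an exact sequence $0\to W_n\to\cdots\to W_0\to M\to 0$ with each $W_j\in\Add_R\omega$. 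The key point, which I would isolate first, is that these $\Add_R\omega$-resolutions can be taken to remain exact after applying $\Hom_R(\omega,-)$; this follows because modules in $\mathcal{P}_\omega(R)$ lie in $\mathcal{B}_\omega(R)$ (by \cite[Corollary 6.1]{HW}), so by \cite[Theorem 3.9 and Proposition 3.7]{TH1} the syzygies stay in $\cT_\omega(R)\cap{_R\omega^\perp}=\mathcal{B}_\omega(R)$, and the resolution is $\Hom_R(\omega,-)$-exact. Equivalently, $M\in\mathcal{P}_\omega\text{-}\id^{\leqslant n}(R)$ iff $M\in\mathcal{B}_\omega(R)$ and $\pd_S M_*\leqslant n$; this equivalence is the workhorse for both closure properties.

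\emph{Closure under direct summands.} Given $M=M_1\oplus M_2\in\mathcal{P}_\omega\text{-}\id^{\leqslant n}(R)$, I would note that $\mathcal{B}_\omega(R)$ is closed under direct summands (it is closed under extensions and kernels of epimorphisms by \cite[Theorem 6.2]{HW}, hence under summands), so $M_1,M_2\in\mathcal{B}_\omega(R)$; and $M_*\cong (M_1)_*\oplus(M_2)_*$, so $\pd_S(M_i)_*\leqslant\pd_S M_*\leqslant n$. By the reformulation above, $M_1,M_2\in\mathcal{P}_\omega\text{-}\id^{\leqslant n}(R)$.

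\emph{Closure under extensions.} Suppose $0\to M'\to M\to M''\to 0$ is exact with $M',M''\in\mathcal{P}_\omega\text{-}\id^{\leqslant n}(R)$. Since $\mathcal{B}_\omega(R)$ is closed under extensions, $M\in\mathcal{B}_\omega(R)$, and in particular $M\in{_R\omega^\perp}$, so applying $\Hom_R(\omega,-)$ yields a short exact sequence $0\to M'_*\to M_*\to M''_*\to 0$ in $\Mod S$; from $\pd_S M'_*\leqslant n$ and $\pd_S M''_*\leqslant n$ we get $\pd_S M_*\leqslant n$ by the usual long exact sequence in $\Ext_S$. Hence $M\in\mathcal{P}_\omega\text{-}\id^{\leqslant n}(R)$. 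The main obstacle is really just establishing the homological reformulation cleanly — once one knows that $\mathcal{P}_\omega\text{-}\id^{\leqslant n}(R)=\{M\in\mathcal{B}_\omega(R)\mid\pd_S M_*\leqslant n\}$, both closure properties reduce to the corresponding (well-known) closure properties of $\mathcal{B}_\omega(R)$ and of modules of bounded projective dimension over $S$. For (2), the dual reformulation is $\mathcal{I}_\omega\text{-}\pd^{\leqslant n}(S)=\{N\in\mathcal{A}_\omega(S)\mid\id_{R}(\omega\otimes_S N)\leqslant n\}$, using that $\mathcal{A}_\omega(S)$ is closed under extensions, summands, and cokernels of monomorphisms (again by \cite[Theorem 6.2]{HW}) and that $\mathcal{I}_\omega(S)$-coresolutions of modules in $\mathcal{A}_\omega(S)$ are $(\omega\otimes_S-)$-exact by \cite[Theorem 3.11(1)]{TH3}; the two closure statements then follow exactly as above.
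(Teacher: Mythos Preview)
Your argument rests on a directional confusion. By definition, $\mathcal{P}_\omega(R)\text{-}\id_R M\leqslant n$ means $M$ admits a \emph{coresolution}
\[
0\to M\to W^0\to\cdots\to W^n\to 0
\]
with $W^i\in\mathcal{P}_\omega(R)$, not a resolution $0\to W_n\to\cdots\to W_0\to M\to 0$ as you write. Your proposed ``workhorse'' equivalence
\[
\mathcal{P}_\omega\text{-}\id^{\leqslant n}(R)=\{M\in\mathcal{B}_\omega(R)\mid \pd_S M_*\leqslant n\}
\]
is therefore false: it actually characterizes $\mathcal{P}_\omega\text{-}\pd^{\leqslant n}(R)$, a different class. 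Concretely, the module $Y=\Ker(\omega\otimes_S P_{n-1}\to\omega\otimes_S P_{n-2})$ produced in Lemma~\ref{lem-5.1} lies in $\mathcal{P}_\omega\text{-}\id^{\leqslant n-1}(R)$, yet from the coresolution one computes $\Ext^1_R(\omega,Y)\cong\Coker(P_1\to P_0)\cong\Ext^n_R(\omega,M)$, which is nonzero in general; hence $Y\notin{_R\omega^{\perp}}\supseteq\mathcal{B}_\omega(R)$. (This also shows that $\mathcal{B}_\omega(R)$ is \emph{not} closed under kernels of epimorphisms, contrary to what you invoke.) The same reversal occurs in your treatment of (2): your description would force $\omega\otimes_S N$ to have a finite injective \emph{resolution}, hence to be injective, which is absurd.

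The paper's argument avoids any such reformulation. For closure under summands it quotes \cite[Lemma 4.6]{TH2} and \cite[Lemma 4.7]{TH3}. For extensions it observes that a short exact sequence $0\to A\to B\to C\to 0$ with $A,C\in\mathcal{P}_\omega\text{-}\id^{\leqslant n}(R)$ is automatically $\Hom_R(-,\mathcal{P}_\omega(R))$-exact: indeed, since $\Ext_R^{\geqslant 1}(\omega,\omega)=0$ and $_R\omega$ admits a degreewise finite projective resolution, one has $\Ext_R^{\geqslant 1}(W',W)=0$ for all $W',W\in\Add_R\omega$, and dimension-shifting along the coresolution of $C$ yields $\Ext^1_R(C,W)=0$. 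One then applies the generalized horseshoe lemma \cite[Lemma 3.1(2)]{H4} to splice the coresolutions of $A$ and $C$ into one for $B$. Part (2) is handled dually via $(\omega\otimes_S-)$-exactness.
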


\begin{proof}
(1) By \cite[Lemma 4.6]{TH2}, $\mathcal{P}_\omega\text{-}\id^{\leqslant n}(R)$ is closed under direct summands.

Let $$0\to A \to B \to C \to 0$$ be an exact sequence in $\Mod R$ with $A,C\in\mathcal{P}_\omega\text{-}\id^{\leqslant n}(R)$.
It is easy to see that it is $\Hom_R(-,\mathcal{P}_{\omega}(R))$-exact.
Then $B\in\mathcal{P}_\omega\text{-}\id^{\leqslant n}(R)$ by the generalized horseshoe lemma (c.f. \cite[Lemma 3.1(2)]{H4}).

(2) By \cite[Lemma 4.7]{TH3}, $\mathcal{I}_\omega\text{-}\pd^{\leqslant n}(S)$ is closed under direct summands.

Let $$0\to A \to B \to C \to 0$$ be an exact sequence in $\Mod S$ with $A,C\in\mathcal{I}_\omega\text{-}\pd^{\leqslant n}(S)$.
It is easy to see that it is $(\omega\otimes_S-)$-exact; equivalently it is $\Hom_R(-,\mathcal{I}_{\omega}(S))$-exact
by \cite[p.298, Observation]{TH3}. Then $B\in\mathcal{I}_\omega\text{-}\pd^{\leqslant n}(S)$ by the generalized horseshoe lemma
(c.f. \cite[Lemma 3.1(1)]{H4}).
\end{proof}

\begin{proposition} \label{prop-5.4}
Let $n,k\geqslant 1$ and $\Tcograde_{\omega}\Ext^{i+k}_R(\omega,M)\geqslant i$ for any $M\in \Mod R$
and $1\leqslant i\leqslant n-1$. Then for any $M\in \Mod R$ and $0\leqslant i \leqslant n-1$, there exists an exact sequence
$$0\to \coOmega^{k-1}(M) \to X\to Y\to 0$$ in $\Mod R$ with $X\in {_R\omega^{\perp_{i+1}}}$ and
$Y\in\mathcal{P}_\omega\text{-}\id^{\leqslant i}(R)$.
\end{proposition}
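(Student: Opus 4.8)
The plan is to prove this by induction on $i$, using Lemma~\ref{lem-5.1} as the inductive engine and Theorem~\ref{thm-3.5} (applied with the $k$ in that theorem equal to the $k$ here) to supply the cograde hypotheses in the form of syzygy containments $\coOmega^{i+k}(R)\subseteq\cT^{i+1}_{\omega}(R)$. First I would set up the base case $i=0$: here one only needs an exact sequence $0\to\coOmega^{k-1}(M)\to X\to Y\to 0$ with $X\in{_R\omega^{\perp_1}}$ and $Y$ of $\mathcal{P}_\omega$-injective dimension $0$, i.e. $Y\in\mathcal{P}_\omega(R)$. Since $\coOmega^{k-1}(M)$ has $\coOmega^1(\coOmega^{k-1}(M))=\coOmega^k(M)$, and the hypothesis with $i=1$ gives $\coOmega^k(M)\in\cT^2_\omega(R)$ (via Lemma~\ref{lem-3.2}/Theorem~\ref{thm-3.5}), one can take an epimorphism $W_0\twoheadrightarrow\coOmega^k(M)$ with $W_0\in\mathcal{P}_\omega(R)$ from \cite[Proposition 3.7]{TH1} and splice it against the injective envelope sequence $0\to\coOmega^{k-1}(M)\to I^0(\coOmega^{k-1}(M))\to\coOmega^k(M)\to 0$; a pullback/pushout bookkeeping produces the desired short exact sequence, with $X$ sitting between an injective and $W_0$ and hence in ${_R\omega^{\perp_1}}$.

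For the inductive step, suppose the statement holds for $i-1$, so we have $0\to\coOmega^{k-1}(M)\to X\to Y\to 0$ with $X\in{_R\omega^{\perp_i}}$ and $Y\in\mathcal{P}_\omega\text{-}\id^{\leqslant i-1}(R)$. The key move is to apply Lemma~\ref{lem-5.1} to $X$ with $n=i$: this requires $X\in{_R\omega^{\perp_{i-1}}}$ (which we have, since ${_R\omega^{\perp_i}}\subseteq{_R\omega^{\perp_{i-1}}}$) and $\Tcograde_\omega\Ext^i_R(\omega,X)\geqslant i-1$. The latter I would extract from the hypothesis: because $X$ is, up to the kernel term, built from $\coOmega^{k-1}(M)$ and $\omega$-projectives, dimension-shifting identifies $\Ext^i_R(\omega,X)$ with $\Ext^{i}_R(\omega,\coOmega^{k-1}(M))$ (for $i\geqslant 1$, as $\Ext^{\geqslant 1}_R(\omega,\mathcal{P}_\omega(R))=0$), which in turn is $\Ext^{i+k-1}_R(\omega,M)$; then $\Tcograde_\omega\Ext^{i+k-1}_R(\omega,M)\geqslant i-1$ is exactly one of the cases of the hypothesis (with index $i-1$, valid since $1\leqslant i-1\leqslant n-1$ when $i\geqslant 2$; the $i=1$ step reduces to the base case just handled). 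Lemma~\ref{lem-5.1} then yields $0\to X\to X'\to Y''\to 0$ with $X'\in{_R\omega^{\perp_{i+1}}}$ and $Y''\in\mathcal{P}_\omega\text{-}\id^{\leqslant i-1}(R)$.

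To finish, I would form the obvious pushout / horseshoe diagram combining the two short exact sequences: from $0\to\coOmega^{k-1}(M)\to X\to Y\to 0$ and $0\to X\to X'\to Y''\to 0$ one gets a commutative diagram whose outer data is an exact sequence $0\to\coOmega^{k-1}(M)\to X'\to Y'\to 0$ together with $0\to Y\to Y'\to Y''\to 0$. Since both $Y$ and $Y''$ lie in $\mathcal{P}_\omega\text{-}\id^{\leqslant i-1}(R)$ and this class is closed under extensions by Lemma~\ref{lem-5.3}(1), we get $Y'\in\mathcal{P}_\omega\text{-}\id^{\leqslant i-1}(R)\subseteq\mathcal{P}_\omega\text{-}\id^{\leqslant i}(R)$; and $X'\in{_R\omega^{\perp_{i+1}}}$ by construction. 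This is the required sequence for index $i$, completing the induction. The main obstacle I anticipate is the careful identification $\Ext^i_R(\omega,X)\cong\Ext^{i+k-1}_R(\omega,M)$ and the verification that the cograde hypothesis is being invoked at a legitimate index — in particular keeping the shift by $k-1$ straight and confirming that $X$ really lies in ${_R\omega^{\perp_i}}$ at each stage rather than merely ${_R\omega^{\perp_{i-1}}}$, which is what makes Lemma~\ref{lem-5.1} applicable; the diagram chases themselves are routine once the syzygy containments from Theorem~\ref{thm-3.5} are in hand.
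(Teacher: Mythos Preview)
Your overall strategy---induction plus Lemma~\ref{lem-5.1} plus a push-out and extension closure via Lemma~\ref{lem-5.3}(1)---is exactly the paper's, but both your base case and your inductive bookkeeping are off in ways that matter.

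\textbf{Base case.} Lemma~\ref{lem-5.1} with parameter $n=1$ applies to \emph{any} module: the condition there is $\Tcograde_{\omega}\Ext^1_R(\omega,-)\geqslant 0$, which is vacuous. So for $i=0$ you just feed $\coOmega^{k-1}(M)$ into Lemma~\ref{lem-5.1} and get the sequence immediately. Your version instead invokes ``the hypothesis with $i=1$'' to get $\coOmega^{k}(M)\in\cT^2_{\omega}(R)$; but when $n=1$ the hypothesis of the proposition runs over $1\leqslant i\leqslant n-1=0$, i.e.\ it is empty, so this invocation is illegitimate. Drop the extra machinery (Theorem~\ref{thm-3.5}, the pullback against $W_0$, etc.)\ entirely.

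\textbf{Inductive step.} Your indices are shifted by one throughout. From the induction hypothesis you have $X\in{_R\omega^{\perp_i}}$, so $\Ext^i_R(\omega,X)=0$; the identification you wrote, $\Ext^i_R(\omega,X)\cong\Ext^{i+k-1}_R(\omega,M)$, is therefore the statement $0=0$ and does not help. What you actually need is to apply Lemma~\ref{lem-5.1} with parameter $i+1$ (not $i$): this requires $X\in{_R\omega^{\perp_i}}$ (which you have) and $\Tcograde_{\omega}\Ext^{i+1}_R(\omega,X)\geqslant i$. Since $Y\in\mathcal{P}_\omega\text{-}\id^{\leqslant i-1}(R)$ gives $\Ext^{\geqslant i}_R(\omega,Y)=0$, the long exact sequence yields $\Ext^{i+1}_R(\omega,X)\cong\Ext^{i+1}_R(\omega,\coOmega^{k-1}(M))\cong\Ext^{i+k}_R(\omega,M)$, and now the hypothesis of the proposition at index $i$ (valid for $1\leqslant i\leqslant n-1$) gives the needed cograde bound $\geqslant i$. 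Lemma~\ref{lem-5.1} then outputs $X'\in{_R\omega^{\perp_{i+1}}}$ and $Y''\in\mathcal{P}_\omega\text{-}\id^{\leqslant i}(R)$ (not $\leqslant i-1$ as you wrote). The push-out and the appeal to Lemma~\ref{lem-5.3}(1) are then exactly as you describe, and you land in $\mathcal{P}_\omega\text{-}\id^{\leqslant i}(R)$, which is what the statement asks.
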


\begin{proof}
We proceed by induction on $n$. The case for $n=1$ follows from Lemma ~\ref{lem-5.1}.
Now suppose $n\geqslant 2$. By the induction hypothesis, for any $0\leqslant i \leqslant n-2$ there exists an exact sequence
$$0\to \coOmega^{k-1}(M) \to X_i\to Y_i\to 0$$ in $\Mod R$
with $X_i\in {_R\omega^{\perp_{i+1}}}$ and $Y_i\in\mathcal{P}_\omega\text{-}\id^{\leqslant i}(R)$. Then
$$\Ext^{n}_R(\omega,X_{n-2})\cong\Ext^{n}_R(\omega,\coOmega^{k-1}(M))\cong \Ext^{n+k-1}_R(\omega,M).$$ So
$\Tcograde_{\omega}\Ext^{n}_R(\omega,X_{n-2})= \Tcograde_{\omega}\Ext^{n+k-1}_R(\omega,M) \geqslant n-1$ by assumption.
Applying Lemma ~\ref{lem-5.1}, we get an exact sequence
$$0\to X_{n-2}\to X_{n-1}\to Y_{n-1}\to 0$$
in $\Mod R$ with $X_{n-1}\in {_R\omega^{\perp_{n}}}$ and $Y_{n-1}\in\mathcal{P}_\omega\text{-}\id^{\leqslant n-1}(R)$.
Consider the following push-out diagram
$$\xymatrix{ & & 0 \ar[d] & 0 \ar[d]\\
0 \ar[r] & \coOmega^{k-1}(M) \ar[r] \ar@{=}[d] & X_{n-2}  \ar[r] \ar[d] & Y_{n-2}  \ar[r] \ar[d] & 0 \\
0 \ar[r] & \coOmega^{k-1}(M) \ar[r] & X_{n-1}  \ar[r] \ar[d] & Y  \ar[r] \ar[d] & 0 \\
 & & Y_{n-1} \ar@{=}[r] \ar[d] & Y_{n-1}  \ar[d] \\
 & & 0 & 0. }$$
By Lemma ~\ref{lem-5.3}(1), we have $Y\in \mathcal{P}_\omega\text{-}\id^{\leqslant n-1}(R)$.
So the middle row in this diagram is the desired sequence.
\end{proof}

\begin{proposition} \label{prop-5.5}
Let $n,k\geqslant 1$ and $\Ecograde_{\omega}\Tor_{i+k}^S(\omega,N)\geqslant i$ for any $N\in \Mod S$ and $1\leqslant i\leqslant n-1$.
Then for any $N\in \Mod S$ and $0\leqslant i \leqslant n-1$, there exists an exact sequence
$$0\to Y'\to X'\to \Omega^{k-1}_{\mathcal{F}}(N) \to 0$$
in $\Mod S$ with $X'\in {{\omega_S}^{\top_{i+1}}}$ and $Y'\in\mathcal{I}_\omega\text{-}\pd^{\leqslant i}(S)$.
\end{proposition}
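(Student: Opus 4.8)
The plan is to dualize the proof of Proposition~\ref{prop-5.4} step by step: I replace Lemma~\ref{lem-5.1} by Lemma~\ref{lem-5.2}, the push-out square by a pull-back square, and the roles of $\Ext$, $\coOmega$, ${_R\omega^{\perp_{\bullet}}}$ by those of $\Tor$, $\Omega_{\mathcal{F}}$, ${{\omega_S}^{\top_{\bullet}}}$. The argument proceeds by induction on $n$.

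For the base case $n=1$ only the single instance $i=0$ must be produced. Since ${{\omega_S}^{\top_0}}=\Mod S$ and the condition $\Ecograde_{\omega}\Tor_{1}^{S}(\omega,\Omega^{k-1}_{\mathcal{F}}(N))\geqslant 0$ is vacuous, applying Lemma~\ref{lem-5.2} (with its parameter equal to $1$) to the module $\Omega^{k-1}_{\mathcal{F}}(N)$ immediately yields an exact sequence $0\to Y'\to X'\to\Omega^{k-1}_{\mathcal{F}}(N)\to 0$ with $X'\in{{\omega_S}^{\top_1}}$ and $Y'\in\mathcal{I}_\omega\text{-}\pd^{\leqslant 0}(S)$, which is exactly what is wanted.

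For the inductive step ($n\geqslant 2$) the cases $0\leqslant i\leqslant n-2$ come straight from the induction hypothesis, so only the case $i=n-1$ remains. First I would take $i=n-2$ in the induction hypothesis, obtaining an exact sequence $0\to Y'_{n-2}\to X'_{n-2}\to\Omega^{k-1}_{\mathcal{F}}(N)\to 0$ with $X'_{n-2}\in{{\omega_S}^{\top_{n-1}}}$ and $Y'_{n-2}\in\mathcal{I}_\omega\text{-}\pd^{\leqslant n-2}(S)$. Using $\mathcal{I}_{\omega}(S)\subseteq\mathcal{A}_{\omega}(S)\subseteq{{\omega_S}^{\top}}$ (hence $\Tor^S_{\geqslant 1}(\omega,Y'_{n-2})=0$), dimension shifting along this sequence and then along the minimal flat resolution of $N$ gives
$$\Tor_{n}^{S}(\omega,X'_{n-2})\cong\Tor_{n}^{S}(\omega,\Omega^{k-1}_{\mathcal{F}}(N))\cong\Tor_{n+k-1}^{S}(\omega,N),$$
and the hypothesis of the proposition used with $i=n-1$ (which is in the allowed range $1\leqslant i\leqslant n-1$ because $n\geqslant 2$) forces $\Ecograde_{\omega}\Tor_{n}^{S}(\omega,X'_{n-2})\geqslant n-1$. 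Thus $X'_{n-2}$ satisfies the hypotheses of Lemma~\ref{lem-5.2}, which furnishes an exact sequence $0\to Y'_{n-1}\to X'_{n-1}\to X'_{n-2}\to 0$ with $X'_{n-1}\in{{\omega_S}^{\top_n}}$ and $Y'_{n-1}\in\mathcal{I}_\omega\text{-}\pd^{\leqslant n-1}(S)$.

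To finish I would splice the two short exact sequences by forming the pull-back of $X'_{n-1}\twoheadrightarrow X'_{n-2}$ along the monomorphism $Y'_{n-2}\rightarrowtail X'_{n-2}$ — the exact dual of the push-out square in the proof of Proposition~\ref{prop-5.4}. Writing $Y'$ for the kernel of the resulting epimorphism $X'_{n-1}\twoheadrightarrow\Omega^{k-1}_{\mathcal{F}}(N)$, one reads off an exact sequence $0\to Y'_{n-1}\to Y'\to Y'_{n-2}\to 0$, hence $Y'\in\mathcal{I}_\omega\text{-}\pd^{\leqslant n-1}(S)$ by Lemma~\ref{lem-5.3}(2), together with the desired exact sequence $0\to Y'\to X'_{n-1}\to\Omega^{k-1}_{\mathcal{F}}(N)\to 0$. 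The only point requiring genuine care is the dimension-shifting identification $\Tor^S_n(\omega,X'_{n-2})\cong\Tor^S_{n+k-1}(\omega,N)$, i.e. checking that $Y'_{n-2}$ and the flat modules occurring in the minimal flat resolution of $N$ are acyclic for $\omega\otimes_S-$ in the relevant degrees; everything else is formal diagram chasing dual to Proposition~\ref{prop-5.4}.
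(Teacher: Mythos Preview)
Your proof is correct and follows essentially the same route as the paper's own argument: induction on $n$ with base case Lemma~\ref{lem-5.2}, the dimension-shifting identification $\Tor_{n}^{S}(\omega,X'_{n-2})\cong\Tor_{n+k-1}^{S}(\omega,N)$, a second application of Lemma~\ref{lem-5.2}, and the pull-back diagram combined with Lemma~\ref{lem-5.3}(2). The only cosmetic difference is that you spell out why $Y'_{n-2}$ is $\omega\otimes_S-$-acyclic in the relevant degrees, whereas the paper simply asserts the isomorphism.
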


\begin{proof}
We proceed by induction on $n$. The case for $n=1$ follows from Lemma ~\ref{lem-5.2}.
Now suppose $n\geqslant 2$. By the induction hypothesis, for any $0\leqslant i \leqslant n-2$ there exists an exact sequence
$$0\to Y_i'\to X_i'\to \Omega^{k-1}_{\mathcal{F}}(N) \to 0$$ in $\Mod S$
with $X_i'\in {{\omega_S}^{\top_{i+1}}}$ and $Y_i'\in\mathcal{I}_\omega\text{-}\pd^{\leqslant i}(S)$. Then
$$\Tor_{n}^S(\omega,X'_{n-2})\cong \Tor_{n}^S(\omega,\Omega^{k-1}_{\mathcal{F}}(N))\cong \Tor_{n+k-1}^S(\omega,N).$$
So $\Ecograde_{\omega}\Tor_{n}^S(\omega,X'_{n-2})=\Ecograde_{\omega}\Tor_{n+k-1}^S(\omega,N) \geqslant n-1$ by assumption.
Applying Lemma ~\ref{lem-5.2}, we get an exact sequence
$$0\to Y_{n-1}'\to X_{n-1}'\to X_{n-2}'\to 0$$ in $\Mod S$ with $X_{n-1}'\in {{\omega_S}^{\top_{n}}}$ and
$Y'_{n-1}\in\mathcal{I}_\omega\text{-}\pd^{n-1}(S)$. Consider the following pull-back diagram
$$\xymatrix{ & & 0 \ar[d] & 0 \ar[d]\\
0 \ar[r] & Y_{n-1}' \ar[r] \ar@{=}[d] & Y'  \ar[r] \ar[d] & Y_{n-2}'  \ar[r] \ar[d] & 0 \\
0 \ar[r] & Y_{n-1}' \ar[r] & X_{n-1}'  \ar[r] \ar[d] & X_{n-2}'  \ar[r] \ar[d] & 0 \\
 & & \Omega^{k-1}_{\mathcal{F}}(N) \ar@{=}[r] \ar[d] & \Omega^{k-1}_{\mathcal{F}}(N)  \ar[d] \\
 & & 0 & 0. }$$
By Lemma ~\ref{lem-5.3}(2), we have $Y'\in\mathcal{I}_\omega\text{-}\pd^{\leqslant n-1}(S)$.
So the middle column in this diagram is the desired sequence.
\end{proof}

Based on the equivalence of (1) and (2) in Theorem ~\ref{thm-4.19}, we have the following

\begin{theorem} \label{thm-5.6}
For any $n\geqslant 1$, we have
\begin{enumerate}
\item[(1)]  If one of the equivalent conditions in Theorem ~\ref{thm-4.19} is satisfied,
then the following statements are equivalent.
\begin{enumerate}
\item[(1.1)] $\pd_{S^{op}}\omega\leqslant n-1$.
\item[(1.2)] $\mathcal{P}_{\omega}(R)$-$\id_RR\leqslant n-1$.
\item[(1.3)] $\mathcal{P}_{\omega}(R)$-$\id_RP\leqslant n-1$ for any projective $P$ in $\Mod R$.
\item[(1.4)] $(\mathcal{P}_\omega\text{-}\id^{\leqslant n-1}(R), {_R\omega^{\perp_{n}}})$ forms a complete cotorsion pair.
\end{enumerate}
\end{enumerate}
\begin{enumerate}
\item[(2)] If one of the equivalent conditions in Theorem ~\ref{thm-4.19} is satisfied,
then the following statements are equivalent.
\begin{enumerate}
\item[(2.1)] $\mathcal{I}_{\omega}(S)$-$\pd_SQ\leqslant n-1$ for some injective cogenerator $Q$ in $\Mod S$.
\item[(2.2)] $\mathcal{I}_{\omega}(S)$-$\pd_SI\leqslant n-1$ for any injective module $I$ in $\Mod S$.
\item[(2.3)] $({{\omega_S}^{\top_{n}}}, \mathcal{I}_\omega\text{-}\pd^{\leqslant n-1}(S))$ forms a complete cotorsion pair.
\end{enumerate}
If $R$ and $S$ are artin algebras, then the statements (2.1)--(2.3) are equivalent to the following
\begin{enumerate}
\item[(2.4)] $\pd_R\omega\leqslant n-1$.
\end{enumerate}
\end{enumerate}
\end{theorem}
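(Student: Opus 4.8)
The plan is to settle the formal equivalences first and then build the cotorsion pair, the latter being where the hypothesis that $\omega$ satisfies a condition of Theorem~\ref{thm-4.19} is used. The equivalence (1.1)$\Leftrightarrow$(1.2) is Lemma~\ref{lem-4.5}(1) applied to the finitely generated projective module ${_RR}$: since $R^{*}=\Hom_R(R,\omega)\cong{\omega_S}$, it asserts $\pd_{S^{op}}\omega=\mathcal{P}_{\omega}(R)\text{-}\id_RR$. For (1.2)$\Leftrightarrow$(1.3): $\mathcal{P}_\omega(R)=\Add_R\omega$ is closed under arbitrary direct sums and direct sums are exact, so $\mathcal{P}_\omega\text{-}\id^{\leqslant n-1}(R)$ is closed under direct sums; it is closed under direct summands by Lemma~\ref{lem-5.3}(1); hence $R\in\mathcal{P}_\omega\text{-}\id^{\leqslant n-1}(R)$ puts every projective into the class, and the converse is trivial. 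For (1.4)$\Rightarrow$(1.2): $R$ is projective, so $R\in{}^{\perp_1}\mathcal{V}$ for any class $\mathcal{V}$; with $\mathcal{V}={_R\omega^{\perp_n}}$ and the cotorsion-pair identity ${}^{\perp_1}({_R\omega^{\perp_n}})=\mathcal{P}_\omega\text{-}\id^{\leqslant n-1}(R)$ this gives (1.2).

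\textbf{(1.2)$\Rightarrow$(1.4).} I would prove that $(\mathcal{P}_\omega\text{-}\id^{\leqslant n-1}(R),\,{_R\omega^{\perp_n}})$ is a complete cotorsion pair by checking: (a) $\Ext^1_R(U,V)=0$ for $U\in\mathcal{P}_\omega\text{-}\id^{\leqslant n-1}(R)$, $V\in{_R\omega^{\perp_n}}$; (b) both classes are closed under direct summands; (c) every $M\in\Mod R$ sits in an exact sequence $0\to M\to X\to Y\to0$ with $X\in{_R\omega^{\perp_n}}$, $Y\in\mathcal{P}_\omega\text{-}\id^{\leqslant n-1}(R)$; and (d) every $M\in\Mod R$ sits in an exact sequence $0\to X'\to Y'\to M\to0$ with $X'\in{_R\omega^{\perp_n}}$, $Y'\in\mathcal{P}_\omega\text{-}\id^{\leqslant n-1}(R)$. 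Given (a)--(d): splitting the sequence of (c) for $M\in{}^{\perp_1}({_R\omega^{\perp_n}})$ and that of (d) for $M\in(\mathcal{P}_\omega\text{-}\id^{\leqslant n-1}(R))^{\perp_1}$, then using (b), identifies the two classes as each other's ${\perp_1}$-complement; completeness follows from (c),(d) and Lemma~\ref{lem-2.7}. Now (a) comes from dimension-shifting $\Ext^1_R(U,V)$ along a $\mathcal{P}_\omega(R)$-coresolution of $U$ of length $\leqslant n-1$: each term is a summand of a direct sum of copies of $\omega$, so $\Ext^i_R(\text{term},V)=0$ for $1\leqslant i\leqslant n$. (b) is Lemma~\ref{lem-5.3}(1) plus the obvious summand-closure of ${_R\omega^{\perp_n}}$. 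For (c): Theorem~\ref{thm-4.19}(1) yields $\Tcograde_\omega\Ext^{i+1}_R(\omega,M)\geqslant i$ for $1\leqslant i\leqslant n-1$, so Proposition~\ref{prop-5.4} with $k=1$ produces such a sequence for every $M$. For (d), where (1.2) is indispensable: choose $0\to K\to F\to M\to0$ with $F$ free, so $F\in\mathcal{P}_\omega\text{-}\id^{\leqslant n-1}(R)$ by (1.2) and direct-sum closure; apply (c) to $K$ to get $0\to K\to X_K\to Y_K\to0$; and take the pushout of $F\leftarrow K\to X_K$, obtaining $0\to X_K\to F'\to M\to0$ with $F'$ an extension of $Y_K$ by $F$, hence $F'\in\mathcal{P}_\omega\text{-}\id^{\leqslant n-1}(R)$ by Lemma~\ref{lem-5.3}(1). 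Manufacturing the special precovers (d) out of the preenvelopes (c) -- and noticing that it is precisely condition (1.2) that makes this possible -- is the step I expect to require the most care.

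\textbf{Part (2).} (2.1)$\Leftrightarrow$(2.2) rests on $\mathcal{I}_\omega(S)$ being closed under arbitrary products (as $\Hom_R(\omega,-)$ commutes with products and products of injective $R$-modules are injective), so that $\mathcal{I}_\omega\text{-}\pd^{\leqslant n-1}(S)$ is closed under products and, by Lemma~\ref{lem-5.3}(2), under summands, while every injective $S$-module is a summand of a power of an injective cogenerator. (2.3)$\Rightarrow$(2.2) is dual to (1.4)$\Rightarrow$(1.2): an injective $S$-module lies in $({{\omega_S}^{\top_n}})^{\perp_1}=\mathcal{I}_\omega\text{-}\pd^{\leqslant n-1}(S)$. (2.2)$\Rightarrow$(2.3) mirrors (1.2)$\Rightarrow$(1.4): the $\Ext$-vanishing is now $\Ext^{j}_S(U,I_{*})\cong\Hom_R(\Tor^S_j(\omega,U),I)=0$ for $U\in{{\omega_S}^{\top_n}}$, $I$ injective in $\Mod R$ and $1\leqslant j\leqslant n$ (via the adjunction between $\omega\otimes_S-$ and $\Hom_R(\omega,-)$ and exactness of $\Hom_R(-,I)$), combined with dimension-shifting along an $\mathcal{I}_\omega(S)$-resolution; Proposition~\ref{prop-5.5} with $k=1$ gives the special ${{\omega_S}^{\top_n}}$-precovers; and pulling back an injective envelope $0\to M\to E\to M'\to0$ (with $E\in\mathcal{I}_\omega\text{-}\pd^{\leqslant n-1}(S)$ by (2.2)) along the precover of $M'$ gives the special preenvelopes, using Lemma~\ref{lem-5.3}(2).

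\textbf{The artin case.} When $R$ and $S$ are artin algebras I would take $Q=D(S)$, the standard injective cogenerator of $\Mod S$. The ordinary duality $D$ sends $\mathcal{I}_\omega(S)$ onto $\mathcal{P}_\omega(S^{op})$ on finitely generated modules and an $\mathcal{I}_\omega(S)$-resolution of $D(S)$ to a $\mathcal{P}_\omega(S^{op})$-coresolution of $S_S$, so $\mathcal{I}_\omega(S)\text{-}\pd_SD(S)=\mathcal{P}_\omega(S^{op})\text{-}\id_{S^{op}}S$; by the symmetric form of Lemma~\ref{lem-4.5}(2) applied to the finitely generated projective module $S_S$, and since $S^{*}=\Hom_{S^{op}}(S,\omega)\cong{_R\omega}$, the latter equals $\pd_R\omega$. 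Together with (2.1)$\Leftrightarrow$(2.2) this identifies (2.1)--(2.3) with (2.4).
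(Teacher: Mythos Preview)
Your proposal is correct and follows essentially the same route as the paper's proof: the equivalences $(1.1)\Leftrightarrow(1.2)\Leftrightarrow(1.3)$ and $(2.1)\Leftrightarrow(2.2)$ via Lemma~\ref{lem-4.5} and Lemma~\ref{lem-5.3}, the construction of special preenvelopes/precovers from Propositions~\ref{prop-5.4} and~\ref{prop-5.5} with $k=1$, and the pushout/pullback trick to manufacture the other approximation from a projective cover/injective envelope are exactly what the paper does. The only cosmetic difference is that for the artin case you unpack the duality argument yourself (showing $D$ carries $\mathcal{I}_\omega(S)$ to $\mathcal{P}_\omega(S^{op})$ and then invoking Lemma~\ref{lem-4.5}(2)), whereas the paper simply cites \cite[Lemma~4.9]{TH3}, which records the same identity $\pd_R\omega=\mathcal{I}_\omega(S)\text{-}\pd_SD(S_S)$.
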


\begin{proof}
By Lemma ~\ref{lem-4.5}(1), we have $(1.1)\Leftrightarrow (1.2)$.

If $\mathcal{P}_{\omega}(R)$-$\id_RR\leqslant n-1$, then $\mathcal{P}_{\omega}(R)$-$\id_RF\leqslant n-1$ for any free module $F$ in $\Mod R$
by \cite[Proposition 5.1(b)]{HW}. It follows from Lemma ~\ref{lem-5.3}(1) that $\mathcal{P}_{\omega}(R)$-$\id_RP\leqslant n-1$
for any projective $P$ in $\Mod R$. This proves $(1.2)\Leftrightarrow (1.3)$.

$(1.3)\Rightarrow (1.4)$ It is easy to verify that $\Ext_R^1(A,B)=0$ for any $A\in \mathcal{P}_\omega\text{-}\id^{\leqslant n-1}(R)$
and $B\in {_R\omega^{\perp_{n}}}$.

Let $M\in \Mod R$. By Lemma \ref{lem-5.1} when $n=1$ or taking $k=1$ in Proposition ~\ref{prop-5.4} when $n\geqslant 2$,
we get an exact sequence
$$0\to M \to B\to A\to 0\eqno{(5.7)}$$
in $\Mod R$ with $B\in {_R\omega^{\perp_{n}}}$ and $A\in \mathcal{P}_\omega\text{-}\id^{\leqslant n-1}(R)$.
It implies that $M$ has a special ${_R\omega^{\perp_{n}}}$-preenvelope and ${_R\omega^{\perp_{n}}}$ is
special preenveloping in $\Mod R$. If $M\in({\mathcal{P}_\omega\text{-}\id^{\leqslant n-1}(R))^{\bot_1}}$, then the exact sequence (5.7) splits.
It follows that $M$ is a direct summand of $B$ and $M\in {_R\omega^{\perp_{n}}}$.

Let
$$0\to M_1\to P\to M\to 0$$ be an exact sequence in $\Mod R$ with $P$ projective. By (1.3),
we have $P\in\mathcal{P}_\omega\text{-}\id^{\leqslant n-1}(R)$. By Lemma \ref{lem-5.1} when $n=1$ or
by Proposition ~\ref{prop-5.4} when $n\geqslant 2$, we have an exact sequence
$$0\to M_1\to B'\to A'\to 0$$ in $\Mod R$ with $B'\in{_R\omega^{\perp_{n}}}$ and $A'\in \mathcal{P}_\omega\text{-}\id^{\leqslant n-1}(R)$.
Consider the following push-out diagram
$$\xymatrix{  & 0 \ar[d] & 0 \ar[d]\\
0 \ar[r] & M_1 \ar[r] \ar[d] & P  \ar[r] \ar[d] & M  \ar[r] \ar@{=}[d] & 0 \\
0 \ar[r] & B' \ar[d]  \ar[r] & A''  \ar[r] \ar[d] & M  \ar[r]  & 0 \\
& A' \ar@{=}[r] \ar[d] &A' \ar[d] \\
& 0 & 0.  }$$
Since $\mathcal{P}_\omega\text{-}\id^{\leqslant n-1}(R)$ is closed under extensions by Lemma ~\ref{lem-5.3}(1), it follows from the middle column
in the above diagram that $A''\in\mathcal{P}_\omega\text{-}\id^{\leqslant n-1}(R)$. If $M\in{^{\bot_1}{(_R\omega^{\perp_{n}}})}$,
then the middle row in the above diagram splits and $M$ is a direct summand of $A''$. By Lemma ~\ref{lem-5.3}(1),
we have $M\in\mathcal{P}_\omega\text{-}\id^{\leqslant n-1}(R)$. It follows from Lemma ~\ref{lem-2.7} that
$(\mathcal{P}_\omega\text{-}\id^{\leqslant n-1}(R),{_R\omega^{\perp_{n}}})$ forms a complete cotorsion pair.

$(1.4)\Rightarrow (1.2)$ By (1.4), we immediately have that ${_RR}\in\mathcal{P}_\omega\text{-}\id^{\leqslant n-1}(R)$
and $\mathcal{P}_{\omega}(R)$-$\id_RR\leqslant n-1$.

If $\mathcal{I}_{\omega}(S)$-$\pd_SQ\leqslant n-1$ for some injective cogenerator $Q$ in $\Mod S$,
then any direct product of $Q$ is in $\mathcal{I}_\omega\text{-}\pd^{\leqslant n-1}(S)$ by \cite[Proposition 5.1(c)]{HW}.
It follows from Lemma ~\ref{lem-5.3}(2) that $\mathcal{I}_{\omega}(S)$-$\pd_SI\leqslant n-1$ for any injective module $I$ in $\Mod S$.
This proves $(2.1)\Leftrightarrow (2.2)$.

$(2.2)\Rightarrow (2.3)$ It is easy to verify that $\Ext_S^1(C,D)=0$ for any $C\in {{\omega_S}^{\top_{n}}}$ and
$D\in \mathcal{I}_\omega\text{-}\pd^{\leqslant n-1}(S)$.

Let $N\in \Mod S$. By Lemma \ref{lem-5.2} when $n=1$ or taking $k=1$ in Proposition ~\ref{prop-5.5} when $n\geqslant 2$,
we get an exact sequence
$$0\to D \to C\to N\to 0\eqno{(5.8)}$$
in $\Mod S$ with $C\in{{\omega_S}^{\top_{n}}}$ and $D\in \mathcal{I}_\omega\text{-}\pd^{\leqslant n-1}(S)$. It implies that
$N$ has a special ${{\omega_S}^{\top_{n}}}$-precover and ${{\omega_S}^{\top_{n}}}$ is precovering in $\Mod S$. If
$N\in {^{\bot_1}(\mathcal{I}_\omega\text{-}\pd^{\leqslant n-1}(S)})$, then the exact sequence (5.8) splits.
It follows that $N$ is a direct summand of $C$ and $N\in{{\omega_S}^{\top_{n}}}$.

Let
$$0\to N\to I\to N_1\to 0$$ be an exact sequence in $\Mod S$ with $I$ injective. By (2.2), we have
$I\in \mathcal{I}_\omega\text{-}\pd^{\leqslant n-1}(S)$. By Lemma \ref{lem-5.2} when $n=1$ or
by Proposition ~\ref{prop-5.5} when $n\geqslant 2$, we have an exact sequence
$$0\to D'\to C'\to N_1\to 0$$ in $\Mod S$
with $C'\in{{\omega_S}^{\top_{n}}}$ and $D'\in \mathcal{I}_\omega\text{-}\pd^{\leqslant n-1}(S)$.
Consider the following pull-back diagram
$$\xymatrix{ & & 0 \ar[d] & 0 \ar[d]\\
&& N \ar@{=}[r] \ar[d]& N \ar[d]\\
0 \ar[r] & D' \ar[r] \ar@{=}[d] & D''  \ar[r] \ar[d] & I \ar[r] \ar[d] & 0 \\
0 \ar[r] & D'   \ar[r] & C'  \ar[r] \ar[d] & N_1  \ar[r] \ar[d] & 0 \\
 & & 0 & 0.  }$$
Since $\mathcal{I}_\omega\text{-}\pd^{\leqslant n-1}(S)$ is closed under extensions by Lemma ~\ref{lem-5.3}(2), it follows from
the middle row in the above diagram that $D''\in\mathcal{I}_\omega\text{-}\pd^{\leqslant n-1}(S)$.
If $N\in ({{\omega_S}^{\top_{n}}})^{\bot_1}$, then the middle column in the above diagram splits and $N$ is a direct summand of $D''$.
By Lemma ~\ref{lem-5.3}(2), we have $N\in\mathcal{I}_\omega\text{-}\pd^{\leqslant n-1}(S)$. It follows from Lemma ~\ref{lem-2.7} that
$({{\omega_S}^{\top_{n}}},\mathcal{I}_\omega\text{-}\pd^{\leqslant n-1}(S))$ forms a complete cotorsion pair.

$(2.3)\Rightarrow (2.2)$ For any injective module $I$ in $\Mod S$, by (2.3) we have that $I\in\mathcal{I}_\omega\text{-}\pd^{\leqslant n-1}(S)$
and $\mathcal{I}_{\omega}(S)$-$\pd_SI\leqslant n-1$.

If $R$ and $S$ are artin algebras, then $\pd_R\omega=\mathcal{I}_{\omega}(S)$-$\pd_SD(S_S)$ by \cite[Lemma 4.9]{TH3}.
Because $D(S_S)$ is an injective cogenerator in $\Mod S$, $(2.1)\Leftrightarrow (2.4)$ follows.
\end{proof}

\begin{observation} \label{ob-5.7}
Let $R$ be an artin algebra and ${_R\omega_S}={_RD(R)_R}$. Then we have
\begin{enumerate}
\item $\pd_R\omega=\id_{R^{op}}R$ and $\pd_{R^{op}}\omega=\id_{R}R$.
\item $\mathcal{P}_{\omega}(R)$ is exactly the subclass of $\Mod R$
consisting of injective modules. It implies that
\begin{enumerate}
\item[(2.1)] $\mathcal{P}_{\omega}(R)$-$\id_RM=\id_RM$ for any $M\in \Mod R$.
\item[(2.2)] $\mathcal{P}_\omega\text{-}\id^{\leqslant n}(R)=\mathcal{I}^{\leqslant n}(R):=\{M\in \Mod R\mid \id_RM\leqslant n\}$.
\end{enumerate}
\item $\mathcal{I}_{\omega}(R)$ is exactly the subclass of $\Mod R$ consisting of projective modules.
It implies that
\begin{enumerate}
\item[(3.1)] $\mathcal{I}_{\omega}(R)$-$\pd_RN=\pd_RN$ for any $N\in \Mod R$.
\item[(3.2)] $\mathcal{I}_\omega\text{-}\pd^{\leqslant n}(R)=\mathcal{P}^{\leqslant n}(R):=\{N\in \Mod R\mid \pd_RN\leqslant n\}$.
\end{enumerate}
\item By \cite[Proposition VI.5.3]{CE}, it is easy to see that ${{\omega_R}^{\top_{n+1}}}={^{\bot_{n+1}}{_RR}}$.
\item If $R$ is right quasi $(n-1)$-Gorenstein, then all conditions in Theorem ~\ref{thm-4.19} are satisfied;
see Theorem ~\ref{thm-4.14} and Example ~\ref{exa-4.20}(2).
\end{enumerate}
\end{observation}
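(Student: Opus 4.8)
The plan is to verify items (1)--(5) separately; each is a short computation whose only subtlety is the bookkeeping of left/right module structures and of the ordinary duality $D$ introduced before Example~\ref{exa-4.20}. Throughout I would put $\omega={}_RD(R)_R$ and record that, on the left, $\omega=D(R_R)$ is the minimal injective cogenerator of $\Mod R$, on the right $\omega=D({}_RR)$ is the minimal injective cogenerator of $\Mod R^{op}$, and $S\cong\End_R(\omega)\cong R$. For item (1) I would apply $D$ to a projective resolution of $\omega$ in $\mod R$: since $D$ is an exact contravariant duality interchanging finitely generated projective left $R$-modules with finitely generated injective right $R$-modules, and $D(\omega)=DD(R_R)=R_R$, it turns a projective resolution of $\omega$ into an injective coresolution of $R_R$ and conversely, so $\pd_R\omega=\id_{R^{op}}R$; the other equality is symmetric. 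For item (2) I would invoke $\mathcal{P}_\omega(R)=\Add_R\omega$ (\cite[Proposition 3.4(2)]{TH2}); since $R$ is Noetherian and $\omega=D(R_R)$ contains every indecomposable injective left $R$-module, $\Add_R\omega=\Inj R$, so a $\mathcal{P}_\omega(R)$-coresolution of $M$ is nothing but an injective coresolution of $M$ and (2.1), (2.2) follow at once. For item (3) I would again use $\Inj R=\Add_R\omega$; because $\omega$ is finitely generated, $(-)_*=\Hom_R(\omega,-)$ commutes with coproducts and with summands, and $\omega_*=\Hom_R(\omega,\omega)\cong S$, whence $\mathcal{I}_\omega(R)=\{I_*\mid I\in\Inj R\}=\Add_S(\omega_*)=\Add_S S$ is exactly the class of projective modules in $\Mod R$, and (3.1), (3.2) then follow as in item (2).

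For item (4) I would write $D=\Hom_k(-,E)$ with $E$ a $k$-injective cogenerator and apply \cite[Proposition VI.5.3]{CE}: for $N\in\Mod R$ this yields a natural isomorphism $D(\Tor_i^R(\omega,N))\cong\Ext_R^i(N,\Hom_k(\omega_R,E))$, and $\Hom_k(\omega_R,E)=\Hom_k(D({}_RR),E)\cong{}_RR$, so $D(\Tor_i^R(\omega,N))\cong\Ext_R^i(N,R)$. Since $D$ reflects zero objects, $\Tor_i^R(\omega,N)=0$ if and only if $\Ext_R^i(N,R)=0$, and intersecting over $1\leqslant i\leqslant n+1$ gives ${{\omega_R}^{\top_{n+1}}}={}^{\bot_{n+1}}{}_RR$.

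For item (5) (the case $n=1$ being trivial, so one may assume $n\geqslant2$) I would argue as follows: by Example~\ref{exa-4.20}(2) (together with Theorem~\ref{thm-4.14}), $R$ being right quasi $(n-1)$-Gorenstein is equivalent to $\omega$ satisfying the right quasi $(n-1)$-cograde condition of Definition~\ref{def-4.10}, i.e.\ $\sEcograde_\omega\Tor_{i+1}^S(\omega,N)\geqslant i$ for every $N\in\Mod S$ and $1\leqslant i\leqslant n-1$; since $\Ecograde_\omega(-)\leqslant\sEcograde_\omega(-)$ (a module is a quotient of itself), setting $j=i+1$ gives $\Ecograde_\omega\Tor_j^S(\omega,N)\geqslant j-1$ for $2\leqslant j\leqslant n$, and the case $j=1$ holds trivially, so condition (2) of Theorem~\ref{thm-4.19} holds and therefore so do all of its equivalent conditions. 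None of the above is genuinely hard; the points that need care are keeping the left/right and $D$-duality conventions straight — especially in item (4), where \cite[Proposition VI.5.3]{CE} must be fed $\omega$ as a \emph{right} $R$-module — and the one-step index shift in item (5), which is exactly the reason that "quasi $(n-1)$-Gorenstein", rather than "quasi $n$-Gorenstein", already forces the parameter-$n$ form of Theorem~\ref{thm-4.19}.
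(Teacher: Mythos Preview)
Your proof is correct and supplies the details that the paper leaves to the reader (the paper states Observation~\ref{ob-5.7} without proof, merely pointing to Theorem~\ref{thm-4.14} and Example~\ref{exa-4.20}(2) for item (5) and to \cite[Proposition~VI.5.3]{CE} for item (4)). Your route through condition~(2) of Theorem~\ref{thm-4.19} in item~(5) is exactly the right one: the index shift $j=i+1$ turns the quasi $(n-1)$-cograde inequality into the range $2\leqslant j\leqslant n$, and $j=1$ is vacuous.

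One slip to fix: in item~(5) you wrote ``$\Ecograde_\omega(-)\leqslant\sEcograde_\omega(-)$'', but the inequality goes the other way. By definition $\sEcograde_\omega M\geqslant n$ requires $\Ecograde_\omega X\geqslant n$ for \emph{every} quotient $X$ of $M$, so in particular $\Ecograde_\omega M\geqslant n$; hence $\sEcograde_\omega M\leqslant\Ecograde_\omega M$. This is precisely the direction your argument needs (from $\sEcograde\geqslant i$ conclude $\Ecograde\geqslant i$), so the logic is fine once the inequality sign is corrected. A second, very minor point: in item~(3) the inclusion $\Add_S S\subseteq\{I_*\mid I\in\Inj R\}$ deserves one more sentence --- for $P$ projective take $I:=\omega\otimes_S P$, which is injective since $P\in\Add_S S$ and $\omega\otimes_S S\cong\omega$, and then $I_*\cong P$ by \cite[Lemma~4.1]{HW}.
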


As an application of Theorem ~\ref{thm-5.6}, we have the following

\begin{corollary} \label{cor-5.8}
Let $R$ be a right quasi $(n-1)$-Gorenstein artin algebra with $n\geqslant 1$. Then the following statements are equivalent.
\begin{enumerate}
\item[(1)] $\id_{R}R\leqslant n-1$.
\item[(2)] $\id_{R^{op}}R\leqslant n-1$.
\item[(3)] $(\mathcal{I}^{\leqslant n-1}(R), {_RD(R)^{\perp_{n}}})$ forms a complete cotorsion pair.
\item[(4)] $({^{\bot_{n}}{_RR}},\mathcal{P}^{\leqslant n-1}(R))$ forms a complete cotorsion pair.
\end{enumerate}
\end{corollary}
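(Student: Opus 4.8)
The plan is to deduce everything from Theorem~\ref{thm-5.6} by specializing to the bimodule ${_R\omega_S}={_RD(R)_R}$, using the dictionary recorded in Observation~\ref{ob-5.7}. First I would note that since $R$ is right quasi $(n-1)$-Gorenstein, Example~\ref{exa-4.20}(2) says $D(R)$ satisfies the right quasi $(n-1)$-cograde condition, so by Theorem~\ref{thm-4.14} all of the equivalent conditions there hold; in particular the condition ``$\Tcograde_{\omega}\Ext^i_R(\omega,M)\geqslant i-1$ for all $M\in\Mod R$ and $1\leqslant i\leqslant n$'' of Theorem~\ref{thm-4.19}(1) is satisfied (this is exactly Observation~\ref{ob-5.7}(5)). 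Hence both part (1) and part (2) of Theorem~\ref{thm-5.6} are applicable with this $\omega$.

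Next I would translate the four statements of Theorem~\ref{thm-5.6}(1) through the dictionary. By Observation~\ref{ob-5.7}(1), $\pd_{S^{op}}\omega=\pd_{R^{op}}\omega=\id_R R$, so condition (1.1) becomes $\id_R R\leqslant n-1$. Wait --- here I must be careful about the side: with $S=R$ we have $\pd_{S^{op}}\omega=\pd_{R^{op}}D(R)=\id_R R$, so (1.1) reads $\id_RR\leqslant n-1$, which is statement (1) of the corollary. Actually the cleanest route is: Theorem~\ref{thm-5.6}(1) gives $(1.1)\Leftrightarrow(1.4)$, and by Observation~\ref{ob-5.7}(2.2) the class $\mathcal{P}_\omega\text{-}\id^{\leqslant n-1}(R)$ equals $\mathcal{I}^{\leqslant n-1}(R)$, while ${_R\omega^{\perp_n}}={_RD(R)^{\perp_n}}$ by definition; so (1.4) is exactly statement (3) of the corollary. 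This establishes (1)$\Leftrightarrow$(3).

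Then I would run the same translation for Theorem~\ref{thm-5.6}(2). Since $R$ and $S=R$ are artin algebras, statement (2.4) there is available: $\pd_R\omega\leqslant n-1$, and by Observation~\ref{ob-5.7}(1) $\pd_R\omega=\id_{R^{op}}R$, so (2.4) is statement (2) of the corollary. By Observation~\ref{ob-5.7}(3.2) we have $\mathcal{I}_\omega\text{-}\pd^{\leqslant n-1}(S)=\mathcal{P}^{\leqslant n-1}(R)$, and by Observation~\ref{ob-5.7}(4) we have ${{\omega_R}^{\top_n}}={^{\bot_n}{_RR}}$; hence condition (2.3) of Theorem~\ref{thm-5.6} becomes the statement that $({^{\bot_n}{_RR}},\mathcal{P}^{\leqslant n-1}(R))$ is a complete cotorsion pair, which is statement (4) of the corollary. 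Thus Theorem~\ref{thm-5.6}(2) gives $(2)\Leftrightarrow(4)$. Finally, to close the cycle $(1)\Leftrightarrow(2)$ --- i.e.\ $\id_RR\leqslant n-1 \Leftrightarrow \id_{R^{op}}R\leqslant n-1$ --- I would invoke the known symmetry of the one-sided self-injective dimension of an Auslander-type algebra: in fact since all conditions of Theorem~\ref{thm-4.19} hold, parts (1) and (2) of Theorem~\ref{thm-5.6} share their hypothesis, and one can also note that a right quasi $(n-1)$-Gorenstein algebra with $\id_R R\leqslant n-1$ is Auslander $(n-1)$-Gorenstein (the two injective dimensions then forcibly agree), or simply cite \cite[Theorem 3.7]{FGR} / the corresponding result in \cite{H3}. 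The main obstacle is not any single hard computation but keeping the $R$ versus $R^{op}$ bookkeeping straight through the chain of identifications, and making sure the ``if one of the equivalent conditions in Theorem~\ref{thm-4.19} is satisfied'' hypothesis is genuinely met --- which is precisely what Observation~\ref{ob-5.7}(5) guarantees.
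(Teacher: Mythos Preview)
Your reduction of $(1)\Leftrightarrow(3)$ and $(2)\Leftrightarrow(4)$ to Theorem~\ref{thm-5.6} via the dictionary in Observation~\ref{ob-5.7} is exactly what the paper does, and your bookkeeping there is correct.

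The only substantive gap is in your treatment of $(1)\Leftrightarrow(2)$. Your suggested route --- ``a right quasi $(n-1)$-Gorenstein algebra with $\id_R R\leqslant n-1$ is Auslander $(n-1)$-Gorenstein, and then the two injective dimensions agree'' --- does not go through as stated: the right quasi condition controls $\pd_{R^{op}}I^i(R_R)$ only up to $i+1$, and the extra hypothesis $\id_R R\leqslant n-1$ concerns the \emph{left} injective resolution of $R$, so it does not directly sharpen those bounds to $i$. Likewise, \cite[Theorem~3.7]{FGR} characterizes Auslander $n$-Gorenstein rings in terms of strong grade and does not by itself give the equality of the one-sided self-injective dimensions in the quasi situation. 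The paper's argument is different and more indirect: from $(1)$ it uses \cite[Theorem~4.7]{AR2} together with the symmetric version of \cite[Theorem]{H2} to obtain the (crude) bound $\id_{R^{op}}R\leqslant(n-1)+(n-2)=2n-3$, so in particular $\id_{R^{op}}R<\infty$; then Zaks' result \cite[Lemma~A]{Z} (finiteness of both one-sided self-injective dimensions forces their equality) yields $\id_{R^{op}}R=\id_R R\leqslant n-1$. For the converse direction $(2)\Rightarrow(1)$ it invokes \cite[Theorem~7.5]{TH4} directly. You should replace your sketch for $(1)\Leftrightarrow(2)$ by this chain of citations.
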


\begin{proof}
By Theorem ~\ref{thm-5.6} and Observation~ \ref{ob-5.7}, we have $(1)\Leftrightarrow (3)$ and $(2)\Leftrightarrow (4)$.

$(1)\Leftrightarrow (2)$ Let $\id_{R}R\leqslant n-1$. By \cite[Theorem 4.7]{AR2} and the symmetric version of \cite[Theorem]{H2},
we have $\id_{R^{op}}R\leqslant (n-1)+(n-2)=2n-3$. Conversely, let $\id_{R^{op}}R\leqslant n-1$. By \cite[Theorem 7.5]{TH4},
we have $\id_{R}R\leqslant n-1$. Now the assertion follows from \cite[Lemma A]{Z}.
\end{proof}

As a consequence of Corollary ~\ref{cor-5.8}, we have the following

\begin{corollary} \label{cor-5.9}
For any artin algebra $R$, the following conditions are equivalent.
\begin{enumerate}
\item[(1)] $\id_{R}R\leqslant 1$.
\item[(2)] $\id_{R^{op}}R\leqslant 1$.
\end{enumerate}
Furthermore, if $R$ is right quasi 1-Gorenstein, then they are equivalent to each of the following two statements.
\begin{enumerate}
\item[(3)] $(\mathcal{I}^{\leqslant 1}(R), {_RD(R)^{\perp_{2}}})$ forms a complete cotorsion pair.
\item[(4)] $({^{\bot_{2}}{_RR}},\mathcal{P}^{\leqslant 1}(R))$ forms a complete cotorsion pair.
\end{enumerate}
\end{corollary}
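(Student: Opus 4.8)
The plan is to deduce Corollary~\ref{cor-5.9} from Corollary~\ref{cor-5.8}, the point being that the only assertion not already covered by Corollary~\ref{cor-5.8} is the equivalence $(1)\Leftrightarrow(2)$ for an \emph{arbitrary} artin algebra. Indeed, once $R$ is assumed right quasi $1$-Gorenstein, the equivalence of $(1)$, $(2)$, $(3)$ and $(4)$ is exactly the instance $n=2$ of Corollary~\ref{cor-5.8}: for ${}_R\omega_S={}_RD(R)_R$, Observation~\ref{ob-5.7}(2.2) and (3.2) give $\mathcal{P}_\omega\text{-}\id^{\leqslant 1}(R)=\mathcal{I}^{\leqslant 1}(R)$ and $\mathcal{I}_\omega\text{-}\pd^{\leqslant 1}(R)=\mathcal{P}^{\leqslant 1}(R)$, Observation~\ref{ob-5.7}(4) gives ${\omega_R}^{\top_2}={}^{\perp_2}{}_RR$, and Observation~\ref{ob-5.7}(1) identifies $\id_RR=\pd_{R^{op}}\omega$ and $\id_{R^{op}}R=\pd_R\omega$, so that conditions $(3)$ and $(4)$ of Corollary~\ref{cor-5.9} are verbatim conditions $(3)$ and $(4)$ of Corollary~\ref{cor-5.8} with $n=2$. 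Thus the ``furthermore'' clause requires no new argument.

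It remains to establish $(1)\Leftrightarrow(2)$ over an arbitrary artin algebra $R$, with no quasi-Gorenstein hypothesis, and the plan is to reuse the ingredients of the proof of $(1)\Leftrightarrow(2)$ in Corollary~\ref{cor-5.8} with $n=2$, after checking that none of them actually invokes the quasi-Gorenstein hypothesis. For $(2)\Rightarrow(1)$: by \cite[Theorem 7.5]{TH4}, a statement about semidualizing bimodules valid over arbitrary artin algebras, $\id_{R^{op}}R\leqslant1$ yields $\id_RR\leqslant1$. For $(1)\Rightarrow(2)$: \cite[Theorem 4.7]{AR2} and the symmetric version of \cite[Theorem]{H2} give $\id_{R^{op}}R\leqslant(2-1)+(2-2)=1$ directly, or at worst $\id_{R^{op}}R<\infty$, whereupon Zaks' lemma \cite[Lemma A]{Z} forces $\id_{R^{op}}R=\id_RR\leqslant1$.

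As a self-contained fallback for $(1)\Rightarrow(2)$, I would record that condition $(1)$ alone forces $R$ to be right quasi $1$-Gorenstein: by Observation~\ref{ob-5.7}(1), $\id_RR=\pd_{R^{op}}\omega$, and the injective envelope $I^0(R_R)$ of the regular right module is a direct summand of a finite direct sum of copies of $\omega$ viewed as a right $R$-module (namely $D({}_RR)$, an injective cogenerator of $\Mod R^{op}$), so $\pd_{R^{op}}I^0(R_R)\leqslant\pd_{R^{op}}\omega=\id_RR\leqslant1$; hence $R$ is right quasi $1$-Gorenstein and Corollary~\ref{cor-5.8} with $n=2$ applies. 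Symmetrically, $\id_{R^{op}}R\leqslant1$ forces $R$ to satisfy the left quasi $1$-Gorenstein condition, so the left-hand analogue of Corollary~\ref{cor-5.8} gives $(2)\Rightarrow(1)$.

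The main obstacle is bookkeeping rather than conceptual: in the proof of Corollary~\ref{cor-5.8} the results \cite[Theorem 4.7]{AR2}, \cite[Theorem]{H2} and \cite[Theorem 7.5]{TH4} are invoked while the quasi-Gorenstein hypothesis is in force, so I must confirm that their statements hold unconditionally over arbitrary artin algebras (that hypothesis is really only used there for the cotorsion-pair equivalences, not for $(1)\Leftrightarrow(2)$). Should that confirmation fail for some reference, the fallback arguments above still settle $(1)\Leftrightarrow(2)$ without any hypothesis, so the conclusion holds either way.
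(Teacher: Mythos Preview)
Your proposal is correct. For the ``furthermore'' clause you proceed exactly as the paper does: specialize Corollary~\ref{cor-5.8} to $n=2$ under the right quasi $1$-Gorenstein hypothesis, using Observation~\ref{ob-5.7} to match notation.

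For the unconditional equivalence $(1)\Leftrightarrow(2)$, however, the paper takes a shorter path: it simply cites \cite[Corollary 2]{H2}, which states this equivalence directly for artin algebras. You instead extract the argument from the proof of Corollary~\ref{cor-5.8} (via \cite[Theorem 4.7]{AR2}, \cite[Theorem]{H2}, \cite[Theorem 7.5]{TH4} and Zaks' lemma), and also supply a self-contained fallback showing that $\id_RR\leqslant 1$ already forces the right quasi $1$-Gorenstein condition so that Corollary~\ref{cor-5.8} applies. Your route is sound and has the virtue of being more self-contained within the paper's framework, but it is longer; the paper's one-line citation is cleaner since \cite[Corollary 2]{H2} is precisely the statement needed and requires no hypothesis-checking.
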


\begin{proof}
The first assertion follows from \cite[Corollary 2]{H2}.
If $R$ is right quasi 1-Gorenstein,  then we get the second assertion by putting $n=2$ in Corollary ~\ref{cor-5.8}.
\end{proof}

We use $\mathcal{I}(R)$ and $\mathcal{P}(R)$ to denote the subclasses of $\Mod R$ consisting of injective and projective modules respectively.
Putting $n=1$ in Corollary ~\ref{cor-5.8}, we have the following

\begin{corollary} \label{cor-5.10}
For any artin algebra $R$, the following statements are equivalent.
\begin{enumerate}
\item[(1)] $R$ is self-injective.
\item[(2)] $(\mathcal{I}(R), {_RD(R)^{\perp_{1}}})$ forms a complete cotorsion pair
$($in this case, ${_RD(R)^{\perp_{1}}}=\mathcal{I}(R)^{\perp_{1}})$.
\item[(3)] $({^{\bot_{1}}{_RR}},\mathcal{P}(R))$ forms a complete cotorsion pair
$($in this case, ${^{\bot_{1}}{_RR}}={^{\perp_{1}}}{\mathcal{P}(R)})$.
\end{enumerate}
\end{corollary}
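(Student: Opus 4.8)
The plan is to obtain Corollary~\ref{cor-5.10} as the special case $n=1$ of Corollary~\ref{cor-5.8}. The first thing I would check is that the hypothesis of Corollary~\ref{cor-5.8} is automatic here: a right quasi $0$-Gorenstein artin algebra is one satisfying $\pd_{R^{op}}I^i(R_R)\leqslant i+1$ for all $i$ in the empty range $0\leqslant i\leqslant -1$, so \emph{every} artin algebra $R$ is right quasi $0$-Gorenstein. (Equivalently, one invokes Observation~\ref{ob-5.7}(5) with $n=1$, which records that the equivalent conditions of Theorem~\ref{thm-4.19} are then in force, exactly as needed for Theorem~\ref{thm-5.6} and hence for Corollary~\ref{cor-5.8}.) Therefore Corollary~\ref{cor-5.8} applies to an arbitrary artin algebra $R$ once we set $n=1$.

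Next I would read off the four statements of Corollary~\ref{cor-5.8} at $n=1$: namely $\id_{R}R\leqslant 0$; $\id_{R^{op}}R\leqslant 0$; $(\mathcal{I}^{\leqslant 0}(R),{_RD(R)^{\perp_{1}}})$ is a complete cotorsion pair; and $({^{\bot_{1}}{_RR}},\mathcal{P}^{\leqslant 0}(R))$ is a complete cotorsion pair. Then I would rewrite each of these in the shape demanded by Corollary~\ref{cor-5.10}. The conditions $\id_{R}R\leqslant 0$ and $\id_{R^{op}}R\leqslant 0$ each say precisely that $R$ is self-injective (their mutual equivalence, already part of the Corollary~\ref{cor-5.8} chain and also of Corollary~\ref{cor-5.9}, means ``self-injective'' is unambiguous here), so both collapse to statement~(1). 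On the other side, $\mathcal{I}^{\leqslant 0}(R)=\{M\in\Mod R\mid\id_RM\leqslant 0\}=\mathcal{I}(R)$ and $\mathcal{P}^{\leqslant 0}(R)=\{N\in\Mod R\mid\pd_RN\leqslant 0\}=\mathcal{P}(R)$, which turns the two cotorsion-pair statements into statements~(2) and~(3).

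It then remains to justify the two parenthetical identifications. For this I would simply quote Definition~\ref{def-2.6}: any cotorsion pair $(\mathcal{U},\mathcal{V})$ satisfies $\mathcal{V}=\mathcal{U}^{\bot_1}$ and $\mathcal{U}={^{\bot_1}\mathcal{V}}$ by definition. Applying this to the pair in~(2) gives ${_RD(R)^{\perp_{1}}}=\mathcal{I}(R)^{\perp_{1}}$, and applying it to the pair in~(3) gives ${^{\bot_{1}}{_RR}}={^{\perp_{1}}}\mathcal{P}(R)$.

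I do not expect a genuine obstacle: the whole argument is an unwinding of Corollary~\ref{cor-5.8} together with a dimension-zero bookkeeping step. The only points that need a moment's care are noting explicitly that the right quasi $0$-Gorenstein hypothesis is vacuous, keeping the identification $_R\omega_S={_RD(R)_R}$ of Observation~\ref{ob-5.7} consistent throughout, and checking that $\mathcal{I}^{\leqslant 0}(R)=\mathcal{I}(R)$ and $\mathcal{P}^{\leqslant 0}(R)=\mathcal{P}(R)$ so that the translated cotorsion pairs are literally the ones in the statement.
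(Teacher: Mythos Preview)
Your proposal is correct and follows exactly the paper's approach: the paper's proof is the single line ``Putting $n=1$ in Corollary~\ref{cor-5.8}, we have the following,'' and you have simply spelled out the routine verifications (that right quasi $0$-Gorenstein is vacuous, the identifications $\mathcal{I}^{\leqslant 0}(R)=\mathcal{I}(R)$ and $\mathcal{P}^{\leqslant 0}(R)=\mathcal{P}(R)$, and the parenthetical equalities from Definition~\ref{def-2.6}) that the paper leaves implicit.
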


\section {\bf Relative finitistic dimensions}

In this section, we introduce and study the finitistic $\mathcal{P}_\omega(R)$-injective dimension and the $\mathcal{I}_\omega(S)$-projective dimension
of rings.

The {\it finitistic $\mathcal{P}_\omega(R)$-injective dimension} ${\rm F}\mathcal{P}_{\omega}{\text -}\id R$ of $R$ is defined as
$${\rm F}\mathcal{P}_{\omega}{\text -}\id R:=\sup\{\mathcal{P}_\omega(R){\text -}\id_RM\mid
M\in \Mod R\ {\rm and}\ \mathcal{P}_\omega(R){\text -}\id_RM<\infty\};$$
and the {\it finitistic $\mathcal{I}_\omega(S)$-projective dimension} ${\rm F}\mathcal{I}_{\omega}{\text -}\pd S$ of $S$ is defined as
$${\rm F}\mathcal{I}_{\omega}{\text -}\pd S:=\sup\{\mathcal{I}_\omega(S){\text -}\pd_SN\mid
N\in \Mod S\ {\rm and}\ \mathcal{I}_\omega(S){\text -}\pd_SN<\infty\}.$$

\begin{lemma} \label{lem-6.1}
For any $n\geqslant 0$ and $k\geqslant 1$, we have
\begin{enumerate}
\item Let $\Tcograde_{\omega}\Ext^{i+k}_R(\omega,M)\geqslant i$ for any $M\in \Mod R$
and $1\leqslant i\leqslant n+1$. If ${\rm F}\mathcal{P}_{\omega}{\text -}\id R=n$, then $\pd_R\omega\leqslant n+k$.
\item Let $\Ecograde_{\omega}\Tor_{i+k}^S(\omega,N)\geqslant i$ for any $N\in \Mod S$
and $1\leqslant i\leqslant n+1$. If ${\rm F}\mathcal{I}_{\omega}{\text -}\pd S=n$, then $\pd_{S^{op}}\omega\leqslant n+k$.
\end{enumerate}
\end{lemma}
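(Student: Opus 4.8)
The plan is to prove part~(1) in full and to obtain part~(2) by the symmetric argument: one replaces Proposition~\ref{prop-5.4} by Proposition~\ref{prop-5.5}, $\Ext$ by $\Tor$, $\coOmega^{k-1}(M)$ by $\Omega^{k-1}_{\mathcal{F}}(N)$, the classes ${_R\omega^{\perp_j}}$ by ${{\omega_S}^{\top_j}}$, and $\mathcal{P}_\omega(R)$-coresolutions by $\mathcal{I}_\omega(S)$-resolutions, and one finishes by invoking $\pd_{S^{op}}\omega=\fd_{S^{op}}\omega$ (which holds because $\omega_S$ admits a degreewise finite $S$-projective resolution). For part~(1) it suffices to show $\Ext^{n+k+1}_R(\omega,M)=0$ for every $M\in\Mod R$, since this is precisely the assertion $\pd_R\omega\leqslant n+k$.

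So I would fix $M\in\Mod R$ and apply Proposition~\ref{prop-5.4} with its parameter ``$n$'' taken to be $n+2$ (legitimate since $n\geqslant 0$ gives $n+2\geqslant 1$, and $k\geqslant 1$): with this substitution its hypothesis reads $\Tcograde_{\omega}\Ext^{i+k}_R(\omega,M')\geqslant i$ for all $M'\in\Mod R$ and $1\leqslant i\leqslant n+1$, which is exactly what is assumed in~(1). Taking the index $i=n+1$ in the conclusion of that proposition produces an exact sequence
$$0\to \coOmega^{k-1}(M)\to X\to Y\to 0$$
in $\Mod R$ with $X\in{_R\omega^{\perp_{n+2}}}$ and $Y\in\mathcal{P}_\omega\text{-}\id^{\leqslant n+1}(R)$.

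The key observation is that $Y$ now has \emph{finite} $\mathcal{P}_\omega(R)$-injective dimension, so the hypothesis ${\rm F}\mathcal{P}_{\omega}{\text -}\id R=n$ forces $\mathcal{P}_\omega(R)\text{-}\id_RY\leqslant n$. Choosing a $\mathcal{P}_\omega(R)$-coresolution $0\to Y\to W^0\to\cdots\to W^n\to 0$ and using $\mathcal{P}_\omega(R)\subseteq\mathcal{B}_\omega(R)$ by \cite[Corollary 6.1]{HW} together with $\mathcal{B}_\omega(R)\subseteq{_R\omega^{\perp}}$ (so that $\Ext^{\geqslant 1}_R(\omega,W^j)=0$ for each term), a routine dimension shift yields $\Ext^{j}_R(\omega,Y)=0$ for every $j\geqslant n+1$; in particular $\Ext^{n+1}_R(\omega,Y)=0$. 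On the other hand, applying $\Hom_R(\omega,-)$ to the sequence $0\to\coOmega^{k-1}(M)\to X\to Y\to 0$ and using $\Ext^{n+1}_R(\omega,X)=\Ext^{n+2}_R(\omega,X)=0$ (both indices lie in $[1,n+2]$ and $X\in{_R\omega^{\perp_{n+2}}}$) gives an isomorphism $\Ext^{n+1}_R(\omega,Y)\cong\Ext^{n+2}_R(\omega,\coOmega^{k-1}(M))$. Finally, dimension shifting along the first $k-1$ terms of the minimal injective resolution of $M$ gives $\Ext^{n+2}_R(\omega,\coOmega^{k-1}(M))\cong\Ext^{n+k+1}_R(\omega,M)$. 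Chaining the three displays yields $\Ext^{n+k+1}_R(\omega,M)=0$, which finishes~(1).

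I expect no conceptual obstacle here; the only place that needs attention is the index bookkeeping — verifying at each step that the $\Ext$- (resp.\ $\Tor$-) vanishing invoked really falls within the guaranteed ranges ($\Ext^i_R(\omega,X)=0$ only for $1\leqslant i\leqslant n+2$, and $\Ext^j_R(\omega,Y)=0$ only for $j\geqslant n+1$), and noting the harmless degenerate case $k=1$, where $\coOmega^{0}(M)=M$ and the last dimension shift is trivial. For part~(2) the sole extra ingredient is the equality $\pd_{S^{op}}\omega=\fd_{S^{op}}\omega$, needed so that the resulting vanishing $\Tor^S_{n+k+1}(\omega,-)=0$ actually bounds the right projective dimension of $\omega$.
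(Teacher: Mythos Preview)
Your proof is correct and follows essentially the same route as the paper's: both apply Proposition~\ref{prop-5.4} (with its parameter taken as $n+2$ and index $i=n+1$) to obtain the sequence $0\to\coOmega^{k-1}(M)\to X\to Y\to 0$, then use ${\rm F}\mathcal{P}_{\omega}\text{-}\id R=n$ to force $\mathcal{P}_\omega(R)$-$\id_RY\leqslant n$ and conclude via the chain $\Ext^{n+k+1}_R(\omega,M)\cong\Ext^{n+2}_R(\omega,\coOmega^{k-1}(M))\cong\Ext^{n+1}_R(\omega,Y)=0$. The paper's write-up is simply more compressed; your version spells out the index checks and the justification for $\Ext^{n+1}_R(\omega,Y)=0$ in slightly more detail.
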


\begin{proof}
(1) Let $M\in \Mod R$. By Proposition ~\ref{prop-5.4}, there exists an exact sequence
$$0\to \coOmega^{k-1}(M) \to X\to Y\to 0$$ in $\Mod R$
with $X\in {_R{\omega}}^{\bot_{n+2}}$ and $\mathcal{P}_\omega(R)$-$\id_RY\leqslant n+1$.
If ${\rm F}\mathcal{P}_{\omega}{\text -}\id R=n$, then $\mathcal{P}_\omega(R)$-$\id_RY\leqslant n$. Thus we have that
$$\Ext^{n+k+1}_R(\omega,M)\cong \Ext^{n+2}_R(\omega,\coOmega^{k-1}(M))\cong \Ext^{n+1}_R(\omega,Y)=0$$ and $\pd_R\omega\leqslant n+k$.

(2) Let $N\in \Mod S$. By Proposition ~\ref{prop-5.5}, there exists an exact sequence
$$0\to Y'\to X'\to \Omega^{k-1}_{\mathcal{F}}(N) \to 0$$ in $\Mod S$ with $X'\in {\omega_S}^{\top_{n+2}}$
and $\mathcal{P}_\omega(R)$-$\id_SY'\leqslant n+1$.
If ${\rm F}\mathcal{I}_{\omega}{\text -}\pd S=n$, then $\mathcal{I}_\omega(R)$-$\pd_SY'\leqslant n$. Thus we have that
$$\Tor_{n+k+1}^S(\omega,N)\cong \Tor_{n+2}^S(\omega,\Omega^{k-1}_{\mathcal{F}}(N))\cong \Tor_{n+1}^S(\omega,Y')=0$$
and $\pd_{S^{op}}\omega=\fd_{S^{op}}\omega\leqslant n+k$.
\end{proof}

\begin{lemma} \label{lem-6.2}
For any $n\geqslant 0$, we have
\begin{enumerate}
\item Let ${\rm F}\mathcal{P}_{\omega}{\text -}\id R\leqslant n$ and $N\in \Mod S$. If $\Tcograde_{\omega}N\geqslant n+1$, then $N=0$.
\item Let ${\rm F}\mathcal{I}_{\omega}{\text -}\pd S\leqslant n$ and $H\in \Mod R$. If $\Ecograde_{\omega}H\geqslant n+1$, then $H=0$.
\end{enumerate}
\end{lemma}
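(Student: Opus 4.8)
The two assertions are dual to each other under the adjunction between $\omega\otimes_{S}-\colon\Mod S\to\Mod R$ and $(-)_{*}=\Hom_{R}(\omega,-)\colon\Mod R\to\Mod S$: this interchanges $\mathcal{P}_{\omega}(R)$ with $\mathcal{I}_{\omega}(S)$ (hence $\mathcal{P}_{\omega}(R)$-injective dimension with $\mathcal{I}_{\omega}(S)$-projective dimension) and the roles of $\Tor^{S}_{i}(\omega,-)$ and $\Ext^{i}_{R}(\omega,-)$. So I would prove (1) and deduce (2) by symmetry. The plan is to induct on $n$, handling small $n$ (the base cases) separately; below I describe the generic step.

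Fix $N\in\Mod S$ with $\Tcograde_{\omega}N\geqslant n+1$, i.e. $\Tor^{S}_{i}(\omega,N)=0$ for $0\leqslant i\leqslant n$, and choose a projective resolution $\cdots\to P_{1}\to P_{0}\to N\to 0$ in $\Mod S$ with syzygies $\Omega^{i}(N)$. Since $\Tor^{S}_{1}(\omega,\Omega^{i}(N))\cong\Tor^{S}_{i+1}(\omega,N)=0$ for $0\leqslant i\leqslant n-1$, each short exact sequence $0\to\Omega^{i+1}(N)\to P_{i}\to\Omega^{i}(N)\to 0$ remains exact after $\omega\otimes_{S}-$ for those $i$, and because $\omega\otimes_{S}N=0$ they splice to an exact sequence
$$0\to L\to\omega\otimes_{S}P_{n-1}\to\cdots\to\omega\otimes_{S}P_{0}\to 0$$
in $\Mod R$ with $L:=\omega\otimes_{S}\Omega^{n}(N)$ and every $\omega\otimes_{S}P_{j}\in\mathcal{P}_{\omega}(R)$; in particular $\mathcal{P}_{\omega}(R)\text{-}\id_{R}L\leqslant n-1<\infty$. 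As $\mathcal{P}_{\omega}(R)=\Add_{R}\omega$ consists of modules with $\Ext^{\geqslant 1}_{R}(\omega,-)=0$, the displayed sequence is a $(-)_{*}$-acyclic coresolution of $L$; applying $(-)_{*}$, using $(\omega\otimes_{S}P_{j})_{*}\cong P_{j}$ by \cite[Lemma 4.1]{HW}, and comparing the resulting complex $P_{n-1}\to\cdots\to P_{0}$ with the resolution $P_{n-1}\to\cdots\to P_{0}\to N\to 0$ of $N$, dimension shifting gives $\Ext^{i}_{R}(\omega,L)=0$ for $1\leqslant i\leqslant n-2$ and $\Ext^{n-1}_{R}(\omega,L)\cong N$.

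It now suffices to show $\Ext^{n-1}_{R}(\omega,L)=0$. If $\mathcal{P}_{\omega}(R)\text{-}\id_{R}L\leqslant n-2$ this is automatic, since a $\mathcal{P}_{\omega}(R)$-coresolution of $L$ of length $\leqslant n-2$ is $(-)_{*}$-acyclic, hence computes $\Ext^{\ast}_{R}(\omega,L)$, which then vanishes in degrees $\geqslant n-1$; so $N=0$. The remaining case $\mathcal{P}_{\omega}(R)\text{-}\id_{R}L=n-1$ is, I expect, the heart of the matter: here one must genuinely use the hypothesis ${\rm F}\mathcal{P}_{\omega}{\text -}\id R\leqslant n$, without which the statement is false. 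My plan for it is to feed $L$ repeatedly into Lemma~\ref{lem-5.1} — legitimate because $L\in{_R\omega^{\perp_{n-2}}}$ and $\Tcograde_{\omega}\Ext^{n-1}_{R}(\omega,L)=\Tcograde_{\omega}N\geqslant n+1$ — so that at stage $j$ one has a module $M$ with $\Ext^{n-1-j}_{R}(\omega,M)\cong N$, $M\in{_R\omega^{\perp_{n-2-j}}}$ and $M\in\mathcal{P}_{\omega}\text{-}\id^{\leqslant n-1-j}(R)$, and Lemma~\ref{lem-5.1} produces $0\to M\to X\to Y\to 0$ with $X\in{_R\omega^{\perp_{n-1-j}}}$ and $Y\in\mathcal{P}_{\omega}\text{-}\id^{\leqslant n-2-j}(R)$, through which (using $\Ext^{i}_{R}(\omega,X)=0$ for $1\leqslant i\leqslant n-1-j$, the extension‑closure of $\mathcal{P}_{\omega}\text{-}\id^{\leqslant\ast}(R)$ from Lemma~\ref{lem-5.3}(1), and dimension shifting) the isomorphism class of the relevant $\Ext$ passes to $Y$; after $n-1$ such steps one is reduced to bounding an $\Ext^{1}_{R}(\omega,-)$ of a module so constrained by ${\rm F}\mathcal{P}_{\omega}{\text -}\id R\leqslant n$ that it must be $\mathcal{P}_{\omega}(R)$-injective, forcing $N=0$; making this last reduction precise — i.e. seeing exactly how the finitistic bound bites in the extremal case — is the main obstacle. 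The base cases (small $n$) are argued directly from $\omega\otimes_{S}N=0$ together with ${\rm F}\mathcal{P}_{\omega}{\text -}\id R\leqslant n$, and part (2) follows by the stated duality: replace the projective resolution of $N$ by the minimal injective resolution of $H$, $\omega\otimes_{S}-$ by $(-)_{*}$, $\Ext^{i}_{R}(\omega,-)$ by $\Tor^{S}_{i}(\omega,-)$, and Lemma~\ref{lem-5.1} by Lemma~\ref{lem-5.2}, using that $\mathcal{I}_{\omega}(S)\subseteq\mathcal{A}_{\omega}(S)$ supplies $\Tor^{S}_{\geqslant 1}(\omega,I_{*})=0$ and $\omega\otimes_{S}I_{*}\cong I$ (again \cite[Lemma 4.1]{HW}).
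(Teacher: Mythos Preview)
Your construction is right in spirit but you truncate the resolution two steps too early, and this is why you end up with a ``heart of the matter'' that you cannot close. You stop at $P_{n-1}$, obtaining $L=\omega\otimes_S\Omega^n(N)$ with $\mathcal{P}_\omega(R)\text{-}\id_R L\leqslant n-1$ and $N\cong\Ext^{n-1}_R(\omega,L)$. But the hypothesis ${\rm F}\mathcal{P}_\omega\text{-}\id R\leqslant n$ is vacuous for $L$: its $\mathcal{P}_\omega$-injective dimension is already $\leqslant n-1$, so the finitistic bound gives no new information, and there is no reason for $\Ext^{n-1}_R(\omega,L)$ to vanish. Your proposed rescue via iterated applications of Lemma~\ref{lem-5.1} does not converge to anything useful: each step produces a $Y\in\mathcal{P}_\omega\text{-}\id^{\leqslant n-2-j}(R)$, but the finitistic bound never forces such a $Y$ to have $\mathcal{P}_\omega$-injective dimension $0$ --- it only constrains modules whose dimension \emph{a priori} exceeds $n$.

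The paper's proof fixes this by going two steps further in the resolution: set $M:=\Ker(\omega\otimes_S Q_{n+1}\to\omega\otimes_S Q_n)$. The vanishing $\Tor^S_{0\leqslant i\leqslant n}(\omega,N)=0$ still suffices for exactness of
\[
0\to M\to\omega\otimes_S Q_{n+1}\to\cdots\to\omega\otimes_S Q_0\to 0,
\]
so $\mathcal{P}_\omega(R)\text{-}\id_R M\leqslant n+1<\infty$, and \emph{now} the finitistic hypothesis bites: $\mathcal{P}_\omega(R)\text{-}\id_R M\leqslant n$. Since this coresolution is $(-)_*$-acyclic and $(\omega\otimes_S Q_j)_*\cong Q_j$, one reads off $N\cong\Ext^{n+1}_R(\omega,M)=0$. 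No induction, no case split, no Lemma~\ref{lem-5.1}. Part (2) is genuinely dual, as you say.
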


\begin{proof}
(1) Consider a projective resolution
$$\cdots \to Q_{n+1}\to Q_n\to \cdots \to Q_0\to N\to 0$$ of $N$ in $\Mod S$. If $\Tcograde_{\omega}N\geqslant n+1$,
then we get an exact sequence
$$0\to M\to \omega\otimes_SQ_{n+1}\to \omega\otimes_SQ_{n}\to \cdots \to \omega\otimes_SQ_{1}\to \omega\otimes_SQ_{0}\to 0$$
in $\Mod R$, where $M=\Ker(\omega\otimes_SQ_{n+1}\to \omega\otimes_SQ_{n})$. By \cite[Corollary 3.5]{TH3}, $Q\cong(\omega\otimes_SQ)_*$ canonically
for any projective $Q$ in $\Mod S$, so $N\cong \Ext^{n+1}_R(\omega, M)$. Because ${\rm F}\mathcal{P}_{\omega}{\text -}\id R\leqslant n$
by assumption, we have that $\mathcal{P}_\omega(R)$-$\id_RM\leqslant n$ and $N\cong\Ext^{n+1}_R(\omega, M)=0$.

(2) Consider an injective resolution
$$0\to H\to I^0\to \cdots \to I^{n}\to I^{n+1}\to \cdots$$ of $H$ in $\Mod R$. If $\Ecograde_{\omega}H\geqslant n+1$,
then we get an exact sequence
$$0\to {I^0}_*\to \cdots \to {I^n}_*\to {I^{n+1}}_*\to N\to 0$$ in $\Mod S$, where $N=\Coker({I^n}_*\to {I^{n+1}}_*)$.
By \cite[Lemma 2.5(2)]{TH1}, $\omega\otimes_SI_*\cong I$ canonically for any injective $I$ in $\Mod R$, so $H\cong\Tor_{n+1}^S(\omega,N)$.
Because ${\rm F}\mathcal{I}_{\omega}{\text -}\pd S\leqslant n$ by assumption, we have that
$\mathcal{I}_\omega(R^{op})$-$\pd_SN\leqslant n$ and $H\cong\Tor_{n+1}^S(\omega,N)=0$.
\end{proof}

The following is the main result in this section.

\begin{theorem} \label{thm-6.3}
For any $k\geqslant 0$, we have
\begin{enumerate}
\item If $\Tcograde_{\omega}\Ext^{i+k}_R(\omega,M)\geqslant i$ for any $M\in \Mod R$ and $i\geqslant 1$,
then ${\rm F}\mathcal{P}_{\omega}{\text -}\id R\leqslant \pd_R\omega\leqslant{\rm F}\mathcal{P}_{\omega}{\text -}\id R+k$.
\item If $\Ecograde_{\omega}\Tor_{i+k}^S(\omega,N)\geqslant i$ for any $N\in \Mod S$ and $i\geqslant 1$,
then ${\rm F}\mathcal{I}_{\omega}{\text -}\pd S\leqslant \pd_{S^{op}}\omega\leqslant{\rm F}\mathcal{I}_{\omega}{\text -}\pd S+k$.
\end{enumerate}
\end{theorem}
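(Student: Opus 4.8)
The plan is to prove both inequalities in each part, splitting the argument into a lower bound and an upper bound, and to exploit the symmetry between (1) and (2): part (2) is the $S^{op}$-version of part (1) obtained by replacing $_R\omega_S$ with $_S\omega_R$ (so that $\mathcal{P}_\omega(R)$-injective dimension becomes $\mathcal{I}_\omega(S)$-projective dimension, $\pd_R\omega$ becomes $\pd_{S^{op}}\omega$, and $\Ext_R(\omega,-)$ becomes $\Tor^S(\omega,-)$ via the standard $\Hom$-$\otimes$ interplay). So I will only write part (1) in detail and deduce part (2) by this symmetry, referring to Lemmas \ref{lem-6.1} and \ref{lem-6.2} which are already stated in both forms.

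For the lower bound ${\rm F}\mathcal{P}_{\omega}{\text -}\id R\leqslant \pd_R\omega$, I would argue as follows. Set $d:=\pd_R\omega$; if $d=\infty$ there is nothing to prove, so assume $d<\infty$. I want to show every $M\in\Mod R$ with $\mathcal{P}_\omega(R)$-$\id_RM<\infty$ in fact has $\mathcal{P}_\omega(R)$-$\id_RM\leqslant d$. Pick such an $M$ and let $t=\mathcal{P}_\omega(R)$-$\id_RM$; take a $\mathcal{P}_\omega(R)$-coresolution $0\to M\to W^0\to\cdots\to W^{t}\to 0$. Because $\pd_R\omega\leqslant d$ and each $W^i\in\Add_R\omega=\mathcal{P}_\omega(R)$, we have $\Ext^{>d}_R(\omega,W^i)=0$, so dimension shifting along this coresolution forces $\Ext^{>d}_R(\omega,M)=0$, i.e. $M\in{_R\omega^{\perp_{n}}}$ for all $n>d$ appropriately. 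Now I invoke Lemma \ref{lem-5.1} repeatedly (starting from $M\in{_R\omega^{\perp_{d}}}$ and using $\Tcograde_{\omega}\Ext^{d+1}_R(\omega,M)\geqslant d$, which follows from the hypothesis with $i=d+1-k$ when $d+1>k$, and is vacuous otherwise since $\Ext^{d+1}_R(\omega,M)=0$) to split off a $\mathcal{P}_\omega\text{-}\id^{\leqslant d-1}$ piece; combined with the finiteness of $t$ and Lemma \ref{lem-5.3}(1) (closure under summands and extensions), an induction on $t-d$ collapses the coresolution and gives $\mathcal{P}_\omega(R)$-$\id_RM\leqslant d$. Taking the supremum over all such $M$ yields ${\rm F}\mathcal{P}_{\omega}{\text -}\id R\leqslant d$.

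For the upper bound $\pd_R\omega\leqslant{\rm F}\mathcal{P}_{\omega}{\text -}\id R+k$, set $n:={\rm F}\mathcal{P}_{\omega}{\text -}\id R$. If $n=\infty$ the inequality is trivial, so assume $n<\infty$; then Lemma \ref{lem-6.1}(1) applies directly: its hypothesis ``$\Tcograde_{\omega}\Ext^{i+k}_R(\omega,M)\geqslant i$ for $1\leqslant i\leqslant n+1$'' is a special case of our hypothesis (which asserts it for all $i\geqslant 1$), and its conclusion is exactly $\pd_R\omega\leqslant n+k$. One should also dispatch the degenerate case $n=\infty$ cleanly and note that when $\pd_R\omega<\infty$ the lower bound already shows $n\leqslant\pd_R\omega<\infty$, so $n$ is automatically finite whenever there is anything to prove. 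Part (2) now follows verbatim by replacing Lemmas \ref{lem-5.1}, \ref{lem-5.3}(1), \ref{lem-6.1}(1) by Lemmas \ref{lem-5.2}, \ref{lem-5.3}(2), \ref{lem-6.1}(2), and using $\pd_{S^{op}}\omega=\fd_{S^{op}}\omega$ together with the analogous $\mathcal{I}_\omega(S)$-projective coresolution/dimension-shifting.

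The step I expect to be the main obstacle is the lower bound, specifically the bookkeeping in the induction that collapses a finite $\mathcal{P}_\omega(R)$-coresolution of length $t$ down to length $d=\pd_R\omega$: one must check at each stage that the syzygy (really cosyzygy) modules produced by iterating Lemma \ref{lem-5.1} still lie in the relevant $\perp$-class and that the hypothesis on $\Tcograde_{\omega}\Ext^{\bullet}_R(\omega,-)$ is available at the right degree, handling separately the low-degree range where the relevant $\Ext$ already vanishes. The upper bound, by contrast, is essentially immediate from Lemma \ref{lem-6.1} once the finiteness of ${\rm F}\mathcal{P}_{\omega}{\text -}\id R$ is either assumed or seen to be harmless.
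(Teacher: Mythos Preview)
Your lower-bound argument has a genuine gap. From $\pd_R\omega=d$ you correctly get $\Ext^{>d}_R(\omega,M)=0$ for every $M$, but this is \emph{not} the statement $M\in{_R\omega^{\perp_d}}$: by definition ${_R\omega^{\perp_d}}=\{M\mid \Ext^{1\leqslant i\leqslant d}_R(\omega,M)=0\}$, which concerns the low degrees, not the high ones. So the hypothesis of Lemma~\ref{lem-5.1} (namely $M\in{_R\omega^{\perp_{n-1}}}$) is unavailable for a general $M$ with finite $\mathcal{P}_\omega(R)$-injective dimension, and your iterated-Lemma~\ref{lem-5.1} scheme cannot start. There is a second slip in the same paragraph: the hypothesis with $i=d+1-k$ gives $\Tcograde_\omega\Ext^{d+1}_R(\omega,M)\geqslant d+1-k$, not $\geqslant d$. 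In the paper the inequality ${\rm F}\mathcal{P}_{\omega}\text{-}\id R\leqslant\pd_R\omega$ is proved \emph{without} the cograde hypothesis at all: one passes from a finite $\mathcal{P}_\omega(R)$-coresolution to a finite $\mathcal{B}_\omega(R)$-coresolution, invokes \cite[Theorem 4.2]{TH2} to bound the $\mathcal{B}_\omega(R)$-injective dimension by $\pd_R\omega$, and then checks via $(-)_*$ that the relevant cosyzygy, already in $\mathcal{B}_\omega(R)$, is actually $\omega$-projective.

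Your upper-bound argument is almost right but misses one case. Lemma~\ref{lem-6.1} is stated only for $k\geqslant 1$ (its proof uses Proposition~\ref{prop-5.4}, which in turn needs $k\geqslant 1$), so for $k=0$ you cannot simply quote it. The paper handles $k=0$ separately: if ${\rm F}\mathcal{P}_{\omega}\text{-}\id R=n<\infty$, the hypothesis gives $\Tcograde_\omega\Ext^{n+1}_R(\omega,M)\geqslant n+1$ for every $M$, and Lemma~\ref{lem-6.2}(1) then forces $\Ext^{n+1}_R(\omega,M)=0$, hence $\pd_R\omega\leqslant n$. The same remarks apply verbatim to part (2), using Lemmas~\ref{lem-6.1}(2) and \ref{lem-6.2}(2).
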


\begin{proof}
(1) Let $\pd_R\omega=n(<\infty)$ and $M\in \Mod R$ with $\mathcal{P}_\omega(R)$-$\id_RM=m(<\infty)$.
Then there exists an exact sequence
$$0\to M\stackrel{f^0}{\longrightarrow} \omega^0\stackrel{f^1}{\longrightarrow} \omega^1\to \cdots \stackrel{f^m}{\longrightarrow}\omega^{m}\to 0$$
in $\Mod R$ with all $\omega^i$ in $\mathcal{P}_\omega(R)$. Since $\mathcal{P}_\omega(R)\subseteq \mathcal{B}_\omega(R)$ by \cite[Corollary 6.1]{HW},
we have $\mathcal{B}_\omega(R)$-$\id_RM\leqslant\mathcal{P}_\omega(R)$-$\id_RM<\infty$. If $m>n$,
then it follows from \cite[Theorem 4.2]{TH2} that
$\mathcal{B}_\omega(R)$-$\id_RM\leqslant n$ and $\im f^n \in \mathcal{B}_\omega(R)$. On the other hand, we have the following exact and split sequence
$$0\to (\im f^n)_* \to {\omega^n}_* \to \cdots \to {\omega^m}_*\to 0$$
in $\Mod S$ with all ${\omega^i}_*$ projective. So $(\im f^n)_*$ is projective, and hence $\im f^n \in \mathcal{P}_\omega(R)$ by \cite[Lemma 5.1(2)]{HW}.
It yields that $\mathcal{P}_\omega(R)$-$\id_RM\leqslant n$, a contradiction. This proves ${\rm F}\mathcal{P}_{\omega}{\text -}\id R\leqslant \pd_R\omega$.

In the following, we will prove $\pd_R\omega\leqslant{\rm F}\mathcal{P}_{\omega}{\text -}\id R+k$. The case for $k\geqslant 1$ follows from
Lemma ~\ref{lem-6.1}(1). Now suppose that $k=0$ and ${\rm F}\mathcal{P}_{\omega}{\text -}\id R=n(<\infty)$. Let $M\in \Mod R$. Then
$\Tcograde_{\omega}\Ext^{n+1}_R(\omega,M)\geqslant n+1$ by assumption. It follows from Lemma ~\ref{lem-6.2}(1) that $\Ext^{n+1}_R(\omega,M)=0$
and $\pd_R\omega\leqslant n$.

(2) Let $ \pd_{S^{op}}\omega=n(<\infty)$ and $N\in \Mod S$ with $\mathcal{I}_\omega(S)$-$\pd_SN=m(<\infty)$. Then there exists an exact sequence
$$0 \to U_m\stackrel{g_m}{\longrightarrow} \cdots \to U_1\stackrel{g_1}{\longrightarrow} U_0\stackrel{g_0}{\longrightarrow}N\to 0$$
in $\Mod S$ with all $U_i$ in $\mathcal{I}_\omega(S)$. Since $\mathcal{I}_\omega(S)\subseteq \mathcal{A}_\omega(S)$ by \cite[Corollary 6.1]{HW},
we have $\mathcal{A}_\omega(S)$-$\pd_SN<\infty$. If $m>n$, then it follows from the dual result of \cite[Theorem 4.2]{TH2} that
$\mathcal{A}_\omega(S)$-$\pd_SN\leqslant n$ and $\im g_n \in \mathcal{A}_\omega(S)$. On the other hand, we have the following exact and split sequence
$$0\to \omega\otimes_SU_m \to \cdots \to\omega\otimes_SU_n \to \omega\otimes_S\im g_n\to 0$$
in $\Mod R$ with all $\omega\otimes_SU_i$ injective. So $\omega\otimes_S\im g_n$ is injective, and hence $\im g_n \in \mathcal{I}_\omega(S)$
by \cite[Lemma 5.1(3)]{HW}. It yields that $\mathcal{I}_\omega(S)$-$\pd_SN\leqslant n$, a contradiction. This proves
${\rm F}\mathcal{I}_{\omega}{\text -}\pd S\leqslant \pd_{S^{op}}\omega$.

In the following, we will prove $\pd_{S^{op}}\omega\leqslant{\rm F}\mathcal{I}_{\omega}{\text -}\pd S+k$. The case for $k\geqslant 1$ follows from
Lemma ~\ref{lem-6.1}(2). Now suppose that $k=0$ and ${\rm F}\mathcal{I}_{\omega}{\text -}\pd S=n$. Let $N\in\Mod S$. Then
$\Ecograde_{\omega}\Tor_{n+1}^S(\omega,N)\geqslant n+1$ by assumption. It follows from Lemma ~\ref{lem-6.2}(2) that $\Tor_{n+1}^S(\omega,N)=0$
and $\pd_{S^{op}}\omega=\fd_{S^{op}}\omega\leqslant n$.
\end{proof}

Putting $k=0$ in Theorem~ \ref{thm-6.3}, we immediately get the following

\begin{corollary} \label{cor-6.4}
\begin{enumerate}
\item[]
\item If $\Tcograde_{\omega}\Ext^{i}_R(\omega,M)\geqslant i$ for any $M\in \Mod R$ and $i\geqslant 1$,
then ${\rm F}\mathcal{P}_{\omega}{\text -}\id R=\pd_R\omega$.
\item If $\Ecograde_{\omega}\Tor_{i}^S(\omega,N)\geqslant i$ for any $N\in \Mod S$ and $i\geqslant 1$,
then ${\rm F}\mathcal{I}_{\omega}{\text -}\pd S$ $=\pd_{S^{op}}\omega$.
\end{enumerate}
\end{corollary}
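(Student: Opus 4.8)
The plan is to obtain both statements as the $k=0$ case of Theorem~\ref{thm-6.3}. First I would note that in part (1) the standing hypothesis ``$\Tcograde_{\omega}\Ext^{i}_R(\omega,M)\geqslant i$ for any $M\in\Mod R$ and $i\geqslant 1$'' is literally the hypothesis of Theorem~\ref{thm-6.3}(1) with $k=0$. Invoking that theorem then gives
$${\rm F}\mathcal{P}_{\omega}{\text -}\id R\leqslant \pd_R\omega\leqslant{\rm F}\mathcal{P}_{\omega}{\text -}\id R+0={\rm F}\mathcal{P}_{\omega}{\text -}\id R,$$
so both inequalities are equalities and ${\rm F}\mathcal{P}_{\omega}{\text -}\id R=\pd_R\omega$, as claimed.

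For part (2) I would run the identical argument on the other side: the condition ``$\Ecograde_{\omega}\Tor_{i}^S(\omega,N)\geqslant i$ for any $N\in\Mod S$ and $i\geqslant 1$'' is exactly the hypothesis of Theorem~\ref{thm-6.3}(2) with $k=0$, which yields ${\rm F}\mathcal{I}_{\omega}{\text -}\pd S\leqslant \pd_{S^{op}}\omega\leqslant{\rm F}\mathcal{I}_{\omega}{\text -}\pd S$, forcing ${\rm F}\mathcal{I}_{\omega}{\text -}\pd S=\pd_{S^{op}}\omega$. I would keep the write-up to a single line referencing Theorem~\ref{thm-6.3}, since no genuinely new content is needed.

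There is really no obstacle here: all the work has already been done in the proof of Theorem~\ref{thm-6.3} (via Propositions~\ref{prop-5.4} and~\ref{prop-5.5} together with Lemmas~\ref{lem-6.1} and~\ref{lem-6.2}, in particular the $k=0$ branch of that proof which uses Lemma~\ref{lem-6.2} to kill the relevant $\Ext$- or $\Tor$-module). The only thing to check is that Theorem~\ref{thm-6.3} is indeed stated for every $k\geqslant 0$, which it is, so the specialization $k=0$ is legitimate and the corollary is immediate.
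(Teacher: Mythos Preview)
Your proposal is correct and matches the paper's approach exactly: the paper simply states that the corollary follows by putting $k=0$ in Theorem~\ref{thm-6.3}, with no additional argument given.
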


The following is an immediate consequence of Corollaries~ \ref{cor-4.2} and \ref{cor-6.4}.

\begin{corollary} \label{cor-6.5}
If $\omega$ satisfies the $n$-cograde condition for all $n$, then
$${\rm F}\mathcal{P}_{\omega}{\text -}\id R=\pd_R\omega\ {\rm and}\ {\rm F}\mathcal{I}_{\omega}{\text -}\pd S=\pd_{S^{op}}\omega.$$
\end{corollary}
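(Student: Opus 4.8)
The plan is to read this off directly from Corollaries~\ref{cor-4.2} and~\ref{cor-6.4}, as the sentence preceding the statement indicates. First I would rewrite the hypothesis as a family of cograde inequalities: by Definition~\ref{def-4.1}, the assertion that $\omega$ satisfies the $n$-cograde condition for all $n$ says precisely that $\sEcograde_\omega\Tor_i^S(\omega,N)\geqslant i$ for every $N\in\Mod S$ and every $i\geqslant 1$. For each fixed $n$ this is statement~(2) of Corollary~\ref{cor-4.2}, so statement~(1) of that corollary holds for every $n$ as well; letting $n$ range over all positive integers gives $\sTcograde_\omega\Ext_R^i(\omega,M)\geqslant i$ for every $M\in\Mod R$ and every $i\geqslant 1$.

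Next I would weaken these strong cograde conditions to the ordinary ones. By Definition~\ref{def-2.5}, $\sTcograde_\omega X\geqslant i$ implies $\Tcograde_\omega X\geqslant i$ (apply the defining condition to the submodule $X$ itself), and $\sEcograde_\omega Z\geqslant i$ implies $\Ecograde_\omega Z\geqslant i$ (apply it to the quotient $Z$ itself). Taking $X=\Ext_R^i(\omega,M)$ and $Z=\Tor_i^S(\omega,N)$, we obtain $\Tcograde_\omega\Ext_R^i(\omega,M)\geqslant i$ for all $M\in\Mod R$ and $i\geqslant 1$, and $\Ecograde_\omega\Tor_i^S(\omega,N)\geqslant i$ for all $N\in\Mod S$ and $i\geqslant 1$. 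These are exactly the hypotheses of Corollary~\ref{cor-6.4}(1) and~(2) respectively, which then yield ${\rm F}\mathcal{P}_{\omega}{\text -}\id R=\pd_R\omega$ and ${\rm F}\mathcal{I}_{\omega}{\text -}\pd S=\pd_{S^{op}}\omega$, completing the proof.

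There is essentially no obstacle here: the corollary is a formal combination of results already in hand, and the whole argument is one of bookkeeping. The only point that genuinely needs attention is the quantifier over $n$ --- Corollary~\ref{cor-6.4} requires the cograde inequality to hold for \emph{all} $i\geqslant 1$ simultaneously, so one must invoke the full hypothesis ``for all $n$'' (passing to the union of the ranges $1\leqslant i\leqslant n$) rather than any single instance, and one must be careful to route the strong $\Tor$-cograde hypothesis into part~(1) and the strong $\Ext$-cograde hypothesis into part~(2), matching the module categories $\Mod R$ and $\Mod S$ correctly. It is also worth remarking that the conclusion is symmetric in $R$ and $S$, in accordance with the left--right symmetry of the $n$-cograde condition recorded in Theorem~\ref{thm-4.8} and with the symmetry of Corollary~\ref{cor-4.2}.
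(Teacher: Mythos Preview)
Your proof is correct and follows exactly the route the paper intends: the paper's own proof is the single sentence ``immediate consequence of Corollaries~\ref{cor-4.2} and~\ref{cor-6.4}'', and you have simply unpacked that sentence, correctly passing from the (right) $n$-cograde condition to both strong cograde inequalities via Corollary~\ref{cor-4.2}, weakening to ordinary cograde, and feeding into the two parts of Corollary~\ref{cor-6.4}.
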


Combining Theorem~ \ref{thm-4.19} with the case for $k=1$ in Theorem~ \ref{thm-6.3}, we get the following

\begin{corollary} \label{cor-6.6}
We have
$${\rm F}\mathcal{P}_{\omega}{\text -}\id R\leqslant \pd_R\omega\leqslant{\rm F}\mathcal{P}_{\omega}{\text -}\id R+1\ {\rm and}$$
$${\rm F}\mathcal{I}_{\omega}{\text -}\pd S\leqslant \pd_{S^{op}}\omega\leqslant{\rm F}\mathcal{I}_{\omega}{\text -}\pd S+1,$$
if either of the following conditions is satisfied.
\begin{enumerate}
\item $\Tcograde_{\omega}\Ext^{i+1}_R(\omega,M)\geqslant i$ for any $M\in \Mod R$ and $i\geqslant 1$.
\item $\Ecograde_{\omega}\Tor_{i+1}^S(\omega,N)\geqslant i$ for any $N\in \Mod S$ and $i\geqslant 1$.
\end{enumerate}
\end{corollary}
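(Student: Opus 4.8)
The plan is to obtain Corollary~\ref{cor-6.6} as a formal combination of Theorem~\ref{thm-4.19} with the $k=1$ instance of Theorem~\ref{thm-6.3}; the only genuinely new observation needed is that the two hypotheses (1) and (2) are equivalent to one another.

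First I would carry out a bookkeeping reindexing. Putting $j=i+1$, hypothesis (1) --- that $\Tcograde_{\omega}\Ext^{i+1}_R(\omega,M)\geqslant i$ for every $M\in\Mod R$ and every $i\geqslant 1$ --- says exactly that $\Tcograde_{\omega}\Ext^{j}_R(\omega,M)\geqslant j-1$ for every $M\in\Mod R$ and every $j\geqslant 2$; and since $\Tcograde_{\omega}(-)\geqslant 0$ always, the case $j=1$ is automatic, so hypothesis (1) is equivalent to the statement that $\Tcograde_{\omega}\Ext^{j}_R(\omega,M)\geqslant j-1$ for every $M\in\Mod R$ and every $j\geqslant 1$. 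In the same way, hypothesis (2) is equivalent to the statement that $\Ecograde_{\omega}\Tor_{j}^S(\omega,N)\geqslant j-1$ for every $N\in\Mod S$ and every $j\geqslant 1$.

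Next I would invoke Theorem~\ref{thm-4.19}, which for each fixed $n\geqslant 1$ asserts the equivalence of its conditions (1) and (2), namely of $\Tcograde_{\omega}\Ext^{i}_R(\omega,M)\geqslant i-1$ for all $M\in\Mod R$ and $1\leqslant i\leqslant n$ with $\Ecograde_{\omega}\Tor_{i}^S(\omega,N)\geqslant i-1$ for all $N\in\Mod S$ and $1\leqslant i\leqslant n$. Since a property holds for all $i\geqslant 1$ precisely when it holds for $1\leqslant i\leqslant n$ for every $n$, letting $n$ range over all positive integers in Theorem~\ref{thm-4.19} shows that the two statements from the previous paragraph are equivalent. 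Hence hypothesis (1) holds if and only if hypothesis (2) does, so if either one is assumed, both hold.

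Finally, under hypothesis (1), Theorem~\ref{thm-6.3}(1) with $k=1$ gives ${\rm F}\mathcal{P}_{\omega}{\text -}\id R\leqslant \pd_R\omega\leqslant{\rm F}\mathcal{P}_{\omega}{\text -}\id R+1$; and since hypothesis (2) then also holds, Theorem~\ref{thm-6.3}(2) with $k=1$ gives ${\rm F}\mathcal{I}_{\omega}{\text -}\pd S\leqslant \pd_{S^{op}}\omega\leqslant{\rm F}\mathcal{I}_{\omega}{\text -}\pd S+1$. Thus both chains of inequalities hold as soon as either hypothesis is satisfied, which is exactly the assertion. I expect no real obstacle: the substantive work is entirely contained in Theorems~\ref{thm-4.19} and~\ref{thm-6.3}, and the only points requiring attention are the harmless off-by-one reindexing of the cograde indices and the routine passage from the $n$-indexed equivalence of Theorem~\ref{thm-4.19} to the version valid for all $i\geqslant 1$.
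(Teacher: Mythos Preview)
Your proposal is correct and follows essentially the same approach as the paper: the paper's own proof consists of the single sentence ``Combining Theorem~\ref{thm-4.19} with the case for $k=1$ in Theorem~\ref{thm-6.3}, we get the following,'' and your argument is precisely a careful unpacking of that sentence, including the reindexing needed to identify hypotheses (1) and (2) with the conditions of Theorem~\ref{thm-4.19}.
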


\begin{corollary} \label{cor-6.7}
If $\omega$ satisfies the right quasi $n$-cograde condition for all $n$, then
$${\rm F}\mathcal{P}_{\omega}{\text -}\id R=\pd_R\omega\ {\rm and}\
{\rm F}\mathcal{I}_{\omega}{\text -}\pd S\leqslant \pd_{S^{op}}\omega\leqslant{\rm F}\mathcal{I}_{\omega}{\text -}\pd S+1.$$
\end{corollary}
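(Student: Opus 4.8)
The plan is to reduce Corollary~\ref{cor-6.7} to two results that have already been established: Corollary~\ref{cor-6.4}(1) for the equality ${\rm F}\mathcal{P}_{\omega}{\text -}\id R=\pd_R\omega$, and Corollary~\ref{cor-6.6}(2) for the two-sided estimate ${\rm F}\mathcal{I}_{\omega}{\text -}\pd S\leqslant \pd_{S^{op}}\omega\leqslant{\rm F}\mathcal{I}_{\omega}{\text -}\pd S+1$. The only thing to do is to translate the hypothesis ``$\omega$ satisfies the right quasi $n$-cograde condition for all $n$'' into the cograde hypotheses those corollaries require. By Definition~\ref{def-4.10}, for each fixed $n$ the condition reads $\sEcograde_\omega\Tor_{i+1}^S(\omega,N)\geqslant i$ for every $N\in\Mod S$ and every $1\leqslant i\leqslant n$; letting $n$ range over all positive integers, this is simply $\sEcograde_\omega\Tor_{i+1}^S(\omega,N)\geqslant i$ for every $N\in\Mod S$ and every $i\geqslant 1$.

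First I would handle the second chain of inequalities. Since $\Ecograde_\omega(-)\leqslant\sEcograde_\omega(-)$ by definition of the strong $\Ext$-cograde, the displayed hypothesis immediately gives $\Ecograde_\omega\Tor_{i+1}^S(\omega,N)\geqslant i$ for all $N\in\Mod S$ and all $i\geqslant 1$, which is precisely condition (2) of Corollary~\ref{cor-6.6} (equivalently, the case $k=1$ of Theorem~\ref{thm-6.3}(2)). Applying that corollary yields ${\rm F}\mathcal{I}_{\omega}{\text -}\pd S\leqslant \pd_{S^{op}}\omega\leqslant{\rm F}\mathcal{I}_{\omega}{\text -}\pd S+1$. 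For the equality ${\rm F}\mathcal{P}_{\omega}{\text -}\id R=\pd_R\omega$ I would instead invoke the implication $(1)\Rightarrow(2)$ of Proposition~\ref{prop-4.12}: applied for each $n$ (using any $n\geqslant i$ for a given $i$), it converts the family of right quasi $n$-cograde conditions into $\Tcograde_\omega\Ext^i_R(\omega,M)\geqslant i$ for every $M\in\Mod R$ and every $i\geqslant 1$. This is exactly the hypothesis of Corollary~\ref{cor-6.4}(1), so ${\rm F}\mathcal{P}_{\omega}{\text -}\id R=\pd_R\omega$, which finishes the proof.

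I do not anticipate a genuine obstacle here, since all the substantive work lives in Proposition~\ref{prop-4.12} and Theorem~\ref{thm-6.3}; Corollary~\ref{cor-6.7} is the ``quasi'' counterpart of Corollary~\ref{cor-6.5}, where the first equality survives but the second is weakened by an additive constant $1$ coming from the shift $k=1$. The only step that warrants an explicit (one-line) justification is the passage from ``for all $n$'' to ``for all $i\geqslant 1$'' in applying Proposition~\ref{prop-4.12}, and the harmless use of $\sEcograde\geqslant\Ecograde$ to feed Corollary~\ref{cor-6.6}; both are routine, so the write-up should be short.
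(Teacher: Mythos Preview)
Your proposal is correct and follows essentially the same approach as the paper: the equality comes from Proposition~\ref{prop-4.12} (converting the right quasi $n$-cograde condition into $\Tcograde_\omega\Ext^i_R(\omega,M)\geqslant i$) together with Corollary~\ref{cor-6.4}(1), and the inequalities come from the definition of the right quasi $n$-cograde condition together with Corollary~\ref{cor-6.6}. Your added remarks about $\sEcograde\geqslant\Ecograde$ and the passage from ``for all $n$'' to ``for all $i\geqslant 1$'' are the routine justifications implicit in the paper's one-line proof.
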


\begin{proof}
The former equality follows from Proposition~ \ref{prop-4.12} and Corollary~ \ref{cor-6.4}(1),
and the later inequalities follow from the definition of the right quasi $n$-cograde condition and
Corollary~ \ref{cor-6.6}.
\end{proof}

\begin{observation} \label{ob-6.8}
Let $R$ be an artin algebra and ${_R\omega_S}={_RD(R)_R}$. Then we have
\begin{enumerate}
\item By Observation 5.7, we have
$${\rm F}\mathcal{P}_{\omega}{\text -}\id R=\FID R:=\sup\{\id_RM\mid M\in \Mod R\ {\rm and}\ \id_RM<\infty\},$$
$${\rm F}\mathcal{I}_{\omega}{\text -}\pd S=\FPD R:=\sup\{\pd_RN\mid N\in \Mod R\ {\rm and}\ \pd_RN<\infty\}.$$
\item If $R$ is right (or left) quasi $n$-Gorenstein for all $n$, then $\id_{R^{op}}R=\id_RR$ (\cite[Corollary 4]{H2}).
\end{enumerate}
\end{observation}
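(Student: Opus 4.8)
The plan is to obtain both parts of this observation by transporting facts that are already in place, rather than by any fresh computation. Under the hypotheses $\omega={_RD(R)_R}$ with $R$ an artin algebra, the ring $S$ attached to the bimodule is $R$ itself (the duality $D$ furnishes $\Hom_R(D(R),D(R))\cong R$), so $\mathcal{I}_\omega(S)=\mathcal{I}_\omega(R)$ and the $\omega$-relative homological machinery degenerates to the ordinary one. Making this degeneration precise is exactly the role of Observation~\ref{ob-5.7}, which identifies $\mathcal{P}_\omega(R)=\Add_RD(R)$ with the class of injective left $R$-modules and $\mathcal{I}_\omega(R)$ with the class of projective left $R$-modules, and records the dimension identities $\mathcal{P}_\omega(R){\text -}\id_RM=\id_RM$ and $\mathcal{I}_\omega(R){\text -}\pd_RN=\pd_RN$.

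For (1) I would start from the definitions
$${\rm F}\mathcal{P}_{\omega}{\text -}\id R=\sup\{\mathcal{P}_\omega(R){\text -}\id_RM\mid M\in\Mod R,\ \mathcal{P}_\omega(R){\text -}\id_RM<\infty\},$$
$${\rm F}\mathcal{I}_{\omega}{\text -}\pd S=\sup\{\mathcal{I}_\omega(S){\text -}\pd_SN\mid N\in\Mod S,\ \mathcal{I}_\omega(S){\text -}\pd_SN<\infty\},$$
and substitute the two identities from Observation~\ref{ob-5.7}(2.1) and (3.1) term by term: the supremum over the modules of finite $\mathcal{P}_\omega(R)$-injective dimension becomes the supremum over the modules of finite injective dimension, and likewise on the projective side. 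This yields ${\rm F}\mathcal{P}_{\omega}{\text -}\id R=\FID R$ and ${\rm F}\mathcal{I}_{\omega}{\text -}\pd S=\FPD R$, with nothing further to do. The only care required is the left/right bookkeeping: remembering that $\mathcal{P}_\omega(R)$ corresponds to injectives while $\mathcal{I}_\omega(R)$ corresponds to projectives, and that $S=R$ here.

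For (2) the equality $\id_{R^{op}}R=\id_RR$ under the assumption that $R$ is right (or left) quasi $n$-Gorenstein for all $n$ is precisely \cite[Corollary 4]{H2}, so I would simply invoke it. It is nonetheless worth recording the interface with the present paper: in the right case, Example~\ref{exa-4.20}(2) says that $D(R)$ satisfies the right quasi $n$-cograde condition for all $n$, so Corollary~\ref{cor-6.7} together with part (1) and Observation~\ref{ob-5.7}(1) already gives $\FID R=\pd_R\omega=\id_{R^{op}}R$. However, this produces only a finitistic identity and presupposes one-sided finiteness, so it does not by itself yield the genuine symmetry of the self-injective dimensions; that passage is the substantive ingredient borrowed from \cite{H2}. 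Consequently the only real obstacle in the whole observation is keeping the $R$ versus $R^{op}$ and $\mathcal{P}_\omega$ versus $\mathcal{I}_\omega$ dictionary straight, after which both statements are immediate.
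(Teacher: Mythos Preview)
Your proposal is correct and matches the paper's approach exactly: the observation is stated without a separate proof because part (1) follows immediately from the dimension identities in Observation~\ref{ob-5.7}(2.1) and (3.1) substituted into the definitions of the finitistic dimensions, and part (2) is a direct citation of \cite[Corollary 4]{H2}. Your additional remark that the paper's own machinery (Example~\ref{exa-4.20}(2) and Corollary~\ref{cor-6.7}) does not by itself yield the two-sided symmetry of the self-injective dimensions, and hence the appeal to \cite{H2} is genuinely needed, is accurate and a useful clarification.
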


As a consequence of the above results, we have the following

\begin{corollary} \label{cor-6.9}
Let $R$ be an artin algebra. Then we have
\begin{enumerate}
\item If $R$ satisfies the Auslander condition (that is, $R$ is Auslander $n$-Gorenstein for all $n$), then
$$\FPD R^{op}=\FID R^{op}={\id_{R^{op}}R}=\id_RR=\FPD R=\FID R.$$
\item If $R$ satisfies the right quasi Auslander condition (that is, $R$ is right quasi $n$-Gorenstein for all $n$), then
$$\FPD R\leqslant\FID R={\id_{R^{op}}R}=\id_{R}R\leqslant\FPD R+1.$$
\end{enumerate}
\end{corollary}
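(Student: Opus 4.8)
The plan is to specialize the results of Sections~4--6 to the semidualizing bimodule ${_R\omega_S}={_RD(R)_R}$, where $S$ is identified with $R$ via the homothety, and then translate each quantity into a self-injective or finitistic dimension of $R$ using Observations~\ref{ob-5.7} and~\ref{ob-6.8}. No new homological ingredient is needed beyond what is already assembled in Sections~4--6; the proof is a bookkeeping exercise.

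For part~(1): since $R$ is Auslander $n$-Gorenstein for all $n$, Example~\ref{exa-4.20}(1) shows that $\omega=D(R)$ satisfies the $n$-cograde condition for all $n$. Corollary~\ref{cor-6.5} then gives ${\rm F}\mathcal{P}_{\omega}{\text -}\id R=\pd_R\omega$ and ${\rm F}\mathcal{I}_{\omega}{\text -}\pd S=\pd_{S^{op}}\omega$. Rewriting the left-hand sides by Observation~\ref{ob-6.8}(1) and the right-hand sides by Observation~\ref{ob-5.7}(1) yields $\FID R=\id_{R^{op}}R$ and $\FPD R=\id_R R$. The Auslander condition is left--right symmetric, so $R^{op}$ also satisfies it; running the same chain of identities for $R^{op}$ (with $D(R^{op})$ in place of $D(R)$) gives $\FID R^{op}=\id_R R$ and $\FPD R^{op}=\id_{R^{op}}R$. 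Finally, $R$ is in particular right quasi $n$-Gorenstein for all $n$, so Observation~\ref{ob-6.8}(2) gives $\id_{R^{op}}R=\id_R R$; concatenating the six equalities proves~(1).

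For part~(2): the right quasi Auslander condition on $R$ means, by Example~\ref{exa-4.20}(2), that $\omega=D(R)$ satisfies the right quasi $n$-cograde condition for all $n$. Corollary~\ref{cor-6.7} then yields ${\rm F}\mathcal{P}_{\omega}{\text -}\id R=\pd_R\omega$ together with ${\rm F}\mathcal{I}_{\omega}{\text -}\pd S\leqslant\pd_{S^{op}}\omega\leqslant{\rm F}\mathcal{I}_{\omega}{\text -}\pd S+1$. Translating via Observations~\ref{ob-5.7}(1) and~\ref{ob-6.8}(1) gives $\FID R=\id_{R^{op}}R$ and $\FPD R\leqslant\id_R R\leqslant\FPD R+1$. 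A right quasi $n$-Gorenstein algebra for all $n$ again satisfies $\id_{R^{op}}R=\id_R R$ by Observation~\ref{ob-6.8}(2), so $\FID R=\id_{R^{op}}R=\id_R R$; substituting this into the previous inequalities yields $\FPD R\leqslant\FID R=\id_{R^{op}}R=\id_R R\leqslant\FPD R+1$, which is~(2).

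The only delicate point is conventional: one must keep straight that for $\omega=D(R)$ the auxiliary ring $S$ is $R$ itself (so that $\pd_{S^{op}}\omega=\pd_{R^{op}}\omega$ and ${\rm F}\mathcal{I}_{\omega}{\text -}\pd S=\FPD R$), exactly as recorded in Observations~\ref{ob-5.7} and~\ref{ob-6.8}, and --- in part~(1) --- that the Auslander condition descends to $R^{op}$ so the mirror-image identities are legitimately available. Since everything else is a direct substitution into Corollaries~\ref{cor-6.5} and~\ref{cor-6.7}, I expect no genuine obstacle.
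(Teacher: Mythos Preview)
Your proposal is correct and follows essentially the same route as the paper's proof, which reads in full: ``In view of Example~\ref{exa-4.20}, Observations~\ref{ob-5.7} and~\ref{ob-6.8}, the assertions follow from Corollaries~\ref{cor-6.5} and~\ref{cor-6.7} respectively.'' You have simply unpacked this sentence, and your handling of the $R^{op}$ equalities in part~(1) via the left--right symmetry of the Auslander condition is a legitimate way to obtain them.
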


\begin{proof}
In view of Example ~\ref{exa-4.20}, Observations \ref{ob-5.7} and \ref{ob-6.8}, the assertions follow from
Corollaries~ \ref{cor-6.5} and \ref{cor-6.7} respectively.
\end{proof}

\vspace{0.5cm}

{\bf Acknowledgements.}
This research was partially supported by NSFC (Grant Nos. 11571164, 11501144),
a Project Funded by the Priority Academic Program Development of Jiangsu Higher Education Institutions
and NSF of Guangxi Province of China (Grant No. 2016GXNSFAA380151).
The authors thank the referee for the useful suggestions.

\begin{thebibliography}{Lam99}
\bibitem{AF} F. W.  Anderson and K. R. Fuller,   Rings and Categories of Modules, Graduate Texts in
Mathematics, Vol. 13, Springer-Verlag, New York, 1992.

\bibitem{ATY} T. Araya, R. Takahashi and Y. Yoshino, {\it Homological invariants associated to semi-dualizing bimodules},
J. Math. Kyoto Univ. {\bf 45} (2005), 287--306.

\bibitem{ASS} I. Assem, D. Simson and A. Skowro\'nski, Elements of the Representation Theory of Associative Algebras,
Vol. 1, Techniques of Representation Theory, London Math. Soc. Stud. Texts {\bf 65}, Cambridge
Univ. Press, Cambridge, 2006.

\bibitem{AB} M. Auslander and M. Bridger, Stable Module Theory, Memoirs Amer. Math. Soc. {\bf 94}, Amer. Math. Soc.,
Providence, RI, 1969.

\bibitem{AR1} M. Auslander and I. Reiten, {\it $k$-Gorenstein algebras and syzygy modules}, J. Pure Appl. Algebra {\bf 92} (1994),
1--27.

\bibitem{AR2} M. Auslander and I. Reiten, {\it Syzygy modules for noetherian rings}, J. Algebra {\bf 183} (1996), 167--185.

\bibitem{ARS} M. Auslander, I. Reiten and S. O. Smal${\o}$, Representation Theory of Artin Algebras,
Corrected reprint of the 1995 original, Cambridge Stud. in Adv. Math. {\bf 36}, Cambridge Univ. Press, 1997.

\bibitem{BR} A. Beligiannis and I. Reiten, Homological and Homotopical Aspects of Torsion Theories, Mem. Amer. Math. Soc.
\textbf{188}(883), Amer. Math. Soc., Providence, RI, 2007.

\bibitem{BBE} L. Bican, R. El Bashir and E. E. Enochs,  {\it All modules have flat covers}, Bull. London Math. Soc.
{\bf 33} (2001), 385--390.

\bibitem{B} J. E. Bj\"{o}rk, {\it The Auslander condition on Noetherian rings}, in: S\'{e}minaire d'Alg\`{e}bre Paul Dubreil
et Marie-Paul Malliavin, 39\`{e}me Ann\'{e}e, Paris, 1987/1988. Lect. Notes in Math. 1404, Springer-
Verlag, Berlin, 1989, pp.137--173.


\bibitem{CE} H. Cartan and S. Eilenberg, Homological Algebra, with an appendix by D. A. Buchsbaum, reprint of the 1956 original,
Princeton Landmarks in Math., Princeton Univ. Press, Princeton, 1999.

\bibitem{CSS} J. Coates, P. Schneider and R. Sujatha, {\it Modules over Iwasawa algebras}, Institute J. Math. Jussieu
{\bf 2} (2003), 73--108.


\bibitem{E} E. E. Enochs, {\it Injective and flat covers, envelopes and resolvents}, Israel J. Math. {\bf 39} (1981), 189--209.

\bibitem{EJ} E. E. Enochs and O. M. G. Jenda, Relative Homological Algebra. de Gruyter Exp. in Math. {\bf 30}, Walter de
Gruyter, Berlin, New York, 2000.

\bibitem{EHIS} K. Erdmann, T. Holm, O. Iyama and J. Schr\"{o}er, {\it Radical embeddings and representation
dimension},  Adv. Math. {\bf 185} (2004), 159--177.

\bibitem{FGR} R. M. Fossum, P. A. Griffith and I. Reiten, Trivial Extensions of Abelian Categories.
Homological Algebra of Trivial Extensions of Abelian Categories with Applications to Ring Theory,
Lecture Notes in Math. {\bf 456}, Springer-Verlag, Berlin, 1975.

\bibitem{GT} R. G\"obel and J. Trlifaj, Approximations and Endomorphism Algebras of Modules, de Gruyter Exp. in Math. {\bf 41}, Walter de
Gruyter, Berlin, New York, 2006.

\bibitem{GH} P. A. Guil Asensio and I. Herzog, {\it Left cotorsion rings}, Bull. London Math. Soc. {\bf 36} (2004), 303--309.

\bibitem{HW} H. Holm and D. White, {\it Foxby equivalence over associative rings}, J. Math. Kyoto Univ. {\bf 47} (2007),
781--808.

\bibitem{H1} Z. Y. Huang, {\it Extension closure of $k$-torsionfree modules}, Comm. Algebra {\bf 27} (1999), 1457--1464.

\bibitem{H2} Z. Y. Huang, {\it Selforthogonal modules with finite injective dimension II}, J. Algebra  {\bf 264} (2003), 262--268.


\bibitem{H3} Z. Y. Huang, {\it Syzygy modules for quasi $k$-Gorenstein rings}, J. Algebra {\bf 299} (2006), 21--32.

\bibitem{H4} Z. Y. Huang, {\it Proper resolutions and Gorenstein categories}, J. Algebra {\bf 393} (2013), 142--169.


\bibitem{HI} Z. Y. Huang and O. Iyama, {\it Auslander-type conditions and cotorsion pairs}, J. Algebra {\bf 318} (2007),
93--110.

\bibitem{HQ} Z. Y. Huang and H. R. Qin, {\it Homological behavior of Auslander's $k$-Gorenstein rings}, Algebr. Represent. Theory
{\bf 15} (2012), 835--853.

\bibitem{IS} Y. Iwanaga and H. Sato, {\it On Auslander's $n$-Gorenstein rings}, J. Pure Appl. Algebra {\bf 106} (1996),
61--76.

\bibitem{Iy1} O. Iyama, {\it Auslander correspondence}, Adv. Math. {\bf 210} (2007),
51--82.

\bibitem{Iy2} O. Iyama, {\it Cluster tilting for higher Auslander algebras}, Adv. Math. {\bf 226} (2011), 1--61.


\bibitem{MR} F. Mantese and I. Reiten, {\it Wakamatsu tilting modules}, J. Algebra {\bf 278} (2004), 532--552.

\bibitem{N}  W. K. Nicholson, {\it Semiregular modules and rings}, Canad. J. Math. {\bf 28} (1976), 1105--1120.

\bibitem{S} S. Sather-Wagstaff, Semidualizing Modules, Mathematics Monograph, Preprint, available at
https://www.ndsu.edu/pubweb/$\tilde{~}$ssatherw/DOCS/sdmhist.html.


\bibitem{TH1} X. Tang and Z. Y. Huang, {\it Homological aspects of the dual Auslander transpose}, Forum Math. {\bf 27} (2015), 3717--3743.

\bibitem{TH2} X. Tang and Z. Y. Huang, {\it Homological aspects of the dual Auslander transpose II}, Kyoto J. Math. {\bf 57} (2017), 17--53.

\bibitem{TH3} X. Tang and Z. Y. Huang, {\it Homological aspects of the adjoint cotranspose}, Colloq. Math. {\bf 150} (2017), 293--311.

\bibitem{TH4} X. Tang and Z. Y. Huang, {\it Coreflexive modules and semidualizing modules with finite projective dimension},
Taiwanese J. Math. {\bf 21} (2017), 1283--1324.

\bibitem{TH5} X. Tang and Z. Y. Huang, {\it Two filtration results for modules with applications to the Auslander condition},
Colloq. Math. (to appear), Preprint is available at http://math.nju.edu.cn/$\tilde{~}$huangzy/.

\bibitem{W1} T. Wakamatsu, {\it On modules with trivial self-extensions}, J. Algebra {\bf 114} (1988), 106--114.

\bibitem{W2} T. Wakamatsu, {\it Stable equivalence for self-injective algebras and a generalization of tilting modules},
J. Algebra {\bf 134} (1990), 298--325.

\bibitem{W3} T. Wakamatsu, {\it Tilting modules and Auslander's Gorenstein property}, J. Algebra {\bf 275} (2004), 3--39.

\bibitem{Z} A. Zaks, {\it Injective dimension of semiprimary rings}, J. Algebra {\bf 13} (1969), 73--89.
\end{thebibliography}

\providecommand{\bysame}{\leavevmode\hbox to3em{\hrulefill}\thinspace}
\providecommand{\MR}{\relax\ifhmode\unskip\space\fi MR }
\providecommand{\MRhref}[2]{%
\href{http://www.ams.org/mathscinet-getitem?mr=#1}{#2}
}
\providecommand{\href}[2]{#2}

\end{document}